\title{Two-round Ramsey games on random graphs}
\author{
  Yahav Alon\thanks{
    School of Mathematical Sciences,
    Tel Aviv University, Tel Aviv 6997801, Israel.
    Email: \texttt{yahavalo@tauex.tau.ac.il}.
  }
  \and
  Patrick Morris\thanks{
    Departament de Matem\`atiques, Universitat Polit\`ecnica de Catalunya (UPC), Carrer de Pau Gargallo 14, 08028  Barcelona, Spain.
    Email: \texttt{pmorrismaths@gmail.com}.
    Research supported in part by the Deutsche Forschungsgemeinschaft (DFG, German Research
    Foundation) Walter Benjamin program - project number 504502205.
  }
  \and
  Wojciech Samotij\thanks{
    School of Mathematical Sciences,
    Tel Aviv University, Tel Aviv 6997801, Israel.
    Email: \texttt{samotij@tauex.tau.ac.il}.
    Research supported in part by the European Research Council Consolidator Grant 101044123 (RandomHypGra) and by the Israel Science Foundation grant 2110/22.
  }
}
\begin{document}
\maketitle
\newtheorem{lemma}{Lemma}[section]
\newtheorem{corol}{Corollary}[section]
\newtheorem{thmtool}{Theorem}[section]
\newtheorem{corollary}[thmtool]{Corollary}
\newtheorem{lem}[thmtool]{Lemma}
\newtheorem{defi}[thmtool]{Definition}
\newtheorem{prop}[thmtool]{Proposition}
\newtheorem{clm}[thmtool]{Claim}
\newtheorem{conjecture}{Conjecture}
\newtheorem{problem}{Problem}

\theoremstyle{definition}
\newtheorem{rem}[thmtool]{Remark}

\newcommand{\Proof}{\noindent{\bf Proof.}\ \ }
\newcommand{\Remarks}{\noindent{\bf Remarks:}\ \ }
\newcommand{\Remark}{\noindent{\bf Remark:}\ \ }

\newcommand{\Cbad}{\mathcal{C}_{\textrm{bad}}}
\newcommand{\Ccores}{\mathcal{C}^*}

\newcommand{\eps}{\varepsilon}
\newcommand{\cA}{\mathcal{A}}
\newcommand{\cH}{\mathcal{H}}
\newcommand{\cI}{\mathcal{I}}
\newcommand{\cP}{\mathcal{P}}
\newcommand{\cS}{\mathcal{S}}
\newcommand{\cT}{\mathcal{T}}
\newcommand{\cQ}{\mathcal{Q}}
\newcommand{\cC}{\mathcal{C}}
\newcommand{\cW}{\mathcal{W}}
\newcommand{\cF}{\mathcal{F}}
\newcommand{\cK}{\mathcal{K}}
\newcommand{\cB}{\mathcal{B}}
\newcommand{\N}{\mathbb{N}}
\newcommand{\vS}{\mathbf{S}}
\newcommand{\cX}{\mathcal{X}}
\newcommand{\cY}{\mathcal{Y}}
\newcommand{\cZ}{\mathcal{Z}}
\newcommand{\Ex}{\mathbb{E}}

\def\EE{\mathbb E}
\def\PP{\mathbb P}
\def\NN{\mathbb N}

\newcommand{\sig}{\mathrm{sig}}

\newcommand{\Bin}{\mathrm{Bin}}
\newcommand{\red}{\mathrm{red}}
\newcommand{\blue}{\mathrm{blue}}

\let\phi=\varphi

\newcommand{\mybinom}[2]{
    \mleft(
    \begin{array}{@{}c@{\,}} #1\\#2 \end{array}
    \mright)}

\newenvironment{claimproof}{
\let\origqed=\qedsymbol
\renewcommand{\qedsymbol}{$\blacktriangleleft$}
\begin{proof}}{\end{proof}\let\qedsymbol=\origqed}

\newcommand{\MDF}{\mathrm{MDF}}

\newcommand{\WS}[1]{{\color{red} \small{(WS: #1)}}}

\newcommand{\PM}[1]{{\color{blue} \small{(PM: #1)}}}

\begin{abstract}
  Motivated by the investigation of sharpness of thresholds for Ramsey properties in random graphs, Friedgut, Kohayakawa, R\"odl, Ruci\'nski and Tetali introduced two variants of a single-player game whose goal is to colour the edges of a~random graph, in an online fashion, so as not to create a monochromatic triangle.  In the two-round variant of the game, the player is first asked to find a triangle-free colouring  of the edges of a random graph $G_1$ and then extend this colouring to a triangle-free colouring of the union of $G_1$ and another (independent) random graph $G_2$, which is disclosed to the player only after they have coloured $G_1$.  Friedgut et al.\ analysed this variant of the online Ramsey game in two instances: when $G_1$ has $\Theta(n^{4/3})$ edges and when the number of edges of $G_1$ is just below the threshold above which a random graph typically no longer admits a triangle-free colouring, which is located at $\Theta(n^{3/2})$.

  The two-round Ramsey game has been recently revisited by Conlon, Das, Lee and M\'esz\'aros, who generalised the result of Friedgut at al.\ from triangles to all strictly $2$-balanced graphs.
  We extend the work of Friedgut et al.\ in an orthogonal direction and analyse the triangle case of the two-round Ramsey game at all intermediate densities.  More precisely, for every $n^{-4/3} \ll p \ll n^{-1/2}$, with the exception of $p = \Theta(n^{-3/5})$, we determine the threshold density $q$ at which it becomes impossible to extend any triangle-free colouring of a typical $G_1 \sim G_{n,p}$ to a triangle-free colouring of the union of $G_1$ and $G_2 \sim G_{n,q}$.  An interesting aspect of our result is that this threshold density $q$ `jumps' by a polynomial quantity as $p$ crosses a `critical' window around $n^{-3/5}$.
\end{abstract}

\section{Introduction} \label{sec-intro}

Given graphs $G$ and $H$, we say that $G$ is \emph{$H$-Ramsey} if any red/blue-colouring of the edges of $G$ results in a~monochromatic copy of $H$.
The classical theorem of Ramsey~\cite{R30}, from which the term \emph{Ramsey theory} stems, implies that $K_n$ is $H$-Ramsey for all $n$ large enough in terms of $H$.
It is natural to ask what other graphs $G$ are also $H$-Ramsey and a prominent theme has been to explore the
existence of Ramsey graphs $G$ that are \emph{sparse}; see, for example, \cite{NR84} and the references therein.
One famous example is the work of Frankl and
R\"odl~\cite{frankl1986large}, who constructed $K_3$-Ramsey graphs that are $K_4$-free by considering sparse random graphs.
This prompted {\L}uczak, Ruci\'nski and Voigt~\cite{luczak1992ramsey} to initiate the systematic study
   of thresholds for Ramsey properties in random graphs, which has
   since become a prominent theme in probabilistic combinatorics. In particular, this pair of papers established the following. 
(Here and throughout, we denote by $G_{n,p}$ the binomial random graph with $n$ vertices and edge probability $p$ and, for a graph property $P$, we say that $P$ holds \emph{asymptotically almost surely}, a.a.s.\ for short, in $G_{n,p}$ if the probability that $P$ holds tends to $1$ as $n$ tends to infinity.)

\begin{thmtool}[\cite{frankl1986large,luczak1992ramsey}]
  \label{thm:triangle ramsey threshold}
  There exist constants $0<c<C$ such that:
  \begin{enumerate}[label=(\alph*)]
    \item \label{triangle ramsey 0 statement}
    If $p_0\leq cn^{-1/2}$, then a.a.s.\ $G_{n,p_0}$ is \emph{not} $K_3$-Ramsey. 
     \item \label{triangle ramsey 1 statement}
       If $p_1\geq Cn^{-1/2}$, then a.a.s.\ $G_{n,p_1}$ \emph{is}  $K_3$-Ramsey. 
    \end{enumerate}
\end{thmtool}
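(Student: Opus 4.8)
The plan is to prove the two halves by entirely different techniques. For part~(a), the $0$-statement, I would combine first-moment estimates pinning down the local triangle-structure of $G_{n,p_0}$ at $p_0=cn^{-1/2}$ with a deterministic colouring argument; for part~(b), the $1$-statement, I would transfer the (easy, dense) supersaturated Ramsey theorem for $K_3$ to the sparse random host, which is where essentially all of the difficulty lies. The exponent $1/2$ is $1/m_2(K_3)$, where $m_2(K_3)=2$ is the $2$-density of $K_3$; it is the number $2$ that is critical throughout, since for a fixed graph $F$ on $v$ vertices with $e$ edges the expected number of copies of $F$ in $G_{n,p_0}$ has order $n^v p_0^e=c^e n^{v-e/2}$, which vanishes exactly when $e>2v$.

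\emph{Part (a).} A union bound over configurations shows that, for $c$ small, a.a.s.\ every subgraph $H$ of $G_{n,p_0}$ on at most $K=K(c)$ vertices satisfies $e(H)<2v(H)$; in the same vein I would show that a.a.s.\ the \emph{triangle-components} of $G_{n,p_0}$ --- the components of the graph on $E(G_{n,p_0})$ in which two edges are adjacent when they lie in a common triangle --- are all small (of size $O(\log n)$, via a subcriticality estimate for a branching-process exploration of triangles, valid once $c$ is a sufficiently small constant), and that each such component, restricted to any bounded set of vertices, has density below $2$. The remaining, deterministic step is to produce a triangle-free $2$-colouring of $E(G_{n,p_0})$: colour each triangle-component separately (edges in no triangle being irrelevant), each time running a greedy procedure that exploits the absence of small dense sub-configurations --- processing the edges of the component in an order along which each edge closes very few already-present triangles. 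I expect this deterministic colouring to be the technical core of part~(a): one has to turn ``no small dense spot'' into a concrete ordering along which the greedy colouring never gets stuck, and this is exactly the delicate point, since sub-configurations of density just below $2$ (and triangle-components of logarithmic size) do occur.

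\emph{Part (b).} Put $p_1=Cn^{-1/2}$; we must show that a.a.s.\ $G_{n,p_1}$ is not an edge-disjoint union $R\cup B$ of two triangle-free graphs. The dense input is the supersaturated form of $R(3,3)=6$: there is $\delta>0$ such that every red/blue colouring of $E(K_m)$ has at least $\delta m^3$ monochromatic triangles, and in particular, for $m\ge 6$ there is no graph on $m$ vertices such that both it and its complement are triangle-free. I would transfer this via the hypergraph container method: a.a.s.\ $G_{n,p_1}$ satisfies the relevant local conditions (each edge lies in $n^{o(1)}$ triangles, and any two edges lie in at most one common triangle), so the $3$-uniform triangle hypergraph of $G_{n,p_1}$ --- which has only $\Theta(n^{3/2})$ vertices --- admits a family of at most $e^{\eta n^{3/2}}$ ``containers'' (with $\eta=\eta(C)$), each a subgraph of $G_{n,p_1}$ with only $o(n^{3/2})$ triangles, that covers all triangle-free subgraphs of $G_{n,p_1}$. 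A putative triangle-free colouring $R\cup B=G_{n,p_1}$ then embeds $R$ and $B$ into two of these containers; the dense removal/stability input rules out that two such almost-triangle-free subgraphs of $G_{n,p_1}$ together cover all of its edges, the union bound over pairs of containers pitting $e^{\eta n^{3/2}}$ against per-pair failure probabilities exponentially small in $n^{3/2}$ --- a race won by taking $C$ (hence the exponential saving) large.

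The principal obstacle is part~(b) at $p_1=\Theta(n^{-1/2})$ rather than $p_1=\omega(n^{-1/2})$: the number of containers must be kept at $e^{\eta n^{3/2}}$ with $\eta$ small enough to be overwhelmed by enlarging $C$, which forces the use of the quantitative container lemma with precisely the degree and co-degree bounds that $G_{n,p_1}$ enjoys a.a.s., followed by a delicate bookkeeping of the competing exponents. This is the substance of the arguments of Frankl--R\"odl and \L uczak--Ruci\'nski--Voigt in the triangle case, later streamlined using containers.
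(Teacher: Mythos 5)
The paper does not itself prove Theorem~\ref{thm:triangle ramsey threshold}: it is a cited background result, with the $0$-statement~\ref{triangle ramsey 0 statement} due to {\L}uczak, Ruci\'nski and Voigt~\cite{luczak1992ramsey} and the $1$-statement~\ref{triangle ramsey 1 statement} implicit in Frankl--R\"odl~\cite{frankl1986large} and later reproved via hypergraph containers by Nenadov and Steger~\cite{NenSte16}. Your part~(b) sketch is essentially the Nenadov--Steger argument (which the paper explicitly invokes, and whose machinery it imports through Theorem~\ref{thm:containers}), and the ``race'' you flag is indeed the delicate point; note, however, that the quantity $e^{\eta n^{3/2}}$ with $\eta$ small enough to lose to $C$ is not the raw count of containers but the fingerprint-weighted count $\sum_{u}\binom{\binom{n}{2}}{u}p_1^u$ obtained by insisting that each fingerprint $S_1\cup\dotsb\cup S_t$ lie inside $G_{n,p_1}$ --- precisely the trick the present paper uses at the end of the proof of Proposition~\ref{prop:balanced-colouring}. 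Without this conditioning the crude count of containers is $e^{\Theta(n^{3/2}\log n)}$, which no constant $C$ can beat.

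Your part~(a) sketch points in the right direction (exploring triangle-components, subcriticality, greedy colouring, as in~\cite{luczak1992ramsey}), but it contains a concrete error: it is \emph{not} true that a.a.s.\ every bounded subgraph $H$ of $G_{n,cn^{-1/2}}$ satisfies $e(H)<2v(H)$. For a balanced graph $F$ with $e_F=2v_F$ (the simplest example being $K_5$), the expected number of copies in $G_{n,cn^{-1/2}}$ is $\Theta(c^{e_F})$, a positive constant independent of $n$; so copies appear with probability bounded away from $0$ for any fixed $c>0$, however small. Only the non-strict bound $e(H)\le 2v(H)$ holds a.a.s.\ on bounded vertex sets. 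This matters because your plan of running a greedy colouring ``exploits the absence of small dense sub-configurations''; the actual proof must cope with the $O_c(1)$ density-exactly-$2$ pieces that genuinely do occur, and this accounting is where a good deal of the {\L}uczak--Ruci\'nski--Voigt work sits. The gap is not fatal (the small balanced graphs that appear, such as $K_5$, are far from $K_3$-Ramsey), but as written the sketch assumes away a case it would need to handle explicitly.
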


The $1$-statement~\ref{triangle ramsey 1 statement} was implicit in the work of Frankl and R\"odl~\cite{frankl1986large}; {\L}uczak, Ruci\'nski and Voigt~\cite{luczak1992ramsey} proved the $0$-statement~\ref{triangle ramsey 0 statement} and provided an alternative proof of~\ref{triangle ramsey 1 statement}.  The study of Ramsey properties of random graphs culminated in a seminal series of papers by R\"odl and Ruci\'nski~\cite{rodl1993lower,rodl1994random,rodl1995threshold}, who greatly generalised Theorem~\ref{thm:triangle ramsey threshold}, establishing thresholds  for any graph $H$ and also any number of colours $2\leq r\in \mathbb{N}$. Recently, Nenadov and Steger~\cite{NenSte16} provided a short proof of~\ref{triangle ramsey 1 statement} (and the analogous $1$-statements for all $H$ and number of colours) by using the method of hypergraph containers~\cite{balogh2015independent,SaxTho15}; see Section~\ref{sec:containers} for more on this.

\subsection{Ramsey games on random graphs}
\label{sec:games}

The pioneering work~\cite{frankl1986large,luczak1992ramsey,rodl1993lower,rodl1994random,rodl1995threshold} on Ramsey properties of random graphs has been highly influential, with many extensions and variations being studied. In this paper, we will focus on \emph{Ramsey games} played on random graphs, a viewpoint introduced   by   Friedgut, Kohayakawa, R\"odl, Ruci\'nski and Tetali~\cite{friedgut2003ramsey}. The starting point of their work was to view Theorem~\ref{thm:triangle ramsey threshold} as a single-player game played against a random source. In this game, a player is presented with a set of $M$ random edges of $K_n$, for some $1\leq M\leq \binom{n}{2}$, and asked to 
give a \textit{$H$-free
colouring} of the edges, that is, a colouring avoiding monochromatic copies of $H$.   Standard results on the asymptotic equivalence of random graph models (see, for example~\cite[Section 1.4]{randomgraphbook}), along with Theorem~\ref{thm:triangle ramsey threshold}, show that, in the case that $H=K_3$, the player will asymptotically almost surely fail when $M\geq Cn^{3/2}$ and succeed when $M\leq c n^{3/2}$, for appropriately chosen $c,C>0$. In~\cite{friedgut2003ramsey}, the authors introduced the following two variants on this game:

\medskip
\noindent
\textbf{The online game.}
The player is presented with edges of $K_n$ one at a time, according to a uniformly random permutation. Upon seeing each edge, the player must colour the edge with only the knowledge of the previous (already coloured) edges. The game ends when a monochromatic $H$ occurs and the aim of the player is to last as long as possible. 

\medskip
\noindent
\textbf{The two-round game.}
Here, the player is given two random graphs: a uniformly random $n$-vertex graph $G_1$ with $M_1$ edges and a second, \emph{independent} random graph $G_2$ with $M_2$ edges. The player must colour the first random graph, avoiding monochromatic copies of $H$ and with no knowledge of the second random graph. The player is then presented with the second random graph and asked to extend their colouring of $G_1$ to an $H$-free colouring of $G_1\cup G_2$.

\medskip
The work of Friedgut et al.~\cite{friedgut2003ramsey} formalised these games, studied the case where $H=K_3$, and drew interesting connections between the games, the original random Ramsey problem and other related research directions.
In particular, the two-round game arose naturally in another work of a subset of the authors~\cite{friedgut2006sharp} that established that the property of being $K_3$-Ramsey has a~sharp threshold in $G_{n,p}$.

With regards to the online game, the authors of~\cite{friedgut2003ramsey} gave a simple argument showing that, when played with two colours, the game typically ends with a graph having $\Theta(n^{4/3}$) edges.
More precisely, for any\footnote{Here, and throughout, for real-valued functions $f=f(n)$ and $g=g(n)$, we write $f\ll g$ (or $g\gg f$) to denote that $f/g$ tends to $0$ as $n$ tends to infinity.} $M=M(n)\ll n^{4/3}$, a.a.s.\ the player has a strategy to colour the edges avoiding a monochromatic $K_3$. Moreover for any $M\gg n^{4/3}$, a.a.s.\ the player will be forced to create a monochromatic $K_3$ while colouring the first $M$ edges. Interestingly, as discussed in more detail below, the two-round game is used to prove this upper bound on the  running time of the online game. 
There has been a considerable amount of work~\cite{BalBut10,MarSpoSte09-I,MarSpoSte09-II,Noe17} generalising this result and establishing the expected running time of the online game under optimal play. However, a full understanding for all graphs $H$ and number of colours remains elusive; see~\cite{Noe17} and the references therein for the currently best known bounds. 

In the context of the two-round game with respect to $K_3$ and with two colours, Theorem~\ref{thm:triangle ramsey threshold} implies that, when $M_1\geq C n^{3/2}$ for a large enough $C$, a.a.s.\ the player will not be able to survive even the first round, as any colouring of $G_1$ will induce a monochromatic $K_3$. Friedgut, Kohayakawa, R\"odl, Ruci\'nski and Tetali~\cite{friedgut2003ramsey} explored what happens near this extreme, when $M_1=cn^{3/2}$ for some small constant $c>0$, as well as when $M_1=\Theta(n^{4/3})$. They established the results in Theorem \ref{thm:initial two-round results} below. Here and throughout, if $G_1$ is a random graph, we use the term \textit{$G_1$-measurable colouring} to refer to a colouring of the edges of $G_1$ that is determined by the outcome of~$G_1$;  one can think of a $G_1$-measurable colouring as a strategy for colouring the first graph in the two-round game.
Throughout the paper, given two independent random graphs $G_1$ and $G_2$, we shall write that \emph{a.a.s.\ there exists a $G_1$-measurable red/blue-colouring that can be extended to a $K_3$-free colouring of $G_1 \cup G_2$} to mean that there is a deterministic map (strategy) that assigns to every graph $G$ a red/blue-colouring $\varphi = \varphi_G $ of its edges such that, with probability $1-o(1)$ in the random choice of $G_1$,
\begin{equation}
  \label{eq:some-G1-measurable-extends}
  \Pr(\text{$\varphi_{G_1}$ can be extended to a $K_3$-free colouring of $G_1 \cup G_2$} \mid G_1, \varphi_{G_1}) = 1-o(1),
\end{equation}
where this second probability is in the random choice of $G_2$.
Similarly, we shall write that \emph{a.a.s.\ no $G_1$-measurable red/blue-colouring can be extended to a $K_3$-free colouring of $G_1 \cup G_2$} to mean that, for every such deterministic map $\varphi$, with probability $1-o(1)$ in the random choice of $G_1$,
\begin{equation}
  \label{eq:no-G1-measurable-extends}
  \Pr(\text{$\varphi_{G_1}$ can be extended to a $K_3$-free colouring of $G_1 \cup G_2$} \mid G_1, \varphi_{G_1}) = o(1).
\end{equation}
At times, we will restrict our attention to colourings of $G_1$ with a certain property $P$.
We shall say that a $G_1$-measurable colouring $\varphi$ has property $P$ if a.a.s.\ the outcome of $G\sim G_1$ is such that $\varphi_G$ has property $P$.
Thus saying that \emph{a.a.s.\ no $G_1$-measurable colouring with property $P$ can be extended to a $K_3$-free colouring of $G_1 \cup G_2$} has the same meaning as \eqref{eq:no-G1-measurable-extends} except that we now only consider the maps $\phi$ that have property $P$ with probability $1-o(1)$ (over the choice of $G_1$).
Similarly, saying that \emph{a.a.s.\ any $G_1$-measurable colouring with property $P$ can be extended to a $K_3$-free colouring of $G_1 \cup G_2$} means that any $G_1$-measurable colouring $\varphi$ that a.a.s.\ has property $P$ satisfies \eqref{eq:some-G1-measurable-extends}. 

The following theorem thus asserts that in the cases covered, a.a.s.\ no strategy will work to be able to colour both random graphs without monochromatic triangles.

\begin{thmtool}[\cite{friedgut2003ramsey}] \label{thm:initial two-round results} Let $G_1$ be a uniformly random graph on $n$ vertices with $M_1$ edges and $G_2$ an independent random graph on $n$ vertices with $M_2$ edges. Suppose either
\begin{enumerate}[label=(\alph*)]
    \item \label{two-round near threshold}
     $M_1=cn^{3/2}$ for some $c>0$ and $M_2\gg 1$; or
     \item \label{two-round bottom range}
       $M_1=cn^{4/3}$ for some $c>0$ and $M_2\gg n^{4/3}$. 
    \end{enumerate}
    Then a.a.s.\ no $G_1$-measurable  red/blue-colouring  can be extended to a $K_3$-free red/blue-colouring of $G_1\cup G_2$.
\end{thmtool}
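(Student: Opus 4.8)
The plan is, separately for the two ranges, to fix an arbitrary deterministic rule $\chi$ that assigns to each graph $G_1$ a $K_3$-free red/blue-colouring $\chi(G_1)$ of it (defined whenever one exists), and to show that a.a.s.\ over $(G_1,G_2)$ the colouring $\chi(G_1)$ admits no extension to $G_1\cup G_2$; since this will hold for every rule, the statement follows. Write $R=R(G_1)$ and $B=B(G_1)$ for the colour classes of $\chi(G_1)$---both triangle-free, with $R\cup B=G_1$. Call $\{u,v\}$ a \emph{red cherry pair} if $u,v$ have a common neighbour $w$ with $uw,vw\in R$, and analogously a \emph{blue cherry pair}; a \emph{mixed pair} is one that is both. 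Two observations drive everything. First, if some edge of $G_2$ is a mixed pair then $\chi(G_1)$ has no extension---that edge cannot receive either colour without completing a monochromatic triangle already present in $G_1$ (note a mixed pair is automatically a non-edge of $G_1$). Second, if $G_2$ contains a triangle each of whose edges is a red cherry pair and a non-edge of $G_1$, then in any extension all three of those edges are forced blue, producing a blue triangle---and symmetrically with the colours interchanged. So it suffices to show that a.a.s.\ $G_2$ contains one of these two configurations.

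For part~(b), put $M_1=cn^{4/3}$, so $G_1\sim G_{n,p_1}$ with $p_1=\Theta(n^{-2/3})$ and $G_2\sim G_{n,q}$ with $q=\Theta(M_2/n^2)\gg n^{-2/3}$; if $M_2\ge Cn^{3/2}$ then $G_2$, hence $G_1\cup G_2$, is already $K_3$-Ramsey by Theorem~\ref{thm:triangle ramsey threshold}\ref{triangle ramsey 1 statement} and we are done, so we may assume $n^{4/3}\ll M_2\ll n^{3/2}$. Reveal $G_1$; relabelling the colours (depending on $G_1$) we may assume $|R|\ge M_1/2$. Let $\Gamma=\Gamma(R)$ be the graph on $V(G_1)$ whose edges are the red cherry pairs. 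The following hold a.a.s.\ for $G_1$, simultaneously for all triangle-free $R\subseteq G_1$ with $|R|\ge M_1/2$: (i)~$\Gamma$ has $\Omega(n^2)$ triangles---each vertex $w$ yields $\binom{d_R(w)}{3}$ ``red stars'', hence that many triangles of $\Gamma$ counted with bounded multiplicity (any triple of vertices has $O(1)$ common neighbours in $G_1$), and $\sum_w\binom{d_R(w)}{3}=\Omega(n^2)$ by convexity since $\sum_w d_R(w)\ge M_1$; (ii)~$\Delta(\Gamma)\le\Delta(G_1)^2=O(n^{2/3})$; (iii)~only $O(n)$ edges of $\Gamma$ lie inside $G_1$, since each such edge together with a witnessing cherry spans a triangle of $G_1$ and $G_1$ has $\Theta(n)$ triangles---so by (ii) all but $O(n^{5/3})=o(n^2)$ triangles of $\Gamma$ avoid $E(G_1)$. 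Now reveal $G_2$ and let $X$ count the triangles of $\Gamma$ all of whose edges lie in $G_2\setminus G_1$; then $\mathbb{E}[X]=\Omega(n^2)\cdot q^3=\Omega(M_2^3/n^4)\to\infty$, while a second-moment computation gives $\mathrm{Var}[X]\lesssim\mathbb{E}[X]+\big(\text{number of pairs of triangles of }\Gamma\text{ sharing an edge}\big)\cdot q^5$, and the last count is $O\big(n\cdot\Delta(\Gamma)^3\big)=O(n^3)$. Hence $\mathrm{Var}[X]=o(\mathbb{E}[X]^2)$ (using $nq\to\infty$), so $X\ge1$ a.a.s.\ and $\chi(G_1)$ a.a.s.\ fails to extend.

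For part~(a), put $M_1=cn^{3/2}$, so $p_1=\Theta(n^{-1/2})$; if $p_1\ge Cn^{-1/2}$ then $G_1$ is $K_3$-Ramsey by Theorem~\ref{thm:triangle ramsey threshold}\ref{triangle ramsey 1 statement} and there is nothing to prove, so assume $p_1\le c_0n^{-1/2}$. Now $M_2$ may grow arbitrarily slowly, so only the mixed-pair obstruction is available and the entire difficulty sits in the following claim about $G_1$ alone: there is $\delta=\delta(c)>0$ such that a.a.s.\ \emph{every} $K_3$-free colouring of $G_1$ has at least $\delta n^2$ mixed pairs. Granting this, fix $\chi$: a.a.s.\ $\chi(G_1)$ has $\ge\delta n^2$ mixed pairs, each a non-edge of $G_1$, so each of the $M_2$ random edges of $G_2$ falls into this set with probability bounded below by a positive constant, and as $M_2\to\infty$ at least one does, a.a.s.---whence $\chi(G_1)$ has no extension. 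To prove the claim one cannot union-bound over the $2^{\Theta(n^{3/2})}$ colourings; instead one replaces this family by a subexponential collection of container colourings for the $K_3$-free colourings of $G_1$ (via the hypergraph container method, equivalently the R\"odl--Ruci\'nski structure of $K_3$-free colourings near the threshold; cf.\ Theorem~\ref{thm:triangle ramsey threshold}\ref{triangle ramsey 1 statement} and Section~\ref{sec:containers}) and shows that any colouring lying in one of them already has $\Omega(n^2)$ mixed pairs. The count itself rests on the $\Theta(n^2)$ copies of $C_4$ in $G_1$ together with the fact that in a $K_3$-free colouring each colour class meets every triangle of $G_1$ (the other class being triangle-free) and hence has $\Omega(n^{3/2})$ edges; by convexity each class then carries $\Omega(n^2)$ monochromatic cherries, and these cannot be spread over the pairs with two or more common neighbours so as to leave fewer than $\Omega(n^2)$ of them both red- and blue-cherried.

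The hard part is exactly this last claim---that, uniformly over all $K_3$-free colourings, $G_1\sim G_{n,\Theta(n^{-1/2})}$ has none with $o(n^2)$ mixed pairs. The density $p_1=\Theta(n^{-1/2})$ is the critical window below which the densest triangle-free subgraphs of $G_1$ need not be almost bipartite, so no purely soft argument is available: the container reduction has to be combined with a genuine extremal analysis of how a colouring can arrange its monochromatic cherries over the copies of $C_4$. In part~(b), by contrast, the only delicate point is the second-moment estimate for $X$, which is routine once the bounds $\Delta(\Gamma)=O(n^{2/3})$ and ``$O(n)$ edges of $\Gamma$ inside $G_1$'' are in hand.
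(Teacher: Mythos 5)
Theorem~\ref{thm:initial two-round results} is a result of Friedgut, Kohayakawa, R\"odl, Ruci\'nski and Tetali that the present paper cites from~\cite{friedgut2003ramsey} without reproducing a proof, so there is no in-paper argument to compare against; I therefore assess your proposal on its own terms.

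For part~\ref{two-round bottom range}, your argument is sound and essentially self-contained. After passing to $G_{n,p}$, the three claims about the red cherry graph $\Gamma(R)$ --- $\Omega(n^2)$ triangles by convexity plus the a.a.s.\ bounded triple-codegree of $G_1$; $\Delta(\Gamma)=O(n^{2/3})$ via $\Delta(G_1)^2$; and only $O(n)$ edges of $\Gamma$ inside $G_1$ because each such edge witnesses a triangle of $G_1$ --- do hold a.a.s.\ simultaneously for every triangle-free $R\subseteq G_1$ with $|R|\ge M_1/2$, and the second-moment computation on the $\Omega(n^2)$ $\Gamma$-triangles disjoint from $E(G_1)$ goes through, since both $1/(n^2q^3)$ and $1/(nq)$ are $o(1)$ for $q\gg n^{-2/3}$. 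It is worth noting that your obstruction (a triangle of $\Gamma$: three pairwise red-cherried non-edges of $G_1$ forming a triangle, each therefore forced blue) is a third local configuration, distinct from the dangerous edges ($C_{rrbb}$) and dangerous copies of $K_{1,2}$ ($C_{rbbbb}$) that drive this paper's $1$-statements in Propositions~\ref{prop: many Crrbb} and~\ref{prop: many Crbbbb}; it serves the same purpose.

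For part~\ref{two-round near threshold}, however, there is a genuine gap. You correctly reduce to the claim that a.a.s.\ \emph{every} $K_3$-free colouring of $G_1$ at $M_1=cn^{3/2}$ has $\ge\delta n^2$ mixed (dangerous) pairs, and you correctly observe that a naive union bound over $2^{\Theta(n^{3/2})}$ colourings is unavailable, so one must pass to containers. But your final sentence --- that each colour class carries $\Omega(n^2)$ monochromatic cherries and that these ``cannot be spread'' so as to leave fewer than $\Omega(n^2)$ pairs both red- and blue-cherried --- is precisely where the whole difficulty sits, and nothing in the proposal rules out the red cherries and blue cherries concentrating on essentially disjoint sets of pairs. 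Having $\Omega(n^2)$ cherries in each colour does not by itself yield $\Omega(n^2)$ pairs that are \emph{simultaneously} red- and blue-cherried; one needs an extremal input of the type supplied by Lemma~\ref{lem:balanced-colouring} (a lower bound on $C_{rrbb}$-counts in balanced colourings of $K_n$), transported to $G_{n,p}$ at $p=\Theta(n^{-1/2})$ via the container dichotomy as in Proposition~\ref{prop:balanced-colouring}, together with the observation that at this density every $K_3$-free colouring is automatically ``balanced'' because $G_1$ has $\Theta(n^3p^3)=\Theta(n^2p)$ edge-disjoint triangles and each colour class must meet all of them. As written, the proposal names the right strategy but omits the step that makes it work.
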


Theorem~\ref{thm:initial two-round results}~\ref{two-round near threshold} shows that just below the threshold for the $K_3$-Ramsey property, the random graph $G$ is a.a.s.\ very close to being $K_3$-Ramsey in the sense that, although $G$ does admit a $K_3$-free colouring, no such colouring can be extended after one adds to $G$ some $\omega(1)$ random edges.

Theorem~\ref{thm:initial two-round results}~\ref{two-round bottom range} highlights the connection between the online game and the two-round game, as it implies that a.a.s.\ the online game cannot last $M\gg n^{4/3}$ steps. Indeed, even if we allow the player of the online game a `grace period' and do not ask for any colouring until $n^{4/3}$ edges are revealed, a.a.s.\ no matter how the player chooses to colour these, they will not be able to extend to the next $M-n^{4/3}\gg n^{4/3}$ random edges. 

In contrast to the online game, the two-round game has not been further explored until the recent work of Conlon, Das, Lee and M\'esz\'aros~\cite{ConDasLeeMes20}. They investigated to what extent Theorem~\ref{thm:initial two-round results}~\ref{two-round near threshold} can be extended to two-round games with respect to other graphs $H$.  Answering a question from~\cite{friedgut2003ramsey}, they showed that a large family of graphs $H$ (namely strictly 2-balanced graphs, see~\cite{ConDasLeeMes20} for a definition) exhibit the same behaviour as $K_3$ in  that just below the threshold for the $H$-Ramsey property a.a.s.\ all $H$-free colourings can be killed by adding a super-constant number of random edges. They also showed that this is not the case for all graphs $H$ and posed the interesting question as to what properties of $H$ determine this behaviour. Finally, we mention that both \cite{friedgut2003ramsey} and \cite{ConDasLeeMes20} explore the two-round game with three colours near the Ramsey threshold. 

\subsection{Our results} \label{sec:our results}

The aim of our work here is to  expand on the work of \cite{friedgut2003ramsey} on the two-round game in a different direction. We keep our focus on $H=K_3$ and two colours and investigate the outcome  of the two-round game as the number of edges in each round is varied. 
We will state and prove our results in the setting of binomial random graphs, as opposed to uniform graphs with a fixed number of edges.  It is well-known that these models are asymptotically equivalent~\cite[Section 1.4]{randomgraphbook}, but the independence of the binomial model makes it more convenient to work with. In order to systematically study two-round games on random graphs at different densities, we introduce the following notion of a \emph{Ramsey completion threshold}, which captures the critical density of the second graph at which the probability that the player succeeds in extending their $H$-free colouring from the first to the second graph jumps from $1-o(1)$ to $o(1)$.

\begin{defi} \label{def:Ramsey completion threshold}
  Given some probability $p=p(n)\in [0,1]$ and a graph $H$, we say that $q=q(n)$ is a \emph{Ramsey completion threshold} for $H$ with respect to $p$ if the following holds for $G_1 \sim G_{n,p}$:
  \begin{enumerate}[label=(\alph*)]
  \item \label{def:0 statement}  If  $q_0 \ll q$ and $G_2\sim G_{n,q_0}$,  then a.a.s.\ there exists a $G_1$-measurable red/blue-colouring that can be extended to an $H$-free colouring of $G_1\cup G_2$.
    \item \label{def:1 statement} If  $q_1 \gg q$ and $G_2\sim G_{n,q_1}$, then a.a.s.\  no $G_1$-measurable  red/blue-colouring  can be extended to an $H$-free colouring of $G_1\cup G_2$.
   \end{enumerate}
   If such a completion threshold exists, we denote it by\footnote{As usual, we abuse notation here as such a threshold will not be determined uniquely, but rather up to constants.} $q(n;H,p)$. If all red/blue-colourings of $G_{n,p}$ contain a monochromatic $H$ with probability $\Omega(1)$, we set~$q(n;H,p)=0$.
 \end{defi}

We refer to~\ref{def:0 statement} as the \emph{$0$-statement} of the definition and~\ref{def:1 statement} as the \emph{$1$-statement}.  Observe that Theorem~\ref{thm:triangle ramsey threshold} gives that there exists $C>0$ such that $q(n;K_3,p)=0$ for all $p\geq Cn^{-1/2}$. Moreover, Theorem~\ref{thm:initial two-round results}~\ref{two-round near threshold} gives  for \emph{any} constant $c>0$, if $p=cn^{-1/2}$ and $q(n;K_3,p)\neq 0$, then $q(n;K_3,p)=n^{-2}$ whereas Theorem~\ref{thm:initial two-round results}~\ref{two-round bottom range} gives that\footnote{In fact they only prove the 1-statement~\ref{def:1 statement} of Definition~\ref{def:Ramsey completion threshold} but the corresponding $0$-statement also holds and essentially follows from the analysis of the online game in~\cite{friedgut2003ramsey}, see Remark~\ref{rem:0-statement proof}.}  $q(n;K_3,p)=n^{-2/3}$ when $p=\Theta(n^{-2/3})$. Here, we complete the picture for almost all intermediate values of $p$.

\begin{thmtool}\label{thm:main}
We have that 
\[
  q(n;K_3,p) =
  \begin{cases}
    n^{-6}p^{-8} & \text{if } n^{-3/5} \ll p \ll n^{-1/2}; \qquad \text{(upper range)}  \\
    n^{-3}p^{-7/2} & \text{if }   n^{-2/3} \ll p \ll n^{-3/5}. \qquad \text{(lower range)}    
  \end{cases}
\]
\end{thmtool}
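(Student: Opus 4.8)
The plan is to treat the $0$- and $1$-statements of Definition~\ref{def:Ramsey completion threshold} separately, reformulating both as statements about the (non-)appearance of small \emph{blocking configurations} in $G \cup G_{n,q}$, where $G \sim G_{n,p}$. Call a pair $(F, E_{\mathrm{new}})$, consisting of a graph $F$ and a set $E_{\mathrm{new}} \subseteq E(F)$, a \emph{configuration}; it \emph{blocks} a $K_3$-free colouring $\phi$ of $G$ if $F$ embeds into $G \cup G_{n,q}$ with $E(F) \setminus E_{\mathrm{new}}$ mapped into $G$ and $E_{\mathrm{new}}$ mapped into $G_{n,q}$, in such a way that $\phi$, restricted to the image of $E(F) \setminus E_{\mathrm{new}}$, has no $K_3$-free extension to the copy of $F$. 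The starting point is the elementary observation that whether $\phi$ extends to a $K_3$-free colouring of $G\cup G_{n,q}$ at all is decided by a \emph{forcing cascade}: any edge of $G_{n,q}$ that lies in a triangle whose other two edges are already monochromatic (under $\phi$, or under the colours forced so far) is forced to the opposite colour, and $\phi$ extends if and only if iterating this rule never forces an edge into both colours. The minimal obstructions to the cascade are exactly the blocking configurations, and a bounded case analysis of $K_3$-free colourings of small graphs — whose core facts are that the ``diamond'' $K_4$ minus an edge constrains nothing at the missing pair, that $K_5$ minus an edge robustly forces a monochromatic \emph{cherry} (a path on three vertices both of whose edges get the same colour) at the missing pair, and that $K_6$ minus an edge robustly forces monochromatic cherries there in \emph{both} colours (so that, since $K_6$ minus an edge is far too dense to occur in $G_{n,p}$ when $p\ll n^{-1/2}$, no single new edge can force a monochromatic triangle in a way valid for every colouring, and the extremal configurations must instead exploit the structure of $\phi$) — is what will single out, for each range of $p$, the cheapest configuration $F_p$ to realise inside $G \cup G_{n,q}$. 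The threshold $q(n;K_3,p)$ is then the density at which $F_p$ first appears, the two exponents $n^{-6}p^{-8}$ and $n^{-3}p^{-7/2}$ being the appearance thresholds of the two extremal configurations; the crossover is at $p = n^{-3/5}$ because this is the density above which the more efficient configuration — the one placing fewer of its edges in $G_{n,q}$ — first fits inside $G_{n,p}$, which is precisely why $q(n;K_3,p)$ jumps downward as $p$ passes $n^{-3/5}$, and why $p = \Theta(n^{-3/5})$, where the relevant first-moment counts are of order $1$, must be excluded.

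For the $0$-statement I would fix a concrete, ``robust'' $K_3$-free colouring $\phi$ of $G$: since $p \ll n^{-1/2}$ puts $G$ far below its Ramsey threshold, one can take the blue edges to be a near-minimum set meeting every triangle of $G$, chosen to minimise the number of monochromatic blue cherries and their overlaps with red cherries, and the red edges to be everything else. A first-moment computation then shows that for $q_0 \ll q(n;K_3,p)$, a.a.s.\ $G \cup G_{n,q_0}$ contains no configuration that blocks this particular $\phi$ — the configurations that do block it are all at least as dense as $F_p$, so their expected number tends to $0$, and this is where the precise exponents are pinned down. It then remains to check that in the absence of blocking configurations the forcing cascade terminates harmlessly: one shows that the subgraph of $G\cup G_{n,q_0}$ on which the cascade operates (the edges of $G_{n,q_0}$ together with the $G$-cherries forcing them, closed under iteration) a.a.s.\ breaks into components each of which is a tree or contains a single cycle, hence admits an ad hoc $K_3$-free colouring, extending $\phi$.

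For the $1$-statement the real difficulty is the quantifier over the roughly $2^{e(G)}$ colourings of $G$: I must show that for $q_1 \gg q(n;K_3,p)$, a.a.s.\ \emph{no} $K_3$-free colouring of $G$ survives. The plan is to compress the colourings, as in the container-based arguments referenced in Section~\ref{sec:containers} and the Nenadov--Steger proof~\cite{NenSte16}: every $K_3$-free colouring splits $E(G)$ into two triangle-free graphs, and containers for triangle-free graphs supply a small family of ``templates'' such that each such colouring agrees with one template on all but a controlled set of edges. For a fixed template one then argues that $G_{n,q_1}$ a.a.s.\ supplies a copy of $F_p$ that blocks every colouring consistent with the template — the key being that, after the cascade of forced colours induced on the $G_{n,q_1}$-edges by the many monochromatic cherries that the template creates in $G$, one is led to a monochromatic triangle — and that it does so with probability large enough to beat a union bound over templates. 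The two candidate extremal configurations $F_p$ exchange roles at $p = n^{-3/5}$, and carrying out the first- and second-moment estimates so that this ``probability to spare'' survives the union bound is exactly what makes that window delicate.

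The step I expect to be the principal obstacle is this compression-and-counting in the $1$-statement: pinning down a notion of ``template'' that is simultaneously coarse enough to admit a union bound and fine enough that every colouring consistent with it is genuinely forced into a monochromatic triangle by a sparse random $G_{n,q_1}$; and, hand in hand with it, the combinatorial identification of the extremal configurations $F_p$ together with a proof that they block \emph{robustly}, i.e.\ using only the template data and not the full colouring. Once those are in place, both statements reduce to first- and second-moment calculations and a little bookkeeping around the forcing cascade.
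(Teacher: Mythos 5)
Your high-level plan (for the $0$-statement, fix an explicit robust colouring and do a first-moment count; for the $1$-statement, compress the space of colourings and do a union bound) does resemble the paper's skeleton, but the combinatorial core of your proposal is off-track, and the two technically hard steps are left as gaps. The paper's local obstructions are not $K_5^-$ or $K_6^-$ (neither appears anywhere in the argument, and ``$K_5^-$ robustly forces a monochromatic cherry at the missing pair'' is not a colour-invariant fact — which cherry, and in which colour, depends heavily on the colouring): they are an edge of $G_{n,q}$ closing a coloured $4$-cycle $C_{rrbb}$ (two adjacent red edges, two adjacent blue) and a $K_{1,2}$ of $G_{n,q}$ completing a coloured $5$-cycle $C_{rbbbb}$. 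The exponents then follow immediately: any colouring of $G_1$ yields $\Omega(n^6p^8)$ copies of $C_{rrbb}$ in the upper range (one new edge, so $q\gg n^{-6}p^{-8}$) or $\Omega(n^6p^7)$ copies of $C_{rbbbb}$ in the lower range (two new edges, so $q\gg n^{-3}p^{-7/2}$). Likewise, the greedy extension (your ``forcing cascade'') fails only when a copy of $F_0$, $F_1$, or $K_4$ — unions of two or three triangles with at most $6$ vertices — appears in $G_1\cup G_2$ with two or more edges in $G_2$, and every $C_{rrbb}$ in a suitably normalised colouring of $G_1$ lies inside a copy of $F_0^-$, $F_1^-$, or $K_4^-$; this is the bounded catalogue your case analysis needs to produce, and it is where the density exponents are pinned down.

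Two steps you defer are precisely the paper's two hard points, and your sketches for them would not go through. First, for the $0$-statement in the lower range you need a colouring of $G_1$ with only $O(n^3p^{7/2})$ copies of $C_{rrbb}$, which is \emph{much smaller} than the typical $\Theta(n^6p^8)$; ``take the blue edges to be a near-minimum triangle transversal minimising blue cherries'' is not a construction, and there is no reason it would achieve this count. The paper instead decomposes $G_1$ into ``collages'' (connected unions of copies of $K_3$, $F_0^-$, $F_1^-$), proves a.a.s.\ every collage is small and sparse, and runs a dedicated discharging argument to colour each sufficiently nice collage with \emph{zero} copies of $C_{rrbb}$, so that the only $C_{rrbb}$s come from the rare dense collages, of which there are $O(n^5p^7\,\mathrm{polylog}\,n)\ll n^3p^{7/2}$. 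Second, hypergraph containers alone do not carry the $1$-statement: the paper uses them only when the colouring is \emph{balanced}, where the argument reduces to a deterministic count of $C_{rrbb}$ in $K_n$ and produces the (much weaker) threshold $n^{-4}p^{-4}$. The unbalanced case — one colour class of size $o(n^2p)$ — is where the stated exponents actually arise, and it is handled by a direct union bound over possible small colour classes using Janson's inequality, after a degree-peeling preprocessing step in the lower range, and via a more delicate iterated-revealing (``stamp'') argument in the upper range. Your proposal does not anticipate this balanced/unbalanced split, and without it the container union bound does not deliver the right exponent.
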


An interesting aspect of this result is that there is a `jump' in the completion threshold at around $n^{-3/5}$.  Indeed, when $p = n^{-3/5}$, then $n^{-3}p^{-7/2} = n^{-9/10}$ whereas $n^{-6}p^{-8} = n^{-6/5}$.  
We refer to the values of $p$ larger than $n^{-3/5}$ as the \emph{upper range} and those smaller than $n^{-3/5}$ as the \emph{lower range}. Determining the behaviour of $q(n;K_3,p)$ when $p=\Theta(n^{-3/5})$  remains an intriguing open question, as does exploring the range  $0< p\ll n^{-2/3}$ where the second graph becomes denser than the first. 
Finally, it would be interesting to determine the behaviour of the two-round game for different $H$ as the densities of the two random graphs vary. In particular, it is far from clear what properties of $H$ could determine this behaviour; Theorem~\ref{thm:main} does not suggest any obvious conjecture. 

Our proof of Theorem~\ref{thm:main} incorporates several different approaches to capture the different behaviour occurring at different densities. One particular feature that we would like to highlight is a novel use of the `discharging method' to prove the existence of a desired colouring of the first graph in the lower range (see the proof of Lemma~\ref{lem:collagescolorable}). This method is inspired by an argument in recent work of Friedgut, Kuperwasser, Schacht and the third author~\cite{friedgut2022sharp} that establishes sufficient conditions for sharpness of thresholds for various Ramsey properties.
Here, we build on this general idea of using discharging to find `easily colourable configurations' in graphs of small density, but we adopt a more involved discharging scheme catered to our purposes. We believe that this method may find further applications in the study of Ramsey properties of graphs and other discrete structures and our work here demonstrates its flexibility.

\paragraph*{Organisation.}
The proof of Theorem~\ref{thm:main} naturally splits into four parts.  In Section~\ref{sec:0-statements}, we establish the $0$-statement, that is, the lower bound on $q(n; K_3, p)$.  The (shorter) argument for the upper range is presented in Section~\ref{sec:0 upper} and the (longer) argument for the lower range -- in Section~\ref{sec:0 lower}.  In Section~\ref{sec:1-statements}, we establish the $1$-statement.  The $1$-statement---the upper bound on $q(n; K_3, p)$---for the (easier) lower range will be proved in Section~\ref{sec:1 lower} and for the (harder) upper range -- in Section~\ref{sec:1 upper}.  Before embarking on this, we collect the relevant notation and tools in Section~\ref{sec:prelims}.

\paragraph*{Acknowledgement.}
The research on this project was initiated during a joint research workshop of Tel Aviv University and the Freie Universit\"at Berlin on Ramsey Theory, held in Tel Aviv in March 2020, and partially supported by the GIF grant G-1347-304.6/2016. We would like to thank the German--Israeli Foundation (GIF) and both institutions for their support. We also thank Shagnik Das for suggesting this problem and the anonymous referee for their helpful feedback on earlier versions of the manuscript. 

\section{Preliminaries} \label{sec:prelims}

In this section, we present the notation and tools that we will use in our proofs. 

\subsection{Notation} \label{sec:notation}

We use standard probabilistic and graph theory notation throughout. For a graph $G$ and a subgraph $F$, we let $N_F(G)$ denote the number of copies of $F$ in $G$.  For vertex subsets $A,B\subseteq V(G)$ of a graph $G$, $G[A]$ denotes the graph induced by $G$ on $A$ and $e(A,B)$ denotes the number of edges of $G$ with one endpoint in $A$ and the other in $B$ (edges in $G[A\cap B]$ are counted \emph{once} here). For a vertex $v\in V(G)$, we let $d_G(v)$ denote the degree of $v$ in $G$ and $\Delta(G)\coloneqq\max_{v\in V(G)}d_G(v)$ denote the maximum degree. 

Given a hypergraph $\cH$, we denote the numbers of its edges and vertices by $e(\cH)$ and $v(\cH)$, respectively.  Further, for a vertex subset $T\subset V(G)$, $d_\cH(T)$ denotes the number of edges of $\cH$ containing $T$.  For an integer $\ell\ge 0$, we write $\Delta_\ell(\cH)\coloneqq\max\{d_\cH(T):T\subset V(\cH),  |T|=\ell\}$ for the maximum degree of a vertex set of size $\ell$ in $\cH$. A set of edges $M\subseteq E(\cH)$ in a hypergraph $\cH$ is a \emph{matching} if $e\cap f=\emptyset$ for all $e\neq f\in M$. The \emph{matching number} $\nu(\cH)\coloneqq\max\{|M|:M \text{ is a matching in } \cH \}$ is the size of the largest matching in a hypergraph $\cH$. 

For a set $W$, we let $\cP(W)\coloneqq\{U:U\subseteq W\}$ denote the power set of $W$, the set of all subsets of $W$.   For a Boolean statement $A$, we denote by $\mathds{1}[A]$, the indicator function which evaluates to $1$ if $A$ holds and $0$ if $A$ does not hold.

\subsection{Concentration inequalities}

We will frequently use  concentration inequalities for two families of random variables. The first inequality, often attributed to Chernoff~\cite{Chernoff1952} (see also~\cite[Theorem 2.1]{randomgraphbook}), deals with the case of binomial random variables.   

\begin{lem}[\textbf{Chernoff's inequality}]\label{bintail}
Let~$X\sim \Bin(n,p)$ be binomially distributed and let~$\mu=\EE[X]=np$. Then for any~$k\ge 0$, we have that 
\[\Pr[X\geq \mu+k]\leq  \exp\left(-\frac{k^2}{2(\mu +k/3)}\right) .\]
\end{lem}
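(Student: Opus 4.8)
The plan is to use the standard exponential-moment (Bernstein--Chernoff) method. Write $X = \sum_{i=1}^n X_i$ as a sum of independent $\mathrm{Bernoulli}(p)$ variables, and for any parameter $t>0$ apply Markov's inequality to $e^{tX}$:
\[
  \Pr[X \ge \mu + k] \;=\; \Pr\bigl[e^{tX} \ge e^{t(\mu+k)}\bigr] \;\le\; e^{-t(\mu+k)}\,\EE\bigl[e^{tX}\bigr].
\]
By independence, $\EE[e^{tX}] = \prod_{i=1}^n \EE[e^{tX_i}] = \bigl(1 + p(e^t-1)\bigr)^n$, and the elementary bound $1+x \le e^x$ gives $\EE[e^{tX}] \le e^{\mu(e^t-1)}$. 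Substituting, for every $t>0$ we obtain $\Pr[X \ge \mu+k] \le \exp\bigl(\mu(e^t-1-t) - tk\bigr)$.

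Next I would optimise the exponent over $t$. The case $k=0$ is trivial, so assume $k>0$; differentiating $\mu(e^t-1-t)-tk$ in $t$ shows the minimum is attained at $e^t = 1 + k/\mu$, which yields the classical ``relative-entropy'' form of the bound,
\[
  \Pr[X \ge \mu + k] \;\le\; \exp\bigl(-\mu\, h(k/\mu)\bigr), \qquad \text{where } h(x) \coloneqq (1+x)\ln(1+x) - x.
\]

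Finally, to recover the cleaner expression in the statement, I would invoke the analytic inequality $h(x) \ge \dfrac{x^2}{2(1 + x/3)}$, valid for all $x \ge 0$; applied with $x = k/\mu$ this gives $\mu\,h(k/\mu) \ge \dfrac{(k/\mu)^2\,\mu}{2(1+k/(3\mu))} = \dfrac{k^2}{2(\mu + k/3)}$, exactly as claimed. The only step that is not purely mechanical is verifying this last inequality, which I expect to be the main (minor) obstacle. I would prove it by setting $g(x) \coloneqq 2\bigl(1+x/3\bigr)h(x) - x^2$ and checking $g(0)=g'(0)=0$ together with $g''(x) = \tfrac{4}{3}\bigl(\ln(1+x) - \tfrac{x}{1+x}\bigr) \ge 0$ for $x \ge 0$ (the bracketed quantity vanishes at $0$ and has nonnegative derivative $\tfrac{x}{(1+x)^2}$), so that $g \ge 0$ on $[0,\infty)$. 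Everything else is routine.
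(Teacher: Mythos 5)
Your proof is correct, and the paper does not actually supply its own argument for this lemma: it simply cites Chernoff~\cite{Chernoff1952} and Theorem~2.1 of the random graphs book~\cite{randomgraphbook}, both of which establish the bound by exactly the route you take — exponential moments, Markov's inequality, optimisation to obtain the relative-entropy form $\exp(-\mu\,h(k/\mu))$ with $h(x)=(1+x)\ln(1+x)-x$, and then the elementary inequality $h(x)\ge x^2/(2(1+x/3))$. Your verification of that last inequality via $g(x)=2(1+x/3)h(x)-x^2$ with $g(0)=g'(0)=0$ and $g''\ge 0$ is a clean and complete way to close the one non-mechanical step, so the argument is sound as written.
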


The following inequality, known as Janson's inequality~\cite{janson1990poisson} (see also~\cite[Theorem 2.14]{randomgraphbook}) provides an exponential bound for the lower tail of the number of edges of a hypergraph induced by a random subset of its vertices.

\begin{lem}[\textbf{Janson's inequality}] \label{janson}
  Let $\Gamma $ be a finite set, let $p \colon \Gamma \to [0,1]$ and let $\Gamma _p$ be a random subset such that every element $a\in \Gamma$ is in $\Gamma _p$ with probability $p(a)$, independently of all other elements.

  Suppose that $A_1, \dotsc, A_m$ is a sequence of nonempty subsets of $\Gamma$. For each $i \in \{1, \dotsc, m\}$, denote by $I_i \coloneqq \mathds{1}[A_i\subseteq \Gamma_p]$ the indicator random variable for the event $A_i \subseteq \Gamma _p$. Finally, denote
  \[
    X\coloneqq \sum _{i=1}^m I_i,\qquad \mu \coloneqq \mathbb{E}[X] \qquad \text{and} \qquad \Delta \coloneqq \sum _{i,j=1}^m \mathds{1}[A_i \cap A_j \neq \emptyset] \cdot \mathbb{E}[I_iI_j].
  \]
  Then, for every $0\le k\le \mu$,
  \[
    \Pr[X\le \mu -k] \le \exp \left( -\frac{k^2}{2\Delta} \right) .
  \]
\end{lem}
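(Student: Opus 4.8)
The plan is to carry out a Chernoff-type exponential-moment argument, controlling the dependencies among the increasing events $\{A_i \subseteq \Gamma_p\}$ by means of a correlation inequality of FKG/Harris type. First I would fix a parameter $t \ge 0$, to be optimised at the very end, and note that $X \le \mu - k$ forces $e^{-tX} \ge e^{-t(\mu-k)}$, so Markov's inequality gives
\[
  \PP[X \le \mu - k] \;\le\; e^{t(\mu-k)}\,\EE\!\left[e^{-tX}\right] \;=\; e^{t(\mu-k)}\,\EE\!\left[\prod_{i=1}^m e^{-tI_i}\right].
\]
Everything then reduces to an upper bound on $\EE[\prod_i e^{-tI_i}]$.

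To estimate this expectation, I would set $Y_j \coloneqq e^{-tI_j} = 1 - (1-e^{-t})I_j$, a decreasing function of the random set $\Gamma_p$, and for each $i$ let $\nu_i$ denote the law of $\Gamma_p$ tilted by the weight $\prod_{j<i} Y_j$. Then $\EE[\prod_i Y_i] = \prod_i \EE_{\nu_i}[Y_i]$ by a telescoping argument, and since $\EE_{\nu_i}[Y_i] = 1 - (1-e^{-t})\,\PP_{\nu_i}[I_i=1]$, the crux is to bound $\PP_{\nu_i}[I_i=1]$ from below. For this I would split the indices $j < i$ into the \emph{correlated} ones, for which $A_j \cap A_i \ne \emptyset$, and the \emph{uncorrelated} ones, for which $A_j \cap A_i = \emptyset$. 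The product of the $Y_j$ over the uncorrelated indices depends only on ground-set elements outside $A_i$, so it is independent of $I_i$; over the correlated indices, the Weierstrass inequality gives $\prod_j Y_j \ge 1 - (1-e^{-t}) \sum_j I_j$. Substituting both into the defining ratio $\PP_{\nu_i}[I_i=1] = \EE[I_i \prod_{j<i} Y_j] / \EE[\prod_{j<i} Y_j]$ — bounding each cross term via Harris's inequality (the indicator $I_iI_j$ is increasing while the uncorrelated product is decreasing, hence they are negatively correlated) and the denominator from above simply by the uncorrelated product — the common factor cancels and one is left with
\[
  \PP_{\nu_i}[I_i=1] \;\ge\; \EE[I_i] - (1-e^{-t})\!\!\sum_{\substack{j<i \\ A_j \cap A_i \ne \emptyset}}\!\!\EE[I_iI_j].
\]
Plugging this back in, using $1 + x \le e^x$, and multiplying over $i$ — the resulting ordered double sum over correlated pairs being exactly $\tfrac12(\Delta - \mu)$ — I would obtain
\[
  \EE\!\left[e^{-tX}\right] \;\le\; \exp\!\left(-(1-e^{-t})\mu + \tfrac12(1-e^{-t})^2(\Delta - \mu)\right).
\]

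Finally, I would feed this into the Markov bound, put $s \coloneqq 1 - e^{-t}$, and apply the elementary estimates $s \le t$ and $t - s = t - 1 + e^{-t} \le t^2/2$, both valid for every $t \ge 0$, to collapse the exponent to $-tk + \tfrac{t^2}{2}\Delta$; the choice $t = k/\Delta$ then yields exactly $\exp(-k^2/(2\Delta))$.

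I expect the main obstacle to be the lower bound on $\PP_{\nu_i}[I_i=1]$: one must argue with care that tilting by the earlier weights can decrease the probability of the increasing event $\{A_i \subseteq \Gamma_p\}$ only through the genuinely correlated earlier events, and one must choose the monotone functions correctly when invoking Harris's inequality. Tracking the constants so as to land the factor $2$ (rather than $4$) in the denominator of the exponent — keeping the diagonal contribution $\mu$ of $\Delta$ separate from its off-diagonal part $\Delta - \mu$, and being frugal in the two elementary inequalities at the end — also takes some attention; the rest is routine.
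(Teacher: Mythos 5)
The paper does not prove this lemma: it is quoted as a cited tool (Janson's original paper and Theorem~2.14 of the Janson--{\L}uczak--Ruci\'nski textbook), so there is no internal proof to compare against. Your argument is, however, a correct and essentially standard Laplace-transform derivation of Janson's lower-tail bound. Every step checks out: the telescoping $\EE\bigl[\prod_i Y_i\bigr]=\prod_i \EE_{\nu_i}[Y_i]$ is sound because each $Y_j=e^{-tI_j}>0$; splitting $j<i$ into correlated and uncorrelated earlier indices, applying Weierstrass to the correlated factor and Harris (with $I_iI_j$ increasing against the decreasing uncorrelated product $U$) in the numerator, while simply using $W\le 1$ (not a correlation inequality) in the denominator, gives exactly
\[
  \PP_{\nu_i}[I_i=1]\;\ge\;\EE[I_i]-(1-e^{-t})\!\!\sum_{\substack{j<i\\ A_j\cap A_i\ne\emptyset}}\!\!\EE[I_iI_j];
\]
summing the ordered pairs $j<i$ with $A_j\cap A_i\ne\emptyset$ indeed yields $(\Delta-\mu)/2$ under the paper's convention that $\Delta$ ranges over all ordered pairs including the diagonal (which contributes $\mu$); and the elementary bounds $1-e^{-t}\le t$, $t-1+e^{-t}\le t^2/2$ together with $\Delta\ge\mu$ collapse the exponent to $-tk+\tfrac{t^2}{2}\Delta$, optimised at $t=k/\Delta$. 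Nothing is missing; the one point you correctly flag as delicate, the lower bound on $\PP_{\nu_i}[I_i=1]$, is exactly where the independence of $I_i$ from the uncorrelated product $U$ and the Harris step both have to be used, and you use them in the right places.
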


In the setting of Janson's inequality (Lemma~\ref{janson}), we will also be interested in showing that a.a.s., there is a large collection of \emph{disjoint} subsets $A_i$ which all appear in $\Gamma_p$. Our next result provides an upper bound on the probability that this does not happen, and can be derived from Lemma~\ref{janson}. We provide the details of this derivation in Appendix~\ref{app:maxdisfam}. We recall that for a hypergraph $\cH$, $\nu(\cH)$ denotes the matching number of $\cH$, that is, the size of the  largest matching in $\cH$.

\begin{corollary}[\textbf{Maximal disjoint families}] \label{cor:maxdisfam}
  Let $\Gamma$, $p$, $A_1, \dotsc, A_m$, $\mu$ and $\Delta$ be as in the statement of Lemma~\ref{janson} and set $D \coloneqq \frac{\mu^2}{800\Delta}$.  Writing $\cA$ for the hypergraph with vertex set $\Gamma$ and edge set $\{A_1, \dotsc, A_m\}$, we have
  \[
    \Pr \left[ \nu\big(\cA[\Gamma_p]\big) \le D \right] \le \exp (- D).
  \]
\end{corollary}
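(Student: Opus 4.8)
The plan is to reduce the statement to building a large matching greedily, applying Janson's inequality once per greedy step. Observe first that, since $\nu(\cA[\Gamma_p])$ is an integer and $s_0\coloneqq\lfloor D\rfloor+1>D$, it suffices to show that with probability at least $1-\exp(-D)$ some $s_0$ of the sets $A_1,\dots,A_m$ are pairwise disjoint and all contained in $\Gamma_p$ (that is, $\nu(\cA[\Gamma_p])\ge s_0$). When $D<1$ this just asserts that some $A_i$ lies in $\Gamma_p$, which is immediate from Lemma~\ref{janson} with $k=\mu$, so the content is in the regime $D\ge1$. I would construct the matching greedily in $s_0$ rounds, exposing the randomness of $\Gamma_p$ round by round. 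Throughout, write $\pi_i\coloneqq\prod_{a\in A_i}p(a)=\Pr[A_i\subseteq\Gamma_p]$, so $\sum_i\pi_i=\mu$; note also that $\pi_i\pi_j\le\Pr[A_i\cup A_j\subseteq\Gamma_p]$ since every $p(a)\le1$, and hence $\sum_{i,j\,:\,A_i\cap A_j\ne\emptyset}\pi_i\pi_j\le\Delta$.

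Suppose that before round $t$ we have chosen disjoint present sets $B_1,\dots,B_{t-1}$ with union $W_{t-1}$. Let $\mathcal{A}_{t-1}$ be the family of those $A_i$ disjoint from $W_{t-1}$, put $\mu_{t-1}\coloneqq\sum_{A_i\in\mathcal{A}_{t-1}}\pi_i$, and for $A_i\in\mathcal{A}_{t-1}$ set $c_i\coloneqq\sum_{k\,:\,A_k\cap A_i\ne\emptyset}\pi_k$; call $A_i\in\mathcal{A}_{t-1}$ \emph{good} if $c_i\le 4\Delta/\mu$ and let $\mathcal{G}_{t-1}$ be the family of good sets. Since $\sum_{A_i\in\mathcal{A}_{t-1}}\pi_i c_i\le\sum_{i,j\,:\,A_i\cap A_j\ne\emptyset}\pi_i\pi_j\le\Delta$, Markov's inequality shows that the non-good members of $\mathcal{A}_{t-1}$ carry total $\pi$-weight at most $\mu/4$, so $\sum_{A_i\in\mathcal{G}_{t-1}}\pi_i\ge\mu_{t-1}-\mu/4$. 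I would then expose only the event $W_{t-1}\subseteq\Gamma_p$ (and nothing else about elements outside $W_{t-1}$): conditionally on this, the sets of $\mathcal{G}_{t-1}$, being disjoint from $W_{t-1}$, retain their original marginals and joint law, so Lemma~\ref{janson}, applied with $k$ equal to the full conditional expectation $\sum_{A_i\in\mathcal{G}_{t-1}}\pi_i$ and with a $\Delta$-parameter that is at most $\Delta$, bounds the conditional probability that no set of $\mathcal{G}_{t-1}$ lies in $\Gamma_p$ by $\exp\!\big(-(\sum_{A_i\in\mathcal{G}_{t-1}}\pi_i)^2/(2\Delta)\big)$. If some good set is present, take $B_t$ to be one of them. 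Since $B_t$ is good, passing from $\mathcal{A}_{t-1}$ to $\mathcal{A}_t$ deletes total $\pi$-weight at most $c_{B_t}\le 4\Delta/\mu$, so $\mu_t\ge\mu_{t-1}-4\Delta/\mu$; as we perform at most $s_0-1\le D$ such deletions and $D\cdot 4\Delta/\mu=\mu/200$, we keep $\mu_{t-1}\ge 199\mu/200$, and therefore $\sum_{A_i\in\mathcal{G}_{t-1}}\pi_i\ge 199\mu/200-\mu/4\ge\mu/2$, in every round.

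Putting this together, the greedy process produces a matching of size $s_0$ in $\cA[\Gamma_p]$ unless it fails to extend in some round, and by a union bound over the $s_0\le 2D$ rounds the latter has probability at most $s_0\cdot\exp\!\big(-(\mu/2)^2/(2\Delta)\big)\le 2D\cdot\exp\!\big(-\mu^2/(8\Delta)\big)$. Since $D=\mu^2/(800\Delta)$ we have $\mu^2/(8\Delta)=100D$, and $2D\le\exp(99D)$ for all $D\ge0$, so this is at most $\exp(-D)$, completing the proof.

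The step I expect to need genuine care is making the round-by-round exposure rigorous. One must reveal the randomness of $\Gamma_p$ in an order guaranteeing that, at the start of round $t$, the \emph{only} information exposed about the elements outside $W_{t-1}$ is the event $W_{t-1}\subseteq\Gamma_p$, so that Lemma~\ref{janson} may be applied to $\mathcal{G}_{t-1}$ with genuinely fresh randomness and, in particular, without leaking the ``negative'' information that various other sets are \emph{absent} from $\Gamma_p$ that a naive account of how $B_1,\dots,B_{t-1}$ were picked might carry. This requires a careful specification of the exposure (and perhaps of the selection rule for $B_t$); once that is set up, tracking the parameters $\mu_t$ and checking the numerical constants above is routine.
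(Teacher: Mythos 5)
Your approach is genuinely different from the paper's. The paper does not build a matching sequentially; instead, after a good-index filter (with $\tilde\Delta \coloneqq \Delta-\mu$ in place of your $\Delta$), it observes that if $\nu(\cA[\Gamma_p])$ is small then there exists some maximal disjoint good family $I$ with $|I|\le D^*$ such that all of $I$ is present ($Q_I$) and no good set disjoint from $\bigcup I$ is present ($M_I$), exploits that $Q_I$ and $M_I$ are independent, and sums $\Pr[Q_I]\Pr[M_I]$ over all such $I$. The resulting union-bound factor is the Poisson tail $\sum_{k\le D^*}\mu^k/k!\le(e\mu/D^*)^{D^*}$, which only beats the Janson exponent when $\Delta/\mu$ is bounded; this forces a case split, with the large-$\tilde\Delta$ case handled by a random subsampling of indices. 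Your greedy round-by-round scheme would, if it held, replace this exponentially large union-bound factor with the modest factor $s_0\le 2D$, eliminating the case split entirely. That gain, however, is exactly where the argument breaks.

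The step you flag as needing ``genuine care'' is in fact a gap that I do not believe is routine. For the bound $\Pr[\text{fails at round }t]\le\exp(-\mu^2/(8\Delta))$ you would need the conditional probability of ``no good set of $\mathcal{G}_{t-1}$ is present'', \emph{given that the process reached round $t$ with union $W_{t-1}$}, to be at most $\exp(-\mu^2/(8\Delta))$. But the event ``reached round $t$ with union $W_{t-1}$'' is strictly stronger than ``$W_{t-1}\subseteq\Gamma_p$'': any concrete rule for selecting $B_1,\dotsc,B_{t-1}$ must distinguish present from absent good sets along the way, so the conditioning carries \emph{negative} information about elements outside $W_{t-1}$ (typically, that various good sets disjoint from $W_{s-1}$ were absent at earlier rounds $s$, else they would have been picked). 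This information is positively correlated with the round-$t$ failure event, which is itself an ``absence'' event, so it cannot be discarded and Janson's inequality does not apply with the fresh marginals you posit. The natural rigorous replacement is precisely what the paper does: pass from a union over $t\le s_0$ rounds to a union over all eligible families $I$ weighted by $\Pr[Q_I]$, which inflates the factor $s_0$ to $\sum_{k\le D}\mu^k/k!$ and in turn reintroduces the constraint on $\Delta/\mu$ and the need for the paper's second case. So either you supply a selection-and-exposure rule under which ``reached round $t$ with union $W_{t-1}$'' really is measurable with respect to the elements of $W_{t-1}$ alone --- which I do not see how to do --- or the proof needs to be reorganised along the lines of the paper's.
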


\subsection{Containers}  \label{sec:containers}

We will appeal to the method of hypergraph containers, developed by Balogh, Morris and the third author of the present paper~\cite{balogh2015independent}, and independently, by Saxton and Thomason~\cite{SaxTho15}. The key idea underlying this method is that, given a uniform hypergraph whose edge set is evenly distributed, one can distribute its independent sets into a well-behaved collection of \emph{containers}. In more detail, these containers are vertex subsets that are almost independent (in that they induce few edges of the hypergraph), every independent set of the hypergraph lies in some container and, crucially, we have a bound on the number of containers. As there are many fewer containers than independent sets in the hypergraph, reasoning about containers rather than independent sets leads to more efficient arguments and this technique has proven to be extremely powerful. Indeed, the setting of independent sets in hypergraphs can be used to encode a wide range of problems in combinatorics and the method of hypergraph containers has been successfully exploited in a multitude of different settings, see~\cite{containersICMsurvey}. Particularly relevant to our work here are the applications of the method in sparse Ramsey theory, a program which was initiated by Nenadov and Steger~\cite{NenSte16}, who reproved the $1$-statement of Theorem~\ref{thm:triangle ramsey threshold} utilising containers. 

We state the container lemma in the following form, which follows from a general container lemma of Saxton and Thomason~\cite{SaxTho15}. We show how to derive this form of the container theorem in Appendix~\ref{app:containers}. We remark that the following version of the container theorem (and indeed the version of Saxton and Thomason~\cite{SaxTho15}) gives slightly more than we promised in the discussion of the general method given above:  The following theorem allows us to conclude not only that all independent sets lie in containers, but also sets that are very close to being independent (see condition~\ref{item:almostindep} of the theorem). Also, the theorem posits that every container is of the form $f(S_1,\ldots,S_t)$ for some family of small sets $S_i$, that is, each container is determined by a (constant-sized) collection of small subsets of the container. This in turn will allow us to run various union bounds over such collections of small subsets.

\begin{restatable}{thmtool}{containersthm}
  \label{thm:containers}
  For every positive integer $2 \le k\in \NN$ and all  $\eps\in(0,1)$ and $1 \le K\in \NN$, there exist  $t\in \NN$ and  $\delta>0$ such that the following holds. Suppose that a nonempty $k$-uniform hypergraph $\cH$ with vertex set $V$ and $\tau\in (0,1/t)$ satisfy
  \[
    \Delta_\ell(\cH) \le K \tau^{\ell-1} \cdot \frac{e(\cH)}{v(\cH)}
  \]
  for every $\ell \in \{2, \dotsc, k\}$. Then, there exists a function $f \colon \cP(V)^t \to \cP(V)$ with the following properties:
  \begin{enumerate}[label=(\roman*)]
  \item \label{item:almostindep}
    For every set $I \subseteq V$ satisfying $e(\cH[I]) \le \delta \tau^k e(\cH)$, there are $S_1, \dotsc, S_t \subseteq I$, each of size at most $\tau v(\cH)$ and  such that $I \subseteq f(S_1, \dotsc, S_t)$.
  \item \label{item:containersparse}
    For every $S_1, \dotsc, S_t \subseteq V$, the set $f(S_1, \dotsc, S_t)$ induces fewer than $\eps e(\cH)$ edges in $\cH$.
  \end{enumerate}
\end{restatable}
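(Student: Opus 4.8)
The plan is to derive Theorem~\ref{thm:containers} from the general hypergraph container lemma of Saxton and Thomason~\cite{SaxTho15}. That lemma, applied to a $k$-uniform hypergraph $\cH$ on $N=v(\cH)$ vertices with a scale parameter $\sigma$ and target $\eps$, guarantees the following: provided the co-degree quantity $\delta(\cH,\sigma)\coloneqq\sum_{\ell=2}^{k}2^{\binom{\ell-1}{2}}\frac{\Delta_\ell(\cH)}{\sigma^{\ell-1}d(\cH)}$ (with $d(\cH)=k\,e(\cH)/N$ the average degree) is below a threshold $\delta_{\mathrm{ST}}=\delta_{\mathrm{ST}}(\eps,k)$, and provided $\sigma$ is below an absolute $k$-dependent constant $c_k$, there exist a bounded number $s=s(\eps,k)$, a constant $\delta=\delta(\eps,k)$, and a function $g\colon\cP(V)^s\to\cP(V)$ such that every $I\subseteq V$ with $e(\cH[I])\le\delta\sigma^k e(\cH)$ admits $T_1,\dotsc,T_s\subseteq I$ of size at most $\sigma N$ with $I\subseteq g(T_1,\dotsc,T_s)$, and such that $g(\cdot)$ always induces fewer than $\eps e(\cH)$ edges. (The genuinely delicate point here --- that one obtains containers spanning at most $\eps e(\cH)$ edges rather than just a constant fraction of them, which internally requires iterating the basic container step while controlling the co-degrees of the sub-hypergraphs that arise --- is already packaged inside this cited theorem, so no further work is needed on that front.)

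The only obstacle to applying this directly with $\sigma=\tau$ is that our hypothesis $\Delta_\ell(\cH)\le K\tau^{\ell-1}\,e(\cH)/v(\cH)=\frac{K}{k}\tau^{\ell-1}d(\cH)$ merely yields $\delta(\cH,\tau)\le C_k K$ for a constant $C_k$ depending only on $k$, which need not be small when $K$ is large. I would circumvent this by running the Saxton--Thomason lemma at an \emph{enlarged} scale $\sigma\coloneqq M\tau$, where $M=M(\eps,k,K)\in\NN$ is chosen large enough that $C_k K/M\le\delta_{\mathrm{ST}}(\eps,k)$. Since $\frac{\Delta_\ell(\cH)}{(M\tau)^{\ell-1}d(\cH)}=\frac{1}{M^{\ell-1}}\cdot\frac{\Delta_\ell(\cH)}{\tau^{\ell-1}d(\cH)}\le\frac{K}{kM}$ for every $\ell\ge 2$, we get $\delta(\cH,M\tau)\le C_k K/M\le\delta_{\mathrm{ST}}$ as required; and the requirement $M\tau<c_k$ is met once $t$ is taken large enough in terms of $\eps,k,K$ (so that $\tau<1/t$ forces $M\tau<c_k$). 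Applying the lemma then produces $g$ together with, for every $I$ with $e(\cH[I])\le\delta\tau^k e(\cH)$ (which, as $M\ge 1$, is at most $\delta(M\tau)^k e(\cH)$, the threshold in the cited statement), sets $T_1,\dotsc,T_s\subseteq I$ of size at most $M\tau\,v(\cH)$ with $I\subseteq g(T_1,\dotsc,T_s)$.

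Finally I would shrink the fingerprint size from $M\tau\,v(\cH)$ back down to $\tau\,v(\cH)$ by splitting: partition each $T_i$ arbitrarily into $M$ pieces $S_{i,1},\dotsc,S_{i,M}$, each of size at most $\lceil\tau\,v(\cH)\rceil$ (padding with copies of $\emptyset$ if $T_i$ is small), set $t\coloneqq sM$, and define $f(S_{1,1},\dotsc,S_{s,M})\coloneqq g\bigl(\bigcup_{j}S_{1,j},\dotsc,\bigcup_{j}S_{s,j}\bigr)$, using the same formula to extend $f$ to \emph{all} $t$-tuples of subsets of $V$. Then each $S_{i,j}\subseteq T_i\subseteq I$ has size at most $\tau\,v(\cH)$, and $I\subseteq g(T_1,\dotsc,T_s)=f(S_{1,1},\dotsc,S_{s,M})$, which gives property~\ref{item:almostindep}; property~\ref{item:containersparse} is inherited verbatim from the sparsity of the outputs of $g$. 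Both $t=sM$ and $\delta$ depend only on $k$, $\eps$, and $K$, as required, so the proof reduces to this rescale-and-split bookkeeping together with the invocation of the Saxton--Thomason lemma. (After a minor adjustment of $t$ one may also ensure the ceiling $\lceil\tau v(\cH)\rceil$ is at most $\tau v(\cH)$ in the relevant range, or simply absorb the ceiling into the constants.)
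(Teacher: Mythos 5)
Your proposal is correct and follows essentially the same route as the paper's own derivation in Appendix~B: invoke the Saxton--Thomason container lemma at a boosted scale $\sigma = M\tau$ (the paper calls it $L\tau$) so that the extra factor $K$ in the codegree hypothesis is absorbed, use $\tau < 1/t$ to keep the boosted scale in the valid range, and then split each returned fingerprint into $O(1)$ pieces of size at most $\tau v(\cH)$ to define $f$. The only divergence is bookkeeping: the version of the Saxton--Thomason lemma the paper quotes returns fingerprints of size up to $s\tau v(\cH)$ rather than $\tau v(\cH)$, which forces the paper's choice $t = 2Ls^2$ (an extra factor of $s$ and a factor of $2$ to absorb the ceiling), whereas your exposition assumes fingerprints of size $\tau v(\cH)$ and takes $t = sM$; this is exactly the sort of adjustment you flag in your closing parenthetical, and it does not affect the substance of the argument.
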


\subsection{Typical properties of $G_{n,p}$}
\label{sec:typical-properties}

In this section, we derive some properties of $G_{n,p}$ that a.a.s.\ hold. These will be useful throughout the paper.  We recall from Section~\ref{sec:notation}, that $N_F(G)$ denotes the number of copies of $F$ in $G$.  We also recall the standard notion of density of a graph $F$, which is $m(F) \coloneqq \max\{e_J/v_J : J \subseteq F\}$.

\begin{lem} \label{lem:randomgraph}
  Suppose that $G \sim G_{n,p}$ with $n^{-2/3} \ll p \ll n^{-1/2}$. Then a.a.s., $G$ has the following properties:
    \begin{enumerate}[{label=\textbf{(P\arabic*)}}] 
    \item \label{item:bound max deg}
      $\Delta (G) \le 2np$;
    \item
      \label{item:few F} $N_F(G)\le 2n^{v_F}p^{e_F}$ for all $F$ that satisfy both $v_F \le 8$ and $m(F) \le 3/2$; moreover, $N_F(G) \le n^{v_F}p^{e_F} \log n$ for all $F$ that only satisfy $v_F \le 8$. 
    \item \label{item:K210-count} $N_{K_{2,10}}(G)\le n^{11}p^{18}$;
    \item \label{item:many K3} 
      There is a constant $\theta > 0$ such that, for all $U\subseteq V(G)$ with $|U|\ge \frac{n}{2}$, the graph $G[U]$ contains a set of $\theta |U|^3p^3$ edge-disjoint triangles;
    \item
      \label{item:eAB-concentration}
      For all integer-valued $a=a(n),b=b(n)$ such that  $ (\log n)^7 / p\ll a,b\le n$, the following holds for any  $A,B\subseteq V$ with $|A|\le a, |B|\le b$:
      \[
        e(A,B) \le |A|\cdot |B| \cdot p + \frac{a\cdot b\cdot p}{\log ^3n}.
      \]
    \end{enumerate}
\end{lem}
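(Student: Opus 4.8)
The plan is to verify each of the five properties \ref{item:bound max deg}--\ref{item:eAB-concentration} separately, in each case computing the relevant expectation and then applying a concentration inequality followed by a union bound; throughout we exploit the density assumption $n^{-2/3} \ll p \ll n^{-1/2}$.

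First, for \ref{item:bound max deg}, each vertex has degree distributed as $\Bin(n-1,p)$ with mean $\le np$, and since $np \gg n^{1/3} \to \infty$, Chernoff's inequality (Lemma~\ref{bintail}) with $k = np$ gives $\Pr[d_G(v) \ge 2np] \le \exp(-\Theta(np))$, and a union bound over the $n$ vertices suffices since $np \gg \log n$. For \ref{item:few F} and \ref{item:K210-count}, one first computes $\Ex[N_F(G)] \le n^{v_F} p^{e_F}$ (up to a constant depending on $F$, absorbed for $n$ large). For graphs $F$ with $m(F) \le 3/2$, the subgraph condition $e_J/v_J \le 3/2$ together with $p \ll n^{-1/2}$ ensures $n^{v_J} p^{e_J} = (n p^2)^{e_J} \cdot n^{v_J - 2 e_J} \to \infty$ only when $v_J - 2e_J > 0$, i.e. the dominant contributions are well behaved; more precisely, one uses a standard second-moment / Janson-type argument, or the fact that since all subgraph expectations tend to infinity polynomially, $N_F(G)$ concentrates and exceeds twice its mean with probability $o(1)$; taking a union bound over the finitely many graphs $F$ on at most $8$ vertices finishes it. For the weaker bound with the $\log n$ factor (all $F$ with $v_F \le 8$, no density restriction), even if $\Ex[N_F(G)]$ is small, Markov's inequality gives $\Pr[N_F(G) \ge n^{v_F} p^{e_F} \log n] \le 1/\log n = o(1)$, which suffices after the union bound. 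The bound \ref{item:K210-count} on $N_{K_{2,10}}(G)$ is the same Markov argument, noting $\Ex[N_{K_{2,10}}] \le n^{12} p^{20} \cdot \binom{n}{2}^{-1}\cdot\ldots$; in fact $\Ex[N_{K_{2,10}}(G)] = \Theta(n^{12} p^{20})$, and since $p \ll n^{-1/2}$ one checks $n^{12} p^{20} \ll n^{11} p^{18}$, so Markov with threshold $n^{11}p^{18}$ gives probability $o(1)$.

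For \ref{item:many K3}, the plan is to apply Corollary~\ref{cor:maxdisfam} (maximal disjoint families) to the hypergraph $\cA$ on vertex set $\binom{U}{2}$ whose edges are the triangles of the complete graph on $U$; here $\Gamma_p$ is the edge set of $G[U]$. We have $m = \binom{|U|}{3} = \Theta(|U|^3)$, so $\mu = \Theta(|U|^3 p^3)$ and $\Delta = \mu + \Theta(|U|^4 p^5)$ (pairs of triangles sharing an edge). Since $|U| \ge n/2$ and $p \gg n^{-2/3}$, one checks $|U|^3 p^3 \gg |U|^4 p^5 \cdot |U|^{-1}\cdot\ldots$; the point is that $D = \mu^2/(800\Delta) = \Theta(\min\{\mu, \mu^2/(|U|^4 p^5)\}) = \Theta(|U|^3 p^3)$ because $p^3 |U|^3 \cdot (|U| p^2)^{-1}$ dominance holds when $|U| p^2 \to \infty$, which follows from $p \gg n^{-2/3}$. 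Then Corollary~\ref{cor:maxdisfam} gives that with probability $\ge 1 - \exp(-D) = 1 - \exp(-\Theta(n^3 p^3))$ there is a matching of size $D = \theta |U|^3 p^3$ in $\cA[\Gamma_p]$, which is exactly a collection of edge-disjoint triangles in $G[U]$. Since $n^3 p^3 \gg n$, the failure probability survives a union bound over all $2^n$ choices of $U$; we take $\theta$ to be the implicit constant, uniform in $U$.

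Finally, for \ref{item:eAB-concentration}, fix admissible $a, b$; for fixed $A, B$ with $|A| \le a$, $|B| \le b$, the quantity $e(A,B)$ is a sum of independent indicators (with the diagonal $A \cap B$ handled by noting those edges are counted once, which only decreases the count), with mean at most $|A||B|p \le abp$. Applying Chernoff's inequality with $k = abp/\log^3 n$: the exponent is $\Theta(k^2/(abp + k)) = \Theta(abp/\log^6 n)$ since $k \ll abp$, and using $a, b \gg \log^7 n / p$ we get $abp \gg \log^{14} n \gg \log^6 n \cdot (a + b)$, so the exponent dominates $a + b$ and even $\log n \cdot (a+b)$; hence the failure probability for a fixed pair is at most $\exp(-\omega(a+b))$, which beats the $\binom{n}{a}\binom{n}{b} \le 2^{a + b} n^{a+b}$... more carefully, $\binom{n}{\le a}\binom{n}{\le b} \le n^{a} n^{b} = \exp((a+b)\log n)$, and since the exponent is $\gg (a+b)\log n$ the union bound over all $(A,B)$ and over all polynomially-many relevant pairs $(a,b)$ closes. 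The main obstacle I anticipate is \ref{item:many K3}: one must carefully choose the right auxiliary hypergraph and verify that $D = \mu^2/(800\Delta)$ is genuinely of order $|U|^3 p^3$ (and not smaller) uniformly over all large $U$, which hinges precisely on the lower bound $p \gg n^{-2/3}$ ensuring $|U| p^2 \to \infty$; the union bound over subsets $U$ is then comfortable because $\exp(-\Theta(n^3p^3))$ is super-exponentially small in $n$. Everything else is routine first-moment plus Chernoff/Markov.
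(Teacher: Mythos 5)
Your overall plan — first moment plus Chernoff/Markov for \ref{item:bound max deg}, \ref{item:few F}, \ref{item:K210-count} and \ref{item:eAB-concentration}, and Corollary~\ref{cor:maxdisfam} plus a union bound over $2^n$ subsets for \ref{item:many K3} — is the same as the paper's, and four of the five parts are fine. However, there is a concrete error in your treatment of \ref{item:many K3}. You write that $D = \mu^2/(800\Delta) = \Theta(|U|^3p^3)$ ``because \dots $|U|p^2 \to \infty$, which follows from $p \gg n^{-2/3}$'', and you repeat this in your closing paragraph. This is backwards: in the regime $n^{-2/3} \ll p \ll n^{-1/2}$ one has $|U|p^2 \le np^2 \ll 1$, i.e.\ $|U|p^2 \to 0$, and it is precisely this smallness (coming from the \emph{upper} bound $p \ll n^{-1/2}$) that makes the off-diagonal contribution $\Theta(|U|^4p^5) = \Theta(\mu \cdot |U|p^2)$ negligible compared to $\mu$, hence $\Delta = \Theta(\mu)$ and $D = \Theta(\mu) = \Theta(|U|^3p^3)$. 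If $|U|p^2$ tended to infinity, as you assert, the off-diagonal term would dominate and you would get the much weaker $D = \Theta(\mu^2/(|U|^4p^5)) = \Theta(|U|^2p)$. The \emph{lower} bound $p \gg n^{-2/3}$ enters only at the very end, to guarantee $D = \Theta(n^3p^3) \gg n$ so that $\exp(-D)$ survives the union bound over the $2^n$ choices of $U$; you state this correctly, but the role you assign to each side of the density window in the $\Delta$ estimate is inverted, and this should be fixed before the argument can be considered complete.
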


\begin{proof}
  Property~\ref{item:bound max deg} and the first part of property~\ref{item:few F} are  standard properties of the distribution of the edges and small subgraphs in $G_{n,p}$ that can be proved using the second moment method and union bounds, see for example~\cite[Chapter 4]{alonspencer}. The `moreover' part of property~\ref{item:few F} follows from Markov's inequality and a union bound over all $F$ with $v_F\leq 8$.  Property~\ref{item:K210-count} also follows from Markov's inequality. Indeed, 
  we have that \[\EE[N_{K_{2,10}}(G)]\le n^{12}p^{20}=n^{11}p^{18}\cdot{np^2},\]
  and $np^2\ll 1$ due to the fact that $p\ll n^{-1/2}$.  

  We will establish property~\ref{item:many K3} using Corollary~\ref{cor:maxdisfam}.  To this end, fix any $U \subseteq V(G)$ with $|U|\ge \tfrac{n}{2}$, let $X\coloneqq N_{K_3}(G[U])$  and note that $\mu\coloneqq\EE[N_{K_3}(G[U])] = \binom{|U|}{3}p^3$ for $n$ large.  Denoting by $I_T$, for every triangle $T$ in $U$, the indicator random variable of the event that $T$ appears in $G$, we have
  \[
    \Delta\coloneqq\sum\left\{\EE[I_TI_{T'}]: T,T' \text{ triangles in }K_n[U], E(T)\cap E(T')\neq \emptyset\right\} 
    \leq \mu \cdot (1+ np^2)\le 2 \mu,
  \]
  where $\mu$ accounts for a choice of $T$, the first summand in the bracket accounts for a choice of $T'$ such that $|E(T')\cap E(T)|\ge 2$ (and hence $T=T'$) and the second summand accounts for a choice of $T'$ such that $|E(T)\cap E(T')|=1$; we also used that $p\ll n^{-1/2}$ in the last inequality.  By Corollary~\ref{cor:maxdisfam}, with $\Gamma = K_n[U]$ and $\cA$ the collection of all triangles in $\Gamma$, the probability that every collection of pairwise edge-disjoint triangles in $G[U]$ is smaller than
  \[
    D \coloneqq \frac{\mu^2}{800 \Delta} \ge \frac{\mu}{1600} \ge \frac{|U|^3p^3}{10000}
  \]
  is at most $e^{-D}$. Property~\ref{item:many K3} then follows from a union bound over the (less than $2^n$) choices of vertex subset $U$, using that $D\gg n$ for all such $U$ because  $p\gg n^{-2/3}$.  

Finally, in order to prove that $G$ has~\ref{item:eAB-concentration}, note that, for any choice of $a,b\gg (\log n)^7 / p$ and $A,B\subseteq V$ such that $|A|\le a, |B|\le b$, we have that $\mu \coloneqq \EE[e(A,B)]=\left(|A||B|-\binom{|A\cap B|}{2}\right)p\le |A||B|p$. Moreover $e(A,B)$ is binomially distributed and, setting $k \coloneqq abp/\log^3n$, we have that $\mu+k/3\le 2abp$. Hence, by Lemma~\ref{bintail},  we have that 
\begin{align*}
  \Pr\big[e(A,B)\ge |A||B|p+k\big]
  &\le \Pr\big[e(A,B)\ge \mu +k \big]
    \le \exp \left(-\frac{k^2}{2(\mu+k/3)}\right) \\
  & \le \exp\left(-\frac{(abp)^2}{\log^6 n\cdot 4abp}\right)
    \le \exp\left(-\frac{abp}{4\log^6 n}\right).
\end{align*}
Therefore, applying a union bound over the choice of  $a,b,A,B$, we get:
\begin{align*}
  \Pr\left[ G \notin~\ref{item:eAB-concentration} \right]
  & \le \sum _{a,b} \sum _{ \substack{ 1\le \alpha \le a \\ 1\le \beta \le b } } \binom{n}{\alpha} \cdot \binom{n}{\beta} \cdot \exp\left(-\frac{abp}{4\log^6 n}\right)\\
  & \le \sum _{a,b} \sum _{ \substack{ 1\le \alpha \le a \\ 1\le \beta \le b } } n^\alpha \cdot n^\beta \cdot \exp\left(-\frac{abp}{4\log^6 n}\right)\\
  & \le n^2\cdot \sum _{a,b } n^a \cdot n^b \cdot\exp\left(-\frac{abp}{4\log^6 n}\right) \\
  & \le n^2\cdot \sum _{a,b} \exp \left( a\log n + b\log n - \frac{abp}{4\log ^6 n}  \right), 
\end{align*}
where the (outer) sum goes over all choices of $a=a(n)\in \NN$ and $b=b(n)\in \NN$ such that  $(\log n)^7 / p\ll a,b\le n$. 
For all such $a,b$, we have that $\frac{abp}{4\log ^6n} \gg a\log n,b\log n$, and so
\[
  \Pr\left[ G \notin~\ref{item:eAB-concentration} \right] \le n^2 \cdot \sum _{a,b } n^{-g_{a,b}(n)} \ll 1,
\] where $g_{a,b}(n)= \frac{abp}{4\log ^7 n}- a - b  \gg 1$ for all $a,b$. 
\end{proof}

\section{Proof of the 0-statements}
\label{sec:0-statements}

In this section, we prove our $0$-statements, establishing the lower bounds on $q(n;K_3,p)$ in Theorem~\ref{thm:main}.  Our proofs in the lower and upper ranges follow the same scheme.  First, we will show the existence of a \emph{good} colouring of $G_1 \sim G_{n,p}$, i.e., a colouring with certain desirable properties specified in Definition~\ref{def:Crrbb} below.  Second, we will show that any such good colouring of $G_1$ can be a.a.s.\ extended to the second independent random graph $G_2\sim G_{n,q}$ when $q$ is chosen appropriately.  While the arguments showing existence of a good colouring will be different in the lower and the upper ranges, extendability of good colourings, stated as Proposition~\ref{prop:0-statements} below, is proved in the full range of interest.

\begin{defi} \label{def:Crrbb}
For  a colouring $\varphi \colon E(G) \to \{\red, \blue\}$ of the edges of a graph $G$, we define the coloured graph $C_{rrbb}$ to be a 4-cycle with two adjacent red edges and two adjacent blue edges.  Then for $t\geq 0$, we say a colouring $\varphi\colon E(G) \to \{\red, \blue\}$ is \emph{$t$-good} if it has the following properties:
\begin{enumerate}
  \item  \label{item:no mc tris} $\varphi$  has no  monochromatic triangles;
  \item \label{item:mostly blue}  every edge of $G$ that is not in a triangle is coloured blue;
  \item \label{item:few Crrbbs} the number of $C_{rrbb}$ in $G$ coloured by $\phi$ is at most $t$.
  \end{enumerate}
  Moreover, if the colouring is $0$-good,  we  will refer to it as being \emph{very good}.
\end{defi}

We remark that conditions~\ref{item:no mc tris} and~\ref{item:mostly blue} will be easy to impose on a colouring of $G_1\sim G_{n,p}$. Indeed, since $p$ is always below the $K_3$-Ramsey threshold of Theorem~\ref{thm:triangle ramsey threshold}, a $K_3$-free colouring exists and one can recolour edges not in triangles so that condition~\ref{item:mostly blue} is also satisfied. Thus, the critical condition in the definition is condition~\ref{item:few Crrbbs}. 
The motivation for considering copies of $C_{rrbb}$ is that they pose a direct threat to being able to extend a~colouring to $G_2$. Indeed, consider an  edge that forms a triangle with both the red edges and the blue edges of a~copy of  $C_{rrbb}$ in $G_1$. If this edge appears in $G_2$, then clearly there is no way to colour the edge without creating a monochromatic triangle. 

\begin{prop} \label{prop:0-statements}
  Suppose that $n^{-2/3} \ll p \ll  n^{-1/2}$, $t>0$ and  $0<q\ll \min\{t^{-1},n^{-3}p^{-7/2}\}$ and let $G_1 \sim G_{n,p}$ and $G_2 \sim G_{n,q}$ be independent. Then a.a.s.\ any $G_1$-measurable colouring that is $t$-good can be extended to a~triangle-free colouring of $G_1 \cup G_2$.
\end{prop}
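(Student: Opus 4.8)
Fix a $G_1$-measurable colouring rule $\varphi\colon E(G_1)\to\{\red,\blue\}$; I will show that a.a.s.\ (over $G_1$ and $G_2$) either $\varphi(G_1)$ is not $t$-good or it extends as required, which suffices. Condition on $G_1$, assume $\varphi$ is $t$-good and that $G_1$ has the a.a.s.\ properties of Lemma~\ref{lem:randomgraph}. Since an edge of $G_2$ lying in $G_1$ is already coloured by $\varphi$, it suffices to colour the edges of $G_2'\coloneqq G_2\setminus E(G_1)$, which conditionally on $G_1$ is a $q$-random subset of the non-edges of $G_1$. For $e=uv\in E(G_2')$, call $e$ \emph{red-forced} (resp.\ \emph{blue-forced}) if $\{u,v\}$ is the pair of endpoints of a blue (resp.\ red) path of length two in $G_1$, and \emph{free} otherwise; an edge that is both red- and blue-forced spans a \emph{bad pair}, that is, the diagonal of a copy of $C_{rrbb}$ in $(G_1,\varphi)$ -- of which there are fewer than $t$. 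The extension I build colours each forced edge of $G_2'$ with its forced colour, colours blue every edge of $G_2'$ in no triangle of $G_1\cup G_2$, and colours the remaining (free, triangle-meeting) edges afterwards.

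\textbf{The threats.} A monochromatic triangle of $G_1\cup G_2$ uses at least one edge of $G_2'$. If it uses exactly one such edge $e$, the other two edges form a monochromatic path of length two in $G_1$ through the ends of $e$, which forces $e$ to the opposite colour; hence, as long as no bad pair lands in $G_2'$ (so that no edge receives conflicting forcings), these threats are all met by the forced colouring. A triangle using all three edges in $G_2'$ is a triangle of $G_2'$, which we need only avoid colouring monochromatically. The one remaining threat is a pair $uv,vw\in G_2'$ whose third edge $uw$ lies in $G_1$, of colour $c$ say -- a \emph{link} at $v$ over the \emph{base} $uw$ -- which forbids colouring both $uv$ and $vw$ with $c$. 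Thus, having fixed the forced colours, we are left with a $2$-SAT-type system on the free edges of $G_2'$.

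\textbf{Reduction to bad configurations.} I claim it suffices to prove that a.a.s.\ none of the following occurs:
\begin{enumerate}[label=(\alph*)]
\item a bad pair of $(G_1,\varphi)$ lies in $G_2'$;
\item a link $uv,vw\in G_2'$ with base $uw\in E(G_1)$ of colour $c$ for which both $\{u,v\}$ and $\{v,w\}$ span a $\bar c$-path of length two in $G_1$ (so that $uv$ and $vw$ are both forced to $c$);
\item a free edge of $G_2'$ carrying forcing-demands of both colours; a triangle of $G_2'$ whose three edges are all forced to one colour; or a cycle of length at least three in the \emph{link graph} on $E(G_2')$ (two edges adjacent there when they form a link).
\end{enumerate}
Indeed, when (a)--(c) fail, each forced edge has a well-defined forced colour compatible with every link through it (this is (b) failing), the link graph is a forest each of whose vertices carries at most one unary demand (by (c) failing), so a greedy sweep of each tree $2$-colours the free edges consistently; and triangles of $G_2'$ are trivially made non-monochromatic. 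For (a): there are fewer than $t$ bad pairs, each in $G_2'$ with probability $q$, so by a union bound the probability is at most $tq=o(1)$, using $q\ll t^{-1}$. Each configuration in (c) is bounded by forgetting colours and enumerating an uncoloured \emph{witness} subgraph of $G_1$, whose copies are counted by property~\ref{item:few F}; since every such witness places at least three edges in $G_2'$ and $p\gg n^{-2/3}$, the hypothesis $q\ll n^{-3}p^{-7/2}$ is comfortably strong enough for the resulting union bounds to be $o(1)$ (for instance, a link-graph cycle of length $k\ge 3$ forces $k$ edges into $G_2'$ and at least two chords into $G_1$, and a length-three cycle is in fact impossible since its base would have to lie in $G_2'$).

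\textbf{The crux, and the main obstacle.} The heart of the matter is configuration (b), which I expect to be the main difficulty: it is the only bad configuration involving just two edges of $G_2'$, and it is what pins down the threshold exponent. Suppose a link $uv,vw\in G_2'$ has base $uw\in E(G_1)$ of colour $c$ and that $u-x-v$ and $v-y-w$ are $\bar c$-paths of length two in $G_1$. Then $uw,ux,xv,vy,yw$ are five edges of $G_1$ forming a $C_5$ on $\{u,x,v,y,w\}$, of which $uw$ has colour $c$ and the other four colour $\bar c$; as one of $c,\bar c$ is red, at least one of these five edges is red and hence, by property~\ref{item:mostly blue}, lies in a triangle of $G_1$. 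Forgetting colours, $G_1$ then contains a copy of one of a fixed list of graphs on at most $6$ vertices with at least $7$ edges and density at most $3/2$ (a $C_5$ with a triangle attached along an edge), while $G_2'$ contains two prescribed chords of that $C_5$; by property~\ref{item:few F} there are $O(n^6 p^7)$ such copies in $G_1$, so the probability that $G_2'$ realises such a pair of chords is $O(n^6 p^7)\cdot q^2=o(1)$, precisely because $q\ll n^{-3}p^{-7/2}$ (equivalently $q^2\ll n^{-6}p^{-7}$). The essential point is the use of property~\ref{item:mostly blue}: the naive witness -- the bare $C_5$ on $5$ vertices with $5$ edges -- would give only the bound $O(n^5 p^5)\cdot q^2$, which is \emph{not} $o(1)$ in part of the range $n^{-3/5}\ll p\ll n^{-1/2}$, and upgrading five edges to seven is exactly what matches $n^{-3}p^{-7/2}$. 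The remaining labour is the careful treatment of degenerate cases throughout (a)--(c) -- the path-centres $x,y$ coinciding, a chord $uv,vw$ or the base $uw$ coinciding with a $G_1$-edge of the witness, and so on -- which is routine, some of these cases being absorbed by the no-bad-pair bound of (a) rather than by (b).
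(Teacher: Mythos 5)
Your approach is genuinely different from the paper's. The paper colours $G_2\setminus G_1$ with a simple online rule (blue unless that creates a blue triangle, then red), and shows by contradiction that this rule can only fail if some copy of $F_0$, $F_1$ or $K_4$ in $G_1\cup G_2$ has two edges in $G_2$ — a rare event given $q\ll n^{-3}p^{-7/2}$ — or if a dangerous diagonal of a $C_{rrbb}$ lands in $G_2$ — rare given $q\ll t^{-1}$. You instead set up a constraint-satisfaction system on $E(G_2\setminus G_1)$ and try to enumerate the minimal unsatisfiable configurations. It is a nice observation that your key configuration (b), once you attach a triangle to the red edge of the $C_5$ via property~\ref{item:mostly blue} and forget colours, is exactly the graph $F_0$ from Figure~\ref{fig:F0}; the two arguments are in this sense probing the same extremal structure, and your count $O(n^6p^7)\cdot q^2$ is the same as the paper's bound on mixed copies of $F_0$ in Claim~\ref{clm:Mixed bad subgraphs}.

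However, there is a genuine gap in the reduction: the claim that ``\emph{when (a)--(c) fail, the link graph is a forest each of whose vertices carries at most one unary demand, so a greedy sweep of each tree $2$-colours the free edges consistently}'' does not hold. The clauses along the link graph are not plain ``$2$-colouring'' constraints; each link $e,f$ with base colour $c$ gives the asymmetric clause ``not both $c$''. A tree of such clauses with two unary demands at \emph{different} vertices can already be unsatisfiable even though each vertex carries only one demand. For instance, take a path of free edges $v_1-v_2-v_3$ in the link graph where the $v_1v_2$ base is red, the $v_2v_3$ base is blue, $v_1$ is pinned to red by a forced neighbour and $v_3$ is pinned to blue by another: then $v_2$ is forced to be both blue and red. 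Nothing in your list (a)--(c) forbids this. More generally, a conflict can propagate along an arbitrarily long alternating path of free edges from one forced endpoint to another, and these configurations have to be enumerated and bounded separately (they do involve at least five edges in $G_2\setminus G_1$ and so are rare, but you have not shown this, and the degenerate overlaps are not obviously ``routine''). Similarly, ``triangles of $G_2'$ are trivially made non-monochromatic'' ignores that two of a $G_2'$-triangle's edges may already be pinned to the same colour through the link graph. Because your characterisation of the obstruction set is incomplete, the sufficiency direction of the reduction is not established. The paper's greedy/contradiction argument sidesteps this entirely, since it never needs to classify all bad constraint systems — only the local structure around the last offending edge.
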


In the proof of Proposition~\ref{prop:0-statements} and the further proofs of the $0$-statements, we will consider copies of certain uncoloured subgraphs, which are defined in Figure~\ref{fig:F0}.  Our next lemma explains why the graphs $F_0, F_1, K_4$, and their edge-deleted companions $F_0^-, F_1^-, K_4^-$, appear naturally when one looks for colourings with few copies of~$C_{rrbb}$.

\begin{figure}[h]
\centering
\begin{tikzpicture}[vertex/.style={draw,circle,color=black,fill=black,inner sep=1,minimum width=4pt}]
    
	\pgfmathsetmacro{\scale}{0.7}    
    
    \node[vertex] (u1) at (0*\scale, 0*\scale) {};
    \node[vertex] (u2) at (-1*\scale, -1*\scale) {};
    \node[vertex] (u3) at (1*\scale, -1*\scale) {};
    
    \node[vertex] (w1) at (0*\scale, -2*\scale) {};
    \node[vertex] (w2) at (2*\scale, 0*\scale) {};
    \node[vertex] (w3) at (-2*\scale, 0*\scale) {};
    
    \draw[thick] (u1) to (u2);
    \draw[thick] (u2) to (u3);
    \draw[thick] (u1) to (u3);

	\draw[thick] (u1) to (w3);
	\draw[thick] (u1) to (w2);
	\draw[thick] (u2) to (w1);
	\draw[thick] (u2) to (w3);
	\draw[thick] (u3) to (w1);
	\draw[thick] (u3) to (w2);
	
	\node at (0, -2*\scale - 0.4) {$F_0$};
    
\end{tikzpicture}
\quad \quad \quad 
\begin{tikzpicture}[vertex/.style={draw,circle,color=black,fill=black,inner sep=1,minimum width=4pt}]
    
    	\pgfmathsetmacro{\scale}{0.7}    
    
    \node[vertex] (u1) at (0*\scale, 0*\scale) {};
    \node[vertex] (u2) at (-1*\scale, -1*\scale) {};
    \node[vertex] (u3) at (1*\scale, -1*\scale) {};
    
    \node[vertex] (w1) at (0*\scale, -2*\scale) {};
    \node[vertex] (w2) at (2*\scale, 0*\scale) {};
    \node[vertex] (w3) at (-2*\scale, 0*\scale) {};
    
    \draw[thick] (u1) to (u2);

    \draw[thick] (u1) to (u3);

	\draw[thick] (u1) to (w3);
	\draw[thick] (u1) to (w2);
	\draw[thick] (u2) to (w1);
	\draw[thick] (u2) to (w3);
	\draw[thick] (u3) to (w1);
	\draw[thick] (u3) to (w2);
	
	\node at (0, -2*\scale - 0.4) {$F_0^-$};

\end{tikzpicture}
\quad \quad \quad \quad 
\quad \quad 
\begin{tikzpicture}[vertex/.style={draw,circle,color=black,fill=black,inner sep=1,minimum width=4pt}]
    
	\pgfmathsetmacro{\scale}{0.7}    
    
    \node[vertex] (u1) at (0*\scale, 0*\scale) {};
    \node[vertex] (u2) at (-1*\scale, -1*\scale) {};
    \node[vertex] (u3) at (1*\scale, -1*\scale) {};
    
    \node[vertex] (w1) at (0*\scale, -2*\scale) {};
    \node[vertex] (w) at (0*\scale, 1*\scale) {};
    
    \draw[thick] (u1) to (u2);
    \draw[thick] (u2) to (u3);
    \draw[thick] (u1) to (u3);

	\draw[thick] (u1) to (w);
	\draw[thick] (u2) to (w1);
	\draw[thick] (u2) to (w);
	\draw[thick] (u3) to (w1);
	\draw[thick] (u3) to (w);
	
	\node at (0, -2*\scale - 0.4) {$F_1$};
\end{tikzpicture}
\quad \quad \quad \quad 
\begin{tikzpicture}[vertex/.style={draw,circle,color=black,fill=black,inner sep=1,minimum width=4pt}]
    
	\pgfmathsetmacro{\scale}{0.7}    
    
    \node[vertex] (u1) at (0*\scale, 0*\scale) {};
    \node[vertex] (u2) at (-1*\scale, -1*\scale) {};
    \node[vertex] (u3) at (1*\scale, -1*\scale) {};
    
    \node[vertex] (w1) at (0*\scale, -2*\scale) {};
    \node[vertex] (w) at (0*\scale, 1*\scale) {};
    
    \draw[thick] (u1) to (u2);

    \draw[thick] (u1) to (u3);

	\draw[thick] (u1) to (w);
	\draw[thick] (u2) to (w1);
	\draw[thick] (u2) to (w);
	\draw[thick] (u3) to (w1);
	\draw[thick] (u3) to (w);
	
	\node at (0, -2*\scale - 0.4) {$F^-_1$};
\end{tikzpicture}
\caption{The graphs $F_0,F_0^-,F_1$ and $F_1^-$.}\label{fig:F0}
\end{figure}
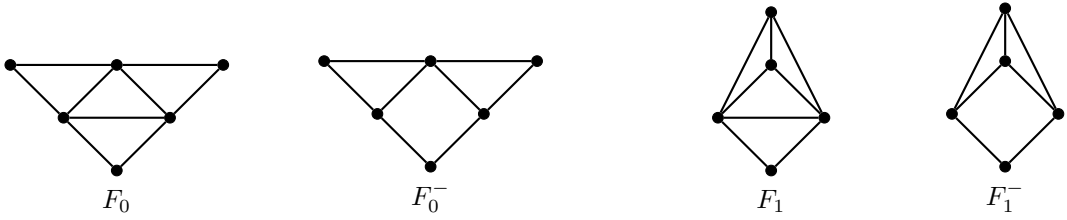

\begin{lem} \label{lem:4 cycle containment}
  Suppose that a 4-cycle $C$ in a graph $G$ has two adjacent edges $e_1,e_2$ such that, for $i=1,2$, $e_i$ is contained in a triangle of $G$ that does not contain $e_{3-i}$. Then $C$ is contained in a copy of $F^-$ in $G$, for some  $F\in\{F_0,F_1,K_4\}$. Moreover, in each case, adding the edge that forms a triangle with $e_1$ and $e_2$ completes this copy  of $F^-$ to a copy of $F$.
\end{lem}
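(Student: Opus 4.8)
The plan is to set up coordinates on the $4$-cycle and then run a short case analysis according to where the ``apex'' vertices of the two given triangles lie. Write $C = v_1v_2v_3v_4$ (with edges $v_1v_2$, $v_2v_3$, $v_3v_4$, $v_4v_1$) and, after relabelling if necessary, assume that $e_1 = v_1v_2$ and $e_2 = v_2v_3$; then the edge that forms a triangle with $e_1$ and $e_2$ is $v_1v_3$. By hypothesis, $G$ contains a triangle on a vertex set $\{v_1,v_2,x\}$, and $x \neq v_3$ (otherwise this triangle would contain $e_2$), so $x \notin \{v_1,v_2,v_3\}$ and hence $x$ equals $v_4$ or lies outside $V(C)$. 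Symmetrically, $G$ contains a triangle on $\{v_2,v_3,y\}$ with $y \notin \{v_1,v_2,v_3\}$, so $y \in \{v_4\}\cup(V(G)\setminus V(C))$. These are the only vertex coincidences that need to be tracked.

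Now I would distinguish three cases. \textbf{(i)} If $x = v_4$ or $y = v_4$, then in either case $v_2v_4 \in E(G)$, so on the vertex set $V(C)$ the five edges $v_1v_2$, $v_2v_3$, $v_3v_4$, $v_4v_1$, $v_2v_4$ form a copy of $K_4^-$ that contains $C$, and adding $v_1v_3$ turns it into a copy of $K_4$. \textbf{(ii)} If $x,y\notin V(C)$ and $x=y$, let $z$ denote this common vertex; then $v_1,v_2,v_3,v_4,z$ are five distinct vertices and the seven edges $v_1v_2$, $v_2v_3$, $v_3v_4$, $v_4v_1$, $v_1z$, $v_2z$, $v_3z$ span a copy of $F_1^-$ containing $C$ --- one exhibits the isomorphism $(u_1,u_2,u_3,w,w_1)\mapsto(z,v_1,v_3,v_2,v_4)$, under which the ``missing'' edge $u_2u_3$ of $F_1^-$ corresponds to $v_1v_3$, so adding $v_1v_3$ produces a copy of $F_1$. \textbf{(iii)} If $x,y\notin V(C)$ and $x\neq y$, then $v_1,v_2,v_3,v_4,x,y$ are six distinct vertices and the eight edges $v_1v_2$, $v_2v_3$, $v_3v_4$, $v_4v_1$, $v_1x$, $v_2x$, $v_2y$, $v_3y$ span a copy of $F_0^-$ containing $C$, via the isomorphism $(u_1,u_2,u_3,w_1,w_2,w_3)\mapsto(v_2,v_1,v_3,v_4,y,x)$; again the missing edge $u_2u_3$ corresponds to $v_1v_3$, so adding $v_1v_3$ produces a copy of $F_0$. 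In each case $C$ maps onto a $4$-cycle of the relevant $F^-$, and one checks that $e_1,e_2$ occupy the prescribed two adjacent edges, which finishes the proof.

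The whole argument is bookkeeping, and the only genuinely necessary observation is that the apex of a triangle built on $e_i$ cannot be the third endpoint of $e_{3-i}$; this is what confines $x$ and $y$ to $\{v_4\}\cup(V(G)\setminus V(C))$ and makes the case split finite. Checking that the three listed edge sets are indeed copies of $K_4^-$, $F_1^-$, and $F_0^-$ is immediate --- they already have $4$, $5$, and $6$ vertices respectively, and the required adjacencies can be read straight off Figure~\ref{fig:F0} --- so I do not expect any real difficulty beyond organising the cases cleanly.
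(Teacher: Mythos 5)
Your proof is correct and follows essentially the same strategy as the paper: label the $4$-cycle, name the apex vertices of the two triangles, and split into the three cases (apex equals the opposite vertex of $C$; two distinct apices outside $C$; a common apex outside $C$), which yield $K_4^-$, $F_0^-$, and $F_1^-$ respectively. Your write-up is somewhat more explicit — you spell out the vertex maps and verify the missing edge — whereas the paper states the cases and leaves the isomorphism checks to the reader, but the underlying argument is identical.
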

\begin{proof}
  Label the vertices of $C$ as $x,y,w,z$ so that $e_1=xy$ and $e_2=xw$.  Further, for $i=1,2$, let $u_i$ be the vertex which forms the triangle with $e_i$ given by the statement of the lemma;  we therefore have that $u_1\neq w$ and $u_2\neq y$. We consider the following cases. Firstly, if one of the $u_i$ is equal to $z$, then the vertices of $C$ host a $K_4^-$ in $G$ and we are done.  So we can assume that neither $u_i$ is equal to $z$. If  $u_1=u_2$, then we get a copy of $F_1^-$ that contains $C$, whilst if $u_1\neq u_2$, we get a copy of $F_0^-$ containing $C$.  The moreover statement can also be easily checked in each case. 
\end{proof}

The following simple consequence of Lemma~\ref{lem:4 cycle containment} is more easily applicable in some of our proofs. 

\begin{corollary} \label{cor:Crrbb containment}
  Let $G$ be a graph coloured by some $\varphi\colon E(G)\rightarrow \{\red,\blue\}$ which satisfies conditions~\ref{item:no mc tris} and~\ref{item:mostly blue} of Definition~\ref{def:Crrbb}. Then any copy of $C_{rrbb}$ in $G$ is contained in some copy of $F^-$ in $G$ for some  $F\in\{F_0,F_1,K_4\}$.
\end{corollary}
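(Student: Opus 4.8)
The plan is to obtain this as an immediate consequence of Lemma~\ref{lem:4 cycle containment}, applied with $e_1, e_2$ chosen to be the two red edges of the given copy of $C_{rrbb}$. Write $C$ for the underlying $4$-cycle; since its two red edges are adjacent, I can label $C$ as $x - v - w - z - x$ so that $e_1 = vx$ and $e_2 = vw$ are red while $wz$ and $zx$ are blue. Because $\varphi$ satisfies condition~\ref{item:mostly blue}, a red edge cannot lie outside every triangle, so each of $e_1$ and $e_2$ lies in at least one triangle of $G$.

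The one thing to verify before invoking Lemma~\ref{lem:4 cycle containment} is its hypothesis: for $i = 1, 2$, that $e_i$ lies in a triangle of $G$ not containing $e_{3-i}$. I would argue by cases. If this holds for both $e_1$ and $e_2$, then Lemma~\ref{lem:4 cycle containment} directly gives that $C$ sits inside a copy of $F^-$ for some $F \in \{F_0, F_1, K_4\}$, as required. Otherwise it fails for one of them, say for $e_1$ (the other case being symmetric), so that every triangle through $e_1 = vx$ also contains $e_2 = vw$ and hence is the triangle on the vertex set $\{v, x, w\}$. Since $e_1$ does lie in a triangle, this forces $xw \in E(G)$, and then the four vertices of $C$ span the five pairwise-distinct edges $vx, vw, wz, zx, xw$, i.e.\ a copy of $K_4^-$ (namely $K_4$ on $\{v, x, w, z\}$ minus the edge $vz$) that contains $C$. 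This again yields the conclusion, with $F = K_4$.

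I do not expect any genuine obstacle here; the only point that deserves a line of care is the degenerate configuration in which the two red edges of the copy of $C_{rrbb}$ share all of their triangles, and as above this case is automatically benign, since it produces the diagonal edge $xw$ and hence a $K_4^-$ on $V(C)$. Note that condition~\ref{item:no mc tris} is not even needed for this corollary, only condition~\ref{item:mostly blue}, which is what guarantees that red edges lie in triangles.
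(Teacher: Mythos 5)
Your proof is correct, and it takes a mildly but genuinely different route from the paper's. Both arguments reduce to Lemma~\ref{lem:4 cycle containment} applied with $e_1,e_2$ the two red edges of the $C_{rrbb}$-copy, and both hinge on the role of the diagonal $xw$ of the $4$-cycle. The paper's proof verifies the lemma's hypothesis by using condition~\ref{item:no mc tris} to rule out $xw \in E(G)$: if $xw$ were present it would form a monochromatic triangle with either the red pair or the blue pair of $C$, and once $xw \notin E(G)$, the triangle (guaranteed by condition~\ref{item:mostly blue}) through $e_i$ cannot contain $e_{3-i}$. You instead case-split on whether the lemma's hypothesis holds, and in the degenerate case—where every triangle through, say, $e_1$ also contains $e_2$—you observe that condition~\ref{item:mostly blue} alone forces $xw \in E(G)$, at which point $V(C)$ already spans a $K_4^-$. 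The payoff of your route is the correct observation that condition~\ref{item:no mc tris} is not actually needed for this corollary; the paper's route is slightly more economical in that it never needs to consider the $xw \in E(G)$ case at all. Since the corollary is only ever applied to colourings satisfying both conditions, this distinction has no downstream effect, but your observation is a nice tightening of the statement.
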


\begin{proof}
  Let $e_1$ and $e_2$ be the red edges in the copy of $C_{rrbb}$.  Condition~\ref{item:mostly blue} in Definition~\ref{def:Crrbb} implies that each of these edges is in a triangle of $G$.  Moreover, $G$ does not contain the edge $e_3$ that forms a triangle with $e_1$ and $e_2$, as otherwise there would be no way to colour $e_3$ without creating a monochromatic triangle, contradicting the fact that $\varphi$ is triangle-free, which is condition~\ref{item:no mc tris} in Definition~\ref{def:Crrbb}.
\end{proof}

We are now in a position to prove Proposition~\ref{prop:0-statements}.

\begin{proof}[Proof of Proposition~\ref{prop:0-statements}]
  We first show the following claim.
  \begin{clm}
    \label{clm:Mixed bad subgraphs}
    A.a.s.\ every copy of $F_0$, $F_1$ and $K_4$ in $G_1 \cup G_2$ has at most one edge in $G_2$.
  \end{clm}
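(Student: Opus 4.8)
The plan is to bound, conditionally on $G_1$ being typical, the expected number of copies of each $F \in \{F_0, F_1, K_4\}$ in $G_1 \cup G_2$ that have at least two edges in $G_2$, and then conclude by Markov's inequality. So I would first fix one of the graphs $F \in \{F_0, F_1, K_4\}$ and condition on $G_1$ satisfying the conclusion of Lemma~\ref{lem:randomgraph}, which happens a.a.s.; all expectations below are then over the randomness of $G_2$, which is independent of $G_1$.

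Given a `bad' copy of $F$ (one with at least two edges in $G_2$), I would select two of its edges $a, b$ lying in $G_2$ and let $E'' \subseteq E(F) \setminus \{a,b\}$ be the set of its remaining edges that lie in $G_2 \setminus G_1$. Such a copy restricts to a copy of $H \coloneqq \bigl(V(F), E(F) \setminus (\{a,b\} \cup E'')\bigr)$ in $G_1$, while realising the $2 + |E''|$ edges of $\{a,b\} \cup E''$ in $G_2$. Summing over the $O(1)$ choices of the pair $\{a,b\}$ and of the set $E''$ and using the independence of $G_1$ and $G_2$, the expected number of bad copies of $F$ is bounded, up to a constant, by $\sum_{\{a,b\}} \sum_{E''} N_H(G_1) \cdot q^{2 + |E''|}$. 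The point of deleting at least two edges is that $H$ becomes sparse enough to apply property~\ref{item:few F} of Lemma~\ref{lem:randomgraph}: although $m(F_1) = 8/5 > 3/2$, one checks directly that any graph obtained from $F_0$, $F_1$ or $K_4$ by deleting at least two edges has the property that the subgraph induced on its non-isolated vertices has at most $8$ vertices and maximum subgraph density at most $3/2$ — this is because $m(F_0) = m(K_4) = 3/2$, while deleting two edges from $F_1$ simply leaves too few edges to support a subgraph of density above $3/2$. Hence $N_H(G_1) = O\bigl(n^{v_F} p^{e_F - 2 - |E''|}\bigr)$, and since $q \ll p$ — which follows from $q \ll n^{-3}p^{-7/2}$ together with $p \gg n^{-2/3}$ — the resulting geometric series in $q/p$ sums to $O(1)$, so the expected number of bad copies of $F$ is $O\bigl(n^{v_F}p^{e_F - 2}q^2\bigr)$.

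Finally I would check that $n^{v_F} p^{e_F - 2} q^2 = o(1)$ for each $F$, i.e.\ that $n^6 p^7 q^2$, $n^5 p^6 q^2$ and $n^4 p^4 q^2$ all tend to $0$; plugging in $q \ll n^{-3}p^{-7/2}$ bounds these by $o(1)$, $o(n^{-1}p^{-1})$ and $o(n^{-2}p^{-3})$ respectively, each of which is $o(1)$ throughout the range $n^{-2/3} \ll p \ll n^{-1/2}$. (The constraint coming from $F_0$ is exactly the hypothesis $q \ll n^{-3}p^{-7/2}$, which is why this threshold shows up in the proposition.) A union bound over the three graphs $F$, Markov's inequality, and the fact that $G_1$ is typical a.a.s.\ then yield the claim. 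I expect the only real work to be the elementary density bookkeeping for the edge-deletions; two minor points to handle with care are that $H$ may be disconnected — in which case $N_H(G_1)$ is bounded by the product of the copy-counts of its components, changing only the implied constant — and that edges of $G_1 \cap G_2$ must be accounted for honestly, which is why $E''$ is tracked separately rather than merely splitting $E(F)$ into a $G_1$-part and a $G_2$-part.
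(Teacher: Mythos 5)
Your plan is correct: you compute the expected number of copies of each $F \in \{F_0, F_1, K_4\}$ in $G_1 \cup G_2$ having at least two edges in $G_2$, show it is $o(1)$, and conclude by Markov's inequality — which is exactly the paper's strategy and yields the same bounds $n^6p^7q^2$, $n^5p^6q^2$, $n^4p^4q^2$. However, your route to the expectation is considerably longer than necessary, and the machinery you invoke is not needed. The paper does not condition on $G_1$ being typical, does not use Lemma~\ref{lem:randomgraph}, and never needs to delete edges and check that the resulting subgraphs have maximum density $\le 3/2$. Instead it uses the trivial, unconditional observation: for a fixed labelled copy of $F$ in $K_n$, the event that two specified edges lie in $G_2$ while the remaining $e_F - 2$ edges lie in $G_1 \cup G_2$ has probability at most $q^2(p+q)^{e_F-2}$ (by independence of distinct edge-slots and of $G_1$ and $G_2$). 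A union bound over the $\binom{e_F}{2}$ choices of the pair and over the at most $n^{v_F}$ labelled copies of $F$ in $K_n$, together with $q \ll p$, gives $\mathbb{E}[X_F] \le 2\binom{e_F}{2} n^{v_F} p^{e_F-2} q^2$ directly — one line in place of your decomposition into $\{a,b\}$, $E''$ and $H$, the geometric series over $q/p$, the density bookkeeping, and the handling of isolated vertices. Your approach is sound (the density check for subgraphs of $F_0$, $F_1$, $K_4$ obtained by deleting two edges does go through as you say), but it buys you nothing here: when estimating an expectation over the union $G_1 \cup G_2$, the bound $\Pr[e \in G_1 \cup G_2] \le p+q$ is all that is needed, and separating the $G_1$-part from the $G_2$-part only complicates matters.
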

  \begin{claimproof}
    Let $X_{F_0}$ count the number of copies of $F_0$ in $G_1\cup G_2$ with at least two edges in $G_2$.  There are at most $n^6$ copies of $F_0$ in $K_n$ and the probability that each such copy appears in $G_1\cup G_2$ with at least two edges in $G_2$ is at most $\binom{9}{2} (p+q)^7 q^2$.  Since $q\ll p$, by our assumptions on $q$ and $p$, we can bound the expectation of $X_{F_0}$ as follows:
    \[
      \mathbb{E}[X_{F_0}] \le \binom{9}{2} n^6 (p+q)^7 q^2 \leq 2 \binom{9}{2} n^6 p^7 q^2 \ll 1,
    \]
    using that $q\ll n^{-3}p^{-7/2}$ in the last inequality here. 
    Similarly, defining $X_{F}$ to be the number of copies $F$ in $G_1\cup G_2$ with at least two edges in $G_2 $ for $F\in \{F_1,K_4\}$, we have
    \[ \mathbb{E}[X_{F_1}]\leq 2^8n^5p^6q^2 \ll n^{-1} p^{-1} \ll 1, \qquad \qquad
      \mathbb{E}[X_{K_4}]\leq 2^6n^4p^4q^2 \ll n^{-2} p^{-3} \ll 1.
    \]
    The assertion of the claim now follows by Markov's inequality.
  \end{claimproof}

  Given $G_1$ and a red/blue-colouring of its edges, we call an edge $uv \in K_n\setminus G_1$ \emph{dangerous} if there is a copy of $C_{rrbb}$ in $G_1$ with edges $uw,wv,vx,xu$ for some vertices $w, x \notin \{u,v\}$ such that $uw, wv$ are coloured red and $vx, xu$ are coloured blue.

  \begin{clm}
    \label{clm:Very good colouring}
    A.a.s.~$G_1$ has the following properties:
    \begin{enumerate}[label=(\roman*)]
    \item
      \label{cond:wb mixed F and K_4}
      A.a.s.\ (with respect to $G_2$), every copy of $F\in\{F_0,F_1,K_4\}$ in $G_1 \cup G_2$ has at most one edge in $G_2$,
    \item
      \label{cond:no dangerous edges}
      For every $t$-good colouring $\varphi$ of $G_1$, a.a.s.\ $G_2$ contains no dangerous edges.
    \end{enumerate}
  \end{clm}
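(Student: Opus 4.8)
The plan is to handle the two parts of the claim separately. Part~\ref{cond:no dangerous edges} will turn out to hold for \emph{every} $G_1$ (it is a deterministic consequence of $t$-goodness and the hypothesis $q \ll t^{-1}$), so the only genuinely probabilistic ingredient is a transfer of Claim~\ref{clm:Mixed bad subgraphs} into part~\ref{cond:wb mixed F and K_4}.

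For part~\ref{cond:wb mixed F and K_4}, I would simply re-read Claim~\ref{clm:Mixed bad subgraphs} as a statement about $G_1$ alone, via a Fubini-plus-Markov argument. Let $E = E(G_1, G_2)$ be the event that every copy of $F \in \{F_0, F_1, K_4\}$ in $G_1 \cup G_2$ has at most one edge in $G_2$, so that Claim~\ref{clm:Mixed bad subgraphs} says $\eps_n \coloneqq \Pr[\neg E] = o(1)$ in the joint probability space. Setting $g(G_1) \coloneqq \Pr_{G_2}[\neg E \mid G_1]$, we have $\EE_{G_1}[g(G_1)] = \eps_n$, so Markov's inequality gives $\Pr_{G_1}[g(G_1) \ge \sqrt{\eps_n}] \le \sqrt{\eps_n} = o(1)$. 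Hence a.a.s.\ $g(G_1) \le \sqrt{\eps_n} = o(1)$, which is exactly the assertion that a.a.s.\ $G_1$ has the property that $E$ holds a.a.s.\ over $G_2$.

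For part~\ref{cond:no dangerous edges}, I would fix an arbitrary $G_1$ and an arbitrary $t$-good colouring $\varphi$ of $G_1$ (if $G_1$ admits no such colouring the statement is vacuous) and bound the number of dangerous edges directly. The key observation is that every dangerous edge is the ``mixed diagonal'' of some copy of $C_{rrbb}$: if $uv$ is dangerous, witnessed by a $4$-cycle $u$--$w$--$v$--$x$--$u$ coloured with $uw, wv$ red and $vx, xu$ blue, then this $4$-cycle is a copy of $C_{rrbb}$ and $u, v$ are precisely its two vertices incident to one red and one blue edge, so $uv$ is determined by this copy. Therefore the set $D_\varphi$ of dangerous edges has size at most the number of copies of $C_{rrbb}$ in the coloured graph $(G_1,\varphi)$, which is less than $t$ by condition~\ref{item:few Crrbbs} of Definition~\ref{def:Crrbb}. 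Since the edges of $G_2 \sim G_{n,q}$ appear independently with probability $q$, a union bound gives
\[
  \Pr_{G_2}\big[ E(G_2) \cap D_\varphi \neq \emptyset \big] \le |D_\varphi| \cdot q < tq = o(1),
\]
using $q \ll t^{-1}$. Thus a.a.s.\ $G_2$ contains no dangerous edge; as this bound is uniform over $G_1$, part~\ref{cond:no dangerous edges} holds for every $G_1$, and combining with part~\ref{cond:wb mixed F and K_4} proves the claim.

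I do not expect a real obstacle. Part~\ref{cond:no dangerous edges} is immediate from $t$-goodness together with $q \ll t^{-1}$, and part~\ref{cond:wb mixed F and K_4} is a routine passage from the joint statement of Claim~\ref{clm:Mixed bad subgraphs}. The only point requiring (minor) care is the nested ``a.a.s.''---one over $G_1$ and, conditionally, one over $G_2$---which is precisely what the Markov argument with the $\sqrt{\eps_n}$ threshold is designed to handle.
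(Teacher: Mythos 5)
Your proof is correct and takes essentially the same route as the paper: part~\ref{cond:wb mixed F and K_4} via Fubini (which you make explicit with the standard Markov/$\sqrt{\eps_n}$ device), and part~\ref{cond:no dangerous edges} via the observation that each dangerous edge is the unique mixed diagonal of some copy of $C_{rrbb}$, so a $t$-good colouring has at most $t$ dangerous edges, and a first-moment bound finishes. Your added remark that part~\ref{cond:no dangerous edges} is in fact deterministic in $G_1$ is accurate and slightly sharper than the paper's phrasing, but the underlying argument is identical.
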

  \begin{claimproof}
    Property~\ref{cond:wb mixed F and K_4} follows from Claim~\ref{clm:Mixed bad subgraphs} and Fubini's theorem.  Further, each $t$-good colouring $\varphi$ of $G_1$ contains at most $t$ copies of $C_{rrbb}$ and hence at most $t$ dangerous edges.  Since each such edge appears in $G_2$ with probability $q \ll t^{-1}$, the expected number of dangerous edges that appear in $G_2$ is $o(1)$ and~\ref{cond:no dangerous edges} follows from Markov's inequality.
  \end{claimproof}

  In view of Claim~\ref{clm:Very good colouring}, and again appealing to Fubini's theorem, it suffices to show that for every instance of $G_1$ that has the two properties described in the claim, every $t$-good colouring $\varphi$ of $G_1$ can be extended to a $K_3$-free colouring of $G_1 \cup G_2$ for every instance of the graph $G_2$ that satisfies the events described in both~\ref{cond:wb mixed F and K_4} and~\ref{cond:no dangerous edges}, which a.a.s.\ hold  (for fixed $G_1$ and $\varphi$).  To this end, consider an arbitrary ordering of the edges of $G_2\setminus G_1$ and colour them one-by-one according to the following rule.  We colour $e\in G_2\setminus G_1$ blue \emph{unless} $e$ forms a blue triangle with previously coloured edges (of $G_1\cup G_2$), in which case we colour $e$ red.  We claim that  the resulting colouring of $G_1 \cup G_2$ is $K_3$-free. 
  
  Note that the only possible monochromatic triangles that can occur in our colouring of $G_1 \cup G_2$ must be red and contain an edge of $G_2\setminus G_1$, as $\varphi$ is good and thus triangle-free.  
  Suppose that $e$ is the \emph{last} edge of $G_2\setminus G_1$ that completes a red triangle; denote this triangle by $T$ and its two remaining edges by $f_{r}$ and $g_r$.  Note that $e$ is also contained in a triangle with two blue edges, say $f_b$ and $g_b$, already coloured in $G_1\cup G_2$, as otherwise our rule would colour $e$ blue.
  We claim that $f_r$ and $g_r$ are both contained in triangles other than $T$ in $G_1 \cup G_2$.  Indeed, let $h \in \{ f_r, g_r\}$ be arbitrary.  If $h \in G_1$, then $h$ must be contained in a (non-monochromatic) triangle of $G_1$ due to the fact that our colouring of $G_1$ was good and the fact that $h$ is coloured red.  If $h \in G_2 \setminus G_1$, then it must be contained in a triangle whose remaining two edges are blue, as otherwise we would have coloured it blue.  Consequently, by Lemma~\ref{lem:4 cycle containment}, we have that the $4$-cycle $C \coloneqq \{f_r,g_r,f_b,g_b\}$ is contained in a copy of $F^-$ in $G_1\cup G_2$, for some $F\in\{F_0,F_1,K_4\}$, and the edge $e$ completes this copy of $F^-$ to a copy of $F$.  However, as we have assumed that $G_2$ contains no dangerous edges, one of the edges of the $C_{rrbb}$-copy $C\coloneqq\{f_r,g_r,f_b,g_b\}$ must belong to $G_2$.  This contradicts the assumed conclusion of~\ref{cond:wb mixed F and K_4}, as the copy of $F$ containing $C$ has at least two edges in $G_2$, namely $e$ and one of the edges in $C$. 
  This shows that no red triangle can occur, which concludes the proof of the proposition.
\end{proof}

\begin{rem}
  \label{rem:0-statement proof}
  Our proof of the 0-statements adopts a greedy strategy to colour the second random graph. 
  In fact, our colouring is identical to the colouring used in~\cite{friedgut2003ramsey} to prove that the online game a.a.s.\ lasts $\Omega(n^{4/3})$ rounds under optimal play.
  Indeed, they also colour each edge blue as it appears unless it creates a blue triangle, in which case they colour it red.
  The authors of~\cite{friedgut2003ramsey} show that this colouring will only fail to avoid monochromatic triangles if a copy of $F_0$ (Figure~\ref{fig:F0}) or $K_4$ appears, which a.a.s.\ does not happen with $\ll n^{4/3}$ rounds/edges.
  Similarly, one can adjust our proof presented above to show that Theorem~\ref{thm:initial two-round results}~\ref{two-round bottom range} is tight in the following sense: When $M_1=cn^{4/3}$ for some $c>0$ and $M_2\ll n^{4/3}$, then a.a.s.\ there is a red/blue-colouring of $G_1$ that can be extended to the edges of $G_2$ avoiding monochromatic copies of $K_3$. Indeed, as in our proof of Proposition~\ref{prop:0-statements}, one can colour $G_1$ avoiding monochromatic triangles such that every edge not in a triangle is blue. Colouring the second random graph according to the online greedy approach, as in~\cite{friedgut2003ramsey}, the player will only fail if a copy of $F_0$ or $K_4$ appears in $G_1\cup G_2$, with at least one edge of $G_2$. Such copies a.a.s.\ do not exist when $M_2\ll n^{4/3}$ and so the player a.a.s.\ succeeds. 
\end{rem}

\subsection{The $0$-statement in the upper range}
\label{sec:0 upper}

In this section, we prove the lower bound on $q(n;K_3,p)$ in the upper range $n^{-3/5} \ll p \ll n^{-1/2}$ of Theorem~\ref{thm:main}.  In this case, the proof follows  easily from Proposition~\ref{prop:0-statements}.

\begin{thmtool} \label{thm:0 upper}
 Suppose that $n^{-2/3} \ll p \ll  n^{-1/2}$ and $q \ll n^{-6}p^{-8}$ and let $G_1 \sim G_{n,p}$ and $G_2 \sim G_{n,q}$ be independent. Then a.a.s.\ there is a $G_1$-measurable colouring $\varphi \colon E(G_1)\to \{\red,\blue\}$ that can be extended to a~triangle-free colouring of $G_1\cup G_2$.
\end{thmtool}

\begin{proof}
  By Proposition~\ref{prop:0-statements}, it suffices to show that a.a.s.\ $G_1\sim G_{n,p}$ admits a $t$-good colouring $\phi$ with $t=150n^6p^8$.  Firstly, we claim that a.a.s.\ $G_1$ contains at most $n^6p^8$ copies of  $F^-$, for each  $F\in \{F_0, F_1,K_4\}$.  Indeed, this follows from Lemma~\ref{lem:randomgraph}~\ref{item:few F}, which applies since $m(F_0^-) = 4/3$, $m(F_1^-) = 7/5$, and $m(K_4^-) = 5/4$, and the fact that $n^4p^5, n^5p^7\ll n^6p^8$ for $p\gg n^{-2/3}$.  We also have that  a.a.s.\  $G_1$ is \emph{not} $K_3$-Ramsey due to Theorem~\ref{thm:triangle ramsey threshold}~\ref{triangle ramsey 0 statement}.  It is therefore enough to show that $G_1$ admits a $t$-good colouring $\phi$ under the assumption that these two asymptotically-almost-sure events occur.

  We define $\phi$ as follows. Take any triangle-free colouring of $G_1$ and recolour any edge not in a triangle blue. As we only changed the colour of edges not in triangles, it is clear that $\phi$ remains triangle-free; it only remains to show that $\phi$ induces at most $t$ copies of $C_{rrbb}$.  This follows from Corollary~\ref{cor:Crrbb containment}, as each copy of $C_{rrbb}$ is contained in some copy of $F^-$ for some $F\in\{F_0, F_1,K_4\}$.  Each such copy of some $F^-$ with $F\in\{F_0, F_1,K_4\}$, hosts at most\footnote{We state this loose upper bound for simplicity.  It is easy to check that there are at most $45$ copies of $C_4$ in any graph with at most $6$ vertices, but of course our specific $F^-$ contain many fewer $4$-cycles.} $50$ copies of $C_{rrbb}$ and so the number of $C_{rrbb}$ is at most $50$ times the number of copies of some $F^-$ with $F\in\{F_0, F_1,K_4\}$. This completes the proof due to our upper bounds on the number of these copies above.
\end{proof}

\subsection{The $0$-statement in the lower range} \label{sec:0 lower}

In this section, we prove the lower bound on $q(n;K_3,p)$ in the lower range $n^{-2/3} \ll p \ll n^{-3/5}$ of Theorem~\ref{thm:main}.  We will again appeal to Proposition~\ref{prop:0-statements}, but now we will be able to show the existence of a $t$-good colouring of $G_1$ for the much larger value~$t = n^3p^{7/2}$.

\begin{thmtool} \label{thm:0 lower}
  Suppose that $n^{-2/3} \ll p \ll n^{-3/5}$ and $q \ll n^{-3}p^{-7/2}$ and let $G_1 \sim G_{n,p}$ and $G_2 \sim G_{n,q}$ be independent.  Then a.a.s.\ there is a $G_1$-measurable colouring $\varphi \colon E(G_1)\to \{\red,\blue\}$ that can be extended to a~triangle-free colouring of $G_1\cup G_2$.
\end{thmtool}

Recall that in the proof of Theorem~\ref{thm:0 upper}, we showed that every copy of $C_{rrbb}$ is contained in a copy of $F^-$, for some $F\in \{F_0,F_1,K_4\}$, and we used simple upper bounds on the number of copies of $F^-$.  Here, such simple bounds will no longer suffice and we will have to explore how the copies of these fixed graphs interact.   Clearly, any singular copy of such an $F^-$ can be coloured so that it avoids both monochromatic triangles \emph{and} copies of $C_{rrbb}$.  Therefore, we are only forced to create copies of $C_{rrbb}$ if these copies of some $F^-$ and the copies of triangles interact in certain ways. The following definition captures the subgraphs of $K_n$ that correspond to a collection of interacting copies of $K_3$, $F_0^-$ and $F_1^-$ (we exclude $K_4^-$ from this list, as it is composed of two interacting triangles).

\begin{defi} \label{def:collage}
  Let $H$ be the hypergraph with vertex set $E(K_n)$ whose hyperedges are all ($3$-, $7$- or $8$-element) sets of edges that form a copy of $K_3$, $F_0^-$ or $F_1^-$ in $K_n$.  We will call a graph $C\subseteq K_n$ a \emph{collage} if $C$ induces a~connected subhypergraph of $H$.  We will denote the collection of collages in $K_n$ by $\mathcal{C}$. 
\end{defi}

We also define collages that are  \emph{well-behaved} as follows. 

\begin{defi} \label{def:well behaved}
We say a collage $C\in \cC$ is well-behaved if 
\begin{enumerate}[label=(\roman*)] 
    \item \label{wb:small} $v(C)\le \log n$;
    \item \label{wb:sparse} For any subgraph $C'\subseteq C$ such that $C'\in \cC$, we have that $e(C')/v(C')<5/3$.
\end{enumerate}
Moreover, we say that $C\in \cC$ is \emph{very well-behaved} if, in addition to~\ref{wb:small} and~\ref{wb:sparse}, $C$ satisfies the following further condition:
\begin{enumerate}[label=(\roman*)] 
\setcounter{enumi}{2}
\item \label{wb:subgraph} $C$ contains no copies of a graph $F$ with  $(v_F,e_F)\in\{(4,6),(5,7),(8,12)\}$.
\end{enumerate}
\end{defi}

We will reduce Theorem~\ref{thm:0 lower} to two key lemmas. The first shows that our random graph $G_1\sim G_{n,p}$ will a.a.s.\ only contain well-behaved collages. 

\begin{lem} \label{lem:collages}
  Suppose that $n^{-2/3} \ll p \ll n^{-3/5}$ and let $G_1 \sim G_{n,p}$. Then a.a.s.\  every collage $C\in \cC$ such that $C\subseteq G_1$ is well-behaved. 
\end{lem}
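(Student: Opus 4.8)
\medskip

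The plan is to show that a.a.s.\ every collage $C \subseteq G_{n,p}$ satisfies both defining conditions~\ref{wb:small} and~\ref{wb:sparse} of being well-behaved, handling each failure mode by a first-moment (union bound) argument. For condition~\ref{wb:sparse}, observe that since a collage is a connected subhypergraph of the hypergraph $H$ whose edges are copies of $K_3$, $F_0^-$, or $F_1^-$, we may build any collage $C$ greedily by starting from one hyperedge and repeatedly attaching another hyperedge that shares at least one (graph-)edge with the part built so far. I would first record the ``densities'' of the pieces: $m(K_3) = 1$, $m(F_0^-) = 4/3$ (achieved by $F_0^-$ itself, which has $7$ edges on $6$ vertices, wait---$F_0^-$ has $7$ edges: two triangle edges minus one, plus six pendant-type edges; in any case $e_{F_0^-}/v_{F_0^-} = 7/6 < 4/3$ comes from a subgraph), $m(F_1^-) = 7/5$, and $m(K_4^-) = 5/4$, all of which are strictly less than $5/3$. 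The key point is that \emph{adding} a new hyperedge (a copy of $K_3$, $F_0^-$, or $F_1^-$) that overlaps the current collage in at least one edge increases $e(C')$ and $v(C')$ in a controlled way so that the ratio $e(C')/v(C')$ stays bounded: each new hyperedge contributes at most a bounded number of new vertices and new edges, and the worst ``new edges per new vertex'' ratio over the gluing possibilities must be computed. I expect that, because $p \ll n^{-3/5}$, the threshold $5/3$ is exactly the critical density: the expected number of copies in $G_{n,p}$ of any fixed graph $J$ with $e_J / v_J \ge 5/3$ is $n^{v_J} p^{e_J} \le n^{v_J} (n^{-3/5})^{e_J} = n^{v_J - 3e_J/5} \le n^0 = 1$, and in fact $\ll 1$ after summing if one is careful, or one uses the $p \ll n^{-3/5}$ slack to get a genuine $o(1)$.

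\medskip

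Concretely, for condition~\ref{wb:sparse} I would argue by contradiction: if some $C' \subseteq C \subseteq G_{n,p}$ is a collage with $e(C')/v(C') \ge 5/3$, then $G_{n,p}$ contains a copy of some graph $J$ on $v_J = v(C') \le \log n$ vertices with $e_J/v_J \ge 5/3$, i.e.\ $e_J \ge 5v_J/3$. The number of such (isomorphism types of) graphs on at most $\log n$ vertices is bounded, and for each fixed $J$ the expected number of copies in $G_{n,p}$ is at most $n^{v_J} p^{e_J} \le n^{v_J} p^{5 v_J / 3} = (n p^{5/3})^{v_J}$, and $n p^{5/3} \ll n \cdot n^{-1} = 1$ since $p \ll n^{-3/5}$; more precisely $n p^{5/3} = n^{1 - \omega(1)\cdot(3/5)\cdot\text{slack}}$... here I must be slightly more careful because $p \ll n^{-3/5}$ only gives $np^{5/3} = o(1)$ when the product telescopes, but since $v_J \ge 3$ and $np^{5/3} \to 0$, summing over all $v_J \in \{3, \dots, \log n\}$ gives a geometric-type series bounded by $O(np^{5/3}) = o(1)$. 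Thus a.a.s.\ no such $C'$ exists, giving~\ref{wb:sparse}. Wait---I should double-check the edge case where the minimal dense subgraph might have $v_J$ large but the union bound over \emph{unlabelled} graphs on $\le \log n$ vertices is superpolynomial (it is $2^{O((\log n)^2)}$), so I would instead union-bound over \emph{labelled} copies directly: the expected number of subgraphs $J$ of $G_{n,p}$ with $v_J = k$ and $e_J \ge 5k/3$ is at most $\binom{n}{k} \binom{\binom{k}{2}}{\lceil 5k/3 \rceil} p^{5k/3} \le n^k k^{2 \cdot 5k/3} p^{5k/3} = (n k^{10/3} p^{5/3})^k$, and since $k \le \log n$ and $p \ll n^{-3/5}$, we have $n k^{10/3} p^{5/3} \le n (\log n)^{10/3} p^{5/3} = o(1)$ (using $p^{5/3} \ll n^{-1} (\log n)^{-10/3}$, which follows from $p \ll n^{-3/5}$ with room to spare). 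Summing the resulting geometric series in $k$ gives $o(1)$.

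\medskip

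For condition~\ref{wb:small}, that $v(C) \le \log n$ for every collage $C \subseteq G_{n,p}$: here I would use the connectivity of the hypergraph structure together with the density bound just established. Since $C$ is connected in $H$, if $v(C) > \log n$ then $C$ contains a connected subhypergraph on a number of vertices just above $\log n$ (by peeling hyperedges one at a time, the vertex count drops by a bounded amount each step, so we can find a collage $C''$ with $\log n < v(C'') \le \log n + O(1)$). But a connected collage $C''$ built from $K_3$, $F_0^-$, $F_1^-$ hyperedges each overlapping the previous part in $\ge 1$ edge has $e(C'') \ge v(C'') - O(1)$ at the very least (connectivity forces $e \ge v - 1$ since even the graph is connected), but more usefully each hyperedge-attachment that introduces $a$ new vertices introduces at least, say, $a$ new edges when $a \ge 1$ (because $K_3$, $F_0^-$, $F_1^-$ attached along an edge are ``dense enough''); I'd verify this gluing inequality case-by-case, concluding $e(C'') \ge (1+c) v(C'')$ for some constant $c > 0$, whence $C''$ is a subgraph of $G_{n,p}$ on $\approx \log n$ vertices with $e(C'') \ge (1+c)v(C'')$, so certainly $e(C'')/v(C'') \ge 5/3$ if $c \ge 2/3$---and if the gluing constant is smaller than $2/3$ this direct route fails, so instead I would bound the expected number of such $C''$ directly by $(n \cdot \mathrm{poly}(\log n) \cdot p^{1+c})^{\log n}$ and use $p \le n^{-3/5}$, $1+c > 1$ to get $n p^{1+c} \le n \cdot n^{-3/5} = n^{2/5}$---which is \emph{not} $o(1)$, so this crude bound is insufficient and I'd need the true gluing density. \textbf{I expect the main obstacle to be precisely this:} pinning down the exact worst-case ratio of new-edges-to-new-vertices when gluing a copy of $K_3$, $F_0^-$, or $F_1^-$ onto an existing collage along a shared edge (or shared larger subgraph), and verifying that this ratio exceeds the threshold needed so that a collage on $\Theta(\log n)$ vertices has expected count $o(1)$ in $G_{n,p}$ for $p \ll n^{-3/5}$. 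The resolution is likely that one should not demand $e/v \ge 5/3$ for the \emph{whole} large collage, but rather observe that any collage with more than $\log n$ vertices must contain, as a subgraph, \emph{some} graph whose density exceeds $1$ by a definite amount \emph{and} whose expected count in $G_{n,p}$ with $p \ll n^{-3/5}$ is $o(1)$ after the $\log n$-fold product kicks in; alternatively, one leverages that $G_{n,p}$ with $p \ll n^{-3/5} \ll n^{-1/2}$ has all components of its ``$2$-core-like'' structure of bounded size, using standard results (e.g.\ Lemma~\ref{lem:randomgraph}~\ref{item:few F} applied to appropriate small graphs), so that no connected dense configuration can be large. I would present the argument via the labelled-copy union bound of the previous paragraph, extended to count connected configurations with $v = k$ up to $k = \log n$, and check that the per-vertex factor $n \cdot \mathrm{poly}(k) \cdot p^{\mu}$ with $\mu$ the correct per-vertex edge rate is $o(1)$.
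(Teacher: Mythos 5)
Your high-level instinct is correct: this should be a first-moment/union-bound argument, and $p\ll n^{-3/5}$ is indeed the critical regime for subgraphs of density $5/3$.  However, the argument as written has two genuine gaps.

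\textbf{First gap: the entropy of the union bound for condition~\ref{wb:sparse} is too large.}  You bound the expected number of $k$-vertex subgraphs with $\ge 5k/3$ edges by $\binom{n}{k}\binom{\binom{k}{2}}{\lceil 5k/3\rceil}p^{5k/3}$ and then claim the resulting per-vertex factor $n\,k^{\Theta(1)}p^{5/3}$ is $o(1)$ uniformly in $k\le\log n$, on the grounds that ``$p^{5/3}\ll n^{-1}(\log n)^{-10/3}$ follows from $p\ll n^{-3/5}$ with room to spare.''  This implication is false: $p\ll n^{-3/5}$ only means $p n^{3/5}\to 0$, which is compatible with, say, $p=n^{-3/5}/\log\log n$.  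For this $p$ one has $n p^{5/3}(\log n)^{10/3}=(\log\log n)^{-5/3}(\log n)^{10/3}\to\infty$, so the union bound diverges for $k$ near $\log n$.  Even the sharper bound $\binom{\binom{k}{2}}{5k/3}\le (3ek/10)^{5k/3}$ gives a per-vertex factor of order $n\,k^{5/3}p^{5/3}$, which still blows up for the same choice of $p$.  The crude entropy of ``choose $k$ vertices, then choose $\sim 5k/3$ edges among them'' is simply too large; the proof needs a union bound with a per-vertex factor of order $n$, with no polylogarithmic loss.

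\textbf{Second gap: no working argument for condition~\ref{wb:small}.}  You correctly identify the problem — a large collage need not itself be dense, and a naive per-vertex edge-rate estimate such as $n p^{1+c}$ does not converge — and you do not propose a complete resolution.

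\textbf{How the paper closes both gaps.}  The paper does not union-bound over all dense subgraphs or over all collages.  Instead, it algorithmically extracts from each bad collage $C$ (one with either $e(C)/v(C)\ge 5/3$ or $v(C)\ge\log n$) a canonical \emph{core} $C^*\in\Ccores$.  The core is built by starting from one edge and repeatedly attaching hyperedges of the collage hypergraph, always preferring a \emph{regular} attachment: a copy of $F_0^-$ intersecting the current core in a triangle, chosen to be rooted at the earliest available edge.  Each regular step adds $3$ vertices and $5$ edges — which keeps the deficit $3e-5v$ fixed — and is encodable with a single integer (the position of the root edge).  \emph{Degenerate} steps (any other attachment) are explicitly logged, and the algorithm terminates after at most $7$ of them or once $\log n$ vertices are accrued.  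Claim~\ref{clm:collages_density} then pins down the density of the output: $3e(C^*)-5v(C^*)$ is within an additive constant of the number of degenerate steps, so $e(C^*)\ge 5v(C^*)/3$ (or $\ge 5v(C^*)/3-3$ when $v(C^*)\ge\log n$).  Crucially, the three logs $(L_V,L_O,L_D)$ determine $C^*$ and there are at most $(2k)^{150}(8n)^k$ cores on $k$ vertices — a per-vertex factor of essentially $8n$, which is exactly tight enough to make $\sum_k(2k)^{150}(8n)^kp^{5k/3}$ and $\sum_{k\ge\log n}(2k)^{150}(8n)^kp^{5k/3-3}$ both $o(1)$ under $p\ll n^{-3/5}$ (the second because $(8np^{5/3})^{\log n}$ is superpolynomially small, which absorbs the $p^{-3}$).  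Both conditions of being well-behaved are thus handled by the single union bound over cores; the size bound~\ref{wb:small} drops out because the algorithm is deliberately run until the core reaches $\log n$ vertices, at which point the density is still nearly $5/3$.

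A small factual slip: $F_0^-$ has $8$ edges on $6$ vertices (so $m(F_0^-)=4/3$ is achieved by the whole graph), not $7$ as you wrote — but this does not affect your argument.
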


Our second lemma asserts that \emph{very} well-behaved collages can be coloured avoiding any copies of $C_{rrbb}$.

\begin{lem}
  \label{lem:collagescolorable}
  Every very well-behaved collage $C\in \mathcal{C}$  admits a \emph{very} good colouring.
\end{lem}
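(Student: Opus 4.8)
\textbf{Proof proposal for Lemma~\ref{lem:collagescolorable}.}

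The plan is to colour a very well-behaved collage $C$ edge by edge, maintaining a suitable invariant, and to use a discharging argument to show that the process never gets stuck. First, I would set up the basic structure: by Definition~\ref{def:collage}, $C$ is obtained by gluing together copies of $K_3$, $F_0^-$ and $F_1^-$ along shared edges in a connected fashion, and I want to produce a triangle-free colouring in which every edge not in a triangle is blue (property~\ref{item:mostly blue}) and which creates \emph{no} copy of $C_{rrbb}$ at all. By Corollary~\ref{cor:Crrbb containment}, once properties~\ref{item:no mc tris} and~\ref{item:mostly blue} hold, every $C_{rrbb}$ lives inside a copy of $F^-$ for some $F\in\{F_0,F_1,K_4\}$; so it suffices to check that no such copy of $F^-$ in $C$ ends up coloured as a $C_{rrbb}$ with the extra triangle-forming edge absent. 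The condition~\ref{wb:subgraph} that $C$ contains no $F$ with $(v_F,e_F)\in\{(4,6),(5,7),(8,12)\}$ exactly rules out the \emph{completed} graphs $K_4$, $F_1$ and $F_0$ sitting inside $C$ (so the dangerous edge is never present inside $C$), and also rules out denser local configurations; this is what makes the $C_{rrbb}$-threat purely a colouring constraint rather than an unavoidable one.

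The core of the argument is an ordering of the edges together with a discharging scheme. I would build $C$ up as a union $C = C_1 \cup C_2 \cup \dots \cup C_m$, where each $C_i$ is one of the `tiles' ($K_3$, $F_0^-$ or $F_1^-$), ordered so that $C_i$ shares at least one edge with $C_1 \cup \dots \cup C_{i-1}$ (possible by connectivity of the underlying subhypergraph of $H$). Assign to each tile an initial `charge' and to each edge a charge, chosen so that the total charge is controlled by $e(C) - \tfrac53 v(C)$, which is negative by the sparsity condition~\ref{wb:sparse} (applied to $C$ itself, and crucially to every sub-collage, so the bound is \emph{hereditary}). Then redistribute charge so that each tile receiving too little `new' structure (few new vertices relative to new edges) is forced, via the discharging rules, to overlap the previous tiles in a way that~\ref{wb:sparse} forbids. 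The upshot I want to extract is a structural dichotomy: at each step $i$, either $C_i$ contributes a new vertex of degree $1$ or $2$ into $C_1\cup\dots\cup C_i$ (giving us a free edge to colour blue, or a triangle we can 2-colour with a fresh colour choice), or the way $C_i$ attaches is so constrained that its colouring is determined and consistent. The sparsity bound $5/3$ is precisely the threshold below which every sub-collage has average degree $<10/3$, hence a vertex of degree $\le 3$, which drives the induction.

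With the structural information in hand, the colouring itself proceeds greedily over the tiles in this order, maintaining the invariant that the partial colouring is triangle-free, every non-triangle edge is blue, and no $C_{rrbb}$ has yet been created. When we add tile $C_i$: if $C_i=K_3$ and it is a genuinely new triangle, colour two of its edges one colour and the third the other (respecting already-coloured edges — at most two of its edges are old, and by the triangle-free invariant they are not both the same colour if the triangle is new, or the third edge's colour is forced); if $C_i = F_0^-$ or $F_1^-$, use the discharging-guaranteed low-degree new vertex to see that the new edges can be coloured blue unless they are forced into a triangle, and check the finitely many local cases to confirm no $C_{rrbb}$ arises. Since each tile is a fixed finite graph and the overlap patterns consistent with~\ref{wb:sparse} and~\ref{wb:subgraph} form a finite list, this last verification is a bounded case analysis.

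The main obstacle I anticipate is designing the discharging scheme so that it genuinely \emph{forces} the low-degree new vertex at every step — naively, a new $F_0^-$ tile could attach along two disjoint old edges and introduce several new vertices all of degree $2$ or $3$, which is fine, but a cleverer adversarial attachment might try to create a locally dense spot that is nonetheless globally sparse. Ruling this out is exactly where one needs~\ref{wb:sparse} applied to \emph{sub}-collages (not just to $C$), plus~\ref{wb:subgraph} to kill the small dense exceptions $(4,6),(5,7),(8,12)$; getting the bookkeeping of charges to interact correctly with these two conditions, and verifying that the finitely many surviving local configurations are all $C_{rrbb}$-free-colourable, is the delicate part of the proof. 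The bound $v(C)\le\log n$ from~\ref{wb:small} plays essentially no role in the colouring argument itself (it is there for the probabilistic Lemma~\ref{lem:collages}); I would not use it here.
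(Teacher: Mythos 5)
Your proposal takes a genuinely different route from the paper's proof, and it has a gap that I do not think can be closed in the form you describe.

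The paper does a minimal-counterexample argument. If $C$ is a smallest counterexample, then every proper subgraph admits a very good colouring (decompose into maximal collages and combine). The discharging is not over `tiles' $K_3,F_0^-,F_1^-$ but over \emph{blocks}: copies of $K_4^-$ together with triangles not lying in any $K_4^-$. This is a crucial technical choice, because condition~\ref{wb:subgraph} (no $K_4$, $F_2$, $F_3$) forces blocks to be pairwise edge-disjoint, so charge can be pushed onto them without double counting. Each vertex gets charge $5$, each edge $-3$, and a six-stage redistribution sends everything to blocks. Since $e(C)/v(C)<5/3$, some block $X$ has positive weight, and Claims~\ref{clm:Xistri} and~\ref{clm:Xisk4-} convert that positive weight into structural information: $X$ has an edge (or, for $K_4^-$, a pair of edges in one of its triangles) lying in no $4$-cycle of any copy of $F_0^-$ in $C$. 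One then \emph{removes} that edge (or pair), applies minimality to colour the rest very-goodly, and extends the colouring to the removed edges; Corollary~\ref{cor:Crrbb containment} plus the $F_0^-$-freeness of the removed edge and the absence of $F_1$/$K_4$/$F_2$/$F_3$ in $C$ rule out any new $C_{rrbb}$. The direction of this induction matters: the tricky extension step happens only at the very last one or two edges, with full knowledge of the rest of the colouring.

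Your plan inverts this: build $C$ up as $C_1\cup\cdots\cup C_m$ tile by tile and colour greedily, maintaining an invariant. This is where the gap sits. A greedy colouring of $C_i$ commits to colour choices that cannot be revisited, but the $C_{rrbb}$-avoidance constraint is not local: a later tile $C_j$ can complete a $C_{rrbb}$ using edges of $C_i$ whose colours were forced long ago, and you give no mechanism by which the discharging guarantees that a consistent choice existed at step $i$. The heuristic ``average degree $<10/3$, hence a vertex of degree $\le 3$, which drives the induction'' does not deliver what you need: a degree-$\le 3$ vertex does not by itself yield a free edge or a safely $2$-colourable triangle, and it says nothing about whether the colours of the old edges touching $C_i$ already determine a $C_{rrbb}$. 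Moreover, discharging onto tiles is awkward because tiles overlap (a single edge can lie in a $K_3$ and an $F_0^-$ and an $F_1^-$ simultaneously), so the bookkeeping you sketch is ill-defined without the edge-disjointness that the paper engineers via its notion of block. You also say that what you want to extract is that ``either $C_i$ contributes a new low-degree vertex\dots or the way $C_i$ attaches is so constrained that its colouring is determined and consistent'' --- but the second branch is exactly the statement that needs proof, and the first does not obviously preclude a $C_{rrbb}$ involving earlier tiles. The correct use of the sparsity condition~\ref{wb:sparse} (hereditary, as you note) is to guarantee a positive-weight block after a global discharge, not to guarantee local freedom at each greedy step; and the correct use of~\ref{wb:subgraph} is to make blocks edge-disjoint and to kill the specific dense $8$-vertex unions $F_5,F_5',F_6,F_6'$ in the analysis of a positive-weight $K_4^-$ block, neither of which appears in your sketch. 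You are right that~\ref{wb:small} plays no role in the colouring argument.
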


We remark that condition~\ref{wb:small} of Definition~\ref{def:well behaved} is in fact irrelevant here and we will prove that the conclusion of Lemma~\ref{lem:collagescolorable} holds for all collages that satisfy conditions~\ref{wb:sparse} and~\ref{wb:subgraph}.  Before proving these lemmas, let us see how they imply Theorem~\ref{thm:0 lower}.

\begin{proof}[Proof of Theorem~\ref{thm:0 lower}]
  By Proposition~\ref{prop:0-statements}, it suffices to show that a.a.s.\ $G_1$ admits a $t$-good colouring with $t \coloneqq n^3p^{7/2}$.  Let us assume the asymptotically-almost-sure conclusions of Lemma~\ref{lem:collages} and Lemma~\ref{lem:randomgraph}~\ref{item:few F} and also that $G_1$ is not $K_3$-Ramsey, which happens a.a.s.\ due to Theorem~\ref{thm:triangle ramsey threshold}~\ref{triangle ramsey 0 statement}.

  We colour the edges of $G_1$ according to the following scheme, where we define $\cC(G_1)$ to be the collection of collages $C \in \cC$ such that $C \subseteq G_1$:
  \begin{enumerate}
  \item Colour all maximal subgraphs $C\in \mathcal{C}(G_1)$ which are very well-behaved with a very good colouring (this is possible, due to Lemma~\ref{lem:collagescolorable}).
  \item Colour all the other maximal subgraphs in $\mathcal{C}(G_1)$ in a triangle-free way, such that all edges not in a triangle are blue (this is possible as $G_1$ is not $K_3$-Ramsey).
  \end{enumerate}
        
  We claim that the resulting colouring is $t$-good.  Firstly, note that all edges of $G_1$ are indeed coloured as every edge of $G_1$ lies in some maximal collage contained in $G_1$ (the collage may just consist of the single edge).  Clearly, all edges not in a triangle are coloured blue.  Moreover, as every triangle of $G_1$ lies in some maximal collage, the resulting colouring contains no monochromatic triangles.  It remains to show that there are at most $t$ copies of $C_{rrbb}$.  

  Corollary~\ref{cor:Crrbb containment} implies that any copy of $C_{rrbb}$ must lie in some copy of $F_1^-$, $F_0^-$, or $K_4^-$.  However, each of these graphs is contained in some maximal collage (they all induce connected subhypergraphs in $H$) and thus all copies of $C_{rrbb}$ are contained in maximal collages that are not very well-behaved.  It thus suffices to show that the total number of $4$-cycles that lie in such collages is at most $t$.  By the assumed conclusion of Lemma~\ref{lem:collages},  every $C\in \mathcal{C}(G_1)$ is well-behaved, and so if $C$ is not  very well-behaved, then it contains some copy of a subgraph $F$ with $v_F$ vertices and $e_F$ edges such that $(v_F,e_F)\in\{(4,6),(5,7),(8,12)\}.$  However, the assumed conclusion of  Lemma~\ref{lem:randomgraph}~\ref{item:few F} implies that there are at most
  \[
    n^4p^6\log n+\binom{\binom{5}{2}}{7}n^5p^7\log n+\binom{\binom{8}{2}}{12}n^8p^{12}\log n\leq 2^{10}n^5p^7\log n
  \]
  copies of such an $F$ in $G_1$ (using that $n^4p^6\ll n^8p^{12}\ll n^5p^7$ here).  Therefore, there are at most $2^{10}n^5p^7\log n$ 
maximal collages $C\in \cC(G_1)$ that are not very well-behaved. A collage $C\in \cC(G_1)$ contains at most $v(C)^4$ copies of 4-cycles, and each collage $C\in \cC(G_1)$ has at most $\log n$ vertices on account of it being well-behaved. So in total, there are at most $2^{10}n^5p^7 \cdot \log^5n\ll n^3p^{7/2}$ copies of $C_{rrbb}$ in our colouring of $G_1$, finishing our proof.
\end{proof}

It remains to prove Lemmas~\ref{lem:collages} and~\ref{lem:collagescolorable}. We begin by proving Lemma~\ref{lem:collages}.

\begin{proof}[Proof of Lemma~\ref{lem:collages}]
  Let $\Cbad$ be the collection of all $C \in \mathcal{C}$ such that either $e(C) / v(C) \ge 5/3$ or $v(C) \ge \log n$. It suffices to show that a.a.s.\ $G_1 \sim G_{n,p}$ does not contain any subgraphs in $\Cbad$.  In fact, we will focus on another  family $\Ccores$ of (nonempty) subgraphs of $K_n$ with the following properties:
  \begin{enumerate}[label=(\alph*)]
  \item
    \label{item:Ccores-1}
    Every set in $\Cbad$ contains some element of $\Ccores$.
  \item
    \label{item:Ccores-2}
    Every $C^* \in \Ccores$ satisfies $e(C^*) \ge 5v(C^*)/3$ or both $v(C^*) \ge \log n$ and $e(C^*) \ge 5v(C^*)/3-3$.
  \item
    \label{item:Ccores-3}
    For every $5\le k\le n$, there are at most $(2k)^{150}(16n)^k$ graphs $C^* \in \Ccores$ with $v(C^*) = k$.
  \end{enumerate}
  Assuming we can find such a family $\Ccores$ of subgraphs, we claim that we are done. Indeed, by~\ref{item:Ccores-1},
  \[ 
    \Pr[\exists C \in \Cbad : C \subseteq G_1] \le \Pr[\exists C^* \in \Ccores :C^* \subseteq G_1] \le \sum_{C^* \in \Ccores} p^{e(C^*)}.
  \]
  Moreover, by~\ref{item:Ccores-2} and~\ref{item:Ccores-3},
  \[
    \sum_{C^* \in \Ccores} p^{e(C^*)} = \sum^n_{k=5} \sum_{\substack{C^* \in \Ccores \\ v(C^*) = k}} p^{e(C^*)} \le \sum^{\log n}_{k =5} (2k)^{150} (16n)^k p^{5k/3}  + \sum^{n}_{k =\log n} (2k)^{150} (16n)^k p^{5k/3-3} \ll 1,
  \]
  where the last inequality follows from the assumption that $p \ll n^{-3/5}$.  We also used that~\ref{item:Ccores-2} easily implies that there are no $C^*\in \Ccores$  with less than 5 vertices.

  It remains to define a family $\Ccores$ of subgraphs of $K_n$ satisfying conditions~\ref{item:Ccores-1}--\ref{item:Ccores-3} above. First, we fix  some order $\sigma$ on $E(K_n)$. Now given a collage $C \in \Cbad$, we construct the `core' of $C$ algorithmically as follows.  We initiate our algorithm with logs $L_V$, $L_E$, $L_T$, $L_O$ and $L_D$ all being empty. Throughout the algorithm, we will have that $L_V$ is a sequence of distinct vertices in $V(C)\subseteq V(K_n)$, $L_E$ and $L_T$ are sequences of distinct edges in $E(C)\subseteq E(K_n)$, $L_O$ is a sequence of  integers  and $L_D$ is a sequence whose each entry indicates a time step $i\geq 0$ and some set of edges $F\subseteq E(C)\subseteq E(K_n)$. We will maintain, at the end of every time step $i\geq 0$ of the algorithm, that the set of vertices in $L_V$ and the set of edges in $L_E$ define a subgraph of $C$, which we denote as $C_i$ (and so $C_0$ is the empty graph). Moreover at the end of each time step $i\geq 0$,  the collection of edges featuring in $L_T$ will be precisely those edges in $L_E$ which are contained in a triangle in $C_i$.

  Now, in the first step of the algorithm, we take $e_1\in C$ to be the first edge in $C_1$ according to the order $\sigma$ on $E(K_n)$, add its endpoints (in an arbitrary order) to $L_V$ and add $e_1$ to $L_E$ (so that $C_1$ is the one-edge graph $e_1$). In every subsequent step $i\geq 2$, we do the following: 
  \begin{itemize}
      \item We terminate and output $C^*=C_{i-1}$  if one of the following is true:
        \[
          |L_D|=7, \qquad |L_V|\geq \log n \qquad \text{or} \qquad C_{i-1}=C.
        \] 
      \item
        Otherwise, since $C_{i-1} \neq C$ and $C$ is a collage, there must be a copy of $K_3$, $F_0^-$ or $F_1^-$ in $C$ that intersects $C_{i-1}$ in at least one edge but is not fully contained in $C_{i-1}$.  Call such a copy \emph{regular} if it is a copy of $F_0^-$ and it intersects $C_{i-1}$ in a triangle; otherwise, call it \emph{degenerate}.  We say that a regular copy of $F_0^-$ is \emph{rooted} at $e$ if $e$ belongs to its intersection with $C_{i-1}$ and it is the edge of the triangle that also participates in the (unique) $4$-cycle of $F_0^-$.
        \begin{itemize}
        \item If there exist regular copies of $F_0^-$, then to each copy associate a number $x\in \NN$ which is the position in $L_T$ of the edge $e_x$ that the copy is rooted at. Take a copy of $F_0^-$ that minimises this position and add this minimum $x$ to $L_O$ if the vertices of $e_x$ appear in $L_V$ in an order consistent with their  degrees in the copy of $F_0^-$. If, on the other hand,  the vertex  of $e_x$ which has degree 4 in the copy of $F_0^-$ appears in $L_V$ before the vertex of $e_x$ with degree 3, we add $-x$ to $L_O$. We also update $L_V$ and $L_E$ by appending  the vertices and the edges of our copy of $F_0^-$ that do not lie in $C_{i-1}$: the five such edges are added according to their relative order in $\sigma$ and the three vertices in some canonical order.  Note that this defines $C_i$ with $e(C_i) = e(C_{i-1}) + 5$ and $v(C_i) = v(C_{i-1})+3$. Finally, we add to $L_T$ all edges that are in a triangle in $C_i$ but not in a triangle in $C_{i-1}$, adding them in the order that they appear in $L_E$. 
          \item If there are no regular copies of $F_0^-$, fix some degenerate copy of $K_3$, $F_0^-$ or $F_1^-$, append to $L_V$ and to $L_E$ the vertices and the edges of this copy that do not lie in $C_{i-1}$: the edges are added according to their relative order in $\sigma$ and the vertices (if there are any) in an arbitrary order; this again defines $C_i$. We then add to $L_T$ all edges that are in a triangle in $C_i$ but not in a triangle in $C_{i-1}$, adding them in the order that they appear in $L_E$.  Finally, detail this degenerate step $i$ by logging it in $L_D$ along with the set of edges in $E(C_i)\setminus E(C_{i-1})$.
      \end{itemize}
    \end{itemize}
    Since each step increases $L_E$ by at least one, this algorithm terminates for any $C\in \Cbad$.  We may thus define $\Ccores$ as the set of all its outputs, that is, $\Ccores \coloneqq \{C^* : C \in \Cbad\}$.  This definition guarantees that $\Ccores$ satisfies~\ref{item:Ccores-1} above; we will show that it also satisfies~\ref{item:Ccores-2} and~\ref{item:Ccores-3}. First though we establish the following key estimate that bounds the distance of $e(C_i)$ from $5v(C_i)/3$ in terms of the number of degenerate steps (equivalently, the size of $|L_D|$) at the end of step $i$, which we denote by $d(i)$.

  \begin{clm}\label{clm:collages_density}
  For all $i\geq 1$, we have that  $
      d(i) \le 3e(C_i) - 5v(C_i) + 7\le 21d(i).$
  \end{clm}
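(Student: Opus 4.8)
The plan is to prove both inequalities simultaneously by induction on the step index $i$, carrying along the invariant $d(i) \le 3e(C_i) - 5v(C_i) + 7 \le 21\,d(i)$. For the base of the induction, consider any stage of the algorithm reached before the first degenerate step: there $d(i)=0$, and $C_i$ is obtained from the one-edge graph $C_1$ by a (possibly empty) sequence of regular steps. Since $3e(C_1)-5v(C_1)+7 = 3-10+7 = 0$ and every regular step appends exactly $5$ edges and $3$ vertices — hence leaves $3e-5v$ unchanged, as $3\cdot 5 - 5\cdot 3 = 0$ — we get $3e(C_i)-5v(C_i)+7 = 0 = 21\,d(i)$, so the invariant holds with equality throughout. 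For the inductive step, a regular step changes neither $d$ nor $3e-5v+7$, so the invariant is simply inherited; it therefore remains to treat a single degenerate step.

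Suppose step $i$ is degenerate, attaching a copy $J\in\{K_3,F_0^-,F_1^-\}$, and write $a$ and $b$ for the numbers of edges and vertices it appends to $L_E$ and $L_V$, so that $e(C_i)=e(C_{i-1})+a$, $v(C_i)=v(C_{i-1})+b$, and $d(i)=d(i-1)+1$. Granting the inductive hypothesis at step $i-1$, the entire claim reduces to the purely combinatorial inequality
\[
  1 \le 3a - 5b \le 21,
\]
valid for every degenerate step: the lower bound promotes $d(i-1)\le 3e(C_{i-1})-5v(C_{i-1})+7$ to $d(i)\le 3e(C_i)-5v(C_i)+7$, and the upper bound promotes $3e(C_{i-1})-5v(C_{i-1})+7\le 21\,d(i-1)$ to $3e(C_i)-5v(C_i)+7 \le 21\,d(i)$.

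To prove $1\le 3a-5b\le 21$, set $E_0\coloneqq E(J)\cap E(C_{i-1})$ and $V_0\coloneqq V(J)\cap V(C_{i-1})$, so that $a=e(J)-|E_0|$, $b=v(J)-|V_0|$, and
\[
  3a - 5b = \bigl(3e(J)-5v(J)\bigr) + 5|V_0| - 3|E_0|,
\]
where $3e(J)-5v(J)=-6$ for $J\in\{K_3,F_0^-\}$ and $=-4$ for $J=F_1^-$. Since $J$ is not fully contained in $C_{i-1}$ we have $|E_0|\ge 1$, and trivially $|V_0|\le v(J)$; hence $3a-5b\le 3e(J)-3 \le 21$, the last step using $e(J)\le e(F_0^-)=8$. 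For the lower bound, $|V_0|$ is at least the number $\sigma(E_0)$ of vertices covered by $E_0$ (additional shared vertices only increase $3a-5b$), so it suffices to check $5\sigma(E_0)-3|E_0|\ge 7$ when $J\in\{K_3,F_0^-\}$ and $\ge 5$ when $J=F_1^-$. Here $E_0$ ranges over nonempty proper subgraphs of $J$ and, \emph{crucially}, when $J=F_0^-$ it may not be a triangle — a copy of $F_0^-$ meeting $C_{i-1}$ in exactly a triangle triggers the \emph{regular} rule, not a degenerate step. A short finite case analysis — using that a triangle gives $5\sigma-3|E_0|=6$, that $F_0^-$ has no $K_4^-$ subgraph (so at most $4$ edges on any $4$ of its vertices) and $F_1^-$ has no $K_4$ subgraph, and checking the remaining small subgraphs — confirms these bounds; equality $5\sigma-3|E_0|=5$ occurs only for the $K_4^-$ inside $F_1^-$ on $\{u_1,u_2,u_3,w\}$, and $=7$ only for a single edge (and, for $F_0^-$, also its $6$-edge, $5$-vertex subgraph). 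This gives $3a-5b\ge 1$ in every case, closing the induction. The one genuinely delicate point is precisely this last step: the lower inequality is tight, and it is exactly the exclusion of a triangle-shaped intersection — already absorbed into the regular rule — that keeps $3a-5b$ from dropping to $0$ in the $F_0^-$ case, so the bookkeeping of $\sigma(E_0)$ and the case split must be done with care.
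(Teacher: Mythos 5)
Your proof is correct and follows the paper's inductive strategy exactly: regular steps leave $3e(C_i)-5v(C_i)$ unchanged, and each degenerate step increments $d$ by one while changing $3e-5v$ by some amount in $[1,21]$. The paper leaves the lower-bound case analysis to the reader ("considering $v=1,2,3,4$ and noting that one can assume $H'$ is induced"); you carry it out via the equivalent parametrization $5\sigma(E_0)-3|E_0|$, correctly pinpointing that excluding the $(F_0^-,K_3)$ intersection — already absorbed by the regular rule — is exactly what keeps the minimum at $1$ rather than $0$.
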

  \begin{claimproof}
  Both inequalities hold with equality when $i=1$ since $e(C_1)=1$, $v(C_1)=2$, and $d(1) = 0$.  Suppose that $i \ge 2$ and the claim holds for $i-1$.  If the $i$th step is regular, the claim continues to hold since $e(C_i) = e(C_{i-1}) + 5$, $v(C_i) = v(C_{i-1}) + 3$, and $d(i) = d(i-1)$.  If the $i$th step is degenerate, then there is some $H^{\prime} = H\cap C_{i-1}$ such that $H$ is a copy of $K_3$, $F_0^-$ or $F_1^-$ and  $H^{\prime}$ is a proper subgraph of $H$ that contains at least one edge and $(H,H^{\prime})\neq (F_0^-,K_3)$. In this case we have that $e(C_i) = e(C_{i-1}) + e, \ v(C_i) = v(C_{i-1}) + v$, where $e \coloneqq e(H)-e(H^{\prime})$ and $v \coloneqq v(H)-v(H^{\prime})$. For every such $H,H^{\prime}$, we have $1 \le 3e-5v \le 21$, and so the claim will hold also for $i$. Indeed the upper bound of $3e-5v \le 21$ follows from the fact that $e\le 7$ as  $H$ has at most $8$ edges and $H'$ is non-empty. The lower bound $3e-5v\geq 1$ follows from a simple case analysis considering $v=1,2,3,4$ and noting that one can assume that $H'$ is an induced subgraph of $H$. We leave the details to the reader.
\end{claimproof}

Property~\ref{item:Ccores-2} now follows from Claim~\ref{clm:collages_density}. Indeed, if $C^*=C$ and $v(C^*)<\log n$ then certainly $e(C^*)\geq 5v(C^*)/3$ as $C\in \Cbad$. If $C^*\neq C$ and  $v(C^*) < \log n$, then at the time $\tau$ at which the algorithm terminates we have that $C^*=C_{\tau-1}$ and $d(\tau-1)=7$ and so 
  $0\le 3e(C^*)-5v(C^*)$ as required. Finally if $v(C^*)\geq \log n$, the lower bound on $e(C^*)$ also follows from Claim~\ref{clm:collages_density}, using that, trivially, $0$ is a lower bound on the number of degenerate steps taken when the algorithm terminates.

 It remains to prove property~\ref{item:Ccores-3} of the collection $\cC^*$, so let us fix some $ 5\leq k \leq n$.  We bound the number of $C^*\in \Ccores$ with $v(C^*)=k$ as follows.  Firstly, we note that $C^*$ can be completely determined by the logs $L_V, L_O$ and $L_D$ when the algorithm terminates. That is, we can recover $L_E$  (and $L_T$) from these logs. Indeed, the first two vertices in $L_V$ determine the first edge in $L_E$. Now, suppose we have recovered $L_E$ and $L_T$ up to time $i-1$ and consider time step $i$. If the step is not degenerate (which we know from $L_D$), the new edges of the regular copy of $F_0^-$ are completely determined by the edge that the copy is rooted at, which we know from $L_O$ and our recovery of $L_T$ so far, which vertex of this edge has degree 4 in the copy of $F_0^-$, which we know from the sign of the entry in $L_O$ and the order of the vertices of this edge in $L_V$, and the next three vertices, which appear (in a canonical order) in $L_V$. Hence, we can add the new edges (in order according to the order $\sigma$ on $E(K_n)$) to $L_E$ and recover $L_E$ up to time $i$. If, on the other hand, the step $i$ is degenerate, then $L_D$ will signify this and indicate the new edges that need to be added to $L_E$. Again the order they are added is determined by $\sigma$. In either case, once we know $L_E$ at time $i$, we can recover $L_T$ up to time $i$ also. 
 
 This shows that in order to bound the number of $C^*$ with $v(C^*)=k$, it suffices to bound the number of possibilities for the logs $L_V,L_O$ and $L_D$ output by the algorithm with $|L_V|=k$. For the logs $L_V$, we use the simple upper bound that there are at most $n^k$ choices. For the logs $L_O$, note first that Claim~\ref{clm:collages_density} implies that any $C^*$ with $v(C^*)=k$ has \[e(C^*)\le \frac{5k+21\cdot 7}{3}\le 2k+49, \]
 using here that there are at most $7$ degenerate steps before the algorithm terminates. 
Now for any log $L_O$, let $L_O^{|\cdot|}$ be the log obtained from $L_O$ by replacing each entry by its  absolute value. In each instance of the algorithm, we therefore have that the log $L^{|\cdot|}_O$ is a nondecreasing sequence of natural numbers bounded by $e(C^*)\leq 2k+49$.  Moreover, the length of the sequence is the number of regular steps which is certainly less than $k$ and we can append repeated entries with value $2k+50$ to make all the sequences length $k$. Hence we can bound the number of possible $L^{|\cdot|}_O$ by the number of nondecreasing sequences of length $k$ with elements in $\{1, \dotsc, 2k+50\}$, which is $\binom{k+(2k+50)-1}{k}\le 2^{3k+50}$. The log $L_O$ can then be recovered from $L^{|\cdot|}_O$ by a choice of sign for the at most $k$ entries. Finally, each of the at most 7 entries of $L_D$ indicates a step (at most $k+7$ choices) and a selection of at most 7 edges which lie on vertices in $L_V$ (at most $k^2$ choices for each edge). Hence there at most $\sum_{j=0}^7((k+7)(\sum_{h=1}^7(k^2)^h))^j\le k^{140}$ choices for $L_D$. Combining our estimates of the number of choices of $L_V,L_O$ and $L_D$ gives~\ref{item:Ccores-3} and completes the proof of Lemma~\ref{lem:collages}.
\end{proof}

Finally, it remains to prove Lemma~\ref{lem:collagescolorable}, which is the subject of the rest of this section. In order to prove this, we use a novel `discharging'  method, similar in spirit to the method used in the recent work of Friedgut, Kuperwasser, Schacht and the third author~\cite{friedgut2022sharp} in proving sharp thresholds for Ramsey properties. 

\begin{proof}[Proof of Lemma~\ref{lem:collagescolorable}]
  Assume the contrary and let $C \in \mathcal{C}$ be a smallest counterexample.  We claim that every proper subgraph $D \subsetneq C$ admits a very good colouring.  Indeed, let $D = D_1 \cup \dotsb \cup D_t$ be the partition of $D$ into maximal collages and note each $D_i$ is also very well-behaved.  Since $C$ is a smallest counterexample, each $D_i$ admits a very good colouring.  We claim that the union of these colourings is a very good colouring of $D$.  Indeed, every triangle in $D$ is contained in some $D_i$ (as it is a maximal collage) and thus it is not monochromatic.  It is clear that every edge not in a triangle is coloured blue.  If there was a copy of $C_{rrbb}$ in $D$, it would lie in some copy of $F_0^-$, $F_1^-$ or $K_4^-$, by Corollary~\ref{cor:Crrbb containment}, and thus it would lie in some $D_i$ (as each of $F_0^-$, $F_1^-$ and $K_4^-$ induces a connected subhypergraph of $H$), a contradiction.

  Our aim is now to remove from $C$ a carefully chosen selection of edges and show that any very good colouring of the remaining subgraph (which exists, from above) can be extended to the removed edges while remaining very good, thus contradicting our assumption that $C$ is a counterexample.  In order to find such a  \emph{removable} set of edges, we define a discharging procedure which assigns weights to small subgraphs of $C$ that we call  \emph{blocks}.

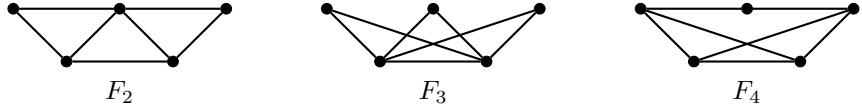
\begin{figure}[h]
\centering
\begin{tikzpicture}[vertex/.style={draw,circle,color=black,fill=black,inner sep=1,minimum width=4pt}]
    
	\pgfmathsetmacro{\scale}{0.7}    
    
    \node[vertex] (u1) at (0*\scale, 0*\scale) {};
    \node[vertex] (u2) at (-1*\scale, -1*\scale) {};
    \node[vertex] (u3) at (1*\scale, -1*\scale) {};

    \node[vertex] (w2) at (2*\scale, 0*\scale) {};
    \node[vertex] (w3) at (-2*\scale, 0*\scale) {};
    
    \draw[thick] (u1) to (u2);
    \draw[thick] (u2) to (u3);
    \draw[thick] (u1) to (u3);

	\draw[thick] (u1) to (w3);
	\draw[thick] (u1) to (w2);

	\draw[thick] (u2) to (w3);

	\draw[thick] (u3) to (w2);
	
	\node at (0, -1*\scale - 0.4) {$F_2$};
    
\end{tikzpicture}
\quad \quad \quad
\begin{tikzpicture}[vertex/.style={draw,circle,color=black,fill=black,inner sep=1,minimum width=4pt}]
    
    	\pgfmathsetmacro{\scale}{0.7}    
    
       \node[vertex] (u1) at (0*\scale, 0*\scale) {};
    \node[vertex] (u2) at (-1*\scale, -1*\scale) {};
    \node[vertex] (u3) at (1*\scale, -1*\scale) {};

    \node[vertex] (w2) at (2*\scale, 0*\scale) {};
    \node[vertex] (w3) at (-2*\scale, 0*\scale) {};
    
    \draw[thick] (u1) to (u2);
    \draw[thick] (u2) to (u3);
    \draw[thick] (u1) to (u3);

	\draw[thick] (u3) to (w3);
	\draw[thick] (u2) to (w2);

	\draw[thick] (u2) to (w3);

	\draw[thick] (u3) to (w2);
	
	\node at (0, -1*\scale - 0.4) {$F_3$};

\end{tikzpicture}
\quad \quad \quad
\begin{tikzpicture}[vertex/.style={draw,circle,color=black,fill=black,inner sep=1,minimum width=4pt}]
    
    	\pgfmathsetmacro{\scale}{0.7}    
    
       \node[vertex] (u1) at (0*\scale, 0*\scale) {};
    \node[vertex] (u2) at (-1*\scale, -1*\scale) {};
    \node[vertex] (u3) at (1*\scale, -1*\scale) {};

    \node[vertex] (w2) at (2*\scale, 0*\scale) {};
    \node[vertex] (w3) at (-2*\scale, 0*\scale) {};
    
    \draw[thick] (u1) to (w2);
    \draw[thick] (u2) to (u3);
    \draw[thick] (u1) to (w3);

	\draw[thick] (u3) to (w3);
	\draw[thick] (u2) to (w2);

	\draw[thick] (u2) to (w3);

	\draw[thick] (u3) to (w2);
	
	\node at (0, -1*\scale - 0.4) {$F_4$};

      \end{tikzpicture}
      \captionsetup{width=0.7\linewidth}
      \caption{The three graphs constructed from $K_4^-$ by connecting a pair of its vertices by a path of length two.}\label{fig:F23}
\end{figure}

To define our blocks, notice that condition~\ref{wb:subgraph} of Definition~\ref{def:well behaved} implies that $C$ does not contain copies of $K_4$ and the graphs $F_2$ and $F_3$ depicted in Figure~\ref{fig:F23}.  This in turn implies that every triangle in $C$ shares edges with at most one other triangle in $C$.  We define our \emph{blocks} $\cB=\cB(C)$ to be all copies of $K_4^-$ in $C$ and all triangles in $C$ that are not contained in a $K_4^-$; this definition guarantees that blocks are pairwise edge-disjoint. Fix an ordering $\sigma$ of $\cB$ such that every triangle in $\cB$ precedes every copy of $K_4^-$ and assign weights to the blocks in $\cB$ as follows:
\begin{enumerate}
	\item \label{stage:1} Assign weight $5$ to each vertex of $C$ and weight $-3$ to each edge. 
	\item \label{stage:2} For every $v\in V(C)$ contained in exactly one block, send its weight to this block. 
	\item \label{stage:3} For every vertex $v\in V(C)$ contained in more than one block, redistribute its weight equally to  the two smallest blocks  that contain $v$ according to the ordering $\sigma$ on $\cB$. Note that in this step, if $v$ is a vertex in $i\geq 0$  triangles in $\cB$, then   $i'\coloneqq\min\{i,2\}$ triangles containing $v$ and $2-i'$ copies of $K_4^-$ containing $v$ increase their  weight  by $5/2$.
	\item \label{stage:4} For every $e\in E(C)$ contained in a block, redistribute its weight to the block containing it (recall that blocks are pairwise edge-disjoint).
	\item \label{stage:5} For every $v\in V(C)$ not contained in a block, redistribute its weight equally to the edges incident to~$v$ (note that these edges were not yet handled, since they are also not in a block).
	\item \label{stage:6} For every $e\in E(C)$ not contained in a block, redistribute its weight in the following way: Since $e$ belongs to a copy of $F_0^-$ or $F_1^-$, at least one of its endpoints must also be a vertex in a block.
          \begin{enumerate}
          \item
            If only one of the endpoints belongs to some block, distribute $e$'s weight equally among all the blocks it belongs to.
          \item
            Otherwise, split $e$'s weight equally among its two endpoints and, for each of the endpoints, distribute the weight equally among all its blocks.	
          \end{enumerate}
\end{enumerate}

Note that by the end of this process, the total weight of all the edges and vertices in $C$ has been redistributed to $\cB$, and the total weight remains unchanged.  By our assumption that $e(C)/v(C) < 5/3$, the total weight of the vertices and edges before redistribution to blocks was positive, and therefore so is the total weight of all the blocks.  Therefore, $C$ must contain (at least) one positive-weight block $X\in \cB$. We will split the argument into two cases, depending on whether $X$ is a copy of $K_3$ or $K_4^-$.  In each case, we will find a removable set of edges.  Before embarking on this, we prove a technical claim that allows us to reason  about the weight of $X$ by inspecting the graph $C$ locally, without knowledge of the whole of $C$.

\begin{clm} \label{clm:local block}
  Suppose that $C'\subseteq C$ satisfy $\cB(C')=\cB(C) \eqqcolon \cB$ and let $w_C, w_{C'} \colon \cB \to \mathbb{R}$ be the weight assignments defined by the above process on $C$ and $C'$, respectively (with the same order $\sigma$ on~$\cB$). Then $w_C(X) \le w_{C^{\prime}}(X)$ for each $X \in \cB$.
\end{clm}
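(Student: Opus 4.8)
The plan is to track, term by term, how the discharging process distributes weight to a fixed block $X \in \cB$, and argue that passing from $C'$ to the larger graph $C$ can only divert weight \emph{away} from $X$, never towards it. Concretely, I would compare the two runs of the process (on $C'$ and on $C$) stage by stage, using the hypothesis $\cB(C') = \cB(C)$ to guarantee that ``block membership'' of a vertex $v$ depends only on which blocks of $\cB$ it lies in, and this is the same in both graphs. Since $C' \subseteq C$, we have $V(C') \subseteq V(C)$ and $E(C') \subseteq E(C)$, and in particular every vertex and edge that contributes weight to $X$ in the run on $C$ that \emph{also} lies in $C'$ contributes at least as much in the run on $C'$; the point is to check that no vertex or edge contributes \emph{strictly more} to $X$ in $C$.

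\medskip

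\textbf{Step 1: vertices in exactly one block.} If $v \in V(C)$ lies in exactly one block and that block is $X$, then since $\cB(C')=\cB(C)$, $v$ lies in exactly one block of $C'$ as well (it must lie in $X$, as $X$ is a block of both and $v \in X$ forces $v \in V(C')$ because $X \subseteq C'$), so $v$ contributes its full weight $5$ to $X$ in \emph{both} runs. Here I would note the key subtlety: a vertex $v$ could lie in exactly one block in $C$ but (vacuously) we need $v \in V(C')$ — this holds precisely because $v \in X$ and $X \subseteq C' \subseteq C$ means $v \in V(C')$. So stages~\ref{stage:2} contribute equally.

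\medskip

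\textbf{Step 2: vertices in several blocks (stage~\ref{stage:3}).} If $v$ lies in more than one block, it sends weight $5/2$ to each of the two $\sigma$-smallest blocks containing it. Since the set of blocks containing $v$ is the same in $C$ and $C'$ (again using $\cB(C')=\cB(C)$, together with $v \in V(C')$ which follows from $v$ lying in some block $Y \subseteq C'$), and the order $\sigma$ is the same, $v$ sends weight to $X$ in the run on $C$ if and only if it does so in the run on $C'$, and the amount is identical. So stage~\ref{stage:3} also contributes equally to $X$.

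\medskip

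\textbf{Step 3: edges, and the non-block vertices/edges (stages~\ref{stage:4}--\ref{stage:6}).} An edge $e$ contained in block $X$ lies in $C'$ (as $X \subseteq C'$) and sends its full weight $-3$ to $X$ in both runs (stage~\ref{stage:4}). The remaining contributions to $X$ come from stage~\ref{stage:6}: an edge $e \notin \cB$ with an endpoint in $X$ may send (part of) its $-3$ weight to $X$. Here is where $C$ and $C'$ can genuinely differ. Such an edge $e$ carries \emph{negative} weight, so $X$ is \emph{better off} receiving less of it. In the run on $C$, the endpoint(s) of $e$ lying in blocks may lie in \emph{more} blocks than in $C'$ (new blocks can only appear, none disappear, when we pass from $C'$ to $C$ — wait, $\cB(C')=\cB(C)$ so the block \emph{set} is identical, but $e$ itself may not be in $C'$ at all). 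I split into cases: (a) if $e \in E(C')$, then the number of blocks containing each endpoint of $e$ is the same in both runs, so $e$ sends the same (negative) amount to $X$ in both; (b) if $e \in E(C) \setminus E(C')$, then $e$ contributes nothing to $X$ in the run on $C'$, while it contributes something $\le 0$ in the run on $C$ — so $w_C(X) \le w_{C'}(X)$ from this source. Similarly for stage~\ref{stage:5}: a vertex $v \notin \bigcup\cB$ sends its positive weight $5$ to its incident edges, not to any block, so it never affects $X$ directly; but I should check it does not indirectly \emph{increase} what $X$ receives — it only feeds edges, and those edges then redistribute via stage~\ref{stage:6}, which as argued can only hurt $X$ in $C$ relative to $C'$.

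\medskip

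\textbf{The main obstacle} I anticipate is bookkeeping in stage~\ref{stage:6}: an edge $e$ with one endpoint $a$ in blocks and one endpoint $b$ in blocks splits its weight between $a$ and $b$ and then among the blocks at each. If $b \in V(C) \setminus V(C')$, then in the run on $C'$ the edge $e$ is absent entirely (since $e \notni V(C')$), so again $X$ receives $0$ from it in $C'$ and $\le 0$ in $C$ — consistent with the inequality. The genuinely delicate point is to make sure that there is \emph{no} mechanism by which the larger graph $C$ routes \emph{more positive} weight to $X$: the only positive weights are on vertices (weight $5$), and a vertex's weight in stages~\ref{stage:2}--\ref{stage:3} goes to blocks containing it via a rule depending only on $\cB$ and $\sigma$, hence is identical in both runs, while in stage~\ref{stage:5} a non-block vertex's weight never reaches a block directly. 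So the difference $w_{C'}(X) - w_C(X)$ equals the total (nonnegative, since each such term is $\ge 0$ — it is $-(\text{negative edge weight})$ or zero) contribution that edges present in $C$ but absent from $C'$ would have sent to $X$, which is $\ge 0$. This gives $w_C(X) \le w_{C'}(X)$, as claimed. I would organise the write-up as: (i) observe $V(C') \subseteq V(C)$, $E(C') \subseteq E(C)$ and that ``block-set of a vertex'' is run-independent; (ii) dispatch stages~\ref{stage:2},~\ref{stage:3},~\ref{stage:4} as giving equal contributions; (iii) argue stages~\ref{stage:5},~\ref{stage:6} give a contribution to $X$ in $C$ that is $\le$ the contribution in $C'$, because the only difference is extra negative-weight edges in $C$, each sending $\le 0$ to $X$.
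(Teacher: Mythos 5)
Your overall strategy — compare the two runs stage by stage, use $\cB(C) = \cB(C')$ to dispatch stages~\ref{stage:1}--\ref{stage:4} as giving identical block weights, and argue that the remaining stages can only push $X$'s weight down when passing from $C'$ to $C$ — is exactly the paper's. However, your case~(a) contains a genuine error. You claim that an edge $e \in J \cap E(C')$ not lying in a block ``sends the same (negative) amount to $X$ in both'' because the block-membership of its endpoints agrees. This only shows that the \emph{fraction} of $e$'s weight routed to $X$ in stage~\ref{stage:6} is the same; it says nothing about the \emph{weight} that $e$ actually carries at that point. That weight was set in stage~\ref{stage:5}: if $e$ has a (unique) non-block endpoint $v$, then $w_H(e) = -3 + 5/d_H(v)$, and since $C' \subseteq C$ gives $d_{C'}(v) \le d_C(v)$, a shared edge can have \emph{strictly smaller} weight in $C$. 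Your later sentence deferring the stage~\ref{stage:5} effect to ``as argued'' in the stage~\ref{stage:6} analysis is therefore circular, and your closing statement that $w_{C'}(X) - w_C(X)$ ``equals'' the contribution of edges in $C \setminus C'$ is false as an equality (it is only a lower bound).

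Fortunately this error points the right way, so the conclusion survives, but the argument needs two more lines to be watertight: (i) for $e \in J \cap C'$, $w_C(e) = -3 + 5/d_C(v) \le -3 + 5/d_{C'}(v) = w_{C'}(e)$, so $X$ receives \emph{at most} as much from $e$ in $C$ as in $C'$; and (ii) every $v \in V(C)$ has $d_C(v) \ge 2$ (each vertex of a collage lies in a $K_3$, $F_0^-$, or $F_1^-$, all of which have minimum degree $2$), whence $w_C(e) \le -1/2 < 0$, so the edges in $J \setminus C'$ make a nonpositive contribution. You assert the negativity of the edge weight without checking it, which is the second (minor) gap. With (i) and (ii) in place, the stage~\ref{stage:6} comparison goes through cleanly, which is precisely how the paper closes the proof.
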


\begin{claimproof}
  Since stages~\ref{stage:1}--\ref{stage:4} depend only on the set of blocks, by the end of stage~\ref{stage:4}, and $\cB(C) = \cB(C')$, all the vertices, edges and blocks in $C$ and $C'$ have the same weight.  Now let $J$ be the graph comprising all the edges of $C$ that do not lie in a triangle (and so have not been dealt with by the end of Stage~\ref{stage:4} of the process).  It is enough to show that, after stage~\ref{stage:5}, the $C'$-weight of each edge of $J \cap C'$ is at least as large as its $C$-weight and that the $C$-weight of each edge of $J$ is at most $-1/2$.  This implies the assertion of the claim, as in stage~\ref{stage:6}, the change in weight of every block depends only on the edges of $J$ and their $C$-weights are negative and never larger than their $C'$-weights.

  Pick an arbitrary $e \in J$.  If both endpoints of $e$ lie in blocks, its weight is $-3$, in both processes.  Otherwise, exactly one endpoint of $e$ does not lie in a block.  If we denote this endpoint by $v$, then, for both $H \in \{C, C'\}$, the $H$-weight of $e$ at the end of stage~\ref{stage:5} is $-3+5/d_H(v)$.  Since $d_{C'}(v) \le d_C(v)$ for all $v\in V(C')$ and $d_C(v) \ge 2$ for all $v\in V(C)$, the $C$-weight of $e$ is at most $-1/2$ and not larger than its $C'$-weight.
\end{claimproof}

\begin{clm} \label{clm:Xistri}
  If $X$ is a positive-weight copy of $K_3$, then one of its edges $e \in E(X)$ is not in a $4$-cycle in a copy of $F_0^-$ in $C$.
\end{clm}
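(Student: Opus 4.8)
The plan is to argue by contradiction: suppose that every edge of $X = abc$ lies in the (unique) $4$-cycle of some copy of $F_0^-$ in $C$, and derive a contradiction by combining a weight estimate for $X$ with a local analysis of $F_0^-$. First I would read off what a positive weight forces. Each of the three edges of $X$ sends weight $-3$ to $X$ in stage~\ref{stage:4} (blocks are pairwise edge-disjoint), each vertex of $X$ sends to $X$ either $5$ (precisely when it lies in no block other than $X$, in which case I call it \emph{private}), or $5/2$ (when it lies in at least two blocks and $X$ is among its two smallest in the order $\sigma$), or $0$; and stage~\ref{stage:6} only removes weight from $X$, since after stage~\ref{stage:5} every non-block edge still has negative weight. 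Hence $w_C(X)\le -9+15 = 6$, and $w_C(X)>0$ forces the three vertices of $X$ to contribute strictly more than $9$ in total. Enumerating sums of three elements of $\{0,5/2,5\}$ exceeding $9$, this means that either \textbf{(a)} at least two vertices of $X$ are private, or \textbf{(b)} exactly one vertex of $X$ is private and the other two each contribute $5/2$ (hence each lies in at least two blocks).

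Next I would record the relevant local structure of a copy $\Phi\cong F_0^-$ in $C$, labelled as in Figure~\ref{fig:F0} with $u_2u_3$ the missing triangle edge. Its unique $4$-cycle is $u_1u_2w_1u_3$; its only triangles are $u_1u_2w_3$ and $u_1u_3w_2$; the vertex $u_1$ lies in both of these, the vertices $u_2,u_3,w_2,w_3$ each lie in exactly one, and $w_1$ lies in neither. Condition~\ref{wb:subgraph} of Definition~\ref{def:well behaved} -- via the observation, already used in the proof of Lemma~\ref{lem:collagescolorable}, that every triangle of $C$ shares an edge with at most one other triangle and hence lies in a block -- then gives: if a private vertex $v$ of $X$ happens to be a vertex of $\Phi$, then $v\neq u_1$ (else $v$ lies in two distinct triangles of $\Phi$, hence in two blocks), and if $v$ lies in a triangle of $\Phi$ at all, that triangle must equal $X$.

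For case~\textbf{(a)}, say $a$ and $b$ are private, and suppose $ab$ lies in the $4$-cycle of a copy $\Phi\cong F_0^-$. By the above, neither $a$ nor $b$ is the vertex $u_1$ of $\Phi$, so $ab$ is one of the two $4$-cycle edges avoiding $u_1$, namely $u_2w_1$ or $w_1u_3$; thus one of $a,b$ equals $w_1$ and the other equals $u_2$ or $u_3$. But $u_2$ (resp.\ $u_3$) lies in the triangle $u_1u_2w_3$ (resp.\ $u_1u_3w_2$) of $\Phi$, which must then equal $X$, forcing $w_1$ to coincide with $u_1$ or $w_3$ (resp.\ with $u_1$ or $w_2$) -- impossible, since these are distinct vertices of $\Phi$. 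Hence an edge of $X$ joining two private vertices lies in no $F_0^-$-$4$-cycle, contradicting the hypothesis and settling case~\textbf{(a)}.

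The remaining case~\textbf{(b)} is where I expect the real difficulty to lie. Here $a$ is the unique private vertex and $b,c$ each lie in at least two blocks; the edges of $X$ to examine are the two at $a$, say $ab$ and $ac$, which under the contradiction hypothesis lie in $4$-cycles of copies $\Phi_1,\Phi_2\cong F_0^-$. Applying the local analysis to $\Phi_1$: since $a$ is private and $a\neq u_1$, either $a$ plays the role of $w_1$ in $\Phi_1$, in which case the third vertex $c$ of $X$ must lie outside $\Phi_1$ (otherwise $\Phi_1$ together with the edges of $X$ already creates a subgraph with $(v_F,e_F)\in\{(4,6),(5,7),(8,12)\}$ -- for instance a copy of $F_0$ itself -- contradicting condition~\ref{wb:subgraph}), or $a$ plays the role of $u_2$ or $u_3$, which forces $X$ to be a triangle of $\Phi_1$ and $b$ to be the degree-$4$ vertex of $\Phi_1$; symmetrically for $\Phi_2$ with $ac$. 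In every resulting combination I would then bring in a third copy $\Phi_3\cong F_0^-$ whose $4$-cycle contains the last edge $bc$, and finish by a careful vertex/edge count on $X\cup\Phi_1\cup\Phi_2\cup\Phi_3$: tracking which of its at most $15$ vertices can coincide, and using that $b$ and $c$ are non-private to constrain their roles in the $\Phi_i$, one shows that this union either contains a subgraph with $(v_F,e_F)\in\{(4,6),(5,7),(8,12)\}$ or is a sub-collage of density at least $5/3$, in both cases contradicting conditions~\ref{wb:sparse} and~\ref{wb:subgraph} of Definition~\ref{def:well behaved}. The main obstacle is precisely this last coincidence bookkeeping.
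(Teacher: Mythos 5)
Your initial reduction to cases \textbf{(a)} and \textbf{(b)} via the direct bound $w_C(X)\le -9+15$ is sound (note that the paper instead passes through Claim~\ref{clm:local block} and a carefully grown subgraph $C'$, which buys sharper local estimates), and your argument in case~\textbf{(a)} is correct. However, your case~\textbf{(b)} is left genuinely unfinished, and -- as you yourself flag -- this is precisely the hard case: in fact your case~\textbf{(a)} never arises, since (as the paper observes) the hypothesis that all three edges of $X$ lie in $F_0^-$-$4$-cycles already forces at least two vertices of $X$ to lie in blocks other than $X$, so at most one can be private. Thus the entire content of the claim sits in case~\textbf{(b)}, and you have not proved it.

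Beyond being incomplete, your proposed route for case~\textbf{(b)} -- a vertex/edge count showing $X\cup\Phi_1\cup\Phi_2\cup\Phi_3$ either contains a forbidden $(4,6)$/$(5,7)$/$(8,12)$ subgraph or has density $\ge 5/3$ -- does not appear to go through without substantial new ideas. A $C_{rrbb}$-sharing union of three copies of $F_0^-$ with the triangle $X$ can easily stay below density $5/3$ (e.g.\ with all $\Phi_i$ pairwise disjoint outside $X$ one gets at most $24$ edges on $15$ vertices), and in concrete sub-cases (say $a$ playing $u_2$ in both $\Phi_1,\Phi_2$) the union $\Phi_1\cup\Phi_2$ already has density well under $5/3$ and no obvious forbidden subgraph; it is not clear that $\Phi_3$ closes the gap. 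Also, your parenthetical example ``a copy of $F_0$ itself'' is not in the forbidden list (it has $(v,e)=(6,9)$, not $(4,6),(5,7)$ or $(8,12)$), though the sub-case it was meant to illustrate can be salvaged by locating a $5$-vertex, $7$-edge subgraph. The paper resolves case~\textbf{(b)} by an entirely different and more local device: using Claim~\ref{clm:local block}, it adds a single non-block edge $xv$ at the private vertex to $C'$ and then traces its weight through stages~\ref{stage:5}--\ref{stage:6}, splitting on whether $v$ lies in a block, whether $d_{C'}(v)\ge 3$, and whether the two relevant $F_0^-$-$4$-cycles share $xv$ -- a discharging-style accounting rather than a forbidden-subgraph count. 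You would need to either reproduce something of this flavour or supply the full ``coincidence bookkeeping'' you allude to; as written, the proof is not complete.
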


\begin{clm} \label{clm:Xisk4-}
  Suppose $X$ is a positive-weight copy of $K_4^-$ with edges $e_1,f_1,e_2,f_2,g$ such that $e_i,f_i$ and $g$ form a triangle for $i=1,2$. Then there exists an $i\in\{1,2\}$ such that neither $e_i$ nor $f_i$ belong to a $4$-cycle in a copy of $F_0^-$ in $C$. 
\end{clm}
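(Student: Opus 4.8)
\textbf{Proof proposal for Claim~\ref{clm:Xisk4-}.}

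The plan is to argue by contradiction, exactly in the same spirit as the (anticipated) proof of Claim~\ref{clm:Xistri}. So suppose that $X$ is a copy of $K_4^-$ on vertices $\{a,b,c,d\}$, say with $g=ab$ the central edge, $e_1=ac$, $f_1=bc$, $e_2=ad$, $f_2=bd$, and suppose that for \emph{each} $i\in\{1,2\}$ at least one of $e_i,f_i$ lies in a $4$-cycle inside a copy of $F_0^-$ in $C$. By Claim~\ref{clm:local block}, we may replace $C$ by any subgraph $C'\subseteq C$ with $\cB(C')=\cB(C)$ without increasing $w_C(X)$; the strategy is therefore to retain only the blocks of $C$ together with, for each of $e_1$ (or $f_1$) and $e_2$ (or $f_2$), a single minimal witnessing configuration, namely the three extra edges and (at most) one extra vertex of the relevant copy of $F_0^-$ that are not already in $X$. (Recall that the $4$-cycle of an $F_0^-$ through a given edge $xy$ uses two further edges $xw,yw'$ with $w\ne w'$ plus the edge $ww'$; wait — more carefully, an $F_0^-$ consists of a triangle $u_1u_2u_3$ together with three outer vertices $w_1,w_2,w_3$ and the six edges $u_iw_j$ pattern of Figure~\ref{fig:F0}, and a $4$-cycle of $F_0^-$ has the form $u_iw_ju_kw_l$; so an edge $e$ of $X$ lying in such a $4$-cycle is one of the ``$u_iw_j$'' edges, and the witnessing configuration adds the other three edges of that $4$-cycle together with the vertices of the $F_0^-$ not already present.) Since we only ever need an edge of $X$ to be \emph{in a $4$-cycle} of an $F_0^-$, we can keep just these bounded-size gadgets.

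The key point is then a local weight computation on this reduced graph $C'$. In $C'$, the block $X$ is a copy of $K_4^-$ with $4$ vertices and $5$ edges; by stages~\ref{stage:1}--\ref{stage:4} it starts with weight $4\cdot 5 - 5\cdot 3 = 5$, minus whatever is siphoned off by vertices shared with \emph{other} blocks. Here I would use the subgraph restrictions in condition~\ref{wb:subgraph} of Definition~\ref{def:well behaved}: $C$ (hence $C'$) contains no $K_4$ and no copies of $F_2,F_3$ of Figure~\ref{fig:F23}, which (as already noted in the excerpt) forces every triangle to share edges with at most one other triangle; in particular the two triangles of $X$ are the only triangles through the edges $e_1,f_1,e_2,f_2,g$, and no vertex of $X$ lies in a \emph{third} block unless forced by one of the $F_0^-$-gadgets. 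One checks that each of $a,b,c,d$ lies in at most two blocks of $C'$ (the two triangles of $X$, or $X$ itself and possibly one triangle coming from an outer vertex of a gadget — but an $F_0^-$ gadget attached along an edge of $X$ creates no new triangle, since $F_0^-$ is triangle-containing only at its own central triangle which would then have to be inside $X$, contradicting edge-disjointness of blocks). So in the worst case each of the (at most) four vertices of $X$ gives away half of its weight $5/2$ to $X$ via stage~\ref{stage:3}; but $X$ is a copy of $K_4^-$, and by the chosen order $\sigma$ every triangle precedes every $K_4^-$, so at a shared vertex $X$ is \emph{not} among the two smallest blocks whenever there are two triangles through that vertex — and the only triangles through $a,b$ are the two triangles of $X$, which are not in $\cB$ since $X$ itself \emph{is} the block. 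Tracking this carefully shows that $X$ retains weight exactly $5$ from stages~\ref{stage:1}--\ref{stage:4} and receives a nonpositive contribution from the edges $e\in J$ in stage~\ref{stage:6} (each such edge has $C$-weight at most $-1/2$ by the argument in Claim~\ref{clm:local block}). Hence, \emph{if} the two $F_0^-$-gadgets force enough edges of $J$ to dump weight onto $X$, we reach $w_{C'}(X)\le 0$, contradicting $w_C(X)>0$ via Claim~\ref{clm:local block}.

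Concretely, I would show that each assumed $F_0^-$-gadget (one witnessing $e_1$ or $f_1$, one witnessing $e_2$ or $f_2$) contributes at least three edges of $J$ each of whose weight is routed, at least in part, into $X$ at stage~\ref{stage:6}, and that the total negative weight thereby delivered to $X$ is at least $-5$, so that $w_{C'}(X)\le 5 + (-5) + (\text{nonpositive}) \le 0$. The reason the edges of the gadget land on $X$ is that each such edge of the $4$-cycle has an endpoint on $X$ (it is a ``$u_iw_j$''-type edge with $u_i\in\{a,b,c,d\}$), and an endpoint on $X$ that lies in no other block routes all of that edge's weight to $X$; even when the other endpoint also lies in a block, at least a positive fraction of the $-3$ reaches $X$. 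A careful bookkeeping of the two gadgets — taking into account that they might share the outer vertex, which only makes the deficit larger, not smaller — yields the contradiction.

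\textbf{Main obstacle.} The delicate part is the casework in stage~\ref{stage:6}: an edge $e\in J$ of a gadget could have \emph{both} endpoints in blocks (possibly both on $X$, possibly one on $X$ and one on a triangle coming from another gadget), and then its $-3$ weight is split and only a fraction reaches $X$; I must verify that in \emph{every} such configuration the amount reaching $X$, summed over the (at least six) gadget-edges from the two $F_0^-$'s, is still at least $5$ in absolute value. This requires using condition~\ref{wb:subgraph} to rule out the configurations where too much weight leaks away (these are exactly the dense configurations of the forbidden types $(4,6),(5,7),(8,12)$, or small $K_4^-$-plus-path graphs), together with the ordering trick that a $K_4^-$ block is always ``last'' and so never captures weight at stage~\ref{stage:3} when a competing triangle is present. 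Getting this accounting tight — rather than merely ``close'' — is where the real work lies.
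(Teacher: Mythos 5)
Your proposal sits in the right family (reduce to a subgraph $C'$ with $\cB(C')=\cB(C)$ via Claim~\ref{clm:local block}, then compute $w_{C'}(X)$), but it contains a factual error and, more importantly, misses a key structural ingredient that the weight calculation alone cannot replace.

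\textbf{The factual error.} You assert that ``$F_0^-$ is triangle-containing only at its own central triangle'' to conclude that the gadgets create no new blocks through the vertices of $X$, so that $X$ ``retains weight exactly $5$ from stages~\ref{stage:1}--\ref{stage:4}.'' But $F_0^-$ is $F_0$ with only \emph{one} edge ($u_2u_3$) of the inner triangle removed; the two triangles $u_1u_2w_3$ and $u_1u_3w_2$ survive. These triangles do pass through vertices of the $4$-cycle, and so the hypothesis that each of $e_1$ (or $f_1$) and $e_2$ (or $f_2$) sits in a $4$-cycle of some $F_0^-$ \emph{forces} at least one vertex of $X$ to lie in a block other than $X$ (necessarily a triangle). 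So the starting weight $5$ is unjustified and in fact wrong; the paper's proof makes essential use of the case distinction on how many vertices of $X$ are shared, pinning this down to exactly one, and to $y$ or $z$ specifically.

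\textbf{The missing ingredient.} Your plan is to get the contradiction purely from weight bookkeeping in stage~\ref{stage:6}. The paper's proof reveals this cannot work on its own: after the weight argument, one is left with a surviving configuration in which $y$ lies in exactly two blocks, $X$ and a single triangle block $X'$, and both $e_1$ and $e_2$ lie in $F_0^-$-$4$-cycles $C_1, C_2$ through $y$ and $X'$. In this case $w_{C'}(X)=5/2>0$ for the paper's $C'$ (the union of all triangles, for which stages~\ref{stage:5}--\ref{stage:6} are vacuous), and there is no guarantee that enlarging $C'$ with gadget edges pushes the weight below zero. The paper instead kills this case by a forbidden-subgraph argument: $X\cup X'\cup C_1\cup C_2$ contains a copy of $F_4$ (if the two $4$-cycles share their third vertex) or one of $F_6,F_6'$ (Figure~\ref{fig:Xisk4-}) otherwise, and the intermediate step that $X'$ is a triangle rather than a $K_4^-$ uses $F_5,F_5'$-freeness; each of these graphs falls under the forbidden pairs $(v_F,e_F)\in\{(4,6),(5,7),(8,12)\}$ of condition~\ref{wb:subgraph} of Definition~\ref{def:well behaved}. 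The ``main obstacle'' you flag is exactly this point, but the resolution is not a tighter weight count --- it is a structural, not a quantitative, argument, and your proposal does not supply it.
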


Before proving these claims, let us see how  we can use them to contradict our assumption that $C$ is a minimal counterexample.  Firstly, consider the case that our positive-weight block $X$ is a triangle.  Let $e$ be an edge of $X$ from the assertion of Claim~\ref{clm:Xistri}. As shown at the beginning of the proof, $C\setminus \{e\}$ has a very good colouring.  We may extend this colouring to a very good colouring of $C$ as follows.  We colour $e$ blue unless the other two edges of $X$ are coloured blue, in which case we colour $e$ red.  As $X$ is the only triangle containing $e$, the colouring remains triangle-free and every edge of $C$ that is not in a triangle is still coloured blue.  Thus, we just need to show that there are no copies of $C_{rrbb}$ in $C$, see property~\ref{item:few Crrbbs} of Definition~\ref{def:Crrbb}.  Suppose that there was such a copy.  As the colouring of $C\setminus \{e\}$ is very good, this copy of $C_{rrbb}$ would contain $e$.  Corollary~\ref{cor:Crrbb containment} would then imply that $e$ lies in the $4$-cycle in a copy of $K_4^-$, contradicting the assumption that $X$ is a block, a copy of $F_0^-$, contrary to our choice of $e$, or a copy of $F_1^-$, contradicting the property~\ref{wb:subgraph} of being very well-behaved.

The case when $X$ is a copy of $K_4^-$ is resolved similarly.  Without loss of generality, we can assume that the edges of $X$ are labelled as in Claim~\ref{clm:Xisk4-} and neither $e_1$ nor $f_1$ belong to a $4$-cycle in a copy of $F_0^-$.  As above, $C \setminus \{e_1,f_1\}$ has a very good colouring, which we may extend to a very good colouring of $C$ as follows.  We colour $e_1$ red and $f_1$ blue unless that creates a copy of $C_{rrbb}$ with the edges $e_2$ and $f_2$, in which case we colour $e_1$ blue and $f_1$ red.  We claim that this gives a very good colouring of $C$.  Since the only triangle in $C$ containing $e_1$ or $f_1$ is the triangle containing both of them, the colouring remains triangle-free; every edge not in a triangle is still blue.  We just need to verify that there are no copies of $C_{rrbb}$.  As in the previous case, we can apply Corollary~\ref{cor:Crrbb containment} and rule out that $e_1$ and $f_1$ are in copies of $F_1^-$ and $F_0^-$ using condition~\ref{wb:subgraph} of being very well-behaved and the key property of $e_1$ and $f_1$ coming from Claim~\ref{clm:Xisk4-}.  The only case left to consider then, is that $e_1$ or $f_1$ lie in some copy of $C_{rrbb}$ that lies in a copy of $K_4^-$ in $C$.  Since $C$ is $\{ K_4, F_2, F_3 \}$-free, the only copy of $K_4^-$ containing our copy of $C_{rrbb}$ is $X$ itself, which is impossible, as we coloured $X$ to avoid having a copy of $C_{rrbb}$.

Now that we have proved that Claims~\ref{clm:Xistri} and~\ref{clm:Xisk4-} contradict the assumption that $C$ is a minimal counterexample, it remains only to prove these two claims.

\begin{claimproof}[Proof of Claim~\ref{clm:Xistri}]
  Denote $V(X)=\{x,y,z\}$ and suppose towards a contradiction that each of $xy$, $xz$ and $yz$ belongs to a $4$-cycle in some copy of $F_0^-$.  To get a contradiction, due to Claim~\ref{clm:local block}, it suffices to show that there is some $C^{\prime} \subseteq C$ such that $\cB(C)=\cB(C^{\prime})$ and $w_{C^{\prime}}(X) \le 0$. 

  We begin by considering $C'$ to be the union of all the triangles in $C$ (so that $\cB(C)=\cB(C')$).  By our assumption, each edge of $X$ must share a vertex with a triangle other than $X$; indeed, otherwise it cannot lie on a $4$-cycle in a copy of $F_0^-$.  Consequently, at least two of $X$'s vertices, say $y$ and $z$, are also in other triangles and hence blocks.  Therefore, their contribution to $X$'s weight (in stage~\ref{stage:3} of the weight redistribution process) is at most $5/2$ each, and so $w_{C'}(X) \le 1$.  We may further assume that $x$ does not lie in an additional triangle, since otherwise $w_{C'}(X) \le -3/2$, see Figure~\ref{fig1}.

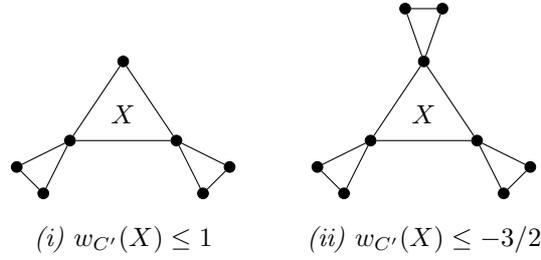
\begin{figure}[h] 
\centering
\begin{tikzpicture}[vertex/.style={draw,circle,color=black,fill=black,inner sep=1,minimum width=4pt}]
    
	\pgfmathsetmacro{\scale}{0.7}    
    
    \node[vertex] (v1) at (-1*\scale, 0*\scale) {};
    \node[vertex] (v2) at (1*\scale, 0*\scale) {};
    \node[vertex] (v3) at (0*\scale, 1.5*\scale) {};
    
    \node[vertex] (u1) at (-2*\scale, -0.5*\scale) {};
    \node[vertex] (u2) at (-1.5*\scale, -1*\scale) {};
    \node[vertex] (w1) at (2*\scale, -0.5*\scale) {};
    \node[vertex] (w2) at (1.5*\scale, -1*\scale) {};

    \draw[] (v1) to (v2);
    \draw[] (v1) to (v3);
    \draw[] (v3) to (v2);
    
    \draw[] (v1) to (u1);
    \draw[] (v1) to (u2);
    \draw[] (u1) to (u2);
    
    \draw[] (v2) to (w1);
    \draw[] (v2) to (w2);
    \draw[] (w1) to (w2);

	\node at (0, -1.3*\scale - 0.4) {\emph{(i)} $w_{C'}(X) \le 1$};
	\node at (0, 0.5*\scale) {$X$};
    
\end{tikzpicture}
\quad \quad
\begin{tikzpicture}[vertex/.style={draw,circle,color=black,fill=black,inner sep=1,minimum width=4pt}]
    
	\pgfmathsetmacro{\scale}{0.7}      
    
    \node[vertex] (v1) at (-1*\scale, 0*\scale) {};
    \node[vertex] (v2) at (1*\scale, 0*\scale) {};
    \node[vertex] (v3) at (0*\scale, 1.5*\scale) {};
    
    \node[vertex] (u1) at (-2*\scale, -0.5*\scale) {};
    \node[vertex] (u2) at (-1.5*\scale, -1*\scale) {};
    \node[vertex] (w1) at (2*\scale, -0.5*\scale) {};
    \node[vertex] (w2) at (1.5*\scale, -1*\scale) {};
    \node[vertex] (x1) at (0.35*\scale, 2.5*\scale) {};
    \node[vertex] (x2) at (-0.35*\scale, 2.5*\scale) {};

    \draw[] (v1) to (v2);
    \draw[] (v1) to (v3);
    \draw[] (v3) to (v2);
    
    \draw[] (v1) to (u1);
    \draw[] (v1) to (u2);
    \draw[] (u1) to (u2);
    
    \draw[] (v2) to (w1);
    \draw[] (v2) to (w2);
    \draw[] (w1) to (w2);
    
    \draw[] (v3) to (x1);
    \draw[] (v3) to (x2);
    \draw[] (x1) to (x2);
	
	\node at (0, -1.3*\scale - 0.4) {\emph{(ii)} $w_{C'}(X) \le -3/2$};
	\node at (0, 0.5*\scale) {$X$};
    
\end{tikzpicture}

\caption{Triangle configurations on the vertices of $X=K_3$.}\label{fig1}
\end{figure}

Now, since both $xy$ and $xz$ are in some copies of $F_0^-$ and $X$ is the only triangle that $x$ belongs to, $C$ must have an edge $xv$ that does not lie in a block; we now add $xv$ to $C'$.  If $v$ supported a triangle (and hence a block) in $C$, then $xv$ would contribute $-3/2$ to $w_{C'}(X)$ in stage~\ref{stage:6} of the weight redistribution process and therefore $w_{C'}(X)\le -1/2$ would be negative (see Figure~\ref{fig2} for illustration).  We may thus further assume that $v$ does not belong to a triangle in $C$.  Now, add to $C'$ all edges in copies of $F_0^-$ that  contain one of $xy$, $xz$.  If $xv$ lied in a $4$-cycle containing $xy$ and in a $4$-cycle containing $xz$, we would have that $d_{C'}(v) \ge 3$ and so $xv$ would have weight at most $-3+5/3 = -4/3$ after stage~\ref{stage:5}, all of which would go to $X$ in stage~\ref{stage:6}, and once again $w_{C'}(X) \le -1/3$ would be negative.  Thus, the $4$-cycles containing $xy$ and $xz$ do not share edges so there must be a vertex $u \neq v$ such that $xu \in C'$.  The contribution of each of $xu$ and $xv$ to $X$'s weight is at most $-1/2$ and therefore $w_{C'}(X) \le 0$, a contradiction. This concludes the proof of Claim~\ref{clm:Xistri}. \end{claimproof}

\begin{figure}[h] 
\centering
\begin{tikzpicture}[vertex/.style={draw,circle,color=black,fill=black,inner sep=1,minimum width=4pt}]
    
	\pgfmathsetmacro{\scale}{0.7}    
    
    \node[vertex] (v1) at (-1*\scale, 0*\scale) {};
    \node[vertex] (v2) at (1*\scale, 0*\scale) {};
    \node[vertex] (v3) at (0*\scale, 1.5*\scale) {};
    
    \node[vertex] (u1) at (-2*\scale, -0.5*\scale) {};
    \node[vertex] (u2) at (-1.5*\scale, -1*\scale) {};
    \node[vertex] (w1) at (2*\scale, -0.5*\scale) {};
    \node[vertex] (w2) at (1.5*\scale, -1*\scale) {};
    
    \node[vertex] (x1) at (0*\scale, 2.7*\scale) {};
    \node[vertex] (x2) at (0.35*\scale, 3.7*\scale) {};
    \node[vertex] (x3) at (-0.35*\scale, 3.7*\scale) {};

    \draw[] (v1) to (v2);
    \draw[] (v1) to (v3);
    \draw[] (v3) to (v2);
    
    \draw[] (v1) to (u1);
    \draw[] (v1) to (u2);
    \draw[] (u1) to (u2);
    
    \draw[] (v2) to (w1);
    \draw[] (v2) to (w2);
    \draw[] (w1) to (w2);
    
    \draw[] (v3) to (x1);
    
    \draw[] (x1) to (x2);
    \draw[] (x1) to (x3);
    \draw[] (x3) to (x2);
    
    \node at (-0.4*\scale, 1.5*\scale) {$x$};
    \node at (-0.4*\scale, 2.7*\scale) {$v$};
	
	\node at (0, -1.3*\scale - 0.4) {\emph{(i)} $w_{C'}(X) \le -1/2$};
	\node at (0, 0.5*\scale) {$X$};
    
\end{tikzpicture}
\quad \quad
\begin{tikzpicture}[vertex/.style={draw,circle,color=black,fill=black,inner sep=1,minimum width=4pt}]
    
	\pgfmathsetmacro{\scale}{0.7}    
    
    \node[vertex] (v1) at (-1*\scale, 0*\scale) {};
    \node[vertex] (v2) at (1*\scale, 0*\scale) {};
    \node[vertex] (v3) at (0*\scale, 1.5*\scale) {};
    
    \node[vertex] (u1) at (-2*\scale, -0.5*\scale) {};
    \node[vertex] (u2) at (-1.5*\scale, -1*\scale) {};
    \node[vertex] (w1) at (2*\scale, -0.5*\scale) {};
    \node[vertex] (w2) at (1.5*\scale, -1*\scale) {};
    
    \node[vertex] (x1) at (0*\scale, 2.7*\scale) {};

    \draw[] (v1) to (v2);
    \draw[] (v1) to (v3);
    \draw[] (v3) to (v2);
    
    \draw[] (v1) to (u1);
    \draw[] (v1) to (u2);
    \draw[] (u1) to (u2);
    
    \draw[] (v2) to (w1);
    \draw[] (v2) to (w2);
    \draw[] (w1) to (w2);
    
    \draw[] (v3) to (x1);
    
    \draw[] (x1) to (u1);
    \draw[] (x1) to (w1);
    
    \node at (-0.4*\scale, 1.5*\scale) {$x$};
    \node at (-0.4*\scale, 2.7*\scale) {$v$};
	
	\node at (0, -1.3*\scale - 0.4) {\emph{(ii)} $w_{C'}(X) \le -1/3$};
	\node at (0, 0.5*\scale) {$X$};
    
\end{tikzpicture}
\quad \quad
\begin{tikzpicture}[vertex/.style={draw,circle,color=black,fill=black,inner sep=1,minimum width=4pt}]
    
	\pgfmathsetmacro{\scale}{0.7}    
    
    \node[vertex] (v1) at (-1*\scale, 0*\scale) {};
    \node[vertex] (v2) at (1*\scale, 0*\scale) {};
    \node[vertex] (v3) at (0*\scale, 1.5*\scale) {};
    
    \node[vertex] (u1) at (-2*\scale, -0.5*\scale) {};
    \node[vertex] (u2) at (-1.5*\scale, -1*\scale) {};
    \node[vertex] (w1) at (2*\scale, -0.5*\scale) {};
    \node[vertex] (w2) at (1.5*\scale, -1*\scale) {};
    
    \node[vertex] (x1) at (-1.7*\scale, 2*\scale) {};
	\node[vertex] (x2) at (1.7*\scale, 2*\scale) {};

    \draw[] (v1) to (v2);
    \draw[] (v1) to (v3);
    \draw[] (v3) to (v2);
    
    \draw[] (v1) to (u1);
    \draw[] (v1) to (u2);
    \draw[] (u1) to (u2);
    
    \draw[] (v2) to (w1);
    \draw[] (v2) to (w2);
    \draw[] (w1) to (w2);
    
    \draw[] (v3) to (x1);
    \draw[] (v3) to (x2);
    
    \draw[] (x1) to (u1);
    \draw[] (x2) to (w1);
    
    \node at (0*\scale, 1.9*\scale) {$x$};    
    \node at (-1.7*\scale, 2.4*\scale) {$v$};
    \node at (1.7*\scale, 2.4*\scale) {$u$};

	\node at (0, -1.3*\scale - 0.4) {\emph{(iii)} $w_{C'}(X) \le 0$};
	\node at (0, 0.5*\scale) {$X$};
    
\end{tikzpicture}

\caption{Edge configurations on the third vertex of $X=K_3$.}\label{fig2}
\end{figure}
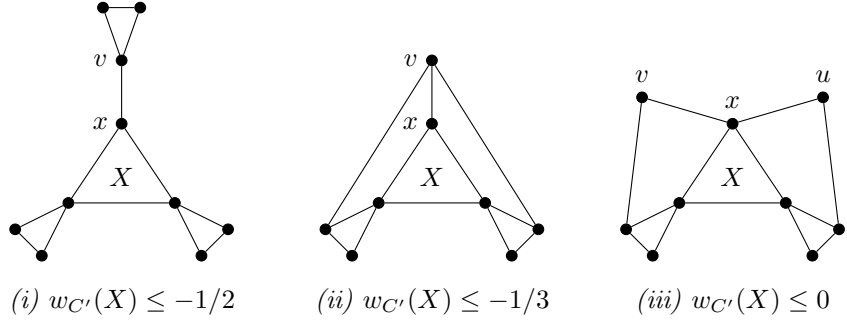

\begin{claimproof}[Proof of Claim~\ref{clm:Xisk4-}]  
  Denote $V(X)=\{x_1,x_2,y,z\}$ so that $g=yz$ and $e_i = x_iy, f_i =x_iz$ for $i \in \{1, 2\}$.  Assume towards contradiction that one of $e_1, f_1$ as well as one of $e_2, f_2$ belongs to a 4-cycle in a copy of $F_0^-$.  We let $C'$ be the union of all triangles in $C$, so that $\cB(C') = \cB(C)$.  By Claim~\ref{clm:local block}, it is enough to show that $w_{C'}(X) \le 0$.
  
  Now, note that stage~\ref{stage:4} of the redistribution process on $C'$ moves weight from the edges of $X$ to $X$.  If two or more vertices of $X$ belonged to blocks other than $X$, then the contribution to $X$'s weight coming from its vertices, in stages~\ref{stage:2} and~\ref{stage:3}, would be at most $2\cdot 5+2\cdot(5/2)\leq 15$ and  we would have $w_{C'}(X)\leq 0$.  Hence, it must be the case that at most one vertex in $X$ belongs to a block that is not $X$.  Moreover, if none of $y,z,x_i$ belonged to a block other than $X$, then none of $e_{i}, f_{i}$ would be contained in a 4-cycle of a copy of $F_0^-$.  Consequently, one of $y, z$ must belong to a block other than $X$; without loss of generality, assume that $y$ is the only vertex of $X$ contained in a block other than $X$.  Since neither $f_1$ nor $f_2$ can lie in a $4$-cycle of a copy of $F_0^-$, it must be that both $e_1$ and $e_2$ do.

  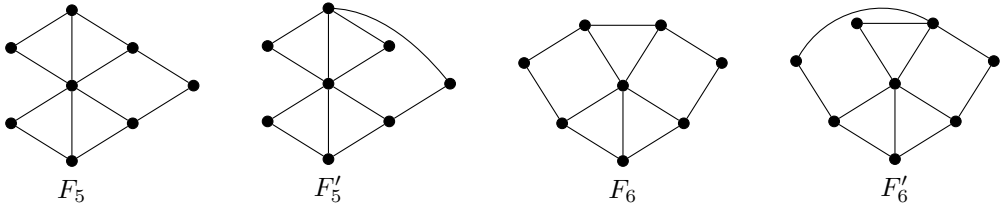
\begin{figure}[h] 
    \centering
    \begin{tikzpicture}[vertex/.style={draw,circle,color=black,fill=black,inner sep=1,minimum width=4pt}]
      
      \pgfmathsetmacro{\scale}{1}    
      
      \node[vertex] (v1) at (-0.8*\scale, 0.5*\scale) {};
      \node[vertex] (v2) at (-0.8*\scale, -0.5*\scale) {};
      \node[vertex] (v3) at (0*\scale, 1*\scale) {};
      \node[vertex] (v4) at (0*\scale, 0*\scale) {};
      \node[vertex] (v5) at (0*\scale, -1*\scale) {};
      \node[vertex] (v6) at (0.8*\scale, 0.5*\scale) {};
      \node[vertex] (v7) at (0.8*\scale, -0.5*\scale) {};
      \node[vertex] (v8) at (1.6*\scale, 0*\scale) {};

      \draw[] (v1) to (v4);
      \draw[] (v1) to (v3);
      \draw[] (v3) to (v4);
      
      \draw[] (v2) to (v4);
      \draw[] (v2) to (v5);
      \draw[] (v5) to (v4);
      
      \draw[] (v6) to (v4);
      \draw[] (v6) to (v3);

      \draw[] (v7) to (v4);
      \draw[] (v7) to (v5);
      
      \draw[] (v7) to (v8);
      \draw[] (v6) to (v8);
      
      \node at (0, -1*\scale - 0.4) {$F_5$};

    \end{tikzpicture}
    \quad \quad
     \begin{tikzpicture}[vertex/.style={draw,circle,color=black,fill=black,inner sep=1,minimum width=4pt}]
      
      \pgfmathsetmacro{\scale}{1}    
      
      \node[vertex] (v1) at (-0.8*\scale, 0.5*\scale) {};
      \node[vertex] (v2) at (-0.8*\scale, -0.5*\scale) {};
      \node[vertex] (v3) at (0*\scale, 1*\scale) {};
      \node[vertex] (v4) at (0*\scale, 0*\scale) {};
      \node[vertex] (v5) at (0*\scale, -1*\scale) {};
      \node[vertex] (v6) at (0.8*\scale, 0.5*\scale) {};
      \node[vertex] (v7) at (0.8*\scale, -0.5*\scale) {};
      \node[vertex] (v8) at (1.6*\scale, 0*\scale) {};

      \draw[] (v1) to (v4);
      \draw[] (v1) to (v3);
      \draw[] (v1) to (v6);
      
      \draw[] (v2) to (v4);
      \draw[] (v2) to (v5);
      \draw[] (v5) to (v4);
      
      \draw[] (v6) to (v4);
      \draw[] (v6) to (v3);

      \draw[] (v7) to (v4);
      \draw[] (v7) to (v5);
      
      \draw[] (v7) to (v8);
      \draw[] (v6) to (v8);
      
      \node at (0, -1*\scale - 0.4) {$F'_5$};

    \end{tikzpicture}
    \quad \quad 
    \begin{tikzpicture}[vertex/.style={draw,circle,color=black,fill=black,inner sep=1,minimum width=4pt}]
      
      \pgfmathsetmacro{\scale}{1}    
      
      \node[vertex] (v1) at (-0.8*\scale, 0.5*\scale) {};
      \node[vertex] (v2) at (-0.8*\scale, -0.5*\scale) {};
      \node[vertex] (v3) at (0*\scale, 1*\scale) {};
      \node[vertex] (v4) at (0*\scale, 0*\scale) {};
      \node[vertex] (v5) at (0*\scale, -1*\scale) {};
      \node[vertex] (v6) at (0.8*\scale, 0.5*\scale) {};
      \node[vertex] (v7) at (0.8*\scale, -0.5*\scale) {};
      \node[vertex] (v8) at (1.6*\scale, 0*\scale) {};

      \draw[] (v1) to (v4);
      \draw[] (v1) to (v3);
      \draw[] (v3) to (v4);
      
      \draw[] (v2) to (v4);
      \draw[] (v2) to (v5);
      \draw[] (v5) to (v4);
      
      \draw[] (v6) to (v4);
      \draw[] (v6) to (v3);

      \draw[] (v7) to (v4);
      \draw[] (v7) to (v5);
      
      \draw[] (v7) to (v8);
      
      \draw(v3) parabola (v8);
      
      \node at (0, -1*\scale - 0.4) {$F_5''$};
      
    \end{tikzpicture}
    \quad \quad
    \begin{tikzpicture}[vertex/.style={draw,circle,color=black,fill=black,inner sep=1,minimum width=4pt}]
      
      \pgfmathsetmacro{\scale}{1}    
      
      \node[vertex] (v1) at (-1.3*\scale, 0.3*\scale) {};
      \node[vertex] (v2) at (-0.8*\scale, -0.5*\scale) {};
      \node[vertex] (v3) at (-0.5*\scale, 0.8*\scale) {};
      \node[vertex] (v4) at (0*\scale, 0*\scale) {};
      \node[vertex] (v5) at (0*\scale, -1*\scale) {};
      \node[vertex] (v6) at (0.5*\scale, 0.8*\scale) {};
      \node[vertex] (v7) at (0.8*\scale, -0.5*\scale) {};
      \node[vertex] (v8) at (1.3*\scale, 0.3*\scale) {};

      \draw[] (v1) to (v2);
      \draw[] (v1) to (v3);
      \draw[] (v3) to (v4);
      
      \draw[] (v2) to (v4);
      \draw[] (v2) to (v5);
      \draw[] (v5) to (v4);
      
      \draw[] (v6) to (v4);
      \draw[] (v6) to (v3);

      \draw[] (v7) to (v4);
      \draw[] (v7) to (v5);
      
      \draw[] (v7) to (v8);
      \draw[] (v6) to (v8);
      
      \node at (0, -1*\scale - 0.4) {$F_6$};
      
    \end{tikzpicture}
    \quad \quad
    \begin{tikzpicture}[vertex/.style={draw,circle,color=black,fill=black,inner sep=1,minimum width=4pt}]
      
      \pgfmathsetmacro{\scale}{1}    
      
      \node[vertex] (v1) at (-1.3*\scale, 0.3*\scale) {};
      \node[vertex] (v2) at (-0.8*\scale, -0.5*\scale) {};
      \node[vertex] (v3) at (-0.5*\scale, 0.8*\scale) {};
      \node[vertex] (v4) at (0*\scale, 0*\scale) {};
      \node[vertex] (v5) at (0*\scale, -1*\scale) {};
      \node[vertex] (v6) at (0.5*\scale, 0.8*\scale) {};
      \node[vertex] (v7) at (0.8*\scale, -0.5*\scale) {};
      \node[vertex] (v8) at (1.3*\scale, 0.3*\scale) {};

      \draw[] (v1) to (v2);
      \draw[] (v3) to (v4);
      
      \draw[] (v2) to (v4);
      \draw[] (v2) to (v5);
      \draw[] (v5) to (v4);
      
      \draw[] (v6) to (v4);
      \draw[] (v6) to (v3);

      \draw[] (v7) to (v4);
      \draw[] (v7) to (v5);
      
      \draw[] (v7) to (v8);
      \draw[] (v6) to (v8);
      
      \draw(v1) to[out=60, in=150] (v6);
      
      \node at (0, -1*\scale - 0.4) {$F_6'$};
      
    \end{tikzpicture}
    \caption{The four graphs appearing in $X \cup X_1’ \cup X_2' \cup C_1 \cup C_2$, each of which has 8 vertices and 12 edges.}\label{fig:Xisk4-}
  \end{figure}

  For each $i \in \{1, 2\}$, denote by $C_i$ the $4$-cycle in a copy of $F_0^-$ that passes through $e_i$.  We claim that $e_i$ is the only edge of $X$ in $C_i$.   Indeed, the union of a copy of $K_4^-$ and a copy of $F_0^-$ whose $4$-cycle intersects this $K_4^-$ in more than one edge contains a copy of $F_2$, $F_3$ or $F_4$ depicted in Figure~\ref{fig:F23}.  However, $C$ cannot contain any of these graphs, see condition~\ref{wb:subgraph} in Definition~\ref{def:well behaved}.  Further, the second edge of $C_i$ that is incident with $y$ belongs to some block $X_i' \neq X$.  We claim that $X_i'$ is not a copy of $K_4^-$.  Indeed, if it were, then $X \cup X_i' \cup C_i$ would be a copy of either $F_5, F_5'$ or $F_5''$ from Figure~\ref{fig:Xisk4-} (recall that $C_i$ is not allowed to intersect a copy of $K_4^-$ in more than one edge) and this graph is too dense to be contained in $C$, see condition~\ref{wb:subgraph} in Definition~\ref{def:well behaved}.  Therefore, both $X_1'$ and $X_2'$ are triangle blocks.  Finally, if $X_1' \neq X_2'$, then, since the ordering $\sigma$ prioritises triangles over copies of $K_4^-$, the vertex $y$ would give none of its weight to $X$ in stage~\ref{stage:3} of the redistribution process, yielding $w_{C'}(X)\leq 0$, as desired. Thus $X' \coloneqq X_1' = X_2'$ is a triangle block. Moreover, each $C_i$ shares only one edge with $X'$, as otherwise $X' \cup C_i$ would be a copy of $K_4^-$, which is impossible due to the assumption that $X'$ is a triangle block.  Consequently, each $C_i$ contains a unique vertex $w_i \notin V(X) \cup V(X')$.  If $w_1 = w_2$, then $X \cup C_1 \cup C_2$ contains a copy of $F_4$, which is too dense to be contained in $C$, so we may assume that $w_1 \neq w_2$.  But then, $X \cup X' \cup C_1 \cup C_2$ would be a copy of one of $F_6$ or $F_6'$ from Figure~\ref{fig:Xisk4-}, a contradiction.
\end{claimproof}
This concludes the proof of Lemma~\ref{lem:collagescolorable}.
\end{proof}

\section{Proof of the 1-statements}
\label{sec:1-statements}

In this section, we prove our $1$-statements, establishing the upper bounds on $q(n;K_3,p)$ in Theorem~\ref{thm:main}.  Our aim is to prove that, if $q\gg q(n;K_3,p)$, then  a.a.s.\ \emph{no} $K_3$-free colouring of $G_{n,p}$ can be extended to the edges of an independent copy $G_{n,q}$ without creating monochromatic triangles.  We will achieve this by showing that every $K_3$-free colouring of the edges of a typical $G_{n,p}$ results in many local obstructions -- individual edges or copies of $K_{1,2}$  that one cannot colour without introducing a monochromatic triangle, see Figure~\ref{fig:Crbbbb}. More precisely, we will show that there are either $\omega(q^{-1})$ such \emph{dangerous edges} or $\omega(q^{-2})$ such \emph{dangerous copies of $K_{1,2}$} in $K_n$.  Standard probabilistic arguments will then show that a.a.s.\ at least one such local obstruction will appear in $G_{n,q}$, precluding the existence of a $K_3$-free extension of our colouring of $G_{n,p}$.  In fact, with just a little more work, it will be enough for us to find either $\omega(q^{-1})$ copies of $C_{rrbb}$, the $4$-cycle whose edges are coloured red, red, blue, blue, or $\omega(q^{-2})$ copies of $C_{rbbbb}$, the $5$-cycle with four edges coloured blue and one edge coloured red.

\begin{prop}
  \label{prop: many Crrbb}
  Suppose that $n^{-2/3} \ll p \ll n^{-1/2} $, $t\geq n^7p^{10}$ and $t^{-1}\ll q<1$ and let  $G_1 \sim G_{n,p}$ and $G_2 \sim G_{n,q}$ be independent. Then a.a.s.\ any $G_1$-measurable colouring that contains at least $t$ copies of $C_{rrbb}$ can be extended to a triangle-free colouring of $G_1 \cup G_2$.
\end{prop}

\begin{prop} \label{prop: many Crbbbb}
  Suppose that $n^{-2/3} \ll p \ll n^{-1/2} $, $t\geq n^7p^9$ and $t^{-1/2}\ll q\leq 1$ and let  $G_1 \sim G_{n,p}$ and $G_2 \sim G_{n,q}$ be independent. Then a.a.s.\ any $G_1$-measurable colouring that contains at least $t$ copies of $C_{rbbbb}$ can be extended to a triangle-free colouring of $G_1 \cup G_2$.
\end{prop}

In order to find the required number of copies of $C_{rrbb}$ or $C_{rbbbb}$, we will use three different arguments.  We first split our analysis depending on the structure of the colouring.  If the colouring is \emph{balanced}, in that a positive proportion of the edges of $G_{n,p}$ are coloured in each colour, then the number of $C_{rrbb}$s is of order $n^4p^4$.  We prove this in Section~\ref{sec:balanced} using the method of hypergraph containers (Theorem~\ref{thm:containers}).  Noting that $n^{-4}p^{-4}\ll n^{-6}p^{-8}\ll n^{-3}p^{-7/2}$ in our full range $n^{-2/3}\ll p\ll n^{-1/2}$, this settles the desired result for balanced colourings in both the lower and upper ranges. 

It thus remains to consider colourings that are \emph{unbalanced}, that is, when there is one colour, say $\red$, that appears only $o(n^2p)$ times.  Here, we need more delicate arguments based on Janson's inequality and careful union bounds over all unbalanced colourings. In Section~\ref{sec:1 lower}, we show that every unbalanced colouring contains $\Omega(n^6p^7)$ copies of $C_{rbbbb}$, which implies, by Proposition~\ref{prop: many Crbbbb}, that no such colouring can be extended to a typical copy of $G_{n,q}$ as soon as $q \gg n^{-3}p^{-7/2}$.  Combining this result with the case of balanced colourings covers all possible colourings and shows that $q(n;K_3,p)\leq n^{-3}p^{-7/2}$ in the full range of interest $n^{-2/3}\ll p\ll n^{-1/2}$.  Finally, in Section~\ref{sec:1 upper}, we improve on this in the upper range, when $p\gg n^{-3/5}$, showing that every unbalanced colouring contains $\Omega(n^6p^8)$ copies of $C_{rrbb}$, which renders any unbalanced colouring nonextendable to $G_{n,q}$ as soon as $q\gg n^{-6}p^{-8}$, see Proposition~\ref{prop: many Crrbb}.  Here, in order to perform a union bound over all unbalanced, $K_3$-free colourings of $G_{n,p}$, we face some serious technicalities when $p$ approaches $n^{-3/5}$.

We complete this lengthy introduction with proofs of Propositions~\ref{prop: many Crrbb} and~\ref{prop: many Crbbbb} that supply sufficient conditions on nonextendability of colourings in terms of the number of copies of $C_{rrbb}$ and $C_{rbbbb}$.

\begin{proof}[Proof of Proposition~\ref{prop: many Crrbb}]
  Fix some $G_1$ satisfying property~\ref{item:K210-count} of Lemma~\ref{lem:randomgraph} (which occurs a.a.s.) and some colouring $\varphi \colon E(G_1)\to \{\red,\blue\}$ that contains at least $t$ copies of $C_{rrbb}$.  Recall that a pair of vertices $\{x,y\} \in E(K_n)$ is \emph{dangerous} if it is the `colour-splitting' diagonal of at least one such $C_{rrbb}$, that is, if there exist $u,v\in V(G_1)\setminus \{x,y\}$ with $xu,yu,xv,yv\in E(G_1)$, $\varphi(xu)=\varphi(yu)=\blue$ and $\varphi(xv)=\varphi(yv)=\red$, see Figure~\ref{fig:Crbbbb}.  We will show that there are at least $t/50$ dangerous pairs.  Note that this immediately implies the assertion of the lemma.  Indeed, the number of dangerous pairs that appear in $G_{n,q}$ is bounded from below by a $\Bin(t/50, q)$, which is positive with probability $1-o(1)$, by our assumption that $tq \gg 1$.
  
  Let $D$ be the collection of dangerous pairs.  For each pair $\rho\in D$, let $r_\rho\geq 1$ be the number of red copies of $K_{1,2}$ in $G_1$ that form a triangle with $\rho$ and likewise let $b_\rho$  be the number of blue copies of $K_{1,2}$ in $G_1$ that form a triangle with $\rho$, so that $\rho$ is the colour-splitting diagonal
for $r_\rho b_\rho$ copies of $C_{rrbb}$ in $G_1$ and $ \sum_{\rho\in D}r_\rho b_\rho\geq t$.  We further say that $\rho\in D$ is \emph{heavy} if $r_\rho b_\rho\geq 25$ and let $D_H\subseteq D$ be the collection of heavy dangerous pairs. Now, for each heavy pair $\rho$, by the AM-GM inequality, we have that $r_\rho+b_\rho\geq 2\sqrt{r_\rho b_\rho}\geq 10$ and $\rho$ forms the part of size 2 in $\binom{r_\rho + b_\rho}{10}$ copies of $K_{2,10}$. Therefore, by the assumed conclusion of  Lemma~\ref{lem:randomgraph}~\ref{item:K210-count},
\[\sum_{\rho\in D_H}\frac{r_\rho b_\rho}{25}\leq
  \sum_{\rho\in D_H} \left(\frac{r_\rho +b_\rho}{10}\right)^2 \leq  \sum_{\rho\in D_H}  \left(\frac{r_\rho +b_\rho}{10}\right)^{10} \leq \sum_{\rho\in D_H} \binom{r_\rho +b_\rho}{10}\leq N_{K_{2,10}}(G_1)\leq n^{11}p^{18}\leq \frac{t}{50}, \]
where in the last inequality we used that $n^{11}p^{18}\ll n^7p^{10}\leq t$ due to the fact that $p\ll n^{-1/2}$. Hence $\sum_{\rho\in D\setminus D_H}r_\rho b_\rho\geq t/2$ and, as each $\rho\in D\setminus D_H$ has $r_\rho b_\rho\leq 25$, we indeed obtain $|D|\geq |D\setminus D_H|\geq t/50$. 
\end{proof}

\begin{proof}[Proof of Proposition~\ref{prop: many Crbbbb}]

Fix some $G_1$ satisfying properties~\ref{item:bound max deg} and~\ref{item:K210-count} of Lemma~\ref{lem:randomgraph} (which occur a.a.s.) and some colouring $\varphi \colon E(G_1)\to \{\red,\blue\}$ that contains at least $t$ copies of $C_{rbbbb}$. A copy $K$ of $K_{1,2}$ in $G_1$ with vertices $w,u_1,u_2$ (so that $K$ is formed from edges $wu_i$ for $i=1,2$) is \emph{dangerous} if there are distinct vertices $w_1,w_2\in V(G)\setminus \{w,u_1,u_2\}$ such that $u_1u_2,u_1w_1,w_1w,ww_2,w_2u_2\in E(G_1)$, 
  $\varphi(u_1u_2)=\red$
  and  
  $\varphi(u_1w_1)=\varphi(w_1w)=\varphi(ww_2)=\varphi(w_2u_2)=\blue$. We say that $K$ \emph{hosts} this copy of $C_{rbbbb}$ on vertices $u_1,u_2,w_2,w,w_1$. See Figure~\ref{fig:Crbbbb} for a depiction. 
  
  \begin{figure}[h]
    \centering
    \begin{tikzpicture}[vertex/.style={draw,circle,color=black,fill=black,inner sep=1,minimum width=4pt}]
    
	\pgfmathsetmacro{\scale}{0.7}    
    
    \node[vertex] (y) at (2*\scale, 0*\scale) {};
    \node[vertex] (x) at (-2*\scale, 0*\scale) {};
    \node[vertex] (v) at (0*\scale, 2*\scale) {};
    \node[vertex] (u) at (0*\scale, -2*\scale) {};
    
    \draw[thick] (x) to (y);

    \draw[thick,red] (x) to (v);
    \draw[thick,red] (y) to (v);
    
    \draw[thick,blue] (x) to (u);
    \draw[thick,blue] (y) to (u);
	
    \node at (2*\scale+0.4, 0*\scale) {$y$};
    \node at (-2*\scale-0.4, 0*\scale) {$x$};
    \node at (0*\scale, 2*\scale+0.4) {$v$};
    \node at (0*\scale, -2*\scale-0.4) {$u$};
    
  \end{tikzpicture}
  \qquad\qquad\qquad
  \begin{tikzpicture}[vertex/.style={draw,circle,color=black,fill=black,inner sep=1,minimum width=4pt}]
    
    \pgfmathsetmacro{\scale}{0.7}    
    
    \node[vertex] (w2) at (2*\scale, 0*\scale) {};
    \node[vertex] (w1) at (-2*\scale, 0*\scale) {};
    \node[vertex] (w) at (0*\scale, 2*\scale) {};
    
    \node[vertex] (u1) at (-1*\scale, -2*\scale) {};
    \node[vertex] (u2) at (1*\scale, -2*\scale) {};

    \draw[thick] (u1) to (w);
    \draw[thick] (u2) to (w);
    \draw[thick,red] (u1) to (u2); 
    
	\draw[thick,blue] (u1) to (w1);
	\draw[thick,blue] (u2) to (w2);
	\draw[thick,blue] (w) to (w1);
		\draw[thick,blue] (w) to (w2);
	
	\node at (2*\scale+0.4, 0*\scale) {$w_2$};
	\node at (-2*\scale-0.4, 0*\scale) {$w_1$};
	\node at (0*\scale, 2*\scale+0.4) {$w$};
	\node at (-1*\scale, -2*\scale-0.4) {$u_1$};
	\node at (1*\scale, -2*\scale-0.4) {$u_2$};;
    
\end{tikzpicture}

\caption{A dangerous edge and a dangerous copy of $K_{1,2}$ in $G$.}\label{fig:Crbbbb}
\end{figure}
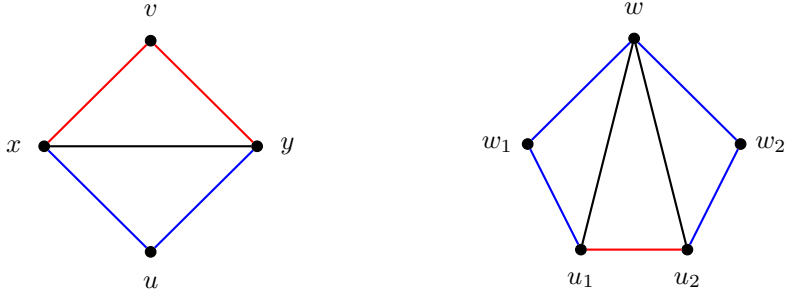
  
  Moreover, for such a dangerous copy $K$ of $K_{1,2}$, we define $x^K_1$ to be the number of choices of $w_1$ such that $\phi(u_1w_1)=\phi(w_1w)=\blue$ and $x^K_2$  to be the number of choices of $w_2$ such that $\phi(u_2w_2)=\phi(w_2w)=\blue$.  
  Therefore, each $K$ hosts at most $x_1^Kx_2^K$ copies of $C_{rbbbb}$ (this is not equality as some choices could have $w_1=w_2$).  Taking $\cK$ to be the collection of dangerous copies of $K_{1,2}$ on $V(G)$, we then have that  $\sum_{K\in \cK}x^K_1x^K_2\geq t$.  As in the proof of Proposition~\ref{prop: many Crrbb}, our aim is to prove that $\cK$ is large.  Define a copy $K$ to be \emph{heavy} if $m^K \coloneqq\max\{x^K_1,x^K_2\} \ge 10$ and let $\cK_H$ be the collection of heavy dangerous copies of $K_{1,2}$. We then have that 
  \[
    \sum_{K\in \cK_H}x_1^Kx_2^K\leq \sum_{K\in \cK_H}(m^K)^2\leq \sum_{K\in \cK_H}100\left(\frac{m^K}{10}\right)^{10}\leq 100\sum_{K\in \cK_H}\binom{m^K}{10}.
  \]
  We claim that the sum in the right-hand side of the above inequality is at most $N_{K_{2,10}^+}(G_1)$, where $K^+_{2,10}$ is the graph obtained  from $K_{2,10}$ by adding a pendant edge to one of its vertices of degree 10.  Indeed, fixing some $K$ in the summand, label the vertices  $u_1, u_2$ and $w$ as we did above and suppose that $m^K$ is achieved by $x_i^K$ for $i\in[2]$ (if $x_1^K=x^K_2$ then choose $i$ arbitrarily).  Then, for every set $W_i$ of $10$ vertices that can play the role of $w_i$ in the sense that they are all connected to both $u_i$ and $w$ by blue edges in $G_1$, we get a copy of $K^+_{2,10}$ on the vertices $w,u_1,u_2$ and $W_i$. This gives $\binom{m^K}{10}$ copies of  $K^+_{2,10}$ in the summand corresponding to $K$; these copies are distinct, as each copy of $K^+_{2,10}$ determines $K$ completely. So we have that 
  \[\sum_{K\in \cK_H}x_1^Kx_2^K\leq 100N_{K^+_{2,10}}(G_1)\leq 100\cdot n^{11}p^{18}\cdot 4np \leq 400 n^{12}p^{19}\leq  t/2,\]
  where we used properties~\ref{item:bound max deg} and~\ref{item:K210-count} of Lemma~\ref{lem:randomgraph} to bound $N_{K^+_{2,10}}(G_1)$ and we used that $n^{12}p^{19}\ll n^7p^9\leq t$ in the last inequality. This implies that $\sum_{K\in \cK\setminus \cK_H}x^K_1x^K_2\geq t/2$ and as every $K\in \cK\setminus \cK_H$ has $x^K_1x^K_2\leq 9^2\le 100$, we have that $|\cK|\geq |\cK\setminus \cK_H|\geq t/200$.
  
  Now, taking $G_2\sim G_{n,q}$, for each dangerous copy $K\in \cK$, let $I_K \coloneqq \mathds{1}[K\subseteq G_2]$ be the indicator random variable for the event that both edges of $K$ appear in $G_2$ and let $X \coloneqq \sum_{K\in \cK} I_K$.  As each dangerous copy of $K_{1,2}$ appears with probability $q^2$, we have that $\mu \coloneqq \mathbb{E}[X] \geq tq^2/200\gg 1$. Moreover, writing $K \sim K'$ when a pair $K, K'$ of dangerous copies of $K_{1,2}$ share at least one edge, we have
  \[
    \Delta\coloneqq\sum_{K\sim K'}\mathbb{E}[I_KI_{K'}]\leq \mu \cdot \big(1 + 4\Delta(G_1)q\big) \leq 2\max\{ \mu, 8npq\mu\},
  \]
  using again the assumed conclusion of  Lemma~\ref{lem:randomgraph}~\ref{item:bound max deg} in $G_1$.  
  Indeed, given some copy $K$ of $K_{1,2}$, we can obtain an upper bound on the number of dangerous $K'$ that intersect $K$ (but are not equal to $K$) by the number of choices of an edge $e$ of $K$ and a $G_1$-neighbour of one of the endpoints of $e$.  Using that 
  \[\frac{\mu^2}{npq\mu}\geq \frac{tq}{200np}\geq \frac{t^{1/2}}{200np}\geq \frac{n^{5/2}p^{7/2}}{200}\gg 1,\]  
  it follows from Janson's inequality (Lemma~\ref{janson}) that
  \[
    \Pr[X = 0] \le \exp\left(-\frac{\mu^2}{2\Delta}\right) \ll 1.
  \]
  Therefore, a.a.s.\ there is a dangerous copy of $K_{1,2}$, on vertices $w,u_1,u_2$ say, such that $E(K)=\{wu_1,wu_2\}\subseteq E(G_2)$.  This precludes the possibility of extending $\varphi$ to $G_2$.  Indeed, as $K$ is dangerous, it hosts some copy of $C_{rbbbb}$ in $G$ (under $\varphi$). If either $wu_1$ or $wu_2$ are coloured blue, then they will form a blue triangle with edges in the copy of $C_{rbbbb}$ whilst if they are both red then there is a red triangle formed with the edge $u_1u_2$. This completes the proof. 
 \end{proof}

\subsection{Balanced colourings} \label{sec:balanced}

In this section, we prove the following proposition which deals with balanced colourings of $G_{n,p}$ in our full range of interest. 

\begin{prop}
  \label{prop:balanced-colouring}
  For every  $\beta>0$, there exists a $\lambda>0$ such that the following holds.  Suppose that $ n^{-2/3}\ll p \ll n^{-1/2}$ and let $G \sim G_{n,p}$.  Then, a.a.s.\ every colouring $\varphi \colon E(G) \to \{\red, \blue\}$ such that $\left| \varphi ^{-1}(c)\right| \ge \beta n^2p$ for both $c\in \{\red,\blue\}$,  contains at least $\lambda n^4p^4$ copies of $C_{rrbb}$.
\end{prop}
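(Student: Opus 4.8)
\emph{The plan.} We prove the contrapositive: for a suitably small $\lambda=\lambda(\beta)>0$, a.a.s.\ $G$ admits no colouring $\varphi\colon E(G)\to\{\red,\blue\}$ that is balanced (each colour class of size $\ge\beta n^2p$) and yet spans fewer than $\lambda n^4p^4$ copies of $C_{rrbb}$. The engine is the container theorem (Theorem~\ref{thm:containers}), applied to the $4$-uniform hypergraph $\cH$ on the vertex set $E(K_n)\times\{\red,\blue\}$ of ``coloured slots'', whose hyperedges are the copies of $C_{rrbb}$ in $K_n$ (a hyperedge is a set $\{(e_1,\red),(e_2,\red),(e_3,\blue),(e_4,\blue)\}$ with $e_1e_2e_3e_4$ a $4$-cycle). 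One has $e(\cH)=\Theta(n^4)$ and $v(\cH)=\Theta(n^2)$, and a direct check gives $\Delta_2(\cH)=O(n)$ and $\Delta_3(\cH)=\Delta_4(\cH)=O(1)$, so the hypotheses of Theorem~\ref{thm:containers} hold with an absolute constant $K$ and any $\tau\gg n^{-2/3}$; we take $\tau=\theta p$ for a small $\theta=\theta(\beta)$ to be fixed last (recall $p\gg n^{-2/3}$). A colouring $\varphi$ of $G$ is encoded by the slot set $S_\varphi=\{(e,\varphi(e)):e\in E(G)\}$, and the number of copies of $C_{rrbb}$ spanned by $\varphi$ is exactly $e(\cH[S_\varphi])$. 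Choosing $\lambda$ small enough that $\lambda n^4p^4\le\delta\tau^4e(\cH)$, Theorem~\ref{thm:containers} gives: whenever $\varphi$ spans fewer than $\lambda n^4p^4$ copies of $C_{rrbb}$, there are sets $S_1,\dots,S_t\subseteq S_\varphi$ with $|S_i|\le\tau v(\cH)=O(pn^2)$ such that $S_\varphi\subseteq\mathcal C\coloneqq f(S_1,\dots,S_t)$, where $\mathcal C$ is one of a controlled family of ``containers'' and $e(\cH[\mathcal C])<\eps e(\cH)$ for a small $\eps=\eps(\beta)>0$ fixed first (before $t$ and $\delta$).

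\emph{From a container to a near-$2$-colouring of $K_n$.} Given a container $\mathcal C$, put $R^*=\{e:(e,\red)\in\mathcal C\}$ and $B^*=\{e:(e,\blue)\in\mathcal C\}$. If a balanced $\varphi$ has $S_\varphi\subseteq\mathcal C$, then $\varphi^{-1}(\red)\subseteq G\cap R^*$ and $\varphi^{-1}(\blue)\subseteq G\cap B^*$, so
\[
E(G)\subseteq R^*\cup B^*,\qquad e(G\cap R^*)\ge\beta n^2p,\qquad e(G\cap B^*)\ge\beta n^2p.
\]
Moreover the sparsity of $\mathcal C$ forces strong structure on $(R^*,B^*)$: writing $Q=R^*\cap B^*$, every $4$-cycle in $Q$ gives, via the slots $(ab,\red),(bc,\red),(cd,\blue),(da,\blue)$, a copy of $C_{rrbb}$ in $\cH[\mathcal C]$, so by supersaturation ($\#C_4(Q)\gtrsim e(Q)^4/n^4$) we get $e(Q)=O(\eps^{1/4}n^2)$; likewise every $4$-cycle $abcd$ with $\{ab,bc\}\subseteq R^*$ and $\{cd,da\}\subseteq B^*$ yields such a copy, so there are fewer than $\eps e(\cH)=O(\eps n^4)$ of these. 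Finally, each container is determined by a $t$-tuple $(S_1,\dots,S_t)$ of subsets of the coloured edges of $G$ of size $O(pn^2)$; this is what makes the forthcoming union bound affordable.

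\emph{The key extremal estimate.} We will use the following: \emph{there is an absolute constant $c>0$ such that for every $\beta'>0$ and every partition $E(K_n)=R\sqcup B$ with $e(R),e(B)\ge\beta'n^2$, the number of $4$-cycles of $K_n$ with two adjacent edges in $R$ and two adjacent edges in $B$ is at least $c\beta'^2n^4$}, for $n$ large. This follows by double counting: that number equals $\tfrac12\sum_{a\ne c}|N_{\red}(a)\cap N_{\blue}(c)|^2$ up to a lower-order additive term (the sum being over ordered pairs), which by Cauchy--Schwarz is at least $\tfrac1{2n^2}\bigl(\sum_w d_{\red}(w)d_{\blue}(w)\bigr)^2$; and an elementary analysis of the degree sequence (the extremal $R$ being a complete bipartite graph or a join of a clique and an independent set) shows $\sum_w d_{\red}(w)d_{\blue}(w)=\Omega(\beta'n^3)$ once both colour classes have $\ge\beta'n^2$ edges.

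\emph{The dichotomy, the union bound, and the main obstacle.} Fix $\gamma=\gamma(\beta)>0$, small in terms of the constant $c$ above. For a fixed container $\mathcal C=f(S_1,\dots,S_t)$ we distinguish cases. If $R^*\cup B^*$ omits at least $\gamma n^2$ edges of $K_n$, then $\Pr[E(G)\subseteq R^*\cup B^*]\le(1-p)^{\gamma n^2}\le e^{-\gamma n^2p/2}$. If $e(R^*)<\tfrac{\beta}{10}n^2$ (or symmetrically $e(B^*)$), then $e(G\cap R^*)\sim\Bin(e(R^*),p)$ has mean $<\tfrac{\beta}{10}n^2p$, so by Lemma~\ref{bintail} $\Pr[e(G\cap R^*)\ge\beta n^2p]\le 2^{-\beta n^2p}$. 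In the remaining case $R^*\cup B^*$ covers all but $<\gamma n^2$ edges, $e(R^*),e(B^*)\ge\tfrac{\beta}{10}n^2$, and $e(Q)$ is negligible; colouring the (few) edges outside $R^*\setminus Q$ and $B^*\setminus Q$ arbitrarily red turns $(R^*\setminus Q,\,B^*\setminus Q)$ into a genuine $2$-colouring of $K_n$ with both classes of size $\ge\tfrac{\beta}{20}n^2$, so the estimate above produces $\Omega(\beta^2n^4)$ of the relevant $4$-cycles, all but $O(\gamma n^4)$ of which avoid the recoloured edges and hence lie in $R^*\setminus Q$ and $B^*\setminus Q$; each such $4$-cycle is a copy of $C_{rrbb}$ in $\cH[\mathcal C]$, whence $e(\cH[\mathcal C])=\Omega(\beta^2n^4)$, contradicting $e(\cH[\mathcal C])<\eps e(\cH)$ once $\eps$ is small in terms of $\beta$. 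Thus this case is vacuous. Finally, summing over all $t$-tuples $(S_1,\dots,S_t)$: grouping by $s=|\pi(S_1\cup\dots\cup S_t)|\le t\tau n^2$ (where $\pi$ forgets colours), there are $\le\binom{n^2}{s}2^{O(ts)}$ choices, each contributing to a relevant event only if $\pi(S_1\cup\dots\cup S_t)\subseteq E(G)$, which has probability $p^s$; since $\sum_{s\le t\tau n^2}\binom{n^2}{s}\bigl(O(1)\,p\bigr)^s=e^{O(\theta pn^2)}$, taking $\theta$ small in terms of $\beta$ makes this factor negligible against the per-container bounds $e^{-\Omega_\beta(n^2p)}$, and the union bound closes. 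The delicate point is precisely this bookkeeping: one fixes $\eps$ (hence $t,\delta$), then $\gamma$, then $\theta$, and finally $\lambda$, all depending only on $\beta$, so that simultaneously (i) $\eps$ is small enough that $e(Q)$ is negligible and $\Omega(\beta^2n^4)>\eps e(\cH)$, (ii) $\gamma$ is small enough that discarding the recoloured edges still leaves $\Omega(\beta^2n^4)$ good $4$-cycles, and (iii) $\theta$ is small enough that the $e^{O(\theta pn^2)}$ containers are dominated by the $e^{-\Omega_\beta(n^2p)}$ failure probabilities; together with the extremal degree-sequence estimate, this is the heart of the proof.
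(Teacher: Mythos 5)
Your proposal follows essentially the same container-based strategy as the paper: the same $4$-uniform hypergraph on coloured slots, the same trichotomy on containers (many uncoloured edges / one small colour class / a vacuous balanced case), the same extremal lower bound on $C_{rrbb}$-copies in a balanced $2$-colouring of $K_n$, and the same entropy-style union bound over fingerprints. The one soft spot is that the claimed ``elementary analysis of the degree sequence'' giving $\sum_w d_{\red}(w)d_{\blue}(w)=\Omega(\beta'n^3)$ is not a pure degree-sequence fact (the infeasible sequence with half the vertices at red-degree $n-1$ and half at $0$ has this sum equal to $0$, so realizability must be used); the paper's Lemma~\ref{lem:balanced-colouring} closes exactly this point with a clean double count over disjoint red/blue edge pairs, which would slot directly into your Cauchy--Schwarz step.
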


As $n^4p^4\gg n^6p^8$ for $p\ll n^{-1/2}$, Propositions~\ref{prop: many Crrbb} and~\ref{prop:balanced-colouring} give that, for $q\gg n^{-6}p^{-8}$, a.a.s.\ no balanced $K_3$-free colouring of $G_{n,p}$ with $n^{-2/3}\ll p\ll n^{-1/2}$ can be extended to the edges of $G_{n,q}$ without creating monochromatic triangles.  Before embarking on the proof of Proposition~\ref{prop:balanced-colouring}, we need a deterministic lemma that deals with the complete  graph, that is, the case $p=1$ in the proposition.

\begin{lem}
  \label{lem:balanced-colouring}
  For every $\beta > 0$, there exists a $\lambda$ such that the following holds.  For all sufficiently large $n$, every $\psi \colon E(K_n) \to \{\red, {\blue}\}$ that satisfies $|\psi^{-1}(c)| \ge \beta n^2$ for both $c \in \{\red, \blue\}$  contains at least $\lambda n^4$ copies of $C_{rrbb}$.
\end{lem}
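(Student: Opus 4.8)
The plan is to estimate directly the number $N$ of copies of $C_{rrbb}$ in the coloured complete graph, to rewrite $N$ as a sum of a \emph{convex} function of the bichromatic codegrees, and then to apply Jensen's inequality; the only genuine difficulty is a structural one, treated in the second paragraph. For a vertex $v$ write $N_R(v),N_B(v)$ for its red and blue neighbourhoods and $d_R(v)=|N_R(v)|$, $d_B(v)=|N_B(v)|$, so $d_R(v)+d_B(v)=n-1$. Every copy of $C_{rrbb}$ is a $4$-cycle whose two red edges meet in a vertex $p$ (the red centre) and whose two blue edges meet in a vertex $q$ (the blue centre); its two remaining vertices then form a pair $\{a,b\}$ with $a,b\in N_R(p)\cap N_B(q)$, and conversely each such triple $\big(p,q,\{a,b\}\big)$ determines exactly one copy (and $p\neq q$ automatically). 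Summing over ordered pairs $(p,q)$ — the terms with $p=q$ vanish since $N_R(p)\cap N_B(p)=\emptyset$ — we get
\[
 N=\sum_{(p,q)}\binom{|N_R(p)\cap N_B(q)|}{2}.
\]
Moreover, double counting the vertices $a$ with $ap$ red and $aq$ blue gives $\sum_{(p,q)}|N_R(p)\cap N_B(q)|=\sum_{v}d_R(v)d_B(v)$, so it will suffice to show that this last quantity is $\Omega_\beta(n^3)$.

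The heart of the matter, and the only place where we use that $\psi$ colours the \emph{complete} graph, is to show that a positive proportion of the vertices are \emph{balanced}, meaning $d_R(v)\ge\delta n$ and $d_B(v)\ge\delta n$ for a small constant $\delta=\delta(\beta)>0$. Concretely, I will show that at least $\eta n$ vertices are balanced for a suitable $\eta=\eta(\beta)>0$. Suppose not. Partition the unbalanced vertices into $L=\{v:d_R(v)<\delta n\}$ (``mostly blue'') and $H=\{v:d_B(v)<\delta n\}$ (``mostly red''); these are disjoint, and $|L|+|H|>(1-\eta)n$. Counting the number $e_R(H,L)$ of red edges between $H$ and $L$ from both ends — each vertex of $L$ sends at most $\delta n$ red edges in total, while each vertex of $H$ sends at most $\delta n$ blue edges in total and hence at least $|L|-\delta n$ red edges into $L$ — we get $|H|\big(|L|-\delta n\big)\le e_R(H,L)\le |L|\,\delta n$, whence $|H|\,|L|\le\delta n\,(|H|+|L|)\le\delta n^2$, so $\min\{|H|,|L|\}<\sqrt\delta\,n$. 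If $|H|<\sqrt\delta\,n$, then the number of red edges meeting $L$ is at most $\sum_{v\in L}d_R(v)<\delta n^2$, while the number of red edges avoiding $L$ is at most $\binom{n-|L|}{2}<(\eta+\sqrt\delta)^2n^2$ (since $V\setminus L$ consists of $H$ together with the fewer than $\eta n$ balanced vertices), so $|R|<\big(\delta+(\eta+\sqrt\delta)^2\big)n^2$, which is below $\beta n^2$ once $\delta,\eta$ are chosen small enough in terms of $\beta$ — a contradiction. The case $|L|<\sqrt\delta\,n$ is symmetric, with the roles of the two colours exchanged. Hence at least $\eta n$ vertices are balanced, and therefore $\sum_v d_R(v)d_B(v)\ge \eta n\cdot(\delta n)^2=\eta\delta^2 n^3$.

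To finish, note that $x\mapsto\binom{x}{2}$ is convex, so Jensen's inequality applied to the $n^2$ summands of the displayed formula for $N$ gives $N\ge n^2\binom{\bar f}{2}$, where $\bar f=n^{-2}\sum_{(p,q)}|N_R(p)\cap N_B(q)|=n^{-2}\sum_v d_R(v)d_B(v)\ge\eta\delta^2 n$. For all sufficiently large $n$ this yields $N\ge n^2\binom{\eta\delta^2 n}{2}\ge \lambda n^4$ with $\lambda=\lambda(\beta)\coloneqq\tfrac14\eta^2\delta^4>0$, as required. The only real obstacle is the middle step: for a general host graph the red cherries and the blue cherries could live on disjoint vertex pairs, making $\sum_v d_R(v)d_B(v)$ — and hence $N$ — genuinely small; what rescues us is that in $K_n$ the red and blue graphs are complementary, so a large ``mostly red'' vertex set and a large ``mostly blue'' vertex set cannot coexist, as the edges between them are over-determined.
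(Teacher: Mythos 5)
Your proof is correct, and its skeleton coincides with the paper's: both express the number of copies of $C_{rrbb}$ as $\sum_{(x,y)} \binom{V_{x,y}}{2}$ with $V_{x,y} = |N_R(x) \cap N_B(y)|$, and both apply Jensen/convexity to reduce the problem to showing that the average $\bar V = n^{-2}\sum_{(x,y)} V_{x,y}$ is $\Omega_\beta(n)$. Where you diverge is in how this average is bounded. The paper uses a quick double-counting trick: for every pair consisting of a red edge $ab$ and a disjoint blue edge $cd$, at least one of the two ``glued'' paths is counted by some $V_{x,y}$, and each $(x,y,z)$ triple is hit $O(n)$ times, giving $\sum V_{x,y} \gtrsim (\beta n^2)^2 / n$ in three lines. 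You instead rewrite $\sum_{(x,y)} V_{x,y} = \sum_v d_R(v)d_B(v)$ and prove the structural claim that a $\Theta_\beta(1)$ fraction of vertices must be \emph{balanced} (linear red and blue degree), via an argument on the red edge count between the ``mostly red'' and ``mostly blue'' vertex classes. Your route is somewhat longer but more transparent: it isolates the exact structural reason (complementarity of the two colour classes in $K_n$) that forces the bichromatic codegree sum to be cubic, and the balanced-vertex statement could be reused elsewhere. One cosmetic nitpick: from $|H||L|\le\delta n^2$ you should conclude $\min\{|H|,|L|\}\le\sqrt\delta\,n$ rather than the strict inequality, but this does not affect anything downstream.
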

\begin{proof}
  For every ordered pair $x, y$ of distinct vertices, let $V_{x,y}$ denote the number of $z$ such that $\psi(xz) = \red$ and $\psi(yz) = \blue$.  Letting $C$ be the number of copies of $C_{rrbb}$, we have, by convexity,
  \[
    C = \sum_{x,y} \binom{V_{x,y}}{2} \ge n(n-1) \cdot \binom{\bar{V}}{2},
  \]
  where
  \[
    \bar{V} = \frac{1}{n(n-1)} \cdot \sum_{x,y} V_{x,y}.
  \]
  Observe that, if $a,b,c,d$ are four distinct vertices such that $\psi(ab) = \red$ and  $\psi(cd) = \blue$, then either $ab$ and $bc$ or $bc$ and $cd$ are counted by $V_{a,c}$ or $V_{b,d}$, respectively. This implies that, for all large $n$,
  \[
    \bar{V} \ge \frac{1}{n(n-1)} \cdot \frac{\beta n^2 \cdot (\beta n^2 - 2n)}{2n} \ge
    \frac{\beta^2 n}{4},
  \]
  which gives the claimed lower bound on $C$.
\end{proof}

We now turn to proving Proposition~\ref{prop:balanced-colouring}

\begin{proof}[Proof of Proposition~\ref{prop:balanced-colouring}] We can assume that $0<\beta<1/4$. 
  Let $\cH$ be the $4$-uniform hypergraph with vertex set $E(K_n) \times \{\red,\blue\}$ whose edges are all copies of $C_{rrbb}$ in $K_n$, that is, sets of the form
  \[
    \big\{(uv,\red), (uw,\red), (vx,\blue), (wx,\blue)\big\},
  \]
  where $u$, $v$, $w$, and $x$ are any four distinct vertices of $K_n$. Observe that
  \[
    v(\cH) = 2\binom{n}{2}, \quad e(\cH) = 12 \cdot \binom{n}{4},  \quad \Delta_2(\cH) \le n, \quad \Delta_3(\cH) = \Delta_4(\cH) = 1.
  \]

  Let $\eps \coloneqq \lambda_{\ref{lem:balanced-colouring}}(\beta/2)/2$ and let $\delta>0$ and $t\in \N$ be the constants provided by Theorem~\ref{thm:containers} invoked with $k = 4$ and $K = 1$.  Set
  \[
    \gamma \coloneqq \min\left\{\eps, \frac{\beta}{32}\right\},
    \qquad
    \sigma \coloneqq \min\left\{\frac{\beta}{4t}, \frac{\gamma^2}{2^t}\right\}
    \qquad
    \text{and}
    \qquad
    \lambda \coloneqq \frac{\delta \sigma^4}{4}.
  \]
  Let $\cI(\cH)$ be the family of all sets $I \subseteq V(\cH)$ that induce fewer than $\lambda n^4p^4$ edges of $\cH$.  Since $p \gg n^{-2/3}$, we may apply Theorem~\ref{thm:containers} to $\cH$ with $\tau \coloneqq \sigma p$ to obtain a function $f \colon \cP(V(\cH))^t \to \cP(V(\cH))$ such that:
  \begin{enumerate}[label=(\roman*)]
  \item
    For every $I \in \cI(\cH)$, there are $S_1, \dotsc, S_t \subseteq V(\cH)$ each of size at most $\tau v(\cH)$  such that $S_1 \cup \dotsb \cup S_t \subseteq I \subseteq f(S_1, \dotsc, S_t)$.
  \item \label{item: container app ii}
    For each $S_1, \dotsc, S_t \subseteq V(\cH)$, the set $f(S_1, \dotsc, S_t)$ induces fewer than $\eps n^4$ edges in $\cH$.
  \end{enumerate}
  Suppose that $\varphi$ is a \emph{bad} colouring of $G$, that is, a colouring with $|\varphi^{-1}(c)| \ge \beta n^2p$ for both $c \in \{\red, \blue\}$ but fewer than $\lambda n^4p^4$ copies of $C_{rrbb}$.  Then $\varphi \in \cI(\cH)$ and thus $\varphi \subseteq f(S_1, \dotsc, S_t)$ for some $S_1, \dotsc, S_t \subseteq \varphi$.  We will call such $\vS \coloneqq (S_1, \dotsc, S_t)$ the \emph{signature} of $\varphi$ and denote it by $\sig(\varphi)$.  Denote by $\pi \colon V(\cH) \to E(K_n)$ the projection to the first coordinate and, with slight abuse of notation, let $\pi(\vS) \coloneqq \pi(S_1 \cup \dotsb \cup S_t)$;  note that $\pi(\sig(\varphi)) \subseteq G$.

  \begin{clm}
    \label{clm:colouring-sig}
    For every $S_1, \dotsc, S_t \subseteq V(\cH)$, letting $\vS \coloneqq (S_1, \dotsc, S_t)$, we have
    \[
      \Pr\big[\text{$G$ has a bad colouring $\varphi$ with $\sig(\varphi) = \vS$}\big] \le \Pr\big[ \pi(\vS) \subseteq G\big] \cdot \exp(-\gamma n^2p).
    \]
  \end{clm}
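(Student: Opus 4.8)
The plan is to condition on the event $A \coloneqq \{\pi(\vS)\subseteq G\}$ and bound the \emph{conditional} probability that $G$ has a bad colouring with signature $\vS$. The first observation is that this event is contained in $A$: if $\sig(\varphi)=\vS$ then $\vS\subseteq\varphi$, so $\pi(\vS)\subseteq\pi(\varphi)\subseteq G$. Since $\Pr[A]=\Pr[\pi(\vS)\subseteq G]$, it suffices to show $\Pr[\text{$G$ has a bad colouring with signature }\vS \mid A]\le\exp(-\gamma n^2p)$. The only input about $\vS$ I would use is container property~\ref{item: container app ii}: $f(\vS)$ induces fewer than $\eps n^4$ edges of $\cH$. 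I would write $L(e)\coloneqq\{c:(e,c)\in f(\vS)\}$ for the allowed colours of $e\in E(K_n)$, and set $R\coloneqq\{e:\red\in L(e)\}$, $B\coloneqq\{e:\blue\in L(e)\}$, $U\coloneqq R\cup B$ and $W\coloneqq R\cap B$. Since a colouring $\varphi\subseteq f(\vS)$ may only use colour $c$ on an edge $e$ with $c\in L(e)$, any bad colouring $\varphi$ with $\sig(\varphi)=\vS$ satisfies $\varphi^{-1}(\red)\subseteq G\cap R$, $\varphi^{-1}(\blue)\subseteq G\cap B$, and $G\subseteq U$.

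I would then split into cases. \emph{Case 1: $|E(K_n)\setminus U|\ge\gamma n^2$.} Here the event in question is contained in $\{G\subseteq U\}$; as $A$ only constrains edges inside $\pi(\vS)\subseteq U$, conditionally on $A$ the $\ge\gamma n^2$ edges outside $U$ are each absent from $G$ independently with probability $1-p$, so the conditional probability is at most $(1-p)^{\gamma n^2}\le\exp(-\gamma n^2p)$. \emph{Case 2a: $|E(K_n)\setminus U|<\gamma n^2$ and $\min\{|R|,|B|\}\le\tfrac\beta2 n^2$}, say $|R|\le\tfrac\beta2 n^2$. Using $|\pi(\vS)|\le\sum_i|S_i|\le t\sigma p\,v(\cH)\le\tfrac\beta4 n^2p$ (by the choice of $\sigma$), a bad colouring with signature $\vS$ would force $|G\cap(R\setminus\pi(\vS))|\ge\beta n^2p-|\pi(\vS)|\ge\tfrac34\beta n^2p$; but conditionally on $A$ this is a sum of at most $|R|\le\tfrac\beta2 n^2$ independent $\mathrm{Bernoulli}(p)$ variables with mean at most $\tfrac\beta2 n^2p$, so Chernoff's inequality (Lemma~\ref{bintail}) bounds its probability by $\exp(-\beta n^2p/32)\le\exp(-\gamma n^2p)$, since $\gamma\le\beta/32$.

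The remaining \emph{Case 2b: $|E(K_n)\setminus U|<\gamma n^2$ and $\min\{|R|,|B|\}>\tfrac\beta2 n^2$} is where the real content lies, and here I would argue that this configuration is \emph{impossible} (so the conditional probability is $0$), as it would contradict $e(\cH[f(\vS)])<\eps n^4$. The idea is to read a $2$-colouring of $K_n$ off from $f(\vS)$ and apply Lemma~\ref{lem:balanced-colouring}. If $\tfrac\beta2 n^2\le|R|\le\binom n2-\tfrac\beta2 n^2$, colour $e$ red iff $e\in R$: both colour classes then have size $\ge\tfrac\beta2 n^2$, so this colouring contains at least $\lambda_{\ref{lem:balanced-colouring}}(\beta/2)\,n^4=2\eps n^4$ copies of $C_{rrbb}$ (this is exactly why we set $\eps=\lambda_{\ref{lem:balanced-colouring}}(\beta/2)/2$). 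Each such copy has its two red edges in $R$ and its two blue edges in $E(K_n)\setminus R=(B\setminus R)\cup(E(K_n)\setminus U)$, so it is an edge of $\cH[f(\vS)]$ unless one of its two blue edges lies in $E(K_n)\setminus U$; since $|E(K_n)\setminus U|<\gamma n^2$ and every edge lies in $O(n^2)$ copies of $C_{rrbb}$, there are only $O(\gamma n^4)$ such exceptional copies, whence $e(\cH[f(\vS)])\ge 2\eps n^4-O(\gamma n^4)>\eps n^4$ once $\gamma$ is small enough relative to $\eps$ --- a contradiction. The symmetric colouring (blue iff in $B$) handles $|B|\le\binom n2-\tfrac\beta2 n^2$, and in the leftover case $|R|,|B|>\binom n2-\tfrac\beta2 n^2$ one has $|W|=|R|+|B|-|U|\ge\binom n2-\beta n^2$, so $W$ misses $O(\beta n^2)$ edges of $K_n$ and (for $\beta$ a small constant) contains $\Omega(n^4)$ copies of $C_4$; each such $C_4$, with the two edges at one diagonal vertex declared red and the two at the other declared blue, is an edge of $\cH[f(\vS)]$ because all four of its edges lie in $W\subseteq R\cap B$, again forcing $e(\cH[f(\vS)])>\eps n^4$.

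I expect the main obstacle to be Case 2b: it is the deterministic supersaturation step where container property~\ref{item: container app ii} is pitted against the balance hypothesis through Lemma~\ref{lem:balanced-colouring}, and where one must check that the error terms arising from the (at most $\gamma n^2$) edges with $L(e)=\emptyset$ are absorbed into the gap between $2\eps n^4$ and $\eps n^4$ --- which is what the choices of $\eps$, $\gamma$, $\sigma$ and ultimately $\lambda$ made at the start of the proof are arranged to guarantee. The probabilistic Cases 1 and 2a are routine, using only the estimate $(1-p)^{\gamma n^2}\le e^{-\gamma n^2 p}$ and Chernoff's inequality together with the bound $|\pi(\vS)|\le\tfrac\beta4 n^2 p$ coming from the smallness of $\sigma$.
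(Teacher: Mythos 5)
Your proof follows the same conditioning-and-dichotomy structure as the paper and is correct in its probabilistic steps (your Cases 1 and 2a are the paper's cases~(a) and~(b), with the Chernoff and $(1-p)^{|X|}$ estimates handled the same way; using the threshold $\gamma n^2$ rather than $\eps n^2$ in Case~1 is harmless since $\gamma\le\eps$). The one place you diverge is in showing that the remaining configuration (your Case~2b) is impossible, i.e., that $|R(\vS)|,|B(\vS)|>\beta n^2/2$ forces $|X(\vS)|\geq\eps n^2$.

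The paper does this with a single, flexible argument: it chooses \emph{any} balanced $\psi$ subject to $\psi^{-1}(\red)\subseteq R(\vS)\cup X(\vS)$ and $\psi^{-1}(\blue)\subseteq B(\vS)\cup X(\vS)$ (such a $\psi$ exists because, writing $a=|R\setminus B|$ and $b=|B\setminus R|$, one always has $a,b<\binom{n}{2}-\tfrac\beta2 n^2$ and $a+b\le\binom n2$, so the interval of feasible red-class sizes meets $[\lceil\beta n^2/2\rceil,\binom n2-\lceil\beta n^2/2\rceil]$), and then every $C_{rrbb}$ of $\psi$ is an edge of $\cH[f(\vS)]$ unless it contains an edge of $X(\vS)$. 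You instead fix the \emph{rigid} colourings ``red iff $e\in R$'' and ``blue iff $e\in B$,'' which forces you to introduce the extra sub-case (iii) where both $|R|,|B|>\binom n2-\tfrac\beta2 n^2$. Your sub-case (iii) argument (counting $C_4$'s with all four edges inside $W=R\cap B$) is correct, but as you flag it yourself, the estimate $4(3\binom n4-\beta n^4)>\eps n^4$ requires $\beta$ small (roughly $\beta<1/8$), whereas the paper's proof only assumes $\beta<1/4$. This is not a fatal gap --- one can always decrease $\beta$ at the outset of Proposition~\ref{prop:balanced-colouring} at no cost, so you would just need to note that reduction explicitly --- but the paper's use of a flexible $\psi$ avoids the extra sub-case and the extra constraint on $\beta$ entirely, and is the cleaner argument.
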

  \begin{claimproof}
    Suppose that $G$ has a bad colouring $\varphi$ with $\sig(\varphi) = \vS$.  This means, in particular, that $\pi(\vS) \subseteq G$, so it is enough to show that
    \[
      \Pr\big[\text{$G$ has a bad colouring $\varphi$ with $\sig(\varphi) = \vS$} | \pi(\vS) \subseteq G\big] \le \exp(-\gamma n^2p).
    \]
    Define
    \begin{align*}
      R(\vS) & \coloneqq \big\{e \in E(K_n) : (e,\red) \in f(\vS)\big\}, \\
      B(\vS) & \coloneqq \big\{e \in E(K_n) : (e,\blue) \in f(\vS)\big\}, \\  
      X(\vS) & \coloneqq E(K_n) \setminus \big(R(\vS) \cup B(\vS)\big)
    \end{align*}
    and observe that $\varphi \subseteq f(\vS)$ means that $G$ is disjoint from $X(\vS)$ and that
    \[
      \varphi^{-1}(\red) \subseteq  R(\vS) \cap G \qquad \text{and} \qquad  \varphi^{-1}(\blue) \subseteq  B(\vS) \cap G.
    \]
    We claim that at least one of the following must be true:
    \begin{enumerate}[label=(\alph*)]
    \item
      \label{item:sig-uncoloured}
      The set $X(\vS)$ has at least $\eps n^2$ edges.
    \item
      \label{item:sig-unbalanced}
      One of the sets $R(\vS)$ or $B(\vS)$ has at most $\beta n^2/2$ edges.
    \end{enumerate}
    Suppose that~\ref{item:sig-unbalanced} does not hold and let $\psi \colon E(K_n) \to \{\red, \blue\}$ be an arbitrary colouring of $K_n$ satisfying $\psi^{-1}(\red) \subseteq R(\vS) \cup X(\vS)$, $\psi^{-1}(\blue) \subseteq B(\vS) \cup X(\vS)$ and $|\psi^{-1}(c)| \ge \beta n^2/2$ for each $c \in \{\red, \blue\}$; such a colouring exists as $\lceil \beta n^2/2 \rceil \le \lfloor \binom{n}{2} / 2\rfloor$ due to our upper bound on $\beta$.  It follows from Lemma~\ref{lem:balanced-colouring} and our definition of $\eps$ that $\psi$ has at least $2\eps n^4$ copies of $C_{rrbb}$.  Any such copy corresponds to an edge of $\cH[f(\vS)]$ unless it contains an edge of $X(\vS)$.  However, the number of $4$-cycles with an edge of $X(\vS)$ is at most $X(\vS) \cdot n^2$.  This implies that $e(\cH[f(\vS)]) \ge 2\eps n^4 - |X(\vS)| \cdot n^2$, which gives $|X(\vS)| > \eps n^2$ due to condition~\ref{item: container app ii} on $f(\vS)$ from the outcome of Theorem~\ref{thm:containers}.

    If~\ref{item:sig-uncoloured} holds, then
    \[
      \Pr\big[G \cap X(\vS) = \emptyset | \pi(\vS) \subseteq G\big] \le (1-p)^{|X(\vS)|} \le \exp(-\eps n^2p),
    \]
    so we may assume that~\ref{item:sig-unbalanced} holds; without loss of generality, $|R(\vS)| \le \beta n^2/2$.  Conditioned on the event that $\pi(\vS) \subseteq G$, the distribution of $e\big(R(\vS) \cap G\big)$ is stochastically dominated by the random variable $|\pi(\vS)| + \Bin\big(|R(\vS)|, p\big)$. Since $|\pi(\vS)| \le t \tau n^2 \le t \sigma n^2 p \le \beta n^2p/4$,
    we have
    \[
      \Pr\left[e\big(R(\vS) \cap G\big) \ge \beta n^2p | \pi(\vS) \subseteq G\right] \le \Pr\big[\Bin(\beta n^2/2, p) \ge (3/4)\beta n^2p\big] \le \exp(-\beta n^2p/32),
    \]
by Lemma~\ref{bintail}.     This proves the assertion of the claim.
  \end{claimproof}
  Using Claim~\ref{clm:colouring-sig}, we may conclude that
  \[
    \begin{split}
      \Pr\big[\text{$G$ has a bad colouring}\big] & \le \sum_{\vS} \Pr\big[\text{$G$ has a bad colouring $\varphi$ with $\sig(\varphi) = \vS$}\big] \\
      & \le \exp(-\gamma n^2p) \cdot \sum_{\vS} \Pr\big[\pi(\vS) \subseteq G\big].
    \end{split}
  \]
  Finally, since there are at most $2^{t|U|}$ sequences $\vS \coloneqq (S_1, \dotsc, S_t)$ satisfying $\pi(\vS) = U$, we have
  \[
    \sum_{\vS} \Pr\big[\pi(\vS) \subseteq G\big] \le \sum_{u \le t\tau v(\cH)} \binom{\binom{n}{2}}{u} \cdot 2^{tu} \cdot p^u \le \sum_{u \le t \sigma n^2p} \left(\frac{2^ten^2p}{u}\right)^u \le n^2 \left(\frac{2^te}{t\sigma}\right)^{t\sigma n^2p} \le e^{\gamma n^2p/2},
  \]
 for $n$ sufficiently large, where the penultimate inequality follows from the fact that, for every $a > 0$, the function $u \mapsto (ea/u)^u$ is increasing when $u \in (0,a]$ and the last inequality follows from the fact that $t\sigma\log (2^te/t\sigma)\leq \gamma/4$ due to our choice of $\sigma$. Hence we have that a.a.s.\ there are no bad colourings of $G$, concluding the proof. 
\end{proof}

\subsection{Unbalanced colourings in the lower range} \label{sec:1 lower}

In this section, we establish the following theorem, proving that $q\left( n; K_3 ,p \right) \le n^{-3}p^{-7/2}$ when $n^{-2/3}\ll p\ll n^{-1/2}$ and hence giving the $1$-statement for the lower range in Theorem~\ref{thm:main}.

\begin{thmtool} \label{thm:1-lower}
  Suppose that $ n^{-2/3}\ll p \ll n^{-1/2}$ and $q\gg n^{-3}p^{-7/2}$ and  let $G_1 \sim G_{n,p}$ and $G_2\sim G_{n,q}$ be independent.  Then, a.a.s.\ no $G_1$-measurable $K_3$-free colouring $\varphi \colon E(G_1) \to \{\red, \blue\}$  can be extended to a $K_3$-free colouring of $G_1\cup G_2$.
\end{thmtool}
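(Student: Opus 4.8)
The plan is to analyse an arbitrary $K_3$-free colouring $\varphi$ of $G_1$ by splitting into two cases according to whether $\varphi$ is \emph{balanced} or not. Fix a small constant $\beta>0$, to be specified, and call $\varphi$ balanced if $|\varphi^{-1}(c)|\ge\beta n^2p$ for both $c\in\{\red,\blue\}$ and unbalanced otherwise. In each case the goal is to produce enough local obstructions to invoke Proposition~\ref{prop: many Crrbb} or Proposition~\ref{prop: many Crbbbb}; what then remains is to check that the numerical conditions on $t$ and $q$ in those propositions are satisfied, which turns out to be true throughout $n^{-2/3}\ll p\ll n^{-1/2}$ with polynomial room to spare.

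First I would dispose of balanced colourings. Applying Proposition~\ref{prop:balanced-colouring} with this $\beta$ yields a constant $\lambda=\lambda(\beta)>0$ such that a.a.s.\ every balanced colouring of $G_1$ contains at least $\lambda n^4p^4$ copies of $C_{rrbb}$. Set $t\coloneqq\lambda n^4p^4$. Then the hypothesis $t\ge n^7p^{10}$ of Proposition~\ref{prop: many Crrbb} holds for all large $n$, since $n^4p^4/(n^7p^{10})=(n^3p^6)^{-1}\to\infty$ when $p\ll n^{-1/2}$; and $t^{-1}\ll q$ holds because $q\gg n^{-3}p^{-7/2}\gg n^{-4}p^{-4}$ (the last inequality uses $np^{1/2}\to\infty$, valid as $p\gg n^{-2/3}$). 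Hence Proposition~\ref{prop: many Crrbb} applies and a.a.s.\ no balanced $K_3$-free colouring of $G_1$ extends to a $K_3$-free colouring of $G_1\cup G_2$.

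The substance of the theorem is the unbalanced case, where by colour-symmetry we may assume $|\varphi^{-1}(\red)|<\beta n^2p$. Here the plan is to establish the following statement, which is the technical core of Section~\ref{sec:1 lower}: for a suitably small fixed $\beta>0$, a.a.s.\ every $K_3$-free colouring $\varphi$ of $G_1$ with $|\varphi^{-1}(\red)|<\beta n^2p$ contains at least $c\,n^6p^7$ copies of $C_{rbbbb}$, for some constant $c=c(\beta)>0$. Granting this, one sets $t\coloneqq c\,n^6p^7$; then $t\ge n^7p^9$ for large $n$ (since $n^6p^7/(n^7p^9)=(np^2)^{-1}\to\infty$) and $t^{-1/2}=c^{-1/2}n^{-3}p^{-7/2}\ll q$ by hypothesis, so Proposition~\ref{prop: many Crbbbb} shows that a.a.s.\ no such colouring can be extended. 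Together with the balanced case this proves the theorem.

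It remains to explain how one would find these $C_{rbbbb}$'s, and this is the main obstacle. A copy of $C_{rbbbb}$ is a red edge $u_1u_2$ together with a blue path $u_1w_1ww_2u_2$ of length four, so the target is a lower bound of order $n^6p^7$ on $\sum_{u_1u_2\in\varphi^{-1}(\red)}P(u_1,u_2)$, where $P(a,b)$ counts blue paths of length four between $a$ and $b$. Two structural facts drive this. First, since $\varphi$ is $K_3$-free and $G_1$ a.a.s.\ has (property~\ref{item:many K3}) a family of $\Omega(n^3p^3)$ edge-disjoint triangles, each of which must contain a red edge, $|\varphi^{-1}(\red)|=\Omega(n^3p^3)$; moreover, applying the same property to every vertex subset $U$ with $|U|\ge n/2$ forces the red colour class to be ``spread out'' — being a triangle transversal of a graph whose triangles are abundant and evenly distributed, it cannot be concentrated on few vertices. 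Second, because red is rare, a typical length-four path is entirely blue, and in fact a.a.s.\ \emph{every} pair $\{a,b\}$ spans $\Theta(n^3p^4)$ length-four paths in $G_1$ — this concentration, simultaneously for all $\binom n2$ pairs, follows from Janson's inequality since $n^3p^4$ exceeds every power of $\log n$ when $p\gg n^{-2/3}$. The difficulty is that the placement of red is adversarial: the crude estimate ``(paths between red endpoints) minus (paths meeting a red edge)'' is far too lossy, because the number of length-four paths meeting the red colour class can swamp $n^6p^7$. The way around this is to count more cleverly: writing a $C_{rbbbb}$ as a blue cherry $w_1ww_2$ completed by a red edge $u_1u_2$ and blue edges $u_1w_1$, $u_2w_2$, one would show that, summed over the $\Omega(n^3p^2)$ blue cherries, the number of completions is $\Omega(n^6p^7)$, the ``spread out'' nature of the red class guaranteeing that the blue neighbourhoods of the cherry's legs typically host $\Omega(n^3p^5)$ red edges between them (degenerate configurations being controlled by property~\ref{item:few F}). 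Making this heuristic rigorous — and carrying out the accompanying union bound over all admissible red colour classes (triangle-free subgraphs of $K_n$ with fewer than $\beta n^2p$ edges), for which the exponential tail in Janson's inequality must be strong enough to beat the number of such classes — is the technical crux; the analogous but still harder union bound in the upper range is precisely what forces the separate, heavier treatment of Section~\ref{sec:1 upper}.
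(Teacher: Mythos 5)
Your balanced case is exactly the paper's argument (Proposition~\ref{prop:balanced-colouring} feeding Proposition~\ref{prop: many Crrbb}), and your top-level plan for the unbalanced case is also the right one (show a.a.s.\ $\Omega(n^6p^7)$ copies of $C_{rbbbb}$ and invoke Proposition~\ref{prop: many Crbbbb}, i.e., Proposition~\ref{prop:unbalanced-lower}). The derivation of Theorem~\ref{thm:1-lower} from these ingredients, including the numerical checks on $t$ and $q$, is correct.

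However, the core of the unbalanced case has a genuine gap, and it is precisely at the place you flag as ``the technical crux.'' Two problems. First, your heuristic that ``the blue neighbourhoods of the cherry's legs typically host $\Omega(n^3p^5)$ red edges between them'' does not follow from the triangle-transversal property. Property~\ref{item:many K3} gives $e(T[U])\ge\theta|U|^3p^3$ only for sets $U$ with $|U|\ge n/2$; the blue neighbourhoods of cherry legs have size $\approx np\ll n/2$, so nothing constrains the red density between two such small sets, and an adversary is free to starve most cherries. Second, and more fundamentally, the union bound over red subgraphs $T$ with $e(T)=s<\beta n^2p$ does \emph{not} close just from ``red is a spread-out transversal.'' If you run Janson for the 5-cycle count with the generic degree bound $\Delta(T)\le 2np$ (which is all that~\ref{item:bound max deg} gives), the correlation term is $\Delta\approx\mu\cdot\Delta(T)n^2p^3$, giving $\mu^2/\Delta\approx s$ up to constants, whereas the entropy of choosing $T$ is of order $s\log(n^2p/s)$. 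The exponential tail loses by a factor of $\log(n^2p/s)$.

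The missing idea is the paper's Lemma~\ref{lem:5-cycle-preprocessing}: an iterative vertex-peeling argument which, for any triangle transversal $T$ with fewer than $\beta n^2p$ edges, produces an induced subgraph $S=T[W]$ with $|W|\ge n/2$, $e(S)\ge\theta n^3p^3/8$, and the \emph{much} tighter degree cap $\Delta(S)\le\frac{cnp}{\log(n^2p)-\log(e(S))}$. Counting 5-cycles with one edge in this $S$ (rather than in all of $T$) makes $\mu^2/\Delta\gtrsim e(S)\log\bigl(n^2p/e(S)\bigr)$ with a large constant, exactly what is needed to absorb the entropy of the union bound over $S$. Your write-up correctly identifies that Janson plus a union bound is the engine and that controlling concentration of the red class is essential, but the concrete degree-capping mechanism is the step you would need to discover, and the cherry-completion count you sketch in its place would not survive an adversarial red class.
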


This theorem will follow from the following proposition which deals with unbalanced colourings.
 
\begin{prop}
  \label{prop:unbalanced-lower}
  There exist $\beta, \zeta>0$ such that the following holds.  Suppose that $ n^{-2/3}\ll p \ll n^{-1/2}$ and let $G \sim G_{n,p}$.  Then, a.a.s.\ every $K_3$-free $\varphi \colon E(G) \to \{\red, \blue\}$ such that $\left| \varphi ^{-1}(\red)\right| < \beta n^2p$ results in at least $\zeta n^6p^7$ copies of $C_{rbbbb}$.
\end{prop}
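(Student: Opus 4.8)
The plan is to exhibit the failure event, bound it by a union over candidate red graphs, and estimate each term with Janson's inequality. Fix constants $\beta,\zeta>0$ to be chosen at the end, with $\beta$ small and $\zeta$ small in terms of the constant $\theta$ from~\ref{item:many K3}. A $K_3$-free colouring $\varphi$ of $G$ with $|\varphi^{-1}(\red)|<\beta n^2p$ is the same as a triangle-free graph $R=\varphi^{-1}(\red)\subseteq G$ with $e(R)<\beta n^2p$ for which $G\setminus R$ is triangle-free, and the number of copies of $C_{rbbbb}$ under $\varphi$ equals (up to a bounded factor, absorbed into $\zeta$) the number of $5$-cycles of $G$ meeting $R$ in exactly one edge. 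Since $G\setminus R$ triangle-free forces $R$ to hit every triangle of $G$, property~\ref{item:many K3} applied with $U=V(G)$ shows that whenever $G$ has~\ref{item:many K3} — which is a.a.s.\ — any such $R$ satisfies $e(R)\ge\theta n^3p^3$. Hence it suffices to prove $\sum_{R}\Pr\big[R\subseteq G,\ G\setminus R\text{ triangle-free},\ N(R)<\zeta n^6p^7\big]=o(1)$, where $N(R)$ denotes the number of $5$-cycles of $G$ with exactly one edge in $R$ and the sum is over triangle-free $R\subseteq K_n$ with $\theta n^3p^3\le e(R)<\beta n^2p$.

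Fix such an $R$ and condition on $R\subseteq G$; the remaining pairs are still independent with probability $p$. A $5$-cycle of $G$ with unique $R$-edge $e=xy$ is $e$ together with a four-edge path from $x$ to $y$ lying entirely in $G\setminus R$, so $N(R)=\sum_{e\in R}X_e$ where $X_e$ counts these paths. Writing $N(R)$ as a sum over pairs $(e,Q)$, with $e\in R$ and $Q$ a four-edge path in $K_n\setminus R$ joining the ends of $e$, of $\mathds{1}[Q\subseteq G]$, Janson's inequality (Lemma~\ref{janson}) applies. One checks that, for all but a "degenerate" family of $R$ discussed below, $\mu:=\Ex[N(R)\mid R\subseteq G]=\Theta(e(R)n^3p^4)=\Omega(n^6p^7)$: each $e$ is joined by $\Theta(n^3)$ four-edge paths in $K_n$, and deleting the $e(R)<\beta n^2p$ edges of $R$ destroys only a vanishing fraction of these in aggregate (using $\Delta(R)\le\Delta(G)\le 2np$ from~\ref{item:bound max deg}). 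For the correlation term, $\Delta=\Theta(e(R)n^5p^7)$: pairs of paths closing the \emph{same} red edge contribute $\Theta(n^5p^7)$ each (the standard $P_5$ estimate), while pairs closing \emph{different} red edges are lower order, since a four-edge path through a fixed edge has only $O(\beta\,\Delta_2(G)\,n^2p)=o(n^2)$ endpoint-pairs in $R$, by~\ref{item:bound max deg} and the codegree bound $\Delta_2(G)\le\max\{20,\,20n^{1.1}p^{1.8}\}$ extracted from~\ref{item:K210-count}. As $p\gg n^{-2/3}$ gives $n^2p^3\to\infty$, we get $\mu\ll\Delta$ and $\mu^2/\Delta=\Theta(e(R)np)$; choosing $\zeta$ so that $\zeta n^6p^7\le\mu/2$, Lemma~\ref{janson} gives $\Pr[N(R)<\zeta n^6p^7\mid R\subseteq G]\le\exp(-c\,e(R)np)$ for a constant $c>0$. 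For the degenerate $R$ — those concentrated on few vertices of near-maximal degree, where the $\mu$-bound can fail — one instead uses the first two conditions: triangle-freeness of $R$ together with triangle-freeness of $G\setminus R$ forces $G[N_R(v)]$ to be triangle-free for every $v$, so these events have probability at most $\exp(-\Omega(n^3p^3))$ (and much smaller when several high-degree vertices occur), while such structured $R$ are too few to matter against this bound.

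For the main (non-degenerate) part, $\Pr[R\subseteq G]=p^{e(R)}$ and the number of $R$ with $e(R)=r$ is at most $\binom{\binom n2}{r}\le(en^2p/r)^r$, so
\[
  \sum_{R}p^{e(R)}\exp(-c\,e(R)np)\ \le\ \sum_{r\ge\theta n^3p^3}\Big(\tfrac{en^2p}{r}\Big)^r e^{-crnp}\ =\ \sum_r\exp\!\Big(r\big(\ln(en^2p/r)-cnp\big)\Big).
\]
For every $r$ in range, $r\ge\theta n^3p^3$ gives $\ln(en^2p/r)\le\ln(e/(\theta np^2))=O(\log n)$, whereas $np\ge n^{1/3}\gg\log n$; hence each summand is at most $e^{-\frac12 crnp}$, and the geometric-type series is $O\big(e^{-\frac12 c\theta n^4p^4}\big)=o(1)$ since $n^4p^4\to\infty$. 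Adding the $o(1)$ contributions from~\ref{item:many K3} failing and from the degenerate $R$, the proposition follows.

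The main obstacle is the estimate $\Delta=O(e(R)n^5p^7)$ in Janson's inequality — in particular, bounding the interactions between four-edge paths that close \emph{different} red edges — which requires a careful case analysis using all of $e(R)<\beta n^2p$, the maximum degree bound, and the codegree bound; a related subtlety is the separate handling of red graphs that are too concentrated for the Janson estimate to apply. Finally, the whole scheme relies on the two numerical facts $np\gg\log n$ (so that the Janson saving outweighs the number of candidate graphs $R$) and $n^2p^3\to\infty$ (so that $\mu^2/\Delta$ is large enough), the latter being precisely why the lower range of Theorem~\ref{thm:main} stops at $p\gg n^{-2/3}$.
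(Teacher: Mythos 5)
Your overall strategy --- conditioning on the red subgraph, applying Janson's inequality, and union-bounding over candidate red graphs with at least $\theta n^3p^3$ edges --- matches the paper's, and the estimate $\mu = \Theta(e(R)n^3p^4)$ is correct. The proof breaks at the Janson correlation term. The claim that $\Delta = \Theta(e(R)n^5p^7)$, i.e.\ that pairs of four-edge paths closing different red edges contribute lower order, is false. If $e_1 = vx$ and $e_2 = vy$ are two red edges sharing the vertex $v$, then a path $Q_1$ from $v$ to $x$ and a path $Q_2$ from $v$ to $y$ may both leave $v$ via the same edge $vw$; there are $\Theta(n^5)$ such pairs of paths sharing exactly that one edge, each weighted $p^7$ in $\Delta$. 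Summing over all ordered pairs of red edges at a common vertex gives a contribution of order $X_2(R)\,n^5p^7$, where $X_2(R) = \sum_v\binom{d_R(v)}{2}$; equivalently, for each fixed $(e_1,Q_1)$ the number of interacting $(e_2,Q_2)$ is $\Theta(\Delta(R)n^2)$, not $\Theta(n^2)$. Any red graph with $e(R)\ge\theta n^3p^3$ has average degree at least $2\theta n^2p^3\gg1$, so $X_2(R)\gg e(R)$; in the extreme, $X_2(R)$ can be of order $e(R)\Delta(R)$ with $\Delta(R)$ as large as $2np$. The true bound is therefore $\mu^2/\Delta = \Omega\big(e(R)np/\Delta(R)\big)$, which can degrade to $\Theta(e(R))$ and is then too small by a logarithmic factor: the entropy of the union bound is $r\log(n^2p/r)$, and $\log(n^2p/r)\ge\log\big(1/(\beta np^2)\big)$ is of order $\log n$ when $p$ is near the lower end $n^{-2/3}$.

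The brief appeal to ``degenerate $R$'' does not repair this. The difficulty is not that $\mu$ fails for a handful of concentrated graphs; it is that $\Delta$ is systematically inflated whenever $\Delta(R)\gg1$, which is unavoidable for any $R$ with $\Theta(n^3p^3)$ edges, and the trivial bound $\Delta(R)\le2np$ is a $\log$ factor too crude. (The codegree quantity $\Delta_2(G)$ you invoke is also not the relevant one: the Janson universe is $K_n\setminus R$, not $G$, and the interaction count is governed by $\Delta(R)$.) The paper supplies exactly the missing logarithmic saving through Lemma~\ref{lem:5-cycle-preprocessing}: from any red graph hitting every triangle it extracts a large induced subgraph $S$ with the carefully calibrated bound $\Delta(S)\le cnp/\big(\log(n^2p)-\log e(S)\big)$, and the Janson argument is then run against $S$ alone (Claim~\ref{clm:B(S) bound}), giving $\mu^2/\Delta = \Omega\big(e(S)np/\Delta(S)\big) = \Omega\big(e(S)\log(n^2p/e(S))\big)$, which matches the union-bound entropy. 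Some degree-control step of this kind is essential; without it the union bound cannot close.
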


Indeed, with Proposition~\ref{prop:unbalanced-lower} and our previous results, Theorem~\ref{thm:1-lower} follows readily.

\begin{proof}[Proof of Theorem~\ref{thm:1-lower}]
  Let $\beta, \zeta>0$ be the constants from the statement of Proposition~\ref{prop:unbalanced-lower}. Further, let $\lambda>0$ be the constant output by Proposition~\ref{prop:balanced-colouring} with input $\beta$ and let $t_1\coloneqq\lambda n^4p^4\geq n^7p^{10}$ and $t_2\coloneqq\zeta n^6p^7\geq n^7p^9$. Now fixing $G_1\sim G_{n,p}$ and $G_2\sim G_{n,q}$, we have that a.a.s.\ the conclusions of Propositions~\ref{prop: many Crrbb} with $t_{\ref{prop: many Crrbb}}=t_1$,~\ref{prop: many Crbbbb} with $t_{\ref{prop: many Crbbbb}}=t_2$,~\ref{prop:balanced-colouring} and~\ref{prop:unbalanced-lower} all hold. We claim that this implies the theorem.  Indeed, consider some  $K_3$-free colouring $\varphi \colon E(G_1) \to \{\red, \blue\}$. Suppose first  that $\left| \varphi ^{-1}(c)\right| \ge \beta n^2p$ for both $c\in \{\red,\blue\}$.  By the assumed conclusion of Proposition~\ref{prop:balanced-colouring}, there are at least $t_1$ copies of $C_{rrbb}$ induced by $\phi$.  Since $q\gg n^{-3}p^{-7/2}\gg t_1^{-1}$, the assumed conclusion of Proposition~\ref{prop: many Crrbb} gives that $\phi$ cannot be extended to $G_2$ whilst avoiding monochromatic triangles.  Likewise, if $|\phi^{-1}(\red)|<\beta n^2p$, then Proposition~\ref{prop:unbalanced-lower} gives that there are at least $t_2$ copies of $C_{rbbbb}$ induced by $\phi$ and Proposition~\ref{prop: many Crbbbb} then gives that we cannot extend $\phi$ to $G_2$ without getting monochromatic triangles, using that $q \gg t_2^{-1/2}$.  Since both colours play symmetric roles, the same conclusion holds under the assumption $|\phi^{-1}(\blue)| < \beta n^2p$.  This covers all colourings and completes the proof.
\end{proof}

It remains to prove Proposition~\ref{prop:unbalanced-lower}. 
Our proof works by taking a union bound over  all possibilities $T$ for the red subgraph. For each $T$, we  use Janson's inequality (Lemma~\ref{janson}) to prove that  it is very unlikely that we avoid creating many $C_{rbbbb}$ when we colour $T$ red and $G \setminus T$ blue. This simple approach almost works -- it turns out that in order to get strong enough error probabilities in the Janson argument, we need to consider only red subgraphs $T$ that  are well behaved, in that they satisfy a maximum degree condition. Before embarking on the proof of Proposition~\ref{prop:unbalanced-lower}, we prove that any red subgraph $T$ that we are interested in contains a large induced subgraph that is well behaved. 

\begin{lem} \label{lem:5-cycle-preprocessing}
  For any $c>0$, there exists a $\beta>0$ such that the following holds for all sufficiently large $n \in \N$ and $p=p(n)\in [0,1]$.  Let $T$ be a graph on $n$ vertices such that $e(T) < \beta n^2p$ and $e(T[U]) \ge 1$ for every $U\subseteq V(G)$ with $|U|\ge \frac{n}{2}$.
  Then, there exists a vertex subset $W\subseteq V(T)$ such that $|W|\ge \frac{n}{2}$ and $S\coloneqq T[W]$ satisfies $\Delta(S)\le\frac{cnp}{\log (n^2p)-\log(e(S))}$.
\end{lem}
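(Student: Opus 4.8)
The plan is to obtain $W$ by an iterative cleaning procedure that repeatedly deletes a highest-degree vertex until the maximum degree drops below the desired threshold, and then to argue that we cannot have removed too many vertices. Fix $c > 0$; I will choose $\beta$ small at the end in terms of $c$. Write $m \coloneqq e(T)$ and note $m < \beta n^2 p$, so in particular $\log(n^2p) - \log m > \log(1/\beta) > 0$, and the quantity $\tau \coloneqq \frac{cnp}{\log(n^2p) - \log m}$ is a positive real. The key point is that $m$ here is the \emph{original} edge count of $T$, which only decreases as we delete vertices, so $\tau$ is an honest upper bound on $\frac{cnp}{\log(n^2p) - \log(e(S))}$ for every subgraph $S \subseteq T$; hence it suffices to produce $W$ with $|W| \ge n/2$ and $\Delta(T[W]) \le \tau$.

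First I would run the greedy algorithm: set $T_0 \coloneqq T$, and while $\Delta(T_i) > \tau$, let $v_{i+1}$ be a vertex of $T_i$ of degree exceeding $\tau$ and set $T_{i+1} \coloneqq T_i - v_{i+1}$. Suppose for contradiction that this process deletes more than $n/2$ vertices before terminating; then after deleting exactly $\lceil n/2 \rceil$ of them, each deletion removed more than $\tau$ edges, so the number of edges destroyed is more than $\frac{n}{2} \cdot \tau = \frac{cn^2p}{2(\log(n^2p) - \log m)}$. Since all these edges lie in $T$, we get
\[
  m > \frac{cn^2p}{2\bigl(\log(n^2p) - \log m\bigr)},
\]
which rearranges to $\log(n^2p) - \log m > \frac{cn^2p}{2m}$, i.e. $\log\!\bigl(\frac{n^2p}{m}\bigr) > \frac{c}{2} \cdot \frac{n^2p}{m}$. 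Writing $x \coloneqq \frac{n^2p}{m} > 1/\beta$, this says $\log x > \frac{c}{2} x$, which is false once $x$ is large enough (since $\log x / x \to 0$); concretely, choosing $\beta$ small enough that $1/\beta$ exceeds the largest solution of $\log x = (c/2)x$ gives a contradiction. Therefore the algorithm terminates after deleting at most $n/2$ vertices, and we may take $W$ to be the vertex set of the resulting graph, which has $|W| \ge n - n/2 \ge n/2$ and $\Delta(T[W]) \le \tau$, as required. (The hypothesis that $e(T[U]) \ge 1$ for all large $U$ is not even needed for this bound; it only guarantees $W$ is nonempty and that $e(S) \ge 1$ so that the denominator $\log(n^2p) - \log(e(S))$ in the statement is well-defined — one should just note $e(S) \ge 1$ so $\log(e(S)) \ge 0$ and $S$ is a genuine graph.)

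The only real subtlety, and the step I would be most careful about, is the bookkeeping around the logarithms: one must keep $\beta$ (hence the lower bound $1/\beta$ on $x = n^2p/m$) large enough to beat the inequality $\log x > (c/2)x$ uniformly, and simultaneously check that $\log(n^2p) - \log m$ is bounded away from $0$ so that $\tau$ is well-defined and the displayed rearrangement is valid — both follow from a single choice of sufficiently small $\beta = \beta(c)$, using that $x \mapsto \log x / x$ is eventually decreasing. Everything else is a routine greedy-deletion argument.
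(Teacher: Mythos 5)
Your proposal contains a critical error: the direction of an inequality is reversed in the very first reduction, and this makes the rest of the argument prove the wrong thing.

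You correctly observe that, for any $S\subseteq T$, we have $e(S)\le m$, and hence
\[
  \frac{cnp}{\log(n^2p)-\log(e(S))}\;\le\;\frac{cnp}{\log(n^2p)-\log m}\;=\;\tau.
\]
But this says $\tau$ is a \emph{weaker} (larger) threshold than the one the lemma asks you to satisfy. Proving $\Delta(T[W])\le\tau$ does \emph{not} imply $\Delta(T[W])\le\frac{cnp}{\log(n^2p)-\log(e(T[W]))}$; it is a strictly weaker conclusion whenever $e(T[W])<m$, which happens as soon as your greedy procedure deletes a single vertex. The target degree bound \emph{tightens} as edges are removed (the denominator grows), so freezing the denominator at its initial value $\log(n^2p)-\log m$ bakes in an unjustified relaxation. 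The ``hence it suffices'' step would be valid only if $\tau$ were a lower bound on the right-hand side of the claimed inequality, not an upper bound.

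The paper's proof runs the same peeling but updates the threshold at every step to $\frac{cnp}{\log(n^2p)-\log(t_{i-1})}$, where $t_{i-1}$ is the \emph{current} edge count. This fixes the logical direction, but the threshold now shrinks as the peeling proceeds, so your one-line edge count (``$n/2$ deletions each destroy more than $\tau$ edges, contradiction'') no longer works: the later deletions may destroy far fewer edges. To control the total number of deletions, the paper instead tracks the edge count through roughly $\log\log n$ dyadic phases, bounds the length of each phase separately, and sums; the hypothesis $e(T[U])\ge1$ for all $|U|\ge n/2$ is used substantively to initialize this estimation (it yields $t_{n/2}\ge1$ and hence $t_{n/4}=\Omega(n^2p/\log n)$). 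Your closing remark that this hypothesis is ``not even needed'' is another symptom of the same misreading of the degree threshold. Repairing your argument essentially requires redoing the paper's phase analysis.
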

\begin{proof}
  Suppose that $T$ satisfies the assumptions of the lemma.  We can assume that $0<c<1/10$ and we fix some $\eps \in (0, c^2)$ and $\beta \in (0, \eps c/8)$.  Consider the following iterative process of peeling off vertices:

  \medskip
  
  \begin{algorithm}[H]
    \SetAlgoNoEnd
    Let $T_0 \coloneqq T$ and $t_0 \coloneqq e(T_0)$.
    
    \For{$i = 1, 2, \dotsc$}{    
      \eIf{$\Delta(T_{i-1}) \le \frac{cnp}{\log(n^2p) - \log(t_{i-1})}$}{
        Terminate with $S \coloneqq T_{i-1}$.
      }{
        Let $v_i$ be an arbitrary vertex of $T_{i-1}$ with degree exceeding $\frac{cnp}{\log(n^2p) - \log(t_{i-1})}$, let $T_i \coloneqq T_{i-1} - v_i$ and $t_i \coloneqq e(T_i)$.
      }
    }
  \end{algorithm}

  \medskip

  We claim that the process terminates after fewer than $n/2$ steps and thus outputs an appropriate $S$.  Suppose for a contradiction that this is not the case and that the process is still running after $n/2$ steps.  We will show that this contradicts our upper bound on $e(T)$.  Firstly, note that 
  \[
    t_{n/4}\ge \frac{n}{4} \cdot \frac{cnp}{\log (n^2p)-\log(t_{n/2})} \ge \frac{n}{4} \cdot \frac{cnp}{\log (n^2p)} \ge \frac{cn^2p}{2^3\log n},
  \]
  using here that $t_{n/2}\ge 1$ due to our assumption on $T$. Now, define $\tau_0\coloneqq 0$ and
  \[
    \tau_i\coloneqq\min\left\{\tau:t_{n/4-\tau}\ge 2^{i-3}\frac{cn^2p}{\log n}\right\}
  \]
  for $i=1, 2,\ldots, k \coloneqq \log_2 (\eps \log n)$.

  \begin{clm}
    $\tau_k\le n/4$ (and hence all $\tau_i$ are well defined).
  \end{clm}
  Note that the claim implies that
  \[e(T)=t_0\ge t_{n/4-\tau_k}\ge 2^{k-3}\frac{cn^2p}{\log n}=\frac{\eps cn^2p}{2^3}>\beta n^2p,\]
  contradicting our upper bound on $e(T)$. It thus remains to prove the claim.

  For this, note that, for each $0\le i \le k-1$ and all $\tau\in \N$ with $\tau\le n/4-\tau_i$, we have that
  \[
    t_{n/4-\tau_i-\tau} - t_{n/4-\tau_i} \ge \tau \cdot \frac{cnp}{\log (n^2p)- \log(t_{n/4-\tau_i}) }
    \ge \tau \cdot \frac{cnp}{\log (2^{3-i}c^{-1}\log n) }.
  \]
  Consequently, for $i \in \{0,\dotsc,k-1\}$,
  \[
    \tau_{i+1}-\tau_i\le 2^{i-2} \cdot \frac{cn^2p}{\log n} \cdot \frac{\log\left(2^{3-i}c^{-1}\log n\right)}{cnp}=  \frac{2^i n \left(\log\big(2^{3}c^{-1}\log n\big)-i \log 2\right)}{4\log n}
  \]
  and so 
  \begin{align*}
    \tau_k&=\tau_0+\sum_{i=0}^{k-1}(\tau_{i+1}-\tau_i) \\
          &\le \frac{n}{4\log n}\left(\sum_{i=0}^{k-1} 2^{i}  \log \big(2^{3}c^{-1}\log n\big) - \log 2 \cdot \sum_{i=0}^{k-1} i2^{i}\right)\\
          &\le \frac{n}{4\log n}\left(2^{k}  \log \big(2^3c^{-1}\log n\big)-\log 2 \cdot (k-2)2^k\right)
    \\ & =  \frac{2^kn}{4 \log n} \log (2^{5-k}c^{-1}\log n)
    \\ &\le \frac{\eps n}{4} \log
         \left(2^5c^{-1}\eps^{-1} \right)
         \leq \frac{n}{4},
  \end{align*}
  as required, where we used our upper bounds on $\eps$ and  $c$ in the final inequality. 
\end{proof}

We now use Lemma~\ref{lem:5-cycle-preprocessing} to establish Proposition~\ref{prop:unbalanced-lower}.

\begin{proof}[Proof of Proposition~\ref{prop:unbalanced-lower}]
  Let $\theta$ be the constant from the statement of Lemma~\ref{lem:randomgraph}~\ref{item:many K3}, let $\zeta = \theta/2^{10}$, let $c = 2^{-17}$ and let $\beta \coloneqq \beta_{\ref{lem:5-cycle-preprocessing}}(c)$ be the constant from the statement of Lemma~\ref{lem:5-cycle-preprocessing}.  Define $\cS$ to be the set of graphs $S$ such that $\theta n^3p^3/8\le e(S)< \beta n^2p$  and  $\Delta(S)\le\frac{cnp}{\log (n^2p)-\log(e(S))}$.     Given an $S\in \cS$, let $B(S)$ be the event  that for $G \sim G_{n,p}$, the graph $G\cup S$ contains fewer than $\zeta n^6p^7$ copies of $C_5$ whose  vertices all lie in $V(S)$ and that have one edge in $S$ and four edges of $G \setminus S$.  The following key claim  bounds the probability of $B(S)$ for all $S\in \cS$.
  
      \begin{clm}
      \label{clm:B(S) bound}
      For any $S\in \cS$, letting $s=e(S)$, we have that 
      \[
        \Pr[B(S)] \le \left(\frac{s}{n^2p}\right)^{2s}.
      \]
    \end{clm}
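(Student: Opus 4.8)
I plan to apply Janson's inequality (Lemma~\ref{janson}). Realise the part of $G \setminus S$ inside $V(S)$ as a random subset $\Gamma_p$ of $\Gamma \coloneqq \binom{V(S)}{2} \setminus E(S)$, with each pair present independently with probability $p$. Call a $4$-element set $A \subseteq \Gamma$ \emph{admissible} if it is the edge set of a path $v_0v_1v_2v_3v_4$ on five distinct vertices of $V(S)$ with $v_0v_4 \in E(S)$; for such $A$ put $I_A \coloneqq \mathds{1}[A \subseteq \Gamma_p]$ and let $X \coloneqq \sum_A I_A$. Then $X$ is exactly the number of copies of $C_5$ on $V(S)$ with one edge in $S$ and four edges in $G \setminus S$, so $B(S) = \{X < \zeta n^6p^7\}$. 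Writing $m \coloneqq |V(S)| \ge n/2$ and using $\mu, \Delta$ as in Lemma~\ref{janson}, I will show $\mu \ge 2\zeta n^6p^7$ and $\mu^2/\Delta \ge 16\,s\log(n^2p/s)$; then, with $k \coloneqq \mu - \zeta n^6p^7 \ge \mu/2$, Lemma~\ref{janson} gives
\[
  \Pr[B(S)] \le \exp\!\left(-\frac{k^2}{2\Delta}\right) \le \exp\!\left(-\frac{\mu^2}{8\Delta}\right) \le \exp\big(-2s\log(n^2p/s)\big) = \left(\frac{s}{n^2p}\right)^{2s}.
\]

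For the first moment, note that $\Delta(S) \le \frac{cnp}{\log(n^2p)-\log s} = o(n)$, since $\log(n^2p)-\log s > \log(1/\beta)$ stays bounded away from $0$ while $p \to 0$. Hence, for a fixed $e \in E(S)$, the number of length-$4$ paths between the endpoints of $e$ that use an edge of $S$ is at most $4\Delta(S)m^2 = o(m^3)$, so the number $N$ of admissible sets satisfies $N \ge s\big((m-2)(m-3)(m-4) - o(m^3)\big) \ge c_1 s n^3$ for some absolute $c_1 > 0$ and all large $n$ (using $m \ge n/2$). Therefore $\mu = Np^4 \ge c_1 s n^3 p^4 \ge c_1 \cdot \tfrac{\theta}{8} n^6 p^7$, which exceeds $2\zeta n^6 p^7$ because $\zeta = \theta/2^{10}$ and $c_1$ may be taken $\ge \tfrac1{64}$.

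For the second moment, recall that $\Delta = \mu + \sum_{A\ne A'\,:\,A\cap A'\ne\emptyset} p^{|A\cup A'|}$, and that two admissible sets share $1$, $2$ or $3$ edges. The dominant contribution is from single-edge overlaps: for a fixed admissible $A$ and a fixed one of its four edges $g$, the number of admissible $A'$ containing $g$ is at most $8\Delta(S)m^2$ --- in each of the four positions $g$ can occupy in $A'$, the edge $g$ pins down two of the path's vertices, at most one further vertex is free, and the last vertex lies in an $S$-neighbourhood and so has at most $\Delta(S)$ choices --- whence at most $32\Delta(S)m^2$ admissible $A'$ share exactly one edge with $A$, contributing at most $32\Delta(S)m^2p^3$ to the inner sum. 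A coarser version of the same argument bounds the two- and three-edge overlaps by $O(\Delta(S)mp^2 + \Delta(S)p) = o(\Delta(S)m^2p^3)$, using $mp \ge np/2 \to \infty$. Consequently, for large $n$,
\[
  \Delta \le \mu\big(1 + 40\,\Delta(S)m^2p^3\big) \le \mu\left(1 + \frac{40cn^3p^4}{\log(n^2p/s)}\right) \le \frac{c_2\,cn^3p^4}{\log(n^2p/s)}\cdot\mu
\]
for an absolute $c_2$, using $\Delta(S) \le \frac{cnp}{\log(n^2p/s)}$, $m \le n$ and $n^3p^4/\log(n^2p/s)\to\infty$. Combining with $\mu \ge c_1 s n^3 p^4$,
\[
  \frac{\mu^2}{\Delta} \ge \frac{\mu\log(n^2p/s)}{c_2\,cn^3p^4} \ge \frac{c_1}{c_2\,c}\cdot s\log(n^2p/s) \ge 16\,s\log(n^2p/s),
\]
the last step holding because $c = 2^{-17}$ is far below $c_1/(16c_2)$.

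The main obstacle is the bound on $\Delta$: one must verify that its overlap term is at most $\mu$ times $\tfrac{O(c\,n^3p^4)}{\log(n^2p/s)}$, with exactly one power of $\log(n^2p/s)$ in the denominator. This $\log$-factor comes directly from the maximum-degree condition built into the definition of $\cS$, and it is precisely what cancels the $\log(n^2p/s)$ in the target bound $(s/(n^2p))^{2s}$, leaving only a constant that the tiny choice $c = 2^{-17}$ is designed to absorb. Confirming that the two- and three-edge overlaps are genuinely lower order across the whole range $n^{-2/3}\ll p\ll n^{-1/2}$ --- which rests on $mp\to\infty$ --- is the remaining point requiring care.
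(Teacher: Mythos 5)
Your proposal follows essentially the same approach as the paper's proof of this claim: applying Janson's inequality to the family of $5$-cycles on $V(S)$ with one $S$-edge and four non-$S$-edges, lower-bounding $\mu$ by $\Omega(sn^3p^4)$ via greedy path counting using $\Delta(S)\ll n$, and showing the single-edge overlaps dominate $\Delta$ so that $\Delta\lesssim\Delta(S)n^2p^3\,\mu$, which combined with $\Delta(S)\le cnp/\log(n^2p/s)$ and the small choice of $c$ gives the stated bound. One small slip in the wording of your single-edge-overlap count: you say ``at most one further vertex is free,'' whereas in fact two of the three remaining vertices of the $5$-cycle are free and only one is confined to an $S$-neighbourhood; however, the bound $8\Delta(S)m^2$ that you actually write down and use is the correct one, so the argument is unharmed.
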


    Before proving this claim, let us see how it implies the proposition. Firstly, let  $\cF$ be the family of all graphs $T$ on $V(G)$ that satisfy the following:
  \begin{enumerate}[label=(\roman*)]
  \item \label{cond:T1} $e(T) < \beta n^2p$;
  \item \label{cond:T2} for every $U\subseteq V(G)$ with $|U|\ge \frac{n}{2}$, we have that $e(T[U])\geq \theta|U|^3p^3$.
  \end{enumerate}
  Now, if  $G$ satisfies property~\ref{item:many K3} of Lemma~\ref{lem:randomgraph}, then every $K_3$-free colouring $\varphi \colon E(G) \to \{\red, \blue\}$ that colours fewer than $\beta n^2 p$ edges red satisfies $\phi^{-1}(\red) \in \cF$.  Indeed, \ref{item:many K3} gives a collection of at least $\theta|U|^3p^3$ edge-disjoint triangles in each $U\subseteq V(G)$ with $|U|\geq \frac{n}{2}$ and at least one edge in each triangle must be coloured red.  Further, Lemma~\ref{lem:5-cycle-preprocessing} implies that, for every $T\in \cF$, there is some $S=S(T)\in \cS$ such that $S=T[W]$ for some $W \subseteq V(G)$ with $|W| \ge n/2$; indeed, the fact that $T\in \cF$ gives that $e(S)\geq \theta n^3p^3/8$. This implies that, if there is a $K_3$-free $\varphi \colon E(G) \to \{\red, \blue\}$ with $|\varphi^{-1}(\red)| < \beta n^2p$ and fewer than $\zeta n^6p^7$ copies of $C_{rbbbb}$ (and $G$ satisfies property~\ref{item:many K3} of Lemma~\ref{lem:randomgraph}), then there is some $S\in \cS$ such that $B(S)$ occurs and $S\subseteq G$.  Indeed, $T \coloneqq \varphi^{-1}(\red) \in \cF$ and $B(S(T))$ occurs as otherwise we get at least $\zeta n^6p^7$ copies of $C_5$ on $V(S)$ each of which has exactly one edge in $S$ and the other 4 edges in $G\setminus S$ and hence gives a copy of $C_{rbbbb}$ in $G$. Finally, note that, for each $S\in \cS$, the events $B(S)$ and $S\subseteq G$ are independent. Therefore, the probability that $G$ has a colouring  $\varphi \colon E(G) \to \{\red, \blue\}$ with $|\varphi^{-1}(\red)| < \beta n^2p$ and fewer than $\zeta n^6p^7$ copies of $C_{rbbbb}$ is less than 
    \[
      \sum_{S \in \cS} \Pr[B(S) \wedge S \subseteq G] + \Pr[G\notin~\ref{item:many K3}]
      \le \sum_{S \in \cS} \Pr[B(S)]\cdot\Pr[S \subseteq G] + \Pr[G\notin~\ref{item:many K3}].
    \]
    
    By Lemma~\ref{lem:randomgraph}, we have that $\Pr[G\notin\ref{item:many K3}]\ll 1$.  We split the sum over $S \in \cS$ depending on $e(S)=s$.  As there are at most $2^n \binom{\binom{n}{2}}{s}$ graphs $S \in \cS$ with $s$ edges (the factor of $2^n$ bounds the number of choices for $V(S)$), appealing to Claim~\ref{clm:B(S) bound}, we therefore have that 
  \begin{align*}
    \sum_{S \in \cS} \Pr[B(S)]\cdot\Pr[S \subseteq G] & \le \sum_{ s} 2^n \binom{\binom{n}{2}}{s} \cdot \left(\frac{s}{n^2p} \right)^{2s} \cdot p^s 
    \\ 
           & \le \sum_{s} 2^n \cdot \left(\frac{en^2}{2s} \cdot \left(\frac{s}{n^2p}\right)^2 \cdot p\right)^s  \\
    & \le \sum_{s} 2^n \cdot \left(\frac{es}{n^2p}\right)^s  \ll 1,
  \end{align*}
  where the sum goes over all $s \in (\theta n^3p^3/8, \beta n^2p)$ and, in the last inequality, we used
  \[
    s\log\left(\frac{n^2p}{s}\right)\ge s\ge \theta n^3p^3/8 \gg n.
  \]
  Therefore, it remains only to establish Claim~\ref{clm:B(S) bound}.
  
  \begin{claimproof}[Proof of Claim~\ref{clm:B(S) bound}]
    Fix some $S\in \cS$ and let $s \coloneqq e(S)$ and $ W \coloneqq V(S)$.  We will appeal to Janson's inequality (Lemma~\ref{janson}) to obtain the required upper bound on $\Pr[B(S)]$.  Let $\Gamma \coloneqq E(K_n[W]) \setminus S$ and let $\cC$ be the set of all $5$-cycles in $K_n[W]$ comprising of one edge of $S$ and four edges of $\Gamma$.  For each such $C \in \cC$,  let $I_C$ be the indicator random variable for the event that $C \cap \Gamma \subseteq \Gamma_p$ and note that $\mathbb{E}[I_C] = p^4$.  For two cycles $C,C'\in \cC$, write $C\sim C'$ if $C\cap C'\cap \Gamma \neq \emptyset$.  Then, following the notation of Lemma~\ref{janson}, we define 
    \[
      X\coloneqq \sum _{C\in\cC} I_C,\qquad
      \mu \coloneqq \mathbb{E}[X]\qquad \text{and} \qquad
      \Delta \coloneqq \sum _{C\sim C'} \mathbb{E}[I_CI_{C'}],
    \]
    where the sum in the definition of $\Delta$ ranges over all pairs $(C,C')\in \cC\times \cC$ such that $C\sim C'$. Now $B(S)$ is precisely the event that $X\le \zeta n^6p^7$ and we can use Lemma~\ref{janson} to upper bound the probability of this event occurring. 
        
    We begin by estimating $\mu=\mathbb{E}[X]$. We first observe that, for each $u_0u_4\in E(S)$, there are at least $(n/4)^3=n^3/2^6$ choices of $u_1,u_2,u_3\in W$ such that $u_iu_{i+1} \in \Gamma$ for $i=0,1,2,3$.  Indeed, this follows from the fact that $|W| \ge n/2$ and $\Delta(S) \le cnp< n/8$ and so $u_1, u_2, u_3$ can be chosen greedily, avoiding edges of~$S$, with at least $n/4$ choices at each step.  Consequently,
    \begin{equation}
      \label{eq:B(S)-mu-lower}
      \mu=\mathbb{E}[X]\ge \frac{sn^3p^4}{2^6}\geq \frac{\theta n^6p^7}{2^9} \ge 2\zeta n^6p^7,
    \end{equation}
    using that $s\geq \theta n^3p^3/8$, due to the fact that $S\in \cS$.
    
    In order to estimate $\Delta$, we fix some arbitrary $C'\in \cC$ and estimate the number of $C\in\cC$ (whose vertices we will label $u_0,\dotsc, u_4$ as above)  that intersect $C'$. We split the analysis into cases.
    
    \begin{enumerate}
    \item Firstly assume that $|C\cap C'\cap \Gamma|=1$.
      There are at most
      \begin{equation} \label{eq:upper-Delta-1}
        4\cdot(4\cdot\Delta(S)\cdot n^2+4\cdot s\cdot n)\le 32\Delta(S)\cdot n^2
      \end{equation}
      choices of $C$ that intersect $C'$ in one edge (outside of $S$), using that $s\le \Delta(S)\cdot n$ in the inequality.  The factor $4$ comes from choosing an edge of $C'\cap \Gamma$, say $e$.  The first summand then comes from considering the case where $e=u_0u_1$ (or analogously $e=u_3u_4$, resulting in a factor of $2$).  Given that $e=u_0u_1$ and choice of labelling of the vertices (another factor of $2$), there are at most $\Delta(S)$ choices for $u_4$ and at most $n$ further choices for each of $u_2$ and $u_3$.  The second summand stems from the case where $e=u_1u_2$ (or analogously $e=u_2u_3$), where after labelling $e$  there are at most $s$ choices for $u_0u_4$ and at most $n$ further choices for $u_3$.
      
    \item Next assume  $|C\cap C'\cap \Gamma|=2$.
      There are at most
      \begin{equation} \label{eq:upper-Delta-2}
        6\cdot(2 \cdot \Delta(S)\cdot n+ 4 \cdot n+ 2s+8\cdot\Delta(S))\le 96\Delta(S)\cdot n
      \end{equation}
      choices of $C$ that intersect $C'$ in two edges (outside of $S$).  Indeed, the factor $6$ bounds the number of choices of two edges of $C' \cap \Gamma$, say $e_1$ and $e_2$. The first summand then treats the case where $\{e_1,e_2\}=\{u_0u_1,u_1u_2\}$ (equivalently, the case where $\{e_1,e_2\}=\{u_2u_3,u_3u_4\}$). We then have two options for choosing how to label the endpoints of the path $e_1e_2$ as $u_0$ and $u_2$, then at most $\Delta(S)$ choices for $u_4$, and $n$ choices for $u_3$.  In the second summand, we consider the case where $\{e_1,e_2\}=\{u_0u_1,u_3u_4\}$, which means that there are at most four choices for the edge of $C \cap S$ and at most $n$ further choices for $u_2$.  The third summand treats the case where $\{e_1,e_2\}=\{u_1u_2,u_2u_3\}$ and a choice of the edge in $S$ and a labelling of its vertices determines $C$.  Finally, in the fourth summand, we consider the case where $\{e_1,e_2\}=\{u_0u_1,u_2u_3\}$ (or $\{e_1,e_2\}=\{u_1u_2,u_3u_4\}$) and a choice of the edge $u_0u_4\in S$ adjacent to $u_0$ determines $C$.
      
    \item Next, consider the case where  $|C\cap C'\cap \Gamma|=3$.
      There are at most
      \begin{equation} \label{eq:upper-Delta-3}
        4 \cdot (4\cdot \Delta(S)+8)\le 48\Delta(S)
      \end{equation}
      choices of $C$ that intersect $C'$ in three edges (outside of $S$).  Indeed, there are at most $4$ choices for the edge $f \in C'\cap \Gamma$ which is \emph{not} on $C$. If $f = u_0u_1$ (or $f=u_3u_4$), all vertices of $C$ apart from $u_0$ are fixed and so a choice of neighbour of $u_4$ in $S$ defines $C$.  If $f=u_1u_2$ (or $f=u_2u_3$), then after labelling, $C$ is already completely determined, leading to the upper bound in the second summand.
      
    \item Finally, if $|C\cap C'\cap \Gamma|=4$, then clearly there is just one choice for $C$.
    \end{enumerate}
    
    We can now put together the bounds from above to conclude that 
    \[
      \Delta \le \mu \cdot \left(32\Delta(S)n^2p^3 + 96 \Delta(S) n p^2+48\Delta(S) p+1\right) \le \mu \cdot 40\Delta(S) n^2p^3.
    \]
    Therefore, we have, using~\eqref{eq:B(S)-mu-lower} and Lemma~\ref{janson}, that 
    \begin{align*}
      \Pr[B(S)] & =\Pr[X\le \zeta n^6p^7] \le \Pr[X\le \mu/2] 
                  \le \exp\left(-\frac{\mu^2}{8\Delta}\right) \\
                & \le \exp\left(-\frac{\mu}{2^{10} \Delta(S) n^2p^3}\right)
                  \le \exp\left(-\frac{snp}{2^{16}\Delta(S)}\right).
    \end{align*}
    Finally, since $\Delta(S)\le\frac{cnp}{\log (n^2p)-\log(s)}$, which follows from the fact that $S\in \cS$, and $c = 2^{-17}$, we have
    \[
      \Pr[B(S)] \le \exp\left(- 2s \cdot \left( \log(n^2p) - \log s\right)\right) = \left(\frac{s}{n^2p}\right)^{2s}.
    \]
    as claimed.
  \end{claimproof}
  The proof of Proposition~\ref{prop:unbalanced-lower} is now complete.
\end{proof}

\subsection{Unbalanced colourings in the upper range} \label{sec:1 upper}

In this section, we improve on Theorem~\ref{thm:1-lower} when $n^{-3/5}\ll p\ll n^{-1/2}$ and show that in this range we have that $q\left( n; K_3 ,p \right) \le n^{-6}p^{-8}$. This gives the $1$-statement for the upper range in Theorem~\ref{thm:main}.

\begin{thmtool} \label{thm:1-upper}
  Suppose that $ n^{-3/5}\ll p \ll n^{-1/2}$ and $q\gg n^{-6}p^{-8}$ and  let $G_1 \sim G_{n,p}$ and $G_2\sim G_{n,q}$ be independent.  Then, a.a.s.\ no $G_1$-measurable $K_3$-free colouring $\varphi \colon E(G_1) \to \{\red, \blue\}$ can be extended to a $K_3$-free colouring of $G_1\cup G_2$.
\end{thmtool}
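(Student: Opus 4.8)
The plan is to follow the template of Section~\ref{sec:1 lower}.  Theorem~\ref{thm:1-upper} will be deduced by combining Proposition~\ref{prop: many Crrbb}, Proposition~\ref{prop:balanced-colouring}, and the following upper-range counterpart of Proposition~\ref{prop:unbalanced-lower}.

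\begin{prop} \label{prop:unbalanced-upper}
  There exist $\beta, \zeta>0$ such that the following holds.  Suppose that $ n^{-3/5}\ll p \ll n^{-1/2}$ and let $G \sim G_{n,p}$.  Then, a.a.s.\ every $K_3$-free $\varphi \colon E(G) \to \{\red, \blue\}$ such that $\left| \varphi ^{-1}(\red)\right| < \beta n^2p$ results in at least $\zeta n^6p^8$ copies of $C_{rrbb}$.
\end{prop}

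\noindent Granting Proposition~\ref{prop:unbalanced-upper}, the deduction of Theorem~\ref{thm:1-upper} is exactly as that of Theorem~\ref{thm:1-lower} from Proposition~\ref{prop:unbalanced-lower}: letting $\lambda>0$ be the constant output by Proposition~\ref{prop:balanced-colouring} with input $\beta$, we set $t_1\coloneqq\lambda n^4p^4$ and $t_2\coloneqq\zeta n^6p^8$.  As $p\ll n^{-1/2}$ we have $t_1,t_2\ge n^7p^{10}$, and as $n^{-6}p^{-8}\gg n^{-4}p^{-4}$ for $p\ll n^{-1/2}$ we have $q\gg n^{-6}p^{-8}\gg\max\{t_1^{-1},t_2^{-1}\}$.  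A balanced $K_3$-free colouring is then killed by Propositions~\ref{prop:balanced-colouring} and~\ref{prop: many Crrbb} (using also $n^4p^4\gg n^6p^8$ for $p\ll n^{-1/2}$), and an unbalanced one by Propositions~\ref{prop:unbalanced-upper} and~\ref{prop: many Crrbb}; as both colours play symmetric roles, this covers every $K_3$-free colouring of $G_1$.

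To prove Proposition~\ref{prop:unbalanced-upper} I would take a union bound over the possible red graphs $T=\varphi^{-1}(\red)$.  Since $\varphi$ is $K_3$-free, $T$ meets every triangle of $G$, so if $G$ has property~\ref{item:many K3} of Lemma~\ref{lem:randomgraph}, then $e(T[U])\ge\theta|U|^3p^3$ for every $U\subseteq V(G)$ with $|U|\ge n/2$; in particular $e(T)\ge\theta n^3p^3/8$.  Using a peeling procedure in the spirit of Lemma~\ref{lem:5-cycle-preprocessing}, I would then pass from $T$ to an induced subgraph $S=T[W]$ with $|W|\ge n/2$, still satisfying $e(S)\ge\theta n^3p^3/8$, and with a maximum-degree bound of the form $\Delta(S)\le cnp/(\log(n^2p)-\log e(S))$.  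The crucial observation is that, by convexity of $x\mapsto\binom{x}{2}$ and $e(S)\ge\theta n^3p^3/8\gg n$ (valid since $p\gg n^{-2/3}$), the graph $S$ already contains $\Omega(e(S)^2/n)=\Omega(n^5p^6)$ copies of $K_{1,2}$, which I will call \emph{red cherries}.  Now, conditioning on $T\subseteq G$, so that inside $W$ the blue edges form a $p$-random subset $\Gamma_p$ of $\Gamma\coloneqq E(K_n[W])\setminus S$, I would apply Janson's inequality (Lemma~\ref{janson}) to the random variable $X$ counting $4$-cycles obtained by adding to a red cherry of $S$ a new vertex $d$ together with the two edges of $\Gamma_p$ joining $d$ to the two ends of the cherry; each such $4$-cycle is a copy of $C_{rrbb}$ under $\varphi$.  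Here $\mu=\EE[X]\gtrsim(\text{number of red cherries})\cdot np^2\gtrsim n^6p^8$, and the parameter $\Delta$ of Lemma~\ref{janson} is to be bounded through the degree bound on $S$ together with the $K_{2,2}$- and $K_{2,10}$-counts supplied by properties~\ref{item:few F} and~\ref{item:K210-count} of Lemma~\ref{lem:randomgraph}, which control the relevant codegrees; Janson then gives an upper bound on $\Pr[X<\mu/2]$, and one sums over all admissible cores $S$.

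The main obstacle, and (I expect) the source of the ``serious technicalities when $p$ approaches $n^{-3/5}$'' foreshadowed in Section~\ref{sec:1-statements}, is making this final union bound close.  In the lower range the analogous Janson estimate $\Pr[B(S)]\le\left(e(S)/n^2p\right)^{2e(S)}$ beats the cost $\binom{\binom n2}{e(S)}p^{e(S)}$ of specifying $S$ with room to spare; here, in contrast, the target $\mu\asymp n^6p^8$ is only barely larger than the entropy $e(S)\cdot\log(n^2p/e(S))$ of that union bound exactly when $p$ is close to $n^{-3/5}$ and $e(S)$ is near one of its extremes $\theta n^3p^3/8$ or $\beta n^2p$ --- the two quantities differing there only by a poly-logarithmic factor.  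I expect that overcoming this requires more than a black-box application of Janson: presumably splitting the analysis according to whether $e(T)$ is of order $n^3p^3$ or of order $n^2p$, using a more refined decomposition of $T$ that simultaneously controls vertex degrees and pair-codegrees of the core $S$ (so as to tighten the estimate for $\Delta$), and quite possibly shrinking the family of red graphs being union-bounded over by exploiting that $T$ is constrained to be a transversal of the triangle hypergraph of $G$.  The estimates will need to be optimised with some care precisely in the window where $p$ exceeds $n^{-3/5}$ by only a poly-logarithmic factor.
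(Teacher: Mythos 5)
Your reduction of Theorem~\ref{thm:1-upper} to an upper-range analogue of Proposition~\ref{prop:unbalanced-lower} is exactly what the paper does, and the proposed Proposition~\ref{prop:unbalanced-upper} matches the paper's statement verbatim.  Your overall strategy for that proposition---union-bound over red graphs $T$, extract a degree-controlled core $S\subseteq T$, count $C_{rrbb}$'s by gluing a vertex onto a red cherry, and apply Janson---is also the right starting point, and you correctly pinpoint where it breaks: near $p\asymp n^{-3/5}$, the Janson saving $\mu^2/\Delta\asymp n^6p^8$ can be dominated by the entropy $e(S)\log(n^2p/e(S))$ of enumerating $S$, precisely when $e(T)$ is close to its minimum $\Theta(n^3p^3)$.

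Where your proposal has a genuine gap is that you only gesture at how this obstacle might be overcome, and the idea you would need is not among the candidates you list.  The paper's Lemma~\ref{lemma:DT_is_big} is not the same peeling as Lemma~\ref{lem:5-cycle-preprocessing}: it targets the ratio $\mu(\cX_S)^2/\Delta(\cX_S)$ directly and establishes a dichotomy---either this ratio already beats the enumeration cost $10t\log(2n^2p/t)$ (type~\ref{type:a}, where your direct union bound does close), or it is forced down to $\Theta(\mu(\cX_S))$ (type~\ref{type:b}).  In the second case Lemma~\ref{lem:b consequences} shows $p$ and $e(T)$ are both squeezed into a polylogarithmic window, and the argument is no longer a union bound over red graphs $T$ at all: instead the paper constructs, for each $G$ admitting a bad colouring, an iteratively built pair of chains $\mathbf{S}(G)=((J_0,\dots,J_\ell),(R_0,\dots,R_\ell))$ (the ``stamp''), replaces Janson by the matching-number strengthening (Corollary~\ref{cor:maxdisfam}) to guarantee $\Omega(M_i)$ \emph{edge-disjoint} cherries at each step, and estimates the number of stamps by comparison with an explicit random process generating $(\mathbf{J}^*,\mathbf{R}^*)$.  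It is this change of what one enumerates over---stamps rather than cores, with the entropy of a stamp priced via $q^*(\mathbf{S})$---that defeats the bottleneck you observed; refining degree/codegree control of $S$ alone, or restricting to transversals of the triangle hypergraph, would not suffice.  As written, your proposal establishes the easy regime but leaves the critical window near $n^{-3/5}$ unresolved.
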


As in the previous section, we first reduce Theorem~\ref{thm:1-upper} to the following proposition.
 
\begin{prop}
  \label{prop:unbalanced-upper}
  There exist $\beta, \zeta>0$ such that the following holds.  Suppose that $ n^{-3/5}\ll p \ll n^{-1/2}$ and let $G \sim G_{n,p}$.  Then, a.a.s.\ every $K_3$-free $\varphi \colon E(G) \to \{\red, \blue\}$ such that $\left| \varphi ^{-1}(\red)\right| < \beta n^2p$ results in at least $\zeta n^6p^8$ copies of $C_{rrbb}$.
\end{prop}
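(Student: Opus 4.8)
The plan is to follow the template of the lower range, suitably strengthened. Just as Theorem~\ref{thm:1-lower} was deduced from Proposition~\ref{prop:unbalanced-lower}, one obtains Theorem~\ref{thm:1-upper} by combining Proposition~\ref{prop:unbalanced-upper} with Proposition~\ref{prop:balanced-colouring} (which handles balanced colourings, since $n^{-4}p^{-4}\ll n^{-6}p^{-8}$ in this range) and Proposition~\ref{prop: many Crrbb}; so the substantive task is Proposition~\ref{prop:unbalanced-upper}. To prove it, I would condition on the a.a.s.\ conclusions of Lemma~\ref{lem:randomgraph}. If $\varphi$ is a $K_3$-free colouring of $G$ with fewer than $\beta n^2p$ red edges, then its red graph $T\coloneqq\varphi^{-1}(\red)$ has $e(T)<\beta n^2p$, has $\Delta(T)\le\Delta(G)\le 2np$ by~\ref{item:bound max deg}, and---because every triangle of $G$ is bichromatic hence contains a red edge, and $G[U]$ has $\theta|U|^3p^3$ edge-disjoint triangles for all $|U|\ge n/2$ by~\ref{item:many K3}---satisfies $e(T[U])\ge\theta|U|^3p^3$ for all $|U|\ge n/2$. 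So it suffices to show that a.a.s.\ no $T\subseteq G$ with these properties has the feature that colouring $E(T)$ red and $E(G)\setminus E(T)$ blue yields fewer than $\zeta n^6p^8$ copies of $C_{rrbb}$.

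For a fixed such $T$, I would count those copies of $C_{rrbb}$ whose red edges form a cherry $v\text{--}u\text{--}w$ of $T$ and whose blue edges form a cherry $v\text{--}x\text{--}w$ with $vx,wx\in E(G)\setminus E(T)$; each is encoded by a pair (red cherry, vertex $x$), and every red cherry has at least $n-2\Delta(T)-3\ge n/2$ admissible vertices $x$. Over the randomness of $G\setminus T$ (with $T\subseteq G$ fixed), the expected count is $\mu\ge\tfrac12\,N_{K_{1,2}}(T)\cdot np^2$, where $N_{K_{1,2}}(T)=\sum_v\binom{d_T(v)}{2}$ is the number of cherries of $T$. The condition $e(T[U])\ge\theta|U|^3p^3$ forces at least $n/2$ vertices to have $T$-degree at least $\theta n^2p^3/4$---otherwise the set of low-degree vertices, of size $\ge n/2$, would violate it---so $N_{K_{1,2}}(T)=\Omega(n^5p^6)$ and hence $\mu=\Omega(n^6p^8)$; for $\zeta$ small this puts the target inside the event $\{X<\mu/2\}$, where $X$ is the count.

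The heart of the proof is a union bound over $T$ in which Janson's inequality (Lemma~\ref{janson}), applied with the potential edges of $G\setminus T$ as ground set, bounds $\Pr[X<\mu/2]$ by $\exp(-c\mu^2/\Delta)$ for each fixed $T$. Writing $\Delta$ as a sum over pairs of the above structures that share one of the two potential blue edges and splitting by overlap type, one gets $\Delta\le\mu\cdot\big(O(1)+O(\Delta(T)^2p)\big)$, where the $O(1)$ uses that one may discard the $o(N_{K_{1,2}}(T))$ red cherries whose leaf pair has $G$-codegree exceeding a large constant (bounding the number discarded via the $N_{K_{2,j}}$ estimates of Lemma~\ref{lem:randomgraph}). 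The term $\Delta(T)^2p$ is the bottleneck: with only $\Delta(T)\le 2np$ it makes $\mu^2/\Delta$ fall short of the union-bound exponent $s\log(en^2p/(2s))+n$ (with $s=e(T)$), so---exactly as in the lower range---one first replaces $T$ by an induced subgraph $S=T[W]$, $|W|\ge n/2$, with $\Delta(S)\le cnp/\log(n^2p/e(S))$ obtained by a degree-peeling procedure in the spirit of Lemma~\ref{lem:5-cycle-preprocessing} with $c$ small, and then runs the union bound over $S$, stratified by $e(S)$ (and by $\Delta(S)$).

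I expect the main obstacle to be reconciling these two demands. In the lower range $\mu$ depended only on $e(S)$, which peeling keeps above $\theta n^3p^3/8$; here $\mu$ is driven by the cherry count $N_{K_{1,2}}(S)$, and naive bounds on how many red cherries the peeling can destroy only suffice when $p\gg n^{-1/2}$---outside the range of interest---so one needs a peeling scheme tailored to preserving cherries (or a direct estimate exploiting that the ``red-heavy'' vertices of degree $\Theta(n^2p^3)$ are never peeled), together with a careful multi-scale accounting of $\Delta(S)$, $N_{K_{1,2}}(S)$ and $e(S)$ in the union bound. The inequality $c\mu^2/\Delta\gg e(S)\log(n^2p/e(S))+n$ must be made to hold uniformly over all admissible $S$ and the whole density range, and it becomes genuinely tight as $p\to n^{-3/5}$: that the hypothesis $p\gg n^{-3/5}$ cannot be dropped is witnessed by the lower-range $0$-statement colouring from the proof of Theorem~\ref{thm:0 lower}, which is unbalanced, $K_3$-free, and has only $O(n^3p^{7/2})=o(n^6p^8)$ copies of $C_{rrbb}$ whenever $p\ll n^{-3/5}$.
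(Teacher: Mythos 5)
You correctly set up the conditioning on the red graph $T=\varphi^{-1}(\red)$, the Janson estimate for the number of potential blue cherries completing a $C_{rrbb}$, and the union bound over $T$, and you correctly flag both the bottleneck ($\mu^2/\Delta$ vs.\ the union-bound entropy $e(T)\log(n^2p/e(T))$, tightest as $p\to n^{-3/5}$) and the fact that naive Lemma~\ref{lem:5-cycle-preprocessing}-style peeling can destroy cherries. What you do not have is a way to close the gap, and in fact no way exists within the framework you propose: the paper's Lemma~\ref{lemma:DT_is_big} shows that there are admissible red graphs $T$ (the ``type~\ref{type:b}'' ones) for which \emph{no} dense subgraph $S\subseteq T$ with $e(S)\ge e(T)/2$ has Janson exponent $\mu(\cX_S)^2/\Delta(\cX_S)$ as large as $10\,e(T)\log(2n^2p/e(T))$, so no amount of ``careful multi-scale accounting of $\Delta(S)$, $N_{K_{1,2}}(S)$ and $e(S)$'' inside a ``union bound over red subgraphs'' can rescue the argument. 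Lemma~\ref{lem:b consequences} confirms that this obstruction lives exactly in the critical window $p=\Theta(n^{-3/5}\log^{O(1)}n)$, $e(T)=o(n^3p^3\log n)$ you identify as genuinely tight.

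The paper's resolution has two components absent from your proposal. First, in place of vertex peeling, Lemma~\ref{lemma:DT_is_big} uses an edge-subgraph dichotomy on $T$: either $T$ has a subgraph with $\ge e(T)/2$ edges and maximum degree at most $d=\sqrt{tp/(2000\log(2n^2p/t))}$ (take that subgraph), or else so many edges of $T$ meet vertices of degree $>d$ that $|\Pi(T)|\ge td/24$ (take $T$ itself); in both branches the overlap term is split into paths ($\Delta_1$) and stars ($\Delta_2$), and $\Delta_2$ is controlled via $X_2(\Pi(S))$ rather than the cruder $\Delta(T)^2 p$ you use. Second---and this is the genuinely new idea---for type~\ref{type:b} graphs the union bound over red subgraphs is abandoned entirely and replaced by the ``stamp'' argument: a multi-round exposure process that iteratively reveals increasing subgraph pairs $(J_i,R_i)$ by extracting batches of $M_i$ edge-disjoint dangerous cherries via Corollary~\ref{cor:maxdisfam}, and then takes a union bound over the resulting stamps (a far smaller family than all red subgraphs), made tractable by a coupling with an explicit random construction. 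Your proposal does not anticipate that the union-bound-over-$T$ paradigm must be abandoned at the critical density, and offers no substitute for it, so the argument as sketched cannot be completed.
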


With Proposition~\ref{prop:unbalanced-upper} and Proposition~\ref{prop:balanced-colouring}, the proof of Theorem~\ref{thm:1-upper} follows almost immediately.

\begin{proof}[Proof of Theorem~\ref{thm:1-upper}]
  Let $\beta, \zeta > 0$ be the constants from the statement of Proposition~\ref{prop:unbalanced-upper}.  Further, let $\lambda>0$ be the constant output by Proposition~\ref{prop:balanced-colouring} with input $\beta$, let $t\coloneqq \zeta n^6p^8$ and note that $t \leq \lambda n^4p^4$.  Now, with $G_1\sim G_{n,p}$ and $G_2\sim G_{n,q}$, we have that a.a.s.\ the conclusions of Propositions~\ref{prop: many Crrbb}, \ref{prop:balanced-colouring} and \ref{prop:unbalanced-lower} all hold.  In particular, a.a.s.\ any  $K_3$-free colouring $\varphi \colon E(G_1) \to \{\red, \blue\}$  gives rise to at least $t$ copies of $C_{rrbb}$.  Indeed, this follows from Proposition~\ref{prop:balanced-colouring}  if $\left| \varphi ^{-1}(c)\right| \ge \beta n^2p$ for both $c\in \{\red,\blue\}$,  or from Proposition~\ref{prop:unbalanced-upper} if $|\phi^{-1}(c)|<\beta n^2p$ for some $c\in \{\red,\blue\}$. The conclusion of the theorem then follows from Proposition~\ref{prop: many Crrbb} as $q\gg t^{-1}$. 
\end{proof}

It remains to prove Proposition~\ref{prop:unbalanced-upper}.  Before embarking on this, we make some definitions and prove several auxiliary lemmas.   As in the proof of Proposition~\ref{prop:unbalanced-lower}, presented in the previous section, we will condition on the \emph{red} subgraph $T$ of $G_{n,p}$. The following definition captures important properties of $T$ that hold a.a.s.\ in $G_{n,p}$ and that we will thus be able to assume hold in our proof.  Throughout this section, we write $\theta$ for the constant from Lemma~\ref{lem:randomgraph}.

\begin{defi} \label{def:upper-red-conds}
  For $\beta>0$ and $n^{-3/5}\ll p\ll n^{-1/2}$,  let $\cF=\cF(\beta;p)$ be the set of subgraphs $T\subseteq K_n$ such that 
\begin{enumerate}[label=(\roman*)]
  \item \label{cond:Ta} $\theta n^3p^3\leq e(T) < \beta n^2p$;
  \item \label{cond:Tb} $T$ satisfies conditions~\ref{item:bound max deg} (upper bounding the maximum degree),~\ref{item:few F} (upper bounding the number of small subgraphs $F$),~\ref{item:K210-count} (upper bounding the number of $K_{2,10}$) and~\ref{item:eAB-concentration} (upper bounding the number of edges between vertex sets) of Lemma~\ref{lem:randomgraph}.
  \end{enumerate}
\end{defi}

In proving Proposition~\ref{prop:unbalanced-upper}, we will show that a.a.s.\ any $K_3$-free colouring $\varphi \colon E(G) \to \{\red, \blue\}$  such that $\left| \varphi ^{-1}(\red)\right| < \beta n^2p$ will have $ \varphi ^{-1}(\red)\in \cF(\beta;p)$.  
Again, similarly to Proposition~\ref{prop:unbalanced-lower}, we will not be able to take a union bound over all possible red subgraphs $T\in \cF$ and will instead consider only carefully chosen subgraphs of such $T$ that we can enumerate more efficiently.  Given that we aim to find many $C_{rrbb}$, the following definitions will be useful.   

\begin{defi} \label{def:S params}
Let $S \subseteq K_n$ be a graph on $n$  vertices. We define the following parameters: 
\begin{itemize}
    \item  $X_2(S)$ denotes the number of copies of $K_{1,2}$ in $S$;
    \item $\Pi(S)$ denotes the  edges in $K_n$ that complete a triangle with a copy of $K_{1,2}$ that lies in $S$;
    \item $\cX_S$ denotes the family of all copies of $K_{1,2}$ in $K_n$ that form a 4-cycle  with some copy of $K_{1,2}$ in $S$.
\end{itemize}
  
\end{defi} Our next simple lemma gives a lower bound on $X_2(S)$ in terms of the number of edges of a subgraph $S\subseteq K_n$, given that $S$ is not too small.
  
    \begin{lem}
  \label{lem:X2-T'-lower}
  If $S$ is a graph on $n$ vertices with at least $2n$ edges, then $X_2(S) \ge \frac{3e(S)^2}{2n}.$
\end{lem}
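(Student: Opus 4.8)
The plan is to use the standard identity $X_2(S)=\sum_{v\in V(S)}\binom{d_S(v)}{2}$, since a copy of $K_{1,2}$ in $S$ is precisely a choice of a centre vertex $v$ together with two of its neighbours, and then to apply convexity. First I would recall that the function $f(x)=\tfrac{x(x-1)}{2}$ is convex on $\mathbb{R}$ and agrees with $x\mapsto\binom{x}{2}$ on the integers, so that by Jensen's inequality
\[
  X_2(S)=\sum_{v\in V(S)} f\big(d_S(v)\big)\ \ge\ n\cdot f\!\left(\frac{1}{n}\sum_{v\in V(S)} d_S(v)\right)=n\cdot f\!\left(\frac{2e(S)}{n}\right),
\]
where I used $\sum_v d_S(v)=2e(S)$ and that $S$ has $n$ vertices (counting isolated vertices if necessary, which only helps the bound).

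Next I would simplify the right-hand side: writing $e\coloneqq e(S)$,
\[
  n\cdot f\!\left(\frac{2e}{n}\right)=\frac{n}{2}\cdot\frac{2e}{n}\left(\frac{2e}{n}-1\right)=e\left(\frac{2e}{n}-1\right).
\]
The final step is to absorb the lower-order term using the hypothesis $e\ge 2n$: since $e\ge 2n$ we have $1\le \tfrac{e}{2n}$, hence $\tfrac{2e}{n}-1\ge \tfrac{2e}{n}-\tfrac{e}{2n}=\tfrac{3e}{2n}$, and therefore $X_2(S)\ge e\cdot\tfrac{3e}{2n}=\tfrac{3e(S)^2}{2n}$, as claimed.

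There is essentially no serious obstacle here; the only point worth checking carefully is that the constant $3/2$ in the statement is exactly calibrated to the threshold $e(S)\ge 2n$ — any smaller edge count would force a worse constant, and the computation above confirms that $e\ge 2n$ is precisely what makes the bound $e(2e/n-1)\ge 3e^2/(2n)$ hold. I would also remark that one could replace Jensen's inequality by the Cauchy–Schwarz inequality applied to $\sum_v d_S(v)$ versus $\sum_v d_S(v)^2$, which gives the same estimate, but the convexity argument is the cleanest route.
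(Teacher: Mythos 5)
Your proof is correct and follows essentially the same route as the paper: both write $X_2(S)=\sum_v\binom{d_S(v)}{2}$, apply convexity/Jensen to lower-bound this by $n\binom{2e(S)/n}{2}$, and then use the hypothesis $e(S)\ge 2n$ to absorb the $-1$ and land on $3e(S)^2/(2n)$. You have simply spelled out the final algebraic step more explicitly than the paper does.
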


\begin{proof}
  By convexity, we have that 
  \[
    X_2(S) = \sum _{v\in V} \binom{d_{S}(v)}{2} \ge n \cdot \binom{\sum_{v \in V} d_{S}(v)/n}{2} = n \cdot \binom{2e(S)/n}{2} \ge \frac{3e(S)^2}{2n},
\]
where the last inequality holds due to our assumption that $e(S) \ge 2n$.
\end{proof}

 Next, for certain subgraphs $S\subseteq K_n$, we show that $|\Pi(S)|$ can be lower bounded by $X_2(S)$. 
 
 \begin{lem} \label{lem:big pi}
   Suppose that $n^{-3/5}\ll p\ll n^{-1/2}$ and $S\subseteq K_n$ is an $n$-vertex graph such that $s\coloneqq e(S)\geq \theta n^3p^3/2$ and $S$ satisfies~\ref{item:K210-count} (upper bounding the number of $K_{2,10}$)  of Lemma~\ref{lem:randomgraph}. Then \[|\Pi(S)|\geq \frac{X_2(S)}{12}\geq \frac{s^2}{8n}.\]
 \end{lem}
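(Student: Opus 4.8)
The plan is to deduce the bound from a double-counting identity relating $X_2(S)$, $\Pi(S)$ and the codegrees of $S$. For a pair $\{u,v\}$ of distinct vertices write $d_S(u,v)$ for the number of their common neighbours in $S$. Every copy of $K_{1,2}$ in $S$ has a unique pair of leaves $\{u,v\}$, the number of copies of $K_{1,2}$ with this leaf pair is exactly $d_S(u,v)$, and the edge of $K_n$ that completes a triangle with such a copy is precisely $uv$. Hence $\Pi(S) = \{\{u,v\} : d_S(u,v) \ge 1\}$ and
\[
  X_2(S) \;=\; \sum_{\{u,v\}} d_S(u,v) \;=\; \sum_{\{u,v\}\in\Pi(S)} d_S(u,v).
\]

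Next I would split $\Pi(S)$ into the \emph{light} pairs, those with $d_S(u,v) \le 9$, and the \emph{heavy} pairs, those with $d_S(u,v) \ge 10$. The light pairs contribute at most $9|\Pi(S)|$ to the sum above. For the heavy pairs, note that such a pair together with any $10$ of its common neighbours spans a copy of $K_{2,10}$ in $S$, so $\sum_{\{u,v\}}\binom{d_S(u,v)}{10} = N_{K_{2,10}}(S)$; since $\binom{m}{10} \ge (m/10)^{10} \ge m/10$ for every integer $m \ge 10$, we get $d_S(u,v) \le 10\binom{d_S(u,v)}{10}$ for each heavy pair, and therefore the heavy pairs contribute at most $10\,N_{K_{2,10}}(S) \le 10 n^{11}p^{18}$ by property~\ref{item:K210-count} of Lemma~\ref{lem:randomgraph}, which $S$ is assumed to satisfy. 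Altogether,
\[
  X_2(S) \;\le\; 9|\Pi(S)| + 10 n^{11}p^{18}.
\]

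It then remains to observe that the error term $10 n^{11}p^{18}$ is negligible compared with $X_2(S)$. Since $s = e(S) \ge \theta n^3p^3/2 \ge 2n$ for large $n$ (using $p \gg n^{-2/3}$), Lemma~\ref{lem:X2-T'-lower} gives $X_2(S) \ge 3s^2/(2n) \ge \tfrac{3\theta^2}{8} n^5p^6$, whereas $n^{11}p^{18} = (np^2)^6 \cdot n^5p^6 = o(n^5p^6)$ because $np^2 \ll 1$ (here we use $p \ll n^{-1/2}$). Hence, for $n$ large, $10 n^{11}p^{18} \le X_2(S)/100$, so $X_2(S) \le 9|\Pi(S)| + X_2(S)/100$, which rearranges to $|\Pi(S)| \ge \tfrac{11}{100}X_2(S) \ge X_2(S)/12$; combining this with $X_2(S) \ge 3s^2/(2n)$ gives $|\Pi(S)| \ge s^2/(8n)$, as claimed. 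I do not anticipate a genuine obstacle here: the real content is the double-counting identity of the first paragraph, and the only mild care required is in tracking constants so that the light-pair term alone exceeds the target factor $12$, and in invoking $p \ll n^{-1/2}$ at the right moment to make the $K_{2,10}$ contribution disappear.
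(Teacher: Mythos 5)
Your proof is correct and follows essentially the same strategy as the paper's: identify $\Pi(S)$ with the pairs of positive codegree in $S$, split these into light (codegree $\le 9$) and heavy (codegree $\ge 10$) pairs, bound the heavy contribution via $\sum_\rho \binom{d_\rho}{10} = N_{K_{2,10}}(S) \le n^{11}p^{18}$ together with $p \ll n^{-1/2}$, and compare with the lower bound $X_2(S) \ge 3s^2/(2n)$ from Lemma~\ref{lem:X2-T'-lower}. The only cosmetic difference is in the final bookkeeping: the paper bounds $|\Pi\setminus\Pi_H| \ge \frac{1}{10}\sum_{\rho\in\Pi\setminus\Pi_H} d_\rho$ directly, while you rearrange $X_2(S) \le 9|\Pi(S)| + 10 n^{11}p^{18}$; both yield the same constant.
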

 
 \begin{proof}
   For each pair of vertices $\rho\in \binom{[n]}{2}=E(K_n)$, let $d_\rho$ be the number of copies of $K_{1,2}$ in $S$ that form a triangle with $\rho$ and call $\rho$  \emph{heavy} if $d_\rho\geq 10$. Then $\Pi=\Pi(S)\subseteq E(K_n)$ are the pairs $\rho\in E(K_n)$ such that $d_\rho\geq 1$ and let $\Pi_H\subseteq \Pi$ be the heavy pairs. Then we have that 
   \begin{equation} \label{eq:heavy uppper sum}
     \sum_{\rho \in \Pi_H} 
     \frac{d_\rho}{10} \leq \sum_{\rho\in \Pi_H}\left(\frac{d_\rho}{10}\right)^{10} \leq \sum_{\rho\in \Pi_H}\binom{d_\rho}{10}=N_{K_{2,10}}(S) \leq n^{11}p^{18}\ll n^5p^6, \end{equation}
   using that property~\ref{item:K210-count} of Lemma~\ref{lem:randomgraph} holds in $S$ in the penultimate inequality and the fact that $p\ll n^{-1/2}$ in the final inequality.  On the other hand,
   \begin{equation} \label{eq:X2 lower sum}
      \sum_{\rho \in \Pi}d_\rho= X_{2}(S)\geq \frac{3s^2}{2n}\geq \frac{\theta^2 n^5p^6}{4},
   \end{equation}
   by appealing to Lemma~\ref{lem:X2-T'-lower} and our lower bound on $s=e(S)$. Combining~\eqref{eq:heavy uppper sum} and~\eqref{eq:X2 lower sum} then gives that   $\sum_{\rho \in \Pi\setminus \Pi_H}d_\rho\geq 5X_2(S)/6$ and so 
   \[|\Pi(S)|\geq |\Pi\setminus \Pi_H|\geq \frac{1}{10}\sum_{\rho\in \Pi\setminus \Pi_H}d_\rho\geq \frac{X_2(S)}{12}\geq \frac{s^2}{8n}, \]
   using Lemma~\ref{lem:X2-T'-lower}, which completes the proof.
 \end{proof}

 Our next lemma identifies, for each $T\in \cF$, some subgraph $S=S(T)\subseteq T$ for which the collection $\cX_S$ is large and well-spread in $K_n$. Following  the notation of Lemma~\ref{janson}, for a subgraph $S\subseteq K_n$ and $p=p(n)$, we let $\mu(\cX_S)$ be the expected number of copies of $K_{1,2}$ in $\cX_S$ that appear in $G_{n,p}$.
 Since every edge in $\Pi(S)$ gives rise to either $n-3$ or $n-2$ copies of $K_{1,2}$ in $K_n$ that close a $4$-cycle with some $K_{1,2}$ in $S$, we have
 \begin{equation} \label{eq:mu S}
   \mu(\cX_S) = |\cX_S|p^2 \in \left[(n-3)|\Pi(S)| p^2 , (n-2)|\Pi(S)| p^2\right].
 \end{equation}
 We also let
 \[
   \Delta(\cX_S)\coloneqq\sum_{K,K'}p^{e(K\cup K')},
 \]
 where the sum goes over all pairs of copies $K,K'\in \cX_S$ such that $K \cap K'\neq \emptyset$.

\begin{lem}
  \label{lemma:DT_is_big}
Suppose $0<\beta <2^{-100}$ and $n^{-3/5}\ll p\ll n^{-1/2}$ and let $T\in \cF=\cF(\beta;p)$ with $t\coloneqq e(T)$.  Then there exists a subgraph $S=S(T)\subseteq T$ with  $e(S)\geq t/2$ and such that either 
 \begin{enumerate}[label=(\alph*)]
     \item \label{type:a} $\frac{\mu(\cX_S)^2}{\Delta(\cX_S)}\geq 10t\log \left( \frac{2n^2p}{t} \right) $; or,
     \item \label{type:b} $
    10t\log \left( \frac{2n^2p}{t} \right)>\frac{\mu(\cX_S)^2}{\Delta(\cX_S)}\geq  \frac{\mu(\cX_S)}{3}$.
 \end{enumerate} 
\end{lem}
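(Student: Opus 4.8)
The plan is to first replace $T$ by a suitably ``spread out'' subgraph $S$, and then verify the dichotomy by estimating $\mu(\cX_S)$ and $\Delta(\cX_S)$ directly in terms of graph parameters of $S$. Write $L \coloneqq 10t\log(2n^2p/t)$. Since always $\mu(\cX_S)^2/\Delta(\cX_S) \le \mu(\cX_S)$ (the diagonal alone contributes $\mu(\cX_S)$ to $\Delta(\cX_S)$), the two alternatives \ref{type:a} and \ref{type:b} together cover all cases except $\Delta(\cX_S) > \max\{3\mu(\cX_S),\ \mu(\cX_S)^2/L\}$; so it suffices to exhibit $S \subseteq T$ with $e(S) \ge t/2$ and $\Delta(\cX_S) \le \max\{3\mu(\cX_S),\ \mu(\cX_S)^2/L\}$.

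\emph{Estimating $\mu$.} For any $S \subseteq T$ with $e(S) \ge t/2$, Lemma~\ref{lem:big pi} applies: $S$ inherits property~\ref{item:K210-count} from $T$ and $e(S) \ge t/2 \ge \theta n^3p^3/2$. Together with \eqref{eq:mu S} it gives $\mu(\cX_S) \ge \tfrac12|\Pi(S)|np^2$ and $|\Pi(S)| \ge X_2(S)/12 \ge e(S)^2/(8n)$ (the last step via Lemma~\ref{lem:X2-T'-lower}), hence also $\mu(\cX_S) \ge e(S)^2p^2/16$; I will use whichever form is convenient.

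\emph{Estimating $\Delta$.} Writing $\Delta(\cX_S) = \mu(\cX_S) + \Delta'(\cX_S)$ and splitting $\Delta'$ by how two distinct copies $K, K' \in \cX_S$ overlap --- each $K_{1,2}$ in $\cX_S$ being specified by its leaf-pair, which lies in $\Pi(S)$, together with its centre --- a short case analysis bounds $\Delta'(\cX_S)$ by a constant times $|\Pi(S)|^2p^3 + (np^3 + n^2p^4)\sum_v d_{\Pi(S)}(v)^2$, up to lower-order terms $\lesssim \mu(\cX_S)^2/n$. The first term is harmless: from $\mu(\cX_S) \ge \tfrac12|\Pi(S)|np^2$ one gets $|\Pi(S)|^2p^3 \lesssim \tfrac{t\log(2n^2p/t)}{n^2p}\cdot\tfrac{\mu(\cX_S)^2}{L}$, and writing $M \coloneqq n^2p/t > \beta^{-1}$ this prefactor equals $M^{-1}\log(2M) \le \beta\log(2\beta^{-1})$ (the map $x\mapsto x^{-1}\log(2x)$ being decreasing for $x>e/2$), which is tiny since $\beta < 2^{-100}$. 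So the whole argument reduces to controlling $\sum_v d_{\Pi(S)}(v)^2$; here $\sum_v d_{\Pi(S)}(v)^2 \le \Delta(S)^2\sum_v d_S(v)^2 \le 2\Delta(S)^3 e(S)$, so a sufficiently strong upper bound $\Delta(S) \le \Delta^*$ on the maximum degree of $S$ makes this term at most the required fraction of $\max\{3\mu(\cX_S), \mu(\cX_S)^2/L\}$. The precise $\Delta^*$ depends on $t, n, p$, and the hypothesis $p \gg n^{-3/5}$ (rather than just $p\gg n^{-2/3}$) is exactly what makes $\Delta^*$ large enough to be attainable.

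\emph{The preprocessing, and the main obstacle.} What remains is to produce, from a general $T \in \cF(\beta;p)$, a subgraph $S$ with $e(S)\ge t/2$ whose maximum degree (and, in the sharpest version of the argument, the spread of $\Pi(S)$) is as small as required above. One cannot simply cap the maximum degree globally while keeping half the edges --- consider $T$ a disjoint union of large stars --- so this has to be done by peeling off vertices of large degree one at a time against a threshold that tightens as edges are removed, in the spirit of Lemma~\ref{lem:5-cycle-preprocessing}, and arguing from $e(T) < \beta n^2p$ that few edges are deleted; the additional structure of $\cF$ --- the bound $\Delta(T)\le 2np$ and the bounds on $N_{K_{2,10}}(T)$ and on $N_F(T)$ for small graphs $F$, see Definition~\ref{def:upper-red-conds}~\ref{cond:Tb} --- is what excludes the genuinely bad configurations. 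I expect this step, and reconciling the form of $\Delta^*$ it yields with the estimates above, to be the most delicate part; it is also where the smallness of $\beta$ and the hypothesis $p\gg n^{-3/5}$ are used essentially, and the reason $p=\Theta(n^{-3/5})$ must be excluded.
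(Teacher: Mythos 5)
Your sketch correctly identifies the decomposition of $\Delta(\cX_S)$ into the diagonal term $\mu(\cX_S)$, the ``path'' pairs, and the ``star'' pairs, and your treatment of the first two matches the paper's: the path pairs contribute at most $O(|\Pi(S)|^2 p^3)$ and comparing against $\mu(\cX_S)^2 \approx |\Pi(S)|^2 n^2 p^4$ leaves $n^2 p$, which beats $L = 10t\log(2n^2p/t)$ precisely because $t < \beta n^2 p$ with $\beta$ tiny. So the whole lemma does indeed reduce to controlling $\Delta_2(\cX_S) \lesssim X_2(\Pi(S))\,np^3$, and you correctly observe that a max-degree cap $\Delta(S)\le d$ with $d\approx\sqrt{tp/\log(2n^2p/t)}$ would suffice, since then $X_2(\Pi(S)) \le |\Pi(S)|\Delta(S)^2$.

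The genuine gap is the preprocessing step, and your proposed fix would not work. You suggest peeling vertices of large degree (in the spirit of Lemma~\ref{lem:5-cycle-preprocessing}) and ``arguing from $e(T)<\beta n^2 p$ that few edges are deleted''. But the edge-count constraint does not rule out the bad configurations: take $T$ to be a disjoint union of $\Theta(n^2p^2)$ stars, each of size $np$. Then $e(T)\approx n^3p^3$ (comfortably below $\beta n^2 p$), all constraints in Definition~\ref{def:upper-red-conds} hold, yet every edge lies on a vertex of degree $np \gg d$, so any subgraph with $\Delta(S)\le d$ has at most a $d/(np) = o(1)$ fraction of the edges. The constraints defining $\cF$ simply do not exclude these graphs, contrary to what your last paragraph suggests.

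What the paper actually does is \emph{not} a peeling process at all. It is a binary dichotomy: either $T$ has a subgraph with $\ge t/2$ edges and $\Delta\le d$ (take it; this is your argument), or it does not, and then one takes $S=T$ with no degree cap at all. In the latter case the point is that the very obstruction --- many edges incident to high-degree vertices --- forces $|\Pi(T)|\ge td/24$ (each such edge closes many paths of length two and hence many $\Pi$-pairs), so $\mu(\cX_T)$ is large; and a direct, delicate estimate $X_2(\Pi(T))\lesssim |\Pi(T)|\,tp$ is proved by bucketing vertices according to $d_T$ and $d_{\Pi(T)}$ and invoking the edge-distribution property~\ref{item:eAB-concentration}. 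This is where $p\gg n^{-3/5}$ is used --- to make the sets $A_\alpha$, $B_\beta$ large enough for~\ref{item:eAB-concentration} to apply --- not, as you suggest, to make the max-degree threshold attainable (it isn't, in general). This second branch is the technical heart of the lemma and is absent from your proposal.
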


\begin{proof}
  Fix some $T \in \cF$ and denote $t \coloneqq e(T)$. Now for any subgraph $S\subseteq T$, we 
  define
  \begin{align*}
      \Delta_1(\cX_S)&\coloneqq|\{K,K'\in\cX_S: K\cup K' \text{ is a path with } 3 \text{ edges }\}|\cdot p^3 \text{ and } \\
      \Delta_2(\cX_S)&\coloneqq|\{K,K'\in\cX_S: K\cup K' \text{ is a copy of } K_{1,3}\}|\cdot p^3,
  \end{align*}
  and we note that, for every $S$, we have $\Delta(\cX_S)=\Delta_1(\cX_S)+\Delta_2(\cX_S)+\mu(\cX_S)$. Indeed, the sum in the definition of $\Delta(\cX_S)$ ranges over all pairs $K,K'\in \cX_S$ that intersect in at least one edge. If they intersect in two edges, then $K=K'$ and the contribution to $\Delta(\cX_S)$ is counted by $\mu(\cX_S)$ and if they intersect in precisely one edge, then their union is either a path, in which case they are counted by $\Delta_1(\cX_S)$, or a star in which case they are counted by $\Delta_2(\cX_S)$. The following claim is the key step in proving the lemma. 
  
  \begin{clm}\label{claim1statupper}
  There is an $S \subseteq T$ with at least $t/2$ edges such that
  \[
    \frac{\mu(\cX_S)^2}{\Delta_2(\cX_S)} \ge 30t\log \left( \frac{2n^2p}{t} \right).
  \]
\end{clm}
  
  With Claim~\ref{claim1statupper}, the lemma follows quickly. Indeed, fix $S\subseteq T$ as output by the claim and note that 
  \begin{equation} \label{eq:mu split min}
\frac{\mu(\cX_S)^2}{\Delta(\cX_S)}=\frac{\mu(\cX_S)^2}{\Delta_1(\cX_S)+\Delta_2(\cX_S)+\mu(\cX_S)} \geq \frac{1}{3} \min\left\{\frac{\mu(\cX_{S})^2}{\Delta_1(\cX_S)}, \frac{\mu(\cX_{S})^2}{\Delta_2(\cX_S)}, \mu(\cX_S)\right\}.  \end{equation}
Firstly, suppose that the minimum is achieved by the last term.   In this case we have that $\frac{\mu(\cX_S)^2}{\Delta(\cX_S)}\geq \frac{\mu(\cX_S)}{3} $ and the conclusion of the lemma is satisfied, with $S$ satisfying~\ref{type:a} if $\frac{\mu(\cX_S)}{3} \geq 10t\log \left( \frac{2n^2p}{t} \right)$ and~\ref{type:b} otherwise. Likewise, if the minimum is achieved by the middle term, then by Claim~\ref{claim1statupper} we have that $\frac{\mu(\cX_S)^2}{\Delta(\cX_S)}$ satisfies~\ref{type:a}. It remains to consider the case where the minimum is achieved by the first term. For this, note that we have $\Delta_1(\cX_S)\leq 4 |\Pi(S)|^2p^3$. Indeed, it $v_1v_2v_3v_4$ is a path of length three in $S$ labeled so that $K$ is the path $v_1v_2v_3$ and $K'$ is the path $v_2v_3v_4$, see Figure~\ref{fig:path}, then $v_1v_3, v_2v_4\in \Pi(S)$ and thus we can count the number of pairs $K,K'$ whose union is a path with three edges by the number of pairs in $\Pi(S)$ with a labelling of the endpoints of each pair. Using~\eqref{eq:mu S}, we therefore have that 
\[\frac{\mu(\cX_{S})^2}{\Delta_1(\cX_S)}\geq \frac{|\Pi(S)|^2 n^2p^4 }{5 |\Pi(S)|^2p^3}\geq \frac{n^2p}{5}= t \log \left( \frac{2n^2p}{t} \right) \cdot \frac{n^2p}{5t}\cdot \log ^{-1} \left( \frac{2n^2p}{t}\right) \ge 30t \log \left( \frac{2n^2p}{t} \right),\]
using here that $t<\beta n^2p$ (property~\ref{cond:Ta} of Definition~\ref{def:upper-red-conds}) and the fact that $\beta<2^{-100}$. Therefore, we also satisfy part~\ref{type:a} of the lemma when the minimum in~\eqref{eq:mu split min} is achieved by the first term.

 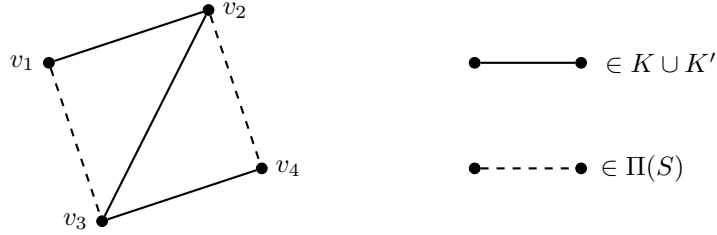
\begin{figure}[h]
\centering
\begin{tikzpicture}[vertex/.style={draw,circle,color=black,fill=black,inner sep=1,minimum width=4pt}]
    
	\pgfmathsetmacro{\scale}{0.7}    
    
    \node[vertex] (v1) at (-2*\scale, 1*\scale) {};
   
    \node[vertex] (v3) at (-1*\scale, -2*\scale) {};
    
    \node[vertex] (v4) at (2*\scale, -1*\scale) {};

    \node[vertex] (v2) at (1*\scale, 2*\scale) {};
    
        \node[vertex] (u1) at (6*\scale, 1*\scale) {};
                \node[vertex] (u2) at (8*\scale, 1*\scale) {};
                    
        \node[vertex] (u3) at (6*\scale, -1*\scale) {};
                \node[vertex] (u4) at (8*\scale, -1*\scale) {};

	\draw[thick] (v1) to (v2);
	\draw[thick] (v2) to (v3);
	\draw[thick] (v4) to (v3);
		\draw[thick] (u1) to (u2);
		\draw[thick,dashed] (v1) to (v3);
		\draw[thick,dashed] (v2) to (v4);
			\draw[thick,dashed] (u3) to (u4);

	\node at (9.5*\scale, 1*\scale) {$\in K\cup K'$};
		\node at (9.15*\scale, -1*\scale) {$\in \Pi(S)$};
	\node at (-2.5*\scale, 1*\scale) {$v_1$};
    \node at (1.5*\scale, 2*\scale) {$v_2$};
     \node at (-1.5*\scale, -2*\scale) {$v_3$};
     \node at (2.5*\scale, -1*\scale) {$v_4$};
    
\end{tikzpicture}

\caption{Two copies of $K_{1,2}$ in $\cX_S$ whose union is a path.}\label{fig:path}
\end{figure}

It remains to prove Claim~\ref{claim1statupper}, which we do now.

\begin{claimproof}[Proof of Claim~\ref{claim1statupper}]

In order to derive the lower bound in Claim~\ref{claim1statupper}, we need to upper bound $\Delta_2(\cX_S)$ and hence we need an upper bound on the count of pairs $K,K'\in \cX_S$ such that $K\cup K'$ forms a copy of $K_{1,3}$. Given such a pair $K$ and $K'$, denote the vertex of degree three in $K\cup K'$ by $u$ and the remaining vertices by $w_1,w_2,w_3$ so that $K$ lies on vertices $u,w_1,w_2$, $K'$ lies on vertices $u,w_2,w_3$ and thus $w_1w_2, w_2w_3 \in \Pi(S)$, see Figure~\ref{fig:K13}.  Hence, we have that 
\begin{equation} \label{eq:Delta 2 Xs}
    \Delta_2(\cX_S)\leq X_2(\Pi(S))np^3,
\end{equation}
where $X_2(\Pi(S))$ is the number of copies of $K_{1,2}$ in  $\Pi(S)$ when considered as a graph on $n$ vertices. Indeed, the number of possible pairs $K,K'$ with $K\cup K'$ a copy of $K_{1,3}$ can be bounded by choosing a copy of $K_{1,2}$ in $\Pi(S)$  and a choice of vertex $u$ (at most $n$ choices). We proceed by splitting our analysis into cases,  depending on whether or not $T$ contains a large subgraph with maximum degree at most $d \coloneqq\sqrt{\frac{tp}{2000\log \left( 2n^2p/t \right)}}$.
  
  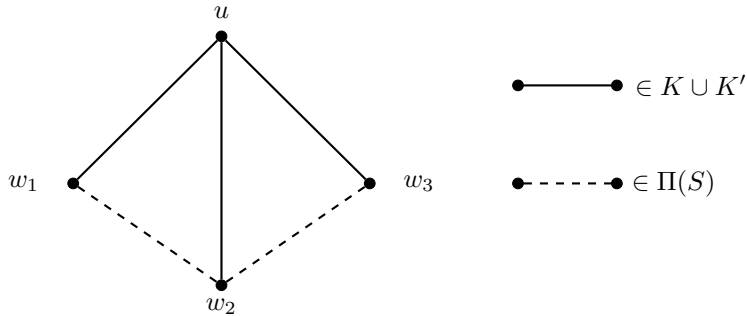
\begin{figure}[h]
\centering

\begin{tikzpicture}[vertex/.style={draw,circle,color=black,fill=black,inner sep=1,minimum width=4pt}]
    
	\pgfmathsetmacro{\scale}{0.65}    
    
    \node[vertex] (w2) at (0, -2) {};

    \node[vertex] (w1) at (-3*\scale, -1*\scale) {};
    \node[vertex] (w3) at (3*\scale, -1*\scale) {};
    
    \node[vertex] (u) at (0, 2*\scale) {};

        \node[vertex] (x1) at (6*\scale, 1*\scale) {};
                \node[vertex] (x2) at (8*\scale, 1*\scale) {};
                    
        \node[vertex] (x3) at (6*\scale, -1*\scale) {};
                \node[vertex] (x4) at (8*\scale, -1*\scale) {};

	\draw[thick] (u) to (w1);
	\draw[thick] (u) to (w2);
	\draw[thick] (u) to (w3);
		\draw[thick] (x1) to (x2);
		\draw[thick,dashed] (w1) to (w2);
		\draw[thick,dashed] (w2) to (w3);
			\draw[thick,dashed] (x3) to (x4);

	\node at (9.5*\scale, 1*\scale) {$\in K\cup K'$};
		\node at (9.15*\scale, -1*\scale) {$\in \Pi(S)$};
	
	\node at (0, - 3.5*\scale) {$w_2$ };
	
		\node at (-4*\scale, -1*\scale) {$w_1$ };
		
		\node at (4*\scale, -1*\scale) {$w_3$ };
		
		\node at (0*\scale, 2.5*\scale) {$u$ };

\end{tikzpicture}

\caption{Two copies of $K_{1,2}$ in $\cX_S$ whose union is a copy of $K_{1,3}$.}\label{fig:K13}
\end{figure}

  \medskip
  \noindent
  \textit{Case 1. $T$ contains a subgraph with at least $t/2$ edges and maximum degree at most $d$.}

  We let $S$ be one such subgraph.  Note that $X_{2}(\Pi(S))\le |\Pi(S)|\Delta(\Pi(S))$ where $\Delta(\Pi(S))$ is the maximum degree of a vertex in $\Pi(S)$ when considered as a graph on $n$ vertices. Since $\Delta(\Pi(S))\le \Delta(S)^2\leq d^2$, appealing to~\eqref{eq:mu S} and~\eqref{eq:Delta 2 Xs}, we have that 
  \begin{align*}
    \frac{\mu(\cX_S)^2}{\Delta_2(\cX_S)} & \ge \frac{|\Pi(S)|^2 n^2p^4}{2|\Pi(S)|d^2np^3} \ge \frac{1000|\Pi(S)|n \log(2n^2p/t)}{t} \geq 30 t \log\left(\frac{2n^2p}{t}\right) ,
  \end{align*}
  as claimed, where we used Lemma~\ref{lem:big pi}, noting that the conditions are satisfied as $S\subseteq T\in \cF$ and $e(S)\geq t/2$.

  \medskip
  \noindent
  \textit{Case 2. Every subgraph of $T$ with maximum degree at most $d$ has fewer than $t/2$ edges.}
 In this case, we define $S = T$ as the subgraph with the desired properties.  First, we claim that
  \begin{equation}
    \label{eq:X2T-strong-LB}
    |\Pi(T)| \ge \frac{td}{24}.
  \end{equation}
  Indeed, let $H\subseteq T$ be a maximal subgraph with respect to inclusion such that $\Delta (H) \le d$. By our assumption, $e(H) < t/2$. By the definition of $H$, for any $e \in E(T)\setminus E(H)$, one of its endpoints has degree at least $d+1$ when added to $H$ and hence $e$ is contained in at least $d$ copies of $K_{1,2}$ with edges in $H$. Summing over all edges in $E(T)\setminus E(H)$ gives that $X_2(T)\geq td/2$ and~\eqref{eq:X2T-strong-LB} follows from Lemma~\ref{lem:big pi}.    We will also show  that
  \begin{equation}
    \label{eq:X4T-upper}
   X_2(\Pi(T)) \le 20 |\Pi(T)| tp.
  \end{equation}
We claim that this suffices to prove Claim~\ref{claim1statupper}.   Indeed, appealing to~\eqref{eq:mu S},~\eqref{eq:Delta 2 Xs} and~\eqref{eq:X2T-strong-LB}, we get that 
  \begin{align*}
    \frac{\mu(\cX_T)^2}{\Delta_2(\cX_T)} &{\ge} \frac{|\Pi(T)|^2 n^2p^4 }{40|\Pi(T)|  tp \cdot np^3}                {\ge} \frac{|\Pi(T)| n}{40t}
        {\ge} \frac{dn}{1000}
                                  = \frac{1}{1000}\cdot \sqrt{\frac{tp}{2000\log \left( 2n^2p/t\right)}} \cdot n \\
    & \ge 30t \log \left( \frac{2n^2p}{t} \right) \cdot \sqrt{\frac{n^2p}{2^{50}t}\cdot \log ^{-3} \left( \frac{2n^2p}{t}\right)} \ge 30t \log \left( \frac{2n^2p}{t} \right) ,
  \end{align*}
 as desired, using that $t<\beta n^2p$ and the fact that $\beta \le 2^{-100}$ in the last inequality here.

  \begin{figure}[h]
\centering

\begin{tikzpicture}[vertex/.style={draw,circle,color=black,fill=black,inner sep=1,minimum width=4pt}]
    
	\pgfmathsetmacro{\scale}{0.65}    
    
    \node[vertex] (w2) at (0, -2) {};

    \node[vertex] (w1) at (-3*\scale, -1*\scale) {};
    \node[vertex] (w3) at (3*\scale, -1*\scale) {};

    \node[vertex] (z) at (3*\scale, -4*\scale) {};

        \node[vertex] (x1) at (6*\scale, -3*\scale) {};
                \node[vertex] (x2) at (8*\scale, -3*\scale) {};
                    
        \node[vertex] (x3) at (6*\scale, -1*\scale) {};
                \node[vertex] (x4) at (8*\scale, -1*\scale) {};

\draw[thick, red] (z) to (w3);
\draw[thick, red] (z) to (w2);
		\draw[thick, red] (x1) to (x2);
		\draw[thick,dashed] (w1) to (w2);
		\draw[thick,dashed] (w2) to (w3);
			\draw[thick,dashed] (x3) to (x4);

	\node at (8.75*\scale, -3*\scale) {$\in T$};
		\node at (9.15*\scale, -1*\scale) {$\in \Pi(T)$};
	
	\node at (0, - 2.5*\scale) {$w$ };
		\node at (4*\scale, - 4*\scale) {$z$ };
	
		\node at (-4*\scale, -1*\scale) {$w_1$ };
		
		\node at (4*\scale, -1*\scale) {$w_2$ };

\end{tikzpicture}

\caption{A copy of  $K_{1,2}$ in $\Pi(T)$.}\label{fig:Pi}
\end{figure}
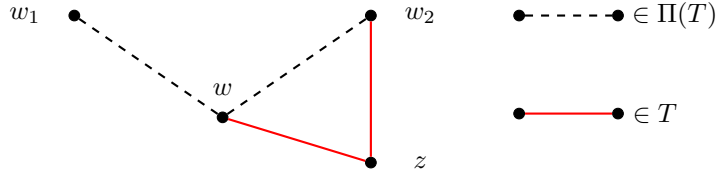

To show that~\eqref{eq:X4T-upper} holds, note first that $X_2(\Pi(T))$ is at most the number of paths $w_1wzw_2$ in $K_n$ such that $w_1w \in \Pi(T)$ and $wz, zw_2 \in T$, see Figure~\ref{fig:Pi}.  Consequently, denoting by $d_{\Pi}(v)$ the number of neighbours of $v$ in $\Pi(T)$, we have 
\begin{equation} \label{eq:X2upper1}
  X_2(\Pi(T))\leq \sum_{e=wz\in T} d_\Pi(w) d_T(z)=\sum_{w\in [n]}\sum_{z\in [n]} d_\Pi(w) d_T(z) \mathds{1}[wz\in T].      
\end{equation}

  We will split this sum further by grouping together vertices depending on their degrees. To this end, for $0 \le \alpha \le \log_2(np)$ and $0\le \beta \le 2\log _2(np)+1$ let
  \[
    A_\alpha \coloneqq \left\{ z\in V(T) : \frac{d_T(z)}{2np} \in \left[ 2^{-\alpha-1},2^{-\alpha} \right] \right\} , \qquad B_\beta \coloneqq \left\{ w\in V(T) : \frac{d_{\Pi}(w)}{4n^2p^2} \in \left[ 2^{-\beta-1},2^{-\beta} \right] \right\},
  \]
  and note that the sets $A_\alpha$ partition the vertices with non-zero degree in $T$ and similarly the sets $B_\beta$ partition the vertices with non-zero degree in $\Pi$, using here that $d_\Pi(w)\leq \Delta(\Pi(S))\leq \Delta(T)^2\leq 4n^2p^2$ for all $w\in V(T)$ due to the fact that $T\in \cF(\beta,p)$ and so $\Delta(T)\leq 2np$ (see Definition~\ref{def:upper-red-conds}). Hence, returning to the upper bound~\eqref{eq:X2upper1}, we have that 
  \begin{equation} \label{eq:X2upper2}
    X_2(\Pi(T)) \le \sum_{\alpha=0}^{\log _2(np)} \sum_{\beta=0}^{2\log _2(np)+1} e_T(A_\alpha,B_\beta) \cdot 8n^3p^3 2^{-\alpha-\beta}.
  \end{equation}
We now turn to bounding  $e_T(A_\alpha,B_\beta)$ for each $0 \le \alpha \le \log_2(np)$ and $0\le \beta \le 2\log _2(np)+1$. For each such $\alpha$ and $\beta$, set
  \[
    a_\alpha \coloneqq \frac{2^{\alpha+1} t}{np}, \qquad b_\beta \coloneqq \frac{2^{\beta} |\Pi(T)|}{n^2p^2},
  \]
and note that  $|A_\alpha|\le a_\alpha$ and $|B_\beta|\le b_\beta$. Indeed, 
$2^{-\alpha}np|A_\alpha|\leq \sum_{z\in V}d_T(z)\leq 2t,$
and similarly for $|B_\beta|$. Moreover
  \[
    a_\alpha = \frac{2^{\alpha+1} t}{np} \ge \frac{t}{np} \ge \theta n^2p^2 \gg \frac{(\log n)^7}{p},\]
    using that $t\geq \theta n^3p^3$ as $T\in \cF$ (see Definition~\ref{def:upper-red-conds}) and the fact that $p\geq n^{-3/5}$. Similarly,  we have that  
    \[b_\beta= \frac{2^\beta |\Pi(T)|}{n^2p^2} \ge \frac{td}{24n^2p^2} \geq \frac{t^{3/2}p^{1/2}}{1200n^2p^2\log(2n^2p/t)}\geq  \frac{\theta^{3/2} n^{5/2}p^{3}}{1200\log(2n^2p/t)} \gg \frac{(\log n)^7}{p},  \]
    where we used~\eqref{eq:X2T-strong-LB} to lower bound $|\Pi(T)|$ as well as our lower bounds on $t $ and $p$. 
  Now as $T\in \cF(\beta;p)$ has property~\ref{item:eAB-concentration} of Lemma~\ref{lem:randomgraph}, see Definition~\ref{def:upper-red-conds},
  \[
    e_T(A_\alpha, B_\beta) \le |A_\alpha|\cdot |B_\beta| \cdot p + \frac{a_\alpha \cdot b_\beta \cdot p}{\log ^3 n} \le |A_\alpha|\cdot |B_\beta| \cdot p + \frac{2^{\alpha+\beta+1} t |\Pi(T)|}{n^3p^2\log ^3 n},
  \]
  for all $\alpha, \beta$ in our ranges of interest. 
  Therefore, plugging these upper bounds into~\eqref{eq:X2upper2}, we get 
  \begin{align*}
    X_2(\Pi(T)) & \le \sum_{\alpha=0}^{\log _2(np)} \sum_{\beta=0}^{2\log _2(np)+1} \left( |A_\alpha|\cdot |B_\beta| \cdot p + \frac{2^{\alpha+\beta+1} t |\Pi(T)|}{n^3p^2\log ^3 n} \right) \cdot 8n^3p^3 2^{-\alpha-\beta} \\
           & \le \frac{16|\Pi(T)|\cdot tp}{\log n} + 8p \cdot \left( \sum _{\alpha=0}^{\log _2(np)} |A_\alpha|\cdot 2^{-\alpha}np \right) \cdot \left( \sum _{\beta=0}^{2\log _2(np)+1} |B_\beta| \cdot 2^{-\beta} n^2 p^2 \right) \\
           \\
           & \le |\Pi(T)|\cdot tp + 8p\cdot \left(\sum_{z\in [n]}d_T(z)\right) \cdot  \left(\sum_{w\in [n]}d_\Pi(w)/2
          \right)\\
           & \le |\Pi(T)|\cdot tp + 8p\cdot 2t \cdot |\Pi(T)| \le 20|\Pi(T)| tp,
  \end{align*}
  establishing~\eqref{eq:X4T-upper} and completing the proof of this case, the claim and the lemma.
\end{claimproof}
\end{proof}

We will split our analysis in the  proof of Proposition~\ref{prop:unbalanced-upper} depending on whether $T\in \cF(\beta;p)$ outputs an $S(T)$ that satisfies~\ref{type:a}  or~\ref{type:b} when Lemma~\ref{lemma:DT_is_big} is applied to $T$.  If~\ref{type:a} holds, then we say that $T$ is of \emph{type}~\ref{type:a} and, likewise, if~\ref{type:b} holds for $S(T)$, we say that $T$ is of \emph{type}~\ref{type:b}.  Our next lemma states that type~\ref{type:b} subgraphs can only occur when both $p$ and $e(T)$ are very close ot their minimal values.

\begin{lem}
  \label{lem:b consequences}
  Suppose $0<\beta \le 2^{-100}$,  $n^{-3/5}\ll p\ll n^{-1/2}$ and  $T\in \cF=\cF(\beta;p)$ is of type~\ref{type:b} with $t\coloneqq e(T)$. Then
  \begin{enumerate}[{label=(\roman*)}]
  \item
    \label{item:p-small}
    $p \le Cn^{-3/5} \log ^{1/5}n$ for some $C = C(\theta)$; 
  \item
    \label{item:eT-small}
    $t \ll n^3p^3\log n$;
  \item
    \label{item:lower log bd}
    $\log\left( \frac{2n^2p}{t}\right)\geq \frac{\log n}{6}$.
  \end{enumerate}
  
\end{lem}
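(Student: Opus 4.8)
The plan is to distill from the assumption that $T$ is of type~\ref{type:b} a single clean estimate, namely $tp^2 = O\!\big(\log\tfrac{2n^2p}{t}\big)$, and then to read off \ref{item:p-small}--\ref{item:lower log bd} from it by elementary manipulation, using throughout the a~priori bounds $\theta n^3p^3 \le t < \beta n^2p$ (valid because $T \in \cF$, see property~\ref{cond:Ta} of Definition~\ref{def:upper-red-conds}) together with the hypothesis $n^{-3/5} \ll p \ll n^{-1/2}$. We may and will assume $n$ is large.

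The first step is to prove the key inequality $tp^2 < 1920\log\!\big(\tfrac{2n^2p}{t}\big)$. I would take $S = S(T)$ to be the subgraph supplied by Lemma~\ref{lemma:DT_is_big}, so that $e(S) \ge t/2$; since $T$ is of type~\ref{type:b} we have $\tfrac13\mu(\cX_S) \le \tfrac{\mu(\cX_S)^2}{\Delta(\cX_S)} < 10t\log\!\big(\tfrac{2n^2p}{t}\big)$, and hence
\[
\mu(\cX_S) < 30\,t\,\log\!\Big(\tfrac{2n^2p}{t}\Big).
\]
For the matching lower bound I would invoke Lemma~\ref{lem:big pi}: it applies because $S$ inherits property~\ref{item:K210-count} from $T$ (copies of $K_{2,10}$ can only disappear when passing to a subgraph) and $e(S) \ge t/2 \ge \theta n^3p^3/2$, and it yields $|\Pi(S)| \ge e(S)^2/(8n) \ge t^2/(32n)$. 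Plugging this into~\eqref{eq:mu S} gives $\mu(\cX_S) \ge |\Pi(S)|(n-3)p^2 \ge t^2p^2/64$, and comparing with the upper bound on $\mu(\cX_S)$ delivers the key inequality.

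With this in hand the three parts are routine. Writing $L := \log\!\big(\tfrac{2n^2p}{t}\big)$, which is positive since $t < \beta n^2p$, I would first note that $t \ge \theta n^3p^3$ and $np^2 \gg n^{-1/5}$ (from $p \gg n^{-3/5}$) force $\tfrac{2n^2p}{t} \le \tfrac{2}{\theta n p^2} \le n^{1/5}$, so $L \le \tfrac15\log n$. Then: part~\ref{item:p-small} follows from $\theta n^3p^5 \le tp^2 < 1920L \le 384\log n$, which rearranges to $p \le Cn^{-3/5}(\log n)^{1/5}$ with $C = (384/\theta)^{1/5}$; part~\ref{item:eT-small} follows since $t < 1920L/p^2$ gives $\tfrac{t}{n^3p^3\log n} < \tfrac{1920L}{n^3p^5\log n} = O\!\big(\tfrac{1}{n^3p^5}\big) \to 0$, using $L = O(\log n)$ and $n^3p^5 = (pn^{3/5})^5 \to \infty$; and part~\ref{item:lower log bd} follows by substituting $t = 2n^2p\,e^{-L}$ into the key inequality to get $e^L > n^2p^3/(960L)$, and then taking logarithms: using $\log p \ge -\tfrac35\log n$ (valid for large $n$ as $p \gg n^{-3/5}$) and $\log(960L) = O(\log\log n)$ (since $L \le \log(2n^2p) = O(\log n)$) yields $L > 2\log n + 3\log p - \log(960L) \ge \tfrac15\log n - O(\log\log n) \ge \tfrac16\log n$.

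I do not expect a serious obstacle here: Step~1 already collapses the type~\ref{type:b} hypothesis to the single inequality $tp^2 = O\!\big(\log\tfrac{2n^2p}{t}\big)$, after which \ref{item:p-small} and \ref{item:eT-small} are immediate; the only mildly delicate point is the self-referential logarithmic estimate in~\ref{item:lower log bd}, which is dispatched by feeding the crude bound $L \le \log(2n^2p)$ back into the inequality.
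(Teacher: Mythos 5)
Your proposal is correct and rests on the same core computation as the paper's: apply Lemma~\ref{lemma:DT_is_big} to obtain $S$ with $e(S)\ge t/2$, and play the type~\ref{type:b} bound $\mu(\cX_S)/3 < 10t\log(2n^2p/t)$ against the lower bound $\mu(\cX_S) \gtrsim t^2p^2$ supplied by~\eqref{eq:mu S} and Lemma~\ref{lem:big pi}. The only difference is organizational --- you distill one clean inequality $tp^2 = O\big(\log(2n^2p/t)\big)$ from which all three parts follow (including a self-referential derivation of~\ref{item:lower log bd}), whereas the paper runs separate contradiction arguments for~\ref{item:p-small} and~\ref{item:eT-small} and then deduces~\ref{item:lower log bd} from those two --- but the underlying arithmetic is identical.
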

\begin{proof}
  Suppose that $T\in \cF(\beta;p)$ is of type~\ref{type:b}, let $S \coloneqq S(T)\subseteq T$ be the graph output by Lemma~\ref{lemma:DT_is_big} when applied to $T$ and let $s \coloneqq e(S)\geq t/2$.  If~\ref{item:p-small} did not hold, then~\eqref{eq:mu S}, Lemma~\ref{lem:big pi} and the fact that $t\geq \theta n^3p^3$ (see Definition~\ref{def:upper-red-conds}) would imply that, if $C = C(\theta)$ is sufficiently large,
  \[ \frac{\mu(\cX_S)}{3}\geq \frac{|\Pi(S)|np^2}{4} \geq \frac{s^2p^2}{32} \geq \frac{t^2p^2}{128} \geq \frac{\theta tn^3p^5}{128} \ge \frac{C^5\theta t \log n}{128} \ge 10t\log\left(\frac{2n^2p}{t}\right),
  \]
  contradicting the fact that $S$ satisfies part~\ref{type:b} of Lemma~\ref{lemma:DT_is_big}.  
  Similarly, if~\ref{item:eT-small} did not hold, then, for some positive constant $c$, we would have, using that $p\gg n^{-3/5}$,
 \[
   \frac{\mu(\cX_S)}{3} \geq \frac{t^2p^2}{128}\geq \frac{ctn^3p^5\log n}{128}\gg t \log n,
 \]
 again contradicting the fact that $S$ satisfies part~\ref{type:b} of Lemma~\ref{lemma:DT_is_big}.  This means that both~\ref{item:p-small} and~\ref{item:eT-small} must hold.  Finally, the third assertion~\ref{item:lower log bd} follows from the other two.  Indeed, we have that  $p\ll  n^{-7/12}(\log n)^{-1/2}$ from  part~\ref{item:p-small} and so using~\ref{item:eT-small}, we get that
 \[\frac{2n^2p}{t}\geq \frac{2}{np^2\log n}\gg n^{1/6}.\qedhere\]
\end{proof}

Finally, we prove Proposition~\ref{prop:unbalanced-upper}, completing this section.

\begin{proof}[{Proof of Proposition~\ref{prop:unbalanced-upper}}]
  Let $\alpha = 1/6400$, $\beta \coloneqq 2^{-100}$ and let $\zeta \coloneqq \min\{\alpha \theta^2 / 96, \alpha \theta / (18C^5)\}$, where $C = C(\theta)$ is the constant from the statement of Lemma~\ref{lem:b consequences}.  Let $G\sim G_{n,p}$, let $\cF \coloneqq \cF(\beta;p)$ be the collection of graphs from Definition~\ref{def:upper-red-conds} and let $\cH$ be the collection of $n$-vertex graphs that satisfy properties~\ref{item:bound max deg}--\ref{item:eAB-concentration} of Lemma~\ref{lem:randomgraph}.  Further, let $\cF_{\ref{type:a}}, \cF_{\ref{type:b}}\subseteq \cF$ be the graphs of types~\ref{type:a} and~\ref{type:b}, respectively.  Now, for a graph $R \subseteq K_n$, let $A(R)$  be the event that $R = \varphi^{-1}(\red)$ for some $K_3$-free colouring $\varphi \colon E(G) \to \{\red, \blue\}$ of $G$ with fewer than $\zeta n^6p^8$ copies of $C_{rrbb}$.   As $\Pr[G\notin \cH]\ll 1$, due to Lemma~\ref{lem:randomgraph}, it suffices to show that a.a.s.\ the event $\bigcup\{A(R) : e(R) < \beta n^2p\} \cap \{G\in \cH\}$ does not occur.  To this end, we first claim that for any $R\subseteq K_n$ with $e(R)<\beta n^2p$, the event $A(R) \cap \{G\in \cH\}$ is empty unless $R \in \cF$.  To see this, suppose that $G\in \cH$ and the event $A(R)$ happens for some $R$ with $e(R)<\beta n^2 p$. As properties~\ref{item:bound max deg}--\ref{item:K210-count} and~\ref{item:eAB-concentration} of Lemma~\ref{lem:randomgraph} are all monotone decreasing, the graph $R \subseteq G \in \cH$ must satisfy condition~\ref{cond:Tb} of Definition~\ref{def:upper-red-conds}.  Moreover, the upper bound on $e(R)$ in condition~\ref{cond:Ta} of Definition~\ref{def:upper-red-conds} is satisfied by assumption.  As for the lower bound, property~\ref{item:many K3} of Lemma~\ref{lem:randomgraph} supplies a collection of at least $\theta n^3p^3$ edge-disjoint triangles in $G$ and each colours class of every $K_3$-free colouring of $G$ must contain at least one edge from each triangle in this collection.
Therefore, it remains to show that a.a.s.\ no event $A(T)\cap \{G\in \cH\}$ occurs with $T\in \cF=\cF_{\ref{type:a}}\cup \cF_{\ref{type:b}}$. We first deal with the type~\ref{type:a} graphs $T$.

\begin{clm} \label{clm:type a upper bound}
  For every $T\in \cF_{\ref{type:a}}$ with $e(T) = t$, we have $\Pr[A(T)]\leq p^t\left(\frac{t}{2n^2p}\right)^t$. 
\end{clm}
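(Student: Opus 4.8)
\textbf{Proof proposal for Claim~\ref{clm:type a upper bound}.} The plan is to condition on the event $T \subseteq G$ and then run a Janson estimate (Lemma~\ref{janson}) inside the resulting product space on $E(K_n) \setminus T$. Since $T$ is of type~\ref{type:a}, Lemma~\ref{lemma:DT_is_big} supplies a subgraph $S \coloneqq S(T) \subseteq T$ with $e(S) \ge t/2$ and $\mu(\cX_S)^2 / \Delta(\cX_S) \ge 10 t\log(2n^2p/t)$. Let $\cX_S' \subseteq \cX_S$ be those copies $K \in \cX_S$ with $E(K) \cap T = \emptyset$. As a copy of $K_{1,2}$ through a fixed edge is determined by one more vertex and an orientation, $|\cX_S \setminus \cX_S'| \le 4tn$; and since Lemma~\ref{lem:big pi} applies to $S$ (because $S\subseteq T\in\cF$ satisfies~\ref{item:K210-count} and $e(S)\ge t/2 \ge \theta n^3p^3/2$), we have $|\cX_S| \ge |\Pi(S)|(n-3) \ge \tfrac{e(S)^2}{8n}(n-3) \ge \tfrac{e(S)^2}{16}$, which dominates $4tn$ because $n^2p^3 \to \infty$ in the upper range. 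Hence $|\cX_S'| = (1-o(1))|\cX_S|$. For each $K \in \cX_S'$ set $I_K \coloneqq \mathds{1}[E(K) \subseteq G]$; conditioned on $T \subseteq G$ these variables live in the product space on $E(K_n)\setminus T$ with $\mathbb{E}[I_K \mid T \subseteq G] = p^2$. Whenever $I_K = 1$, the two edges of $K$ are blue, and the copy of $K_{1,2} \subseteq S \subseteq T \subseteq G$ that $K$ forms a $4$-cycle with has both edges red, yielding a copy of $C_{rrbb}$; distinct $K$ give distinct such copies (a $C_{rrbb}$ determines its unique blue $K_{1,2}$). Therefore the number of $C_{rrbb}$ under the colouring $(\red=T,\ \blue=G\setminus T)$ is at least $X \coloneqq \sum_{K \in \cX_S'} I_K$, so $A(T) \subseteq \{T \subseteq G\} \cap \{X < \zeta n^6p^8\}$.

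Next, let $\mu \coloneqq \mathbb{E}[X \mid T \subseteq G] = |\cX_S'|p^2 \ge (1-o(1))\mu(\cX_S)$ and $\Delta \coloneqq \sum_{K \sim K'} \mathbb{E}[I_KI_{K'}\mid T\subseteq G] \le \Delta(\cX_S)$, where $K \sim K'$ means $E(K)\cap E(K')\neq\emptyset$ (the conditional expectations are again just $p^{e(K\cup K')}$, as all the relevant edges avoid $T$). Since $\mu(\cX_S) = |\cX_S|p^2 \ge \tfrac{e(S)^2p^2}{16} \ge \tfrac{\theta^2 n^6p^8}{64}$ and $\zeta \le \alpha\theta^2/96$ with $\alpha = 1/6400$, we get $\zeta n^6p^8 \le \mu/2$ for $n$ large. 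Applying Lemma~\ref{janson} with $k \coloneqq \mu - \zeta n^6p^8 \ge \mu/2$ and using the type~\ref{type:a} bound (and $10/8 > 1$) gives
\[
  \Pr[X < \zeta n^6p^8 \mid T \subseteq G] \le \exp\!\left(-\frac{k^2}{2\Delta}\right) \le \exp\!\left(-\frac{\mu^2}{8\Delta}\right) \le \exp\!\left(-\frac{(1-o(1))\mu(\cX_S)^2}{8\Delta(\cX_S)}\right) \le \exp\!\left(-t\log\frac{2n^2p}{t}\right).
\]
Finally, $\Pr[T\subseteq G] = p^t$ and independence of $\{T\subseteq G\}$ from the conditional event give
\[
  \Pr[A(T)] \le p^t \cdot \exp\!\left(-t\log\frac{2n^2p}{t}\right) = p^t\left(\frac{t}{2n^2p}\right)^t,
\]
as claimed.

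The main obstacles are the two places where the constants must line up: first, one must check that passing from $\cX_S$ to $\cX_S'$ (discarding copies with an edge in $T$) costs only a $(1+o(1))$ factor in $\mu$ and nothing in $\Delta$, which is exactly where the lower bound $|\Pi(S)| \ge e(S)^2/(8n)$ from Lemma~\ref{lem:big pi} together with $n^2p^3 \to \infty$ (valid since $p \gg n^{-3/5}$) is needed; second, one must verify $\zeta n^6p^8 \le \mu/2$ and that the surviving exponent $\mu^2/(8\Delta)$ still beats $t\log(2n^2p/t)$ — both follow from the specific choice $\zeta \le \alpha\theta^2/96$ and the threshold $10t\log(2n^2p/t)$ in the definition of type~\ref{type:a}, which is tailored precisely so that the final exponent equals $t\log(2n^2p/t)$. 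Everything else is the routine conditional version of a Janson lower-tail bound.
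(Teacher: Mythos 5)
Your proposal is correct and takes essentially the same approach as the paper. You condition on $T \subseteq G$ and apply Janson to the copies of $K_{1,2}$ in $\cX_S$ that avoid $E(T)$ (the paper calls this family $\cY_T$; you call it $\cX_S'$), then use the type~\ref{type:a} bound to land on the exponent $t\log(2n^2p/t)$; the only cosmetic difference is that you lower-bound $|\cX_S'|$ via $|\cX_S\setminus\cX_S'|\le 4tn\ll|\cX_S|$ and the Lemma~\ref{lem:big pi} bound on $|\Pi(S)|$, whereas the paper writes $|\cY_T|\ge|\Pi(S)|(n-2\Delta(T))$ and uses $\Delta(T)\le 2np\ll n$ — both give the same $(1-o(1))$ factor.
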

\begin{claimproof}
  Applying Lemma~\ref{lemma:DT_is_big}, we get some subgraph $S \coloneqq S(T)\subseteq T$ with $s\coloneqq e(S)\geq t/2$ and $\frac{\mu(\cX_S)^2}{\Delta(\cX_S)}\geq 10t\log \left( \frac{2n^2p}{t} \right)$.  Now,  let $\cY_T \subseteq \cX_S$ be the family of all copies of $K_{1,2}$ in $K_n\setminus T$  that form a 4-cycle with some copy of $K_{1,2}$ in $S$ and avoid the edges of $T$.  Using the notation of Lemma~\ref{janson}, let $\mu\coloneqq|\cY_T|p^2$ be the expected number of copies of $K_{1,2}$ in $\cY_T$ that appear in $G_{n,p}$ and let $\Delta\coloneqq\sum_{K,K'}p^{e(K\cup K')}$, where the sum goes over all pairs of copies $K,K'\in \cY_T$ such that $K \cap K' \neq \emptyset$. Note that  $\Delta\leq \Delta(\cX_S)$, as $\cY_T\subseteq \cX_S$, and we also have that 
  \[
    \mu=|\cY_T|p^2\geq |\Pi(S)|(n-2\Delta(T))p^2\geq \frac{2}{\sqrt{5}}|\Pi(S)|np^2 \ge \frac{2\mu(\cX_S)}{\sqrt{5}},
  \]
  appealing to~\eqref{eq:mu S} and the fact that $\Delta(T)\leq 2np\ll n$, as $T\in \cF$, here. 

  Now, let $B(T)$ be the event that fewer than $\mu/2$ copies of $K_{1,2}$ from $\cY_T$ appear in $G$.  By Lemma~\ref{janson},
  \[
    \Pr[B(T)]\leq \exp\left(-\frac{\mu^2}{8\Delta}\right)\leq \exp\left(-\frac{\mu(\cX_S)^2}{10\Delta(\cX_S)}\right)\leq \exp\left(-t\log\left( \frac{2n^2p}{t}\right)\right)=\left( \frac{t}{2n^2p}\right)^t.
  \]
  We claim that $A(T) \subseteq B(T)$.  Indeed, suppose that $A(T)$ occurs and fix some colouring $\varphi \colon E(G) \to \{\red, \blue\}$ with $\phi^{-1}(\red)=T$  and fewer than $\zeta n^6 p^8$ copies of $C_{rrbb} $.  Since  every copy $K$ of $K_{1,2}$ in $\cY_T$ that appears in $G$ is coloured blue (as its edges are not in $T$), it gives rise to at least one copy of $C_{rrbb}$, as $K$ forms a copy of $C_4$ with two edges of $S$ (which $\phi$ colours red).  This implies that $B(T)$ occurs as otherwise, by~\eqref{eq:mu S}, Lemma~\ref{lem:big pi} and the fact that $s \ge t/2 \ge \theta n^3p^3/2$, see Definition~\ref{def:upper-red-conds}, we would get
  \[
    \frac{\mu}{2}\geq \frac{\mu(\cX_S)}{4}\geq \frac{|\Pi(S)|np^2}{8}\geq \frac{s^2p^2}{64} \ge\zeta n^6p^8
  \]
  copies of $C_{rrbb}$, a contradiction.  Finally, as the event $A(T)$ occurring implies that $T\subseteq G$ and the events $T \subseteq G$ and $B(T)$ are independent (the copies of $K_{1,2}$ in $\cY_T$ avoid the edges of $T$), we have
  \[
    \Pr[A(T)]\leq \Pr[\{T \subseteq G\}\cap B(T)]\leq 
    \Pr[T\subseteq G]\cdot \Pr[B(T)]\leq p^t\left(\frac{t}{2n^2p}\right)^t,
  \]
  as required. 
\end{claimproof}

Using Claim~\ref{clm:type a upper bound} and appealing to a union bound, we thus have that 
  \begin{align*}
    \Pr\big[ A(T) \cap \{G\in \cH\} \text{ for some $T\in \cF_{\ref{type:a}}$} \big]
    & \le \sum _{T \in \cF_{\ref{type:a}}} \Pr\big[  A(T) \big] \le \sum _{T \in \cF_{\ref{type:a}}} p^{e(T)} \cdot  \left( \frac{e(T)}{2n^2p} \right) ^{e(T)} \\
    & \le \sum^{\beta n^2p}_{t = \theta n^3p^3} \binom{\binom{n}{2}}{t} \cdot p^t \cdot \left( \frac{t}{2n^2p} \right) ^t  \leq \beta n^2p\left( \frac{e}{4} \right) ^{n} \ll 1.
  \end{align*}

  It remains to consider the events $A(T) \cap \{G\in \cH\}$ for type~\ref{type:b} graphs $T\in \cF_{\ref{type:b}}$.  More precisely, we need to show that a.a.s.\ $G$ does not belong to the family $\cH'$ defined by
  \[
    \cH'\coloneqq\left\{H\in \cH: \exists  \phi \colon E(H)\rightarrow\{\red,\blue\} \text{ with } \phi^{-1}(\red)\in \cF_{\ref{type:b}} \text{ and fewer than } \zeta n^6p^8 \text{ copies of } C_{rrbb}\right\}.
  \]
  In order to do this, for each $H\in \cH'$, we will identify some $\ell=\ell(H)\in \NN$ and a pair of increasing sequences ${\bf{J}}(H)=(J_0,J_1,\ldots, J_\ell)$  and ${\bf{R}}(H)=(R_0,R_1,\ldots,R_\ell) $ of  subgraphs of  $H$;  we will refer to this pair as the \emph{stamp} of $H$ and denote it by ${\bf{S}}(H)=({\bf{J}}(H), {\bf{R}}(H))$. Our proof will then provide an upper bound on the probability that ${\bf S}(G)=\bf{S}$, for any given stamp $\bf{S}$, that is strong enough to survive a union bound over all possible stamps ${\bf{S}}$.  Before proceeding, we remark that, by Lemma~\ref{lem:b consequences}, we can assume that $p \le Cn^{-3/5} \log ^{1/5}n$, as otherwise $\cF_{\ref{type:b}}=\emptyset$ and so $\cH'=\emptyset$. 

  Now, for each $H\in \cH'$, we define $\ell(H)$ and construct the sequences  ${\bf{J}}={\bf J}(H)$ and ${\bf R}={\bf R}(H)$ (and hence the stamp ${\bf S}(H)$) by considering the following process: 
\begin{enumerate}
\item
  Fix some $K_3$-free colouring $\phi \colon E(H)\rightarrow\{\red,\blue\}$ with fewer than $\zeta n^6p^8$ copies of $C_{rrbb}$ and $ T\coloneqq\phi^{-1}(\red)\in \cF_{\ref{type:b}}$.
\item
  Choose a collection $\cC$ of $c_0\coloneqq \theta n^3p^3$ edge-disjoint triangles in $H$ (this is possible as $H\in \cH$ and so it satisfies property~\ref{item:many K3} of Lemma~\ref{lem:randomgraph}), fix $J_0$ to be the collection of edges featuring in $\cC$ and $R_0 \coloneqq J_0\cap T$ to be the collection of edges in $J_0$ which are coloured red by $\phi$. 
\end{enumerate}
At this point note that, for any $n$-vertex graph $R$ with $R_0 \subseteq R\subseteq T$, we have that $R\in \cF$. Indeed,  $|R|\geq |R_0|\geq c_0= \theta n^3p^3 $, as there is at least one red edge in each triangle in $\cC$, and the other conditions of Definition~\ref{def:upper-red-conds} follow from the fact that $R\subseteq T\in \cF_{\ref{type:b}}\subseteq \cF$.  We now continue to form our sequences.  We will  maintain that  $J_{i-1}\subseteq J_i$ for all $i\geq 1$ and define $R_i \coloneqq J_i\cap T$ to be the collection of edges in $J_i$ that are coloured red by $\phi$ (as is the case with $R_0\subseteq J_0$).
\begin{enumerate}
 \setcounter{enumi}{2}
\item \label{step: third}
  Suppose that $i \ge 0$ and that $J_i$ and $R_i$ have already been defined.    As $R_0\subseteq R_i\subseteq T$, we have that $R_i\in \cF$.  In particular, Lemma~\ref{lemma:DT_is_big} gives us $S_i \coloneqq S(R_i)$ such that $e(S_i)\geq e(R_i)/2$. Now fix
  \begin{equation} \label{eq:Midef}
    M_i\coloneqq\alpha \min\left\{  |\Pi(S_i)|np^2,e(R_i)\log\left(\frac{2n^2p}{e(R_i)} \right) \right\},
  \end{equation}
  recalling the definition of $\Pi(S_i)$ from Definition~\ref{def:S params}.
   Let $\cX(H,J_i,S_i)\subseteq \cX_{S_i}$ be the set of copies of $K_{1,2}$ in $H\setminus J_i$ that avoid the edges of $J_i$ and form a $4$-cycle with some copy of $K_{1,2}$ in $S_i$.  Moreover, let $\cZ$ be a largest collection of edge-disjoint copies of $K_{1,2}$ in $\cX(H,J_i,S_i)$. If $|\cZ|< M_i$ then terminate the process and fix  $\ell(H)\coloneqq i$. Otherwise, if $|\cZ|\geq M_i$, then choose some $\cZ'\subseteq \cZ$ with $|\cZ'|=M_i$, let $J_{i+1}$ be the graph obtained by adding the edges in copies of $K_{1,2}$ in $\cZ'$ to $J_i$ and let $R_{i+1}=J_{i+1}\cap T$. Repeat step~\ref{step: third} with $i+1$ replacing $i$.  
\end{enumerate}

We begin by collecting some observations about the process with the following claims. 

\begin{clm}
  \label{clm:SiTi-one}
  For every $i \in \{0, \dotsc, \ell-1\}$, we have
  \[
    e(R_{i+1}) - e(R_i) \ge \frac{e(J_{i+1}) - e(J_i)}{3} = \frac{2M_i}{3} \ge 2\zeta n^6p^8.
  \]
  Consequently, $e(R_i) \ge e(J_i) / 3 \ge 2\zeta i n^6p^8/3$ for every $i \in \{0, \dotsc, \ell\}$.
\end{clm}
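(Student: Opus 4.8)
The plan is to prove the two assertions of the claim in turn, the first one reducing to a uniform lower bound $M_i \ge 3\zeta n^6p^8$ for all $i \le \ell-1$ together with the fact that the copies of $K_{1,2}$ added at step~\ref{step: third} were chosen to span $4$-cycles with \emph{red} copies of $K_{1,2}$ inside $S_i$. Throughout I use, as already observed just before the claim, that for every $i$ in the process one has $R_0 \subseteq R_i \subseteq T$ with $e(R_i) \in [c_0, \beta n^2p) = [\theta n^3p^3, \beta n^2p)$, so $R_i \in \cF$; hence Lemma~\ref{lemma:DT_is_big} applies to $R_i$ and produces $S_i = S(R_i) \subseteq R_i \subseteq T$ with $e(S_i) \ge e(R_i)/2 \ge \theta n^3p^3/2$. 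Since the members of $\cZ' \subseteq \cZ$ are pairwise edge-disjoint copies of $K_{1,2}$ that avoid $J_i$, adjoining them yields exactly $e(J_{i+1}) - e(J_i) = 2M_i$, which is the middle equality in the claim.

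The core of the argument is the bound $M_i \ge 3\zeta n^6p^8$, and I would obtain it by examining which term realises the minimum in~\eqref{eq:Midef}. If the minimum is $|\Pi(S_i)|np^2$, then Lemma~\ref{lem:big pi} (applicable since $S_i \subseteq T$ inherits property~\ref{item:K210-count} of Lemma~\ref{lem:randomgraph} and $e(S_i) \ge \theta n^3p^3/2$) gives $|\Pi(S_i)| \ge e(S_i)^2/(8n) \ge \theta^2 n^5p^6/32$, so $M_i \ge \alpha\theta^2 n^6p^8/32 \ge 3\zeta n^6p^8$ by the choice $\zeta \le \alpha\theta^2/96$. If instead the minimum is $e(R_i)\log(2n^2p/e(R_i))$, then $T$ must be of type~\ref{type:b}, so Lemma~\ref{lem:b consequences} is available and supplies $p \le Cn^{-3/5}\log^{1/5}n$ (equivalently $n^3p^5 \le C^5\log n$) as well as $\log(2n^2p/e(R_i)) \ge \log(2n^2p/t) \ge \tfrac{1}{6}\log n$, the first inequality because $e(R_i) \le e(T) = t$. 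Combined with $e(R_i) \ge \theta n^3p^3$ this gives $M_i \ge \alpha\theta n^3p^3 \cdot \tfrac{1}{6}\log n \ge \tfrac{\alpha\theta}{6C^5}n^6p^8 \ge 3\zeta n^6p^8$ for our choice of $\zeta$. I expect this case to be the main obstacle: it is exactly where one must leverage that a type~\ref{type:b} graph forces both $p$ and $e(T)$ down to polynomial multiples of their minimal values, so that the logarithmic gain in~\eqref{eq:Midef} is large enough to offset the small degree bound implicit there.

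With $M_i \ge 3\zeta n^6p^8$ in hand, the remaining inequality in the first assertion follows from a $C_{rrbb}$-counting argument. Each $K \in \cZ'$ spans a $4$-cycle together with some $K' \subseteq S_i$ whose two edges are red; hence if both edges of $K$ were blue under $\phi$, the cycle $K \cup K'$ would be a copy of $C_{rrbb}$, and since members of $\cZ'$ are edge-disjoint these copies are distinct. As $\phi$ induces fewer than $\zeta n^6p^8$ copies of $C_{rrbb}$, fewer than $\zeta n^6p^8$ members of $\cZ'$ are all-blue; each of the remaining at least $M_i - \zeta n^6p^8$ members contributes at least one edge of $T$ to $J_{i+1}\setminus J_i$, and these edges are pairwise distinct, so $e(R_{i+1}) - e(R_i) \ge M_i - \zeta n^6p^8 \ge \tfrac{2}{3}M_i$, using $\zeta n^6p^8 \le \tfrac{1}{3}M_i$. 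Finally, for the last assertion I would induct on $i$: the base case holds because $J_0$ is a disjoint union of $c_0$ triangles, each containing a red edge (as $\phi$ is $K_3$-free), so $e(R_0) \ge c_0 = e(J_0)/3$; the step is $e(R_{i+1}) \ge e(R_i) + \tfrac23 M_i \ge \tfrac13 e(J_i) + \tfrac23 M_i = \tfrac13 e(J_{i+1})$. Summing the increments $e(J_{i+1}) - e(J_i) = 2M_i \ge 2\zeta n^6p^8$ and discarding $e(J_0) \ge 0$ gives $e(J_i) \ge 2\zeta i n^6p^8$, whence $e(R_i) \ge e(J_i)/3 \ge 2\zeta i n^6p^8/3$, as required.
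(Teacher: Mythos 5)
Your proof is correct and follows essentially the same route as the paper: both arguments show $M_i \ge 3\zeta n^6p^8$ by case analysis on which term realises the minimum in~\eqref{eq:Midef} (invoking Lemma~\ref{lem:big pi} in the first case and Lemma~\ref{lem:b consequences} in the second), and both then observe that fewer than $\zeta n^6p^8 \le M_i/3$ of the edge-disjoint copies in $\cZ'$ can be entirely blue without contradicting the bound on $C_{rrbb}$, so at least $2M_i/3$ of them deposit a red edge into $R_{i+1}\setminus R_i$. The only cosmetic difference is that the paper proves the slightly stronger ratio bound $M_i/e(R_i) \ge 3\zeta n^3p^5/\theta$ (reused later in Claim~\ref{clm:small prob}), whereas you bound $M_i$ directly; your explicit induction for the second assertion matches what the paper dismisses as immediate.
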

\begin{claimproof}
  Since we have already shown that $e(R_0) \geq e(J_0)/3= c_0 = \theta n^3p^3$, as each triangle in $\cC$ must contain at least one red edge, the second assertion of the lemma easily follows from the first assertion.  We begin by showing that $M_i\geq 3 \zeta n^6p^8$, which follows from the stronger inequality
  \begin{equation}
    \label{eq:MiRi}
    \frac{M_i}{e(R_i)} \ge \frac{3\zeta n^3p^5}{\theta},
  \end{equation}
  as $e(R_i) \ge e(R_0) \ge \theta n^3p^3$.  To see that~\eqref{eq:MiRi} holds, consider two cases.  If $M_i$ is equal to the first term in~\eqref{eq:Midef}, this follows from Lemma~\ref{lem:big pi} as 
  \[
    \frac{|\Pi(S_i)|np^2}{e(R_i)} \ge \frac{e(S_i)^2p^2}{8e(R_i)} \ge \frac{e(R_i)p^2}{32} \ge \frac{e(R_0)p^2}{32} \geq \frac{\theta n^3 p^5}{32} \geq \frac{3\zeta n^3p^5}{\alpha\theta}.
  \]
  Similarly, if $M_i$ is equal to the second term, then, recalling that we have assumed that $n^3p^5\leq C^5 \log n$,
  \[
    \log\left(\frac{2n^2p}{e(R_i)}\right)\geq \log\left(\frac{2n^2p}{e(T)}\right)\geq  \frac{\log n}{6} \geq \frac{n^3p^5}{6C^5} \geq \frac{3\zeta n^3p^5}{\alpha \theta},
  \]
  where we also used Lemma~\ref{lem:b consequences}~\ref{item:lower log bd} and the fact that $R_i\subseteq T\in \cF_{\ref{type:b}}$. 

  Observe now that at least two thirds among the collection $\cZ'$ of $M_i$ copies of $K_{1,2}$, which are added to $J_i$ to get $J_{i+1}$, must contain an edge of $T$. Indeed, if this was not the case, then more than $M_i/3 \ge \zeta n^6p^8$ such copies would be coloured completely blue.  However, each of those forms a copy of $C_{rrbb}$ with two edges of $S_i\subseteq R_i\subseteq T$, which are all coloured red.  This contradicts the assumption that there are fewer than $\zeta n^6p^8$ copies of $C_{rrbb}$ in $H$.  Therefore, it must be that  $e(R_{i+1})-e(R_i)\geq 2M_i/3= (e(J_{i+1})-e(J_i))/3$.
\end{claimproof}

Since $n^6p^8 \gg n^3p^3$, by our assumption that $p \gg n^{-3/5}$, Claim~\ref{clm:SiTi-one} implies that $e(R_\ell) \ge \ell n^3p^3$.  Consequently, since $R_\ell\subseteq T\in \cF_{\ref{type:b}}$, we must have that $\ell=\ell(H)\leq \log n$.

\begin{clm} \label{clm:Miactual}
  For every $i \in \{0, \dotsc, \ell-1\}$, we have $M_i = \alpha  |\Pi(S_i)|np^2$.
\end{clm}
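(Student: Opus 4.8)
The plan is to prove that in the defining minimum $M_i=\alpha\min\{|\Pi(S_i)|np^2,\,e(R_i)\log(2n^2p/e(R_i))\}$ the first argument is the smaller one, i.e.\ that $|\Pi(S_i)|np^2\le e(R_i)\log(2n^2p/e(R_i))$. First I would collect the estimates already available: since $R_0\subseteq R_i\subseteq T\in\cF_{\ref{type:b}}$ we have $R_i\in\cF$, Claim~\ref{clm:SiTi-one} gives $e(R_i)\ge e(R_0)\ge\theta n^3p^3$ while $e(R_i)\le e(T)$, and, because $T$ is of type~\ref{type:b}, Lemma~\ref{lem:b consequences} applies and yields $p\le Cn^{-3/5}\log^{1/5}n$ (so $n^3p^5\le C^5\log n$), $e(T)\ll n^3p^3\log n$, and, by monotonicity of the logarithm, $\log(2n^2p/e(R_i))\ge\log(2n^2p/e(T))\ge\tfrac16\log n$. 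I would also record the two-sided control on $\mu(\cX_{S_i})$: from~\eqref{eq:mu S} one has $\mu(\cX_{S_i})\le|\Pi(S_i)|np^2\le\tfrac{n}{n-3}\mu(\cX_{S_i})$, and Lemma~\ref{lem:big pi} (whose hypotheses $e(S_i)\ge e(R_i)/2\ge\theta n^3p^3/2$ and $S_i\subseteq T\in\cF$ are met) gives $|\Pi(S_i)|\ge e(S_i)^2/(8n)\ge e(R_i)^2/(32n)$, while $|\Pi(S_i)|\le|\Pi(T)|\le N_{K_{1,2}}(T)$, where property~\ref{item:few F} of $T$ (applied to $K_{1,2}$, which has $v=3$ and $m(K_{1,2})=1\le 3/2$) bounds $N_{K_{1,2}}(T)$.

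The heart of the argument is to show that $R_i$ is itself of type~\ref{type:b}. I would do this by ruling out type~\ref{type:a}: if $R_i$ were of type~\ref{type:a}, then Lemma~\ref{lemma:DT_is_big} applied to $R_i$ produces $S_i=S(R_i)$ with $\frac{\mu(\cX_{S_i})^2}{\Delta(\cX_{S_i})}\ge 10\,e(R_i)\log(2n^2p/e(R_i))$, and since $\Delta(\cX_{S_i})\ge\mu(\cX_{S_i})$ this forces $\mu(\cX_{S_i})\ge 10\,e(R_i)\log(2n^2p/e(R_i))$; combining this lower bound with the upper bound $\mu(\cX_{S_i})\le n\,N_{K_{1,2}}(T)\,p^2$ and the smallness of $p$ (and hence of $e(T)$) supplied by Lemma~\ref{lem:b consequences} leads to a contradiction. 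Hence $R_i$ is of type~\ref{type:b}, so Lemma~\ref{lemma:DT_is_big}\ref{type:b} gives $\mu(\cX_{S_i})\le 3\cdot\frac{\mu(\cX_{S_i})^2}{\Delta(\cX_{S_i})}<30\,e(R_i)\log(2n^2p/e(R_i))$. Feeding this back through $|\Pi(S_i)|np^2\le\tfrac{n}{n-3}\mu(\cX_{S_i})$, together with the lower bound $|\Pi(S_i)|np^2\ge e(R_i)^2p^2/32$ — which, combined with the upper bound just obtained, forces $e(R_i)p^2$ to be at most an absolute constant times $\log(2n^2p/e(R_i))$ — and using that $\log(2n^2p/e(R_i))=\Theta(\log n)$ throughout this range together with the precise choices of $\alpha=1/6400$, $\zeta$, and $C=C(\theta)$, one concludes that $|\Pi(S_i)|np^2\le e(R_i)\log(2n^2p/e(R_i))$, which is exactly the assertion $M_i=\alpha|\Pi(S_i)|np^2$.

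I expect the main obstacle to be the bookkeeping of the absolute constants: both the step ruling out type~\ref{type:a} for $R_i$ and the final comparison amount to genuine inequalities between explicit multiples of $n^3p^3\log n$ and $n^6p^8$ (equivalently, an upper bound on $n^3p^5$), and one must use the quantitative outputs of Lemma~\ref{lem:b consequences}, property~\ref{item:few F}, and Lemma~\ref{lem:big pi} in tandem, chasing the numerical constants carefully, to see that everything closes up. The conceptual point is simply that $R_i\subseteq T$ being of type~\ref{type:b} forces $p$ to sit polynomially below $n^{-3/5}$ up to logarithmic factors, which in turn makes $e(R_i)$ so small that the $|\Pi(S_i)|np^2$ term of the minimum always wins.
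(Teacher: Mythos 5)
Your plan does not work, for two independent reasons, and it also misses the mechanism the paper actually uses.

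First, the step that tries to rule out type~\ref{type:a} for $R_i$ fails numerically. From the assumption that $R_i$ is of type~\ref{type:a}, and the inequality $\Delta(\cX_{S_i}) \ge \mu(\cX_{S_i})$, you get the lower bound $\mu(\cX_{S_i}) \ge 10\,e(R_i)\log(2n^2p/e(R_i)) \ge \tfrac{10\theta}{6}\, n^3p^3\log n$, using $e(R_i)\ge\theta n^3p^3$ and Lemma~\ref{lem:b consequences}~\ref{item:lower log bd}. Your proposed upper bound is $\mu(\cX_{S_i}) \le |\Pi(S_i)|np^2 \le N_{K_{1,2}}(T)\, np^2 \le 2n^4p^4$ (using property~\ref{item:few F}); even the sharper route $N_{K_{1,2}}(T)\le e(T)\Delta(T) \ll n^3p^3\log n\cdot 2np$ gives $\mu(\cX_{S_i}) \lesssim n^5p^6\log n$. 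Comparing this with the lower bound would require $np \lesssim\log n$ or $n^2p^3\lesssim 1$, both of which are false for $p\gg n^{-2/3}$; indeed $np\gg n^{2/5}$ and $n^2p^3\gg 1$ in our range. So no contradiction is reached and the type~\ref{type:a} possibility cannot be excluded this way.

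Second, even if one somehow established that $R_i$ is of type~\ref{type:b}, that only yields $\mu(\cX_{S_i}) < 30\,e(R_i)\log(2n^2p/e(R_i))$, and hence $|\Pi(S_i)|np^2 \le \tfrac{n}{n-3}\mu(\cX_{S_i}) < 31\,e(R_i)\log(2n^2p/e(R_i))$. That is off by a factor of roughly $30$ from the inequality $|\Pi(S_i)|np^2 \le e(R_i)\log(2n^2p/e(R_i))$ that is actually needed to conclude $M_i=\alpha|\Pi(S_i)|np^2$. This is not a bookkeeping issue that can be absorbed into the choices of $\alpha$, $\zeta$, $C$: those constants appear elsewhere in the definition of $M_i$ and in the probability estimates, not in the ratio between the two arguments of the minimum.

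The conceptual miss is that the claim is not an intrinsic, static property of the single graph $R_i$, but a consequence of the \emph{dynamics} of the process that produced the sequence $R_0\subseteq R_1\subseteq\cdots\subseteq R_\ell\subseteq T$. The paper argues by contradiction using Claim~\ref{clm:SiTi-one}: if for some $i<\ell$ the minimum were attained by the second argument, then $M_i = \alpha\,e(R_i)\log(2n^2p/e(R_i)) \ge \tfrac{\alpha\theta n^3p^3\log n}{18}$ (using $\log(2n^2p/e(R_i))\ge\tfrac{1}{6}\log n$ and $e(R_i)\ge e(J_i)/3 \ge \theta n^3p^3$). But $e(J_{i+1})-e(J_i)=2M_i$, so $e(R_\ell)\ge e(J_\ell)/3\ge e(J_{i+1})/3 \ge 2M_i/3 \ge \tfrac{\alpha\theta n^3p^3\log n}{27}$, which contradicts $e(R_\ell)\le e(T)\ll n^3p^3\log n$ from Lemma~\ref{lem:b consequences}~\ref{item:eT-small}. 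No information about whether $R_i$ itself is of type~\ref{type:a} or~\ref{type:b} is needed; what does the work is that a large $M_i$ at any single step would force the process to grow $e(R_\ell)$ beyond what $T$ can accommodate.
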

\begin{claimproof}
  If this was not the case, then, for some $i \in \{0, \dotsc, \ell-1\}$, we would have that
  \[
    M_i = \alpha e(R_i) \log\left(\frac{2n^2p}{e(R_i)}\right) \geq   \frac{ \alpha e(J_i)\log n}{18} \geq   \frac{\alpha e(J_0)\log n}{18} \geq  \frac{\alpha \theta n^3p^3\log n}{18},
  \]
  using Lemma~\ref{lem:b consequences}~\ref{item:lower log bd} and Claim~\ref{clm:SiTi-one}  here.
  But then, appealing again to Claim~\ref{clm:SiTi-one}, we have that
  \[
    e(R_\ell)\geq \frac{e(J_\ell)}{3} \geq \frac{e(J_{i+1})}{3} \ge \frac{2M_i}{3}\geq \frac{\alpha \theta n^3p^3\log n}{27},
  \]
  which is a contradiction, as $R_\ell\subseteq T\in \cF_{\ref{type:b}}$ and hence $e(R_\ell) \ll n^3p^3 \log n$ by Lemma~\ref{lem:b consequences}.
\end{claimproof}

We are finally in a position to bound the probability that ${\bf S}(G) = {\bf S}$ for each possible stamp ${\bf S}=({\bf J}, {\bf R})$.  Recall the definition of $\cH'$ and let
\[
  {\bm \cS}=\left\{{\bf S}(H)=({\bf J}(H), {\bf R}(H)): H\in \cH'\right\}
\]
be the set of stamps obtained by running the above process on all possible graphs $H\in \cH'$.  Further, for $0\leq k \leq \log n$, let ${\bm \cS}^k\coloneqq\{{\bf S}(H): H\in \cH', \ell(H)=k\}\subseteq {\bm \cS}$ be the stamps of length $k$.

\begin{clm} \label{clm:small prob}
  For any $0\leq k\leq \log n$ and all ${\bf S}=((J_0,\ldots,J_k),(R_0,\ldots,R_k))\in {\bm \cS}^k$, we have that
  \[
    \Pr[{\bf S}(G)={\bf{S}}]\leq p^{e(J_k)}\exp(-\zeta n^3p^5e(J_k)).
  \]
\end{clm}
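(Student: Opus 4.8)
The plan is to show that the event $\{{\bf S}(G)={\bf S}\}$ forces two \emph{independent} events living on disjoint sets of edge--slots of $G$ --- namely $\{J_k\subseteq G\}$ and an event asserting that a certain edge--disjoint packing is small --- and then to estimate the second one using Corollary~\ref{cor:maxdisfam}. Throughout, note that for a fixed stamp ${\bf S}=((J_0,\dots,J_k),(R_0,\dots,R_k))\in{\bm\cS}^k$ the quantities $J_k$, $R_k$, $S_k\coloneqq S(R_k)$ (with $S(\cdot)$ the fixed function from Lemma~\ref{lemma:DT_is_big}) and $M_k$ are all determined; moreover, since ${\bf S}$ arises from some $H\in\cH'$ we have $R_0\subseteq R_k\subseteq T$ for some $T\in\cF_{\ref{type:b}}$, so $R_k\in\cF$, and $J_k\subseteq H\in\cH$, so $\Delta(J_k)\le 2np$.

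First I would record the reduction. If ${\bf S}(G)={\bf S}$ then, since the process builds $J_0\subseteq\dots\subseteq J_k$ out of triangles and copies of $K_{1,2}$ of $G$, we have $J_k\subseteq G$; and since the process stops at step $\ell=k$, the termination rule tells us that a largest collection of edge--disjoint copies of $K_{1,2}$ in $\cX(G,J_k,S_k)$ has fewer than $M_k$ members, i.e.\ $\nu\big(\cX(G,J_k,S_k)\big)<M_k$. Because $S_k=S(R_k)\subseteq R_k\subseteq J_k$, every copy of $K_{1,2}$ counted by $\cX(G,J_k,S_k)$ consists of two edges \emph{outside} $J_k$, so this last event is determined by $G\setminus J_k$ alone and is therefore independent of $\{J_k\subseteq G\}$. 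Hence $\Pr[{\bf S}(G)={\bf S}]\le p^{e(J_k)}\cdot\Pr\big[\nu(\cX(G,J_k,S_k))<M_k\big]$.

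Next I would bound $\Pr\big[\nu(\cX(G,J_k,S_k))<M_k\big]$ via Corollary~\ref{cor:maxdisfam}. Take $\Gamma\coloneqq E(K_n)\setminus E(J_k)$ and let $\cA$ be the hypergraph on $\Gamma$ whose hyperedges are the two--element edge--sets of the copies of $K_{1,2}$ in $\cX_{S_k}$ that lie inside $\Gamma$; then $\cA[\Gamma_p]=\cX(G,J_k,S_k)$ and the matching number of $\cA[\Gamma_p]$ is exactly the packing number above. For the parameters of Lemma~\ref{janson}: each pair $\rho\in\Pi(S_k)$ extends to at least $n-2-2\Delta(J_k)\ge\tfrac{2}{\sqrt5}n$ hyperedges of $\cA$ (using $\Delta(J_k)\le 2np$), so $\mu\coloneqq p^2e(\cA)\ge\tfrac{2}{\sqrt5}|\Pi(S_k)|np^2\ge\tfrac{2}{\sqrt5}\mu(\cX_{S_k})$ by~\eqref{eq:mu S}, while $\Delta\coloneqq\Delta_{\cA}\le\Delta(\cX_{S_k})$ since the hyperedges of $\cA$ form a subfamily of $\cX_{S_k}$. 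Therefore $D\coloneqq\tfrac{\mu^2}{800\Delta}\ge\tfrac{1}{1000}\cdot\tfrac{\mu(\cX_{S_k})^2}{\Delta(\cX_{S_k})}$. Now Lemma~\ref{lemma:DT_is_big} applied to $R_k\in\cF$ gives, in either of its two cases, that $\tfrac{\mu(\cX_{S_k})^2}{\Delta(\cX_{S_k})}\ge\min\{10\,e(R_k)\log(2n^2p/e(R_k)),\,\mu(\cX_{S_k})/3\}$, and using $\mu(\cX_{S_k})/3\ge|\Pi(S_k)|np^2/4$ (from~\eqref{eq:mu S}, for large $n$) this minimum is at least $\tfrac14\min\{e(R_k)\log(2n^2p/e(R_k)),\,|\Pi(S_k)|np^2\}=M_k/(4\alpha)$. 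With $\alpha=1/6400$ this yields $D\ge M_k/(4000\alpha)>M_k$, so $\{\nu<M_k\}\subseteq\{\nu\le D\}$ and Corollary~\ref{cor:maxdisfam} gives $\Pr[\nu<M_k]\le e^{-D}\le e^{-M_k}$. Finally, the computation establishing~\eqref{eq:MiRi} inside Claim~\ref{clm:SiTi-one} uses only that $e(S_k)\ge e(R_k)/2$, Lemma~\ref{lem:big pi}, $e(R_k)\ge e(R_0)\ge\theta n^3p^3$ and Lemma~\ref{lem:b consequences}\ref{item:lower log bd}, all of which remain valid at $i=\ell=k$; hence $M_k/e(R_k)\ge 3\zeta n^3p^5/\theta\ge 3\zeta n^3p^5$, and combining with $e(J_k)\le 3e(R_k)$ from Claim~\ref{clm:SiTi-one} gives $M_k\ge\zeta n^3p^5 e(J_k)$. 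Putting the pieces together,
\[
  \Pr[{\bf S}(G)={\bf S}]\le p^{e(J_k)}\cdot e^{-M_k}\le p^{e(J_k)}\exp\big(-\zeta n^3p^5 e(J_k)\big).
\]

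The main obstacle is the constant--chasing in the last two paragraphs: one has to choose $\alpha$ and $\zeta$ (in terms of $\theta$ and the constant $C$ of Lemma~\ref{lem:b consequences}) so that the single quantity $D$ satisfies $D\ge M_k$ while at the same time $M_k\ge\zeta n^3p^5 e(J_k)$ --- this is precisely why $\alpha=1/6400$ and $\zeta=\min\{\alpha\theta^2/96,\alpha\theta/(6C^5)\}$ are chosen the way they are, and why the assumption $n^3p^5\le C^5\log n$ (valid by Lemma~\ref{lem:b consequences}, since $\cF_{\ref{type:b}}=\emptyset$ otherwise) is needed. The only conceptual point requiring care is the independence in the reduction step, which rests entirely on the inclusion $S_k\subseteq J_k$, i.e.\ on the fact that the copies of $K_{1,2}$ newly added at a step avoid everything recorded so far.
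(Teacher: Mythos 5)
Your proposal is correct and follows the paper's proof essentially verbatim: the same reduction to the independent events $\{J_k\subseteq G\}$ and $\{\nu(\cX(G,J_k,S_k))<M_k\}$, the same application of Corollary~\ref{cor:maxdisfam} after bounding $\mu$ and $\Delta$ through Lemma~\ref{lemma:DT_is_big}, and the same final appeal to Claim~\ref{clm:SiTi-one} and inequality~\eqref{eq:MiRi} to convert $e^{-M_k}$ into $\exp(-\zeta n^3p^5 e(J_k))$. The only differences are harmless constant choices (you use $2/\sqrt5$ where the paper uses $1/\sqrt2$), and you explicitly flag that~\eqref{eq:MiRi} extends to the terminal index $i=\ell=k$, which the paper leaves implicit.
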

Since ${\bf S}(G)$ is only defined for $G \in \cH'$, the event that ${\bf S}(G)={\bf S}$ implicitly implies that $G\in \cH'$. 

\begin{claimproof}[Proof of Claim~\ref{clm:small prob}]
  Fix some $0\leq k\leq \log n$ and  ${\bf S}=((J_0,\ldots,J_k),(R_0,\ldots,R_k))\in {\bm \cS}^k$ as in the statement of the claim.  Further, let $M\coloneqq M_k$, as in~\eqref{eq:Midef}, where $S\coloneqq S_k=S(R_k)$ is the graph obtained from Lemma~\ref{lemma:DT_is_big} with input $R_k$.  Now, let $Z$ be the largest size of a collection of edge-disjoint copies of $K_{1,2}$ in $G\setminus J_k$ that  form a $4$-cycle with some copy of $K_{1,2}$ in $S$.  We claim that ${\bf S}(G)={\bf{S}}$ implies both $J_k\subseteq G$ and $Z< M$.  Indeed, certainly any $H\in \cH'$ with ${\bf S}(H)={\bf{S}}$ must satisfy $J_k\subseteq  H$.  Further, if $Z\geq M$ and $G \in \cH'$, then  the process  defining ${\bf{S}}(G)$ would not terminate at step $k$, and thus $\ell(G)>k$, precluding ${\bf{S}}(G) = {\bf{S}}$.  Since the events $J_k\subseteq G$ and $Z< M$ are independent, as $Z$ depends only on $G \setminus J_k$, we have
  \[
    \Pr[{\bf S}(G)={\bf{S}}]\leq \Pr[\{J_k\subseteq G\}\cap \{Z<M\}]\leq \Pr[J_k\subseteq G]\Pr[Z<M]=p^{e(J_k)}\Pr[Z<M].
  \]
  It thus remains to bound $\Pr[Z<M]$.

  To this end, we let $\cY\subseteq \cX_{S}$ be the family of copies of $K_{1,2}$ in $K_n\setminus J_k$ that form a $4$-cycle with two edges in $S$.  As in the setting of Lemma~\ref{janson}, let $\mu\coloneqq|\cY|p^2$ be the expected number of copies of $K_{1,2}$ in $\cY$ that appear in $G$ and  $\Delta\coloneqq\sum_{K,K'}p^{e(K\cup K')}$, where the sum goes over all pairs of copies $K,K'\in \cY$ with $K \cap K'\neq \emptyset$.
  Note that $\Delta\leq \Delta(\cX_S)$, as $\cY\subseteq \cX_S$, and we also have that 
  \[
    \mu=|\cY|p^2\geq |\Pi(S)|(n-2\Delta(J_k))p^2\geq \frac{1}{\sqrt{2}}|\Pi(S)|np^2 \ge \frac{\mu(\cX_S)}{\sqrt{2}},
  \]
using~\eqref{eq:mu S} and the inequality $\Delta(J_k)\leq 2np\ll n$, which holds as $J_k\subseteq H$ for some $H\in \cH'\subseteq \cH$. Note also that $\mu(\cX_S)/3\geq |\Pi(S)|np^2/4$.  Therefore, by Lemma~\ref{lemma:DT_is_big} and our choice of $\alpha$,
\[
  D\coloneqq\frac{\mu^2}{800\Delta}\geq \frac{\mu(\cX_S)^2}{1600\Delta(\cX_S)}\geq \min\left\{\frac{1}{6400}|\Pi(S)|np^2,\frac{1}{160}e(R_k)\log\left(\frac{2n^2p}{e(R_k)} \right)\right\}\geq M.
\]
In the notation of Corollary~\ref{cor:maxdisfam}, letting $\cA$ be the graph with vertex set $\Gamma=E(K_n)$ whose edges encode copies of $K_{1,2}$ in $\cY$,  we have that  $Z = \nu(\cA[\Gamma_p])$, the size of the largest matching in $\cA$.  In particular, we may apply Corollary~\ref{cor:maxdisfam} to conclude that $\Pr[Z<M]\leq \Pr[Z<D]\leq \exp(-D)\leq \exp (-M)$; thus, the claim will follow after showing that $M\geq \zeta n^3p^5e(J_k)$. This follows from Claim~\ref{clm:SiTi-one} and inequality~\eqref{eq:MiRi}:
\[
  \frac{M_k}{e(J_k)} \ge \frac{M_k}{3e(R_k)} \ge \frac{\zeta n^3p^5}{\theta} \ge \zeta n^3p^5.\qedhere
\]
\end{claimproof}
  
It remains to perform a union bound over all possible stamps ${\bf S}\in{\bm \cS}$. We have that
\begin{align*}
  \Pr[G\in \cH']=\sum_{{\bf S}\in{\bm \cS}}\Pr[{\bf S}(G)={\bf S}]= \sum^{\log n}_{k=0}\underbrace{\sum_{{\bf S}\in{\bm \cS^k}}\Pr[{\bf S}(G)={\bf S}]}_{\Sigma^k}.
  \end{align*} 
  In order to get a grasp on $\Sigma^k$, consider the following random process that constructs increasing random sequences ${\bf J^*}=(J^*_0,\dotsc,J^*_k)$ and ${\bf{R}^*}=(R_0^*,\dotsc,R_k^*)$ of subgraphs of $K_n$:

  \medskip
  
  \begin{algorithm}[H]
    \SetAlgoNoEnd
    Let $\cC^*$ be a uniformly chosen random family of $c_0 = \theta n^3p^3$ triangles in $K_n$.

    Let $J_0^*$ be the union of $\cC^*$ and let $R_0^* \subseteq J_0^*$ be a~uniformly chosen random subset.
    
    \For{$i = 0, \dotsc, k-1$}{    
      \eIf{$R_i^* \in \cF$}{
        Let $S_i^* \coloneqq S(R_i^*)$, the graph output by Lemma~\ref{lemma:DT_is_big}.

        Let $\cX^* \subseteq \cX_{S_i^*}$ be a uniformly chosen random subset with $\alpha |\Pi(S_i^*)|np^2$ elements.

        Let $L^*$ be the union of all copies of $K_{1,2}$ in $\cX^*$.

        Let $Q^*$ be a uniformly chosen random subset of $L^*$.

        Let $J_{i+1}^* \coloneqq J_i^* \cup L^*$ and $R_{i+1}^* \coloneqq R_i^* \cup Q^*$.
      }{
        Let $J_{i+1}^* \coloneqq J_i^*$ and $R_{i+1}^* \coloneqq R_i^*$.
      }
    }
  \end{algorithm}

  \medskip

  The key observation is that, for any ${\bf S}=((J_0,\ldots,J_k),(R_0,\ldots,R_k))\in {\bm \cS}^k$,
  \begin{equation}
    \Pr\big(({\bf{J}^*},{\bf{R}^*})={\bf S}\big) \ge q^*({\bf S})\coloneqq\left(\dbinom{\binom{n}{3}}{c_0} 2^{3c_0}{ \prod_{i=0}^{k-1} }\left( \dbinom{|\Pi(S_i)|n}{\alpha |\Pi(S_i)|np^2} 2^{2\alpha |\Pi(S_i)|np^2}\right)\right)^{-1},
  \end{equation}
  where, for each $i$, we denote by $S_i=S(R_i)$ the  graph output by Lemma~\ref{lemma:DT_is_big} with input $R_i$.  Since clearly $\sum_{{\bf{S}}\in {\bm \cS}^k} q^*({\bf S})\leq 1$, we have
  \[
    \Sigma^k \le \frac{\sum_{{\bf{S}}\in {\bm \cS}^k}
\Pr[{\bf S}(G) = {\bf S}]}{\sum_{{\bf{S}}\in {\bm \cS}^k} q^*({\bf S})}
\leq  \max \big\{ \Pr[{\bf S}(G) = {\bf S}] \cdot q^*({\bf S})^{-1} : {\bf S} \in {\bm S}^k\big\}.
  \]
  Now, fix some  ${\bf S}=((J_0,\ldots,J_k),(R_0,\ldots,R_k))\in {\bm \cS}^k$ and let $S_i\coloneqq S(R_i)$ and  $M_i\coloneqq\alpha |\Pi(S_i)|np^2$ for each $i \in \{0, \dotsc, k-1\}$. By Claims~\ref{clm:Miactual} and~\ref{clm:small prob}, we have that $e(J_k)=3c_0+\sum_{i=0}^{k-1}2M_i$ and thus
\begin{align*}
  \Pr[{\bf S}(G)={\bf S}] \cdot q^*({\bf{S}})^{-1}
  &\leq
    p^{e(J_k)} \exp(-\zeta n^3p^5 e(J_k)) \cdot \left(\frac{8en^3/6}{\theta n^3p^3}\right)^{c_0} \prod_{i=0}^{k-1}  \left(\frac{4e|\Pi(S_i)|n }{\alpha |\Pi(S_i)|np^2}\right)^{M_i} \\
  & \leq \exp({-\zeta n^3p^5 e(J_k)}) \cdot \left(\frac{4}{\theta}\right)^{c_0} \prod_{i=0}^{k-1} \left( \frac{4e}{\alpha}\right)^{M_i} \\
  &\leq \exp\left(- e(J_k) \cdot \left(\zeta n^3p^5 + \log \theta + \log \alpha \right)\right) \le \exp(-e(J_k)),
\end{align*}
as $n^3p^5\gg 1$. Since $e(J_k)\geq c_0\gg n$, we conclude that $\Sigma^k \le e^{-n}$ and, consequently, $\Pr(G \in \cH') \ll 1$.
\end{proof}

\bibliographystyle{abbrv}
\bibliography{Biblio.bib}

\appendix

\section{Proof of Corollary~\ref{cor:maxdisfam}} \label{app:maxdisfam}

Our proof follows the approach of~\cite[Lemma 8.4.2]{alonspencer}. We denote
\[
  \tilde{\Delta} \coloneqq \Delta - \mu =  \sum_{i\neq j}\mathds{1}[{A_i\cap A_j\neq \emptyset}] \cdot p^{|A_i \cup A_j|},
\]
where the sum is taken over ordered pairs $(i,j)\in [m]^2$. We consider two cases, depending on the ratio $\tilde\Delta/\mu$.

  \medskip
  \noindent
  \textit{Case 1.} $\tilde{\Delta} \le \mu/3$.
  For an index $i\in [m]$, let $\delta_i\coloneqq\sum_{j\neq i}\mathds{1}[A_i\cap A_j\neq \emptyset] \cdot p^{\left| A_j\setminus A_i \right|}$ and call  $i\in [m]$ \emph{good} if $\delta_i \le \frac{4\tilde{\Delta}}{\mu}$. Furthermore,  let $\Lambda \subseteq [m]$ be the set of good indices and let $\mu_g \coloneqq \sum_{i\in \Lambda}p^{|A_i|}$. We will look for a matching of size $D^* \coloneqq \tfrac{\mu^2}{50\Delta}$  in $\cA[\Gamma_p]$ only among the edges with good indices.

We will call a family $I\subseteq \Lambda$ \emph{disjoint} if $A_i\cap A_j=\emptyset$ for all $i\neq j\in I$.  The crucial observation is that, if the event $\nu(\cA[\Gamma_p])\le {D^*}$ occurs, then there must be some (possibly empty) disjoint family of indices $I \subseteq \Lambda$ of size at most ${D^*}$ whose all corresponding edges appear in $\Gamma_p$ that is maximal in the sense that no edge corresponding to the family $\Lambda_I \coloneqq \{ j\in \Lambda : \forall i\in I:A_i\cap A_j= \emptyset \}$ appears in $\Gamma_p$.  Denote the former event (that $A_i \subseteq \Gamma_p$ for all $i \in I$) by $Q_I$ and the latter event (that $A_j \nsubseteq \Gamma_p$ for all $i \in \Lambda_I$) by $M_I$.  Note that $\Pr[Q_I] = \prod_{i \in I} p^{|A_i|}$ and, crucially, that $Q_I$ and $M_I$ are independent.

In order to estimate the probability of $M_I$, observe first that ignoring bad indices does not have a big effect on the expected number of sets appearing in $\Gamma_p$ and we have that $\mu_g\geq 3\mu/4$. Indeed, if this were not the case, then we would have that 
\[
  \tilde\Delta = \sum_{i\in[m]}p^{|A_i|}\delta_i> \frac{4\tilde{\Delta}}{\mu}\sum_{i\in [m]\setminus \Lambda}p^{|A_i|}= \frac{4\tilde{\Delta}}{\mu}\cdot (\mu-\mu_g)> \tilde{\Delta},
\]
a contradiction. We further note that 
\[
  \mu_I\coloneqq\sum_{j\in \Lambda_I}p^{|A_j|}\geq \mu_g-\sum_{i\in I}\sum_{j\in \Lambda}\mathds{1}[A_i\cap A_j\neq \emptyset] \cdot p^{\left| A_j \right|}
  \geq \mu_g-\sum_{i\in I} (1+\delta_i)\geq \mu_g-|I|\left(1+\frac{4\tilde\Delta}{\mu}\right).
\]
In particular, if $|I| \le \tfrac{\mu^2}{16\Delta}$, then
\[
  \mu_I \ge \mu_g - \frac{\mu^2}{16\Delta} \cdot \left(1 + \frac{4\tilde\Delta}{\mu}\right) = \mu_g - \frac{\mu^2}{16\Delta} \cdot \left(\frac{4\Delta}{\mu} - 3\right) \ge \mu_g - \frac\mu4 \ge \frac{\mu}{2}.
\]
We also clearly have that
\[
  \Delta_I\coloneqq\sum_{i,j\in \Lambda_I} \mathds{1}[A_i \cap A_j \neq \emptyset] \cdot \mathbb{E}[I_iI_j]\le \Delta.
\]
Hence, by Lemma~\ref{janson}
\[
\Pr \left[ M_I \right] \le \exp \left( -\frac{\mu_I^2}{2\Delta_I} \right) \le \exp \left( -\frac{\mu^2}{8\Delta} \right) .
\]
This gives the following estimate:
\begin{align*}
  \Pr [\nu(\cA[\Gamma_p])\le {D^*}]
  & \le \sum_{|I| \le D^*} \Pr[Q_I \wedge M_I] =\sum _{k = 0}^{D^*} \sum _{\substack{I\subseteq \Lambda \\|I|=k}} \Pr[M_I] \cdot \Pr[Q_I] \\
& \le \exp \left( -\frac{\mu^2}{8\Delta} \right) \cdot\sum _{k=0}^{D^*} \sum _{\substack{I\subseteq \Lambda \\|I|=k}} \prod _{i\in I} p^{|A_i|} \le \exp \left( -\frac{\mu^2}{8\Delta} \right) \cdot  \sum_{k=0}^{D^*} \frac{1}{k!} \left( \sum_{i\in \Lambda} p^{|A_i|} \right) ^k \\
  & \le  \exp \left( -\frac{\mu^2}{8\Delta} \right) \cdot \sum_{k=0}^{D^*} \frac{\mu^k}{k!}
    \le  \exp \left( -\frac{\mu^2}{8\Delta} \right) \cdot \left( \frac{e\mu}{{D^*}} \right) ^{D^*},
\end{align*}
where the last inequality is the well-known concentration inequality for the Poisson distribution that states that, if $x\le \mu $, then $\Pr[\mathrm{Poisson}(\mu) \leq x] \le \left(\tfrac{e\mu}{x}\right)^xe^{-\mu}$.  Finally, since we assume that $\Delta = \mu + \tilde\Delta \le 4\mu/3$, we conclude that
\[
  \Pr [\nu(\cA[\Gamma_p])\le {D^*}] \le \exp\left(D^* \cdot \left(-\frac{25}{4} +\log \frac{50e\Delta}{\mu} \right)\right) \le \exp(-D^*).
\]

  \medskip
  \noindent
  \textit{Case 2.} $\tilde{\Delta} > \mu/3$.
  In this case, we simply find a subset of indices that satisfies the assumption of Case 1.  For a set $S \subseteq [m]$ of indices, denote
  \[
    \mu_S \coloneqq \sum_{i \in S} p^{|A_i|} \qquad \text{and} \qquad \tilde{\Delta}_S \coloneqq  \sum_{i\neq j\in S}\mathds{1}[A_i\cap A_j\neq \emptyset] \cdot p^{|A_i \cup A_j|}.
  \]
  It is clearly enough to show that there exists a set $S$ with ${\tilde\Delta}_S \le \mu_S/3$ and $D_S^* \coloneqq \tfrac{\mu_S^2}{50(\mu_S+\tilde{\Delta}_S)} \ge D$.  Let $S$ be a random subset of $[m]$ obtained by independently retaining each element with probability $q \coloneqq \tfrac{\mu}{6\tilde\Delta}$. Then
  \[
    \Ex[ \mu_S - 3\tilde{\Delta}_S ] = q\mu -3q^2\tilde{\Delta} = \frac{\mu ^2}{12 \tilde{\Delta}},
  \]
  so there is an $S$ that satisfies both $\mu_S \ge \frac{\mu^2}{12\tilde{\Delta}}$ and $\tilde{\Delta}_{S} \le \mu_S/3$ and thus also
  \[
    D^*_S = \frac{\mu _S^2}{50(\mu _S + \tilde{\Delta}_S)} \ge \frac{3\mu_S}{200} \ge \frac{\mu ^2}{800 \tilde{\Delta}} \ge \frac{\mu ^2}{800\Delta}=D.
  \]
  In particular, arguing as in Case~1, with $\cA$ replaced by $\mathcal{A}_S \coloneqq \{A_i \in \cA : i \in S\}$, we obtain,
\[
  \Pr [\nu (\mathcal{A}[\Gamma_p]) \le D]\le \Pr [\nu (\mathcal{A}_S[\Gamma_p]) \le D] \le \Pr [\nu (\mathcal{A}_S[\Gamma_p]) \le D^*_S] \le \exp (-D^*_S) \le \exp(-D).\]

\section{Derivation of Theorem~\ref{thm:containers}}
\label{app:containers}

Here we derive our container lemma, Theorem~\ref{thm:containers}, which we restate for convenience. 
\containersthm*

Theorem~\ref{thm:containers} is  a slight reformulation of the following theorem of Saxton and Thomason. 

\begin{thmtool}[{\cite[Corollary~3.6]{SaxTho15}}]
  \label{thm:containers-Saxton-Thomason}
  For every $2\le k\in \NN$ and  $\eps>0$, there exists an integer $s\in \NN$ such that the following holds.
  Suppose that a nonempty $k$-uniform hypergraph $\cH$ on vertex set $V$ and $\tau \in (0,1/2)$ satisfy
  \[
    \delta(\cH, \tau) \coloneqq 2^{\binom{k}{2}-1} \sum_{j=2}^k 2^{-\binom{j-1}{2}} \delta_j(\cH, \tau) \le \frac{\eps}{12k!},
  \]
  where
  \[
    \delta_j(\cH, \tau) \coloneqq \frac{\tau^{1-j}}{k e(\cH)} \cdot \sum_{v \in V} \max\{d_\cH(T) : v \in T \subseteq V \text{ and } |T| = j\}.
  \]
  Then there exists a function $C \colon \cP(V)^s \to \cP(V)$ such that, letting
  \[
    \cT \coloneqq \big\{(T_1, \dotsc, T_s) \in \cP(V)^s : |T_i| \le s\tau|V| \text{ for all } i \in [s] \big\},
  \]
  we have:
  \begin{enumerate}[{label=(\alph*)}]
  \item
    \label{item:containers-ST-1}
    For every set $I \subseteq V$ satisfying $e(\cH[I]) \le 24\eps k! k \tau^k e(\cH)$, there exists $T = (T_1, \dotsc, T_s) \in \cT \cap \cP(I)^s$ with $I \subseteq C(T)$.
  \item
    \label{item:containers-ST-2}
    For every $T \in \cT$, the set $C(T)$ induces at most $\eps e(\cH)$ edges in $\cH$.
  \end{enumerate}
\end{thmtool}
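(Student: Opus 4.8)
I would prove the statement along the lines of Saxton and Thomason's argument, whose two ingredients are (i) a single ``container algorithm'' that replaces an almost‑independent set $I$ by a slightly smaller set induced by a bounded‑size fingerprint $S\subseteq I$, and (ii) an iteration of this one step until the induced edge count drops below $\eps e(\cH)$. Fix once and for all a linear order $\prec$ on $V$ refining the order of $\cH$‑degrees (highest degree first), and, for every $j\in\{2,\dots,k\}$, an analogous degree‑refining order on the $j$‑uniform hypergraphs that will arise as iterated links. Given an independent set $I$, run the recursive peeling procedure: maintain an available set $A$ (initially $V$) and a fingerprint $S$ (initially $\emptyset$), and work with $\cH^\ast\coloneqq\cH[A]$; repeatedly let $u$ be the $\prec$‑first vertex of $A$ lying in an edge of $\cH^\ast$, and if $u\notin I$ delete $u$ (equivalently, delete all edges of $\cH^\ast$ through $u$), while if $u\in I$ add $u$ to $S$, descend to the $(k-1)$‑uniform link $\cH^\ast_u$, and continue peeling there — deleting low‑link‑degree vertices and recording in $S$ those that lie in $I$ — so that a large, history‑determined set of vertices is deleted from $A$ at the cost of one fingerprint slot per level of recursion. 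Stop once $e(\cH[A])\le(1-c)e(\cH)$ for a suitable $c=c(k)>0$. The key point is that the only branching is on the predicate ``the peeled vertex lies in $I$''; since $S$ was defined to be exactly the set of peeled vertices that did lie in $I$, replacing that predicate by ``lies in $S$'' produces the identical run, so $A$ is a function $A=f(S)$ of $S$ alone (given $\cH$ and the orders), with $I\subseteq f(S)$ and $e(\cH[f(S)])\le(1-c)e(\cH)$; and since the procedure is deterministic and halting on an arbitrary input set, $f$ is defined on all of $\cP(V)$ and always outputs a set of few induced edges.

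\textbf{Where the hypothesis enters.} Peeling a vertex $u\notin I$ destroys $d_{\cH^\ast}(u)\ge d_{\cH^\ast}(w)$ for every $w\succ u$, i.e.\ at least the current average degree, so such steps drive $e$ down quickly for free. Peeling $u\in I$ instead costs a fingerprint slot and descends into $\cH^\ast_u$; the number of edges of $\cH^\ast$ one can destroy per fingerprint slot spent inside that descent is controlled by the codegrees $d_\cH(T)$ over sets $T\ni u$ of sizes $j=2,\dots,k$, normalised by $e(\cH)$ and rescaled by powers of $\tau$ — precisely the quantities $\delta_j(\cH,\tau)$ in the statement. A descent that reaches uniformity $j$ has $k-j$ nested levels, so the relevant loss compounds multiplicatively, which is exactly why the hypothesis bundles the $\delta_j(\cH,\tau)$ with the weights $2^{\binom{k}{2}-1}2^{-\binom{j-1}{2}}$. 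A bookkeeping argument balancing ``edges destroyed'' against ``fingerprint slots spent'' at every step and every level shows that, under the assumed bound $\delta(\cH,\tau)\le \eps/(12k!)$, the procedure reaches $e(\cH[A])\le(1-c)e(\cH)$ having added at most $\tau|V|$ vertices to $S$.

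\textbf{Iteration and packaging.} Having produced $A_1=f(S_1)\supseteq I$ with $e(\cH[A_1])\le(1-c)e(\cH)$, apply the one‑step procedure to $\cH[A_1]$: its co‑degree parameters are inherited up to a bounded factor, since $e$ has only dropped by the constant factor $1-c$ and all codegrees only decreased, and the slack in $\delta(\cH,\tau)\le\eps/(12k!)$ absorbs the degradation; this yields $S_2\subseteq I$ and $A_2\supseteq I$ with a further factor‑$(1-c)$ drop. After $s=s(k,\eps)\coloneqq\lceil\log_{1/(1-c)}(1/\eps)\rceil$ (enlarged by a $k$‑dependent constant if one needs several fingerprint sets per step) rounds the induced edge count is below $\eps e(\cH)$; define $C(T_1,\dots,T_s)$ by running the deterministic reconstruction on the coordinates in order, which gives property~\ref{item:containers-ST-2} for \emph{every} tuple because the reconstruction always halts at a set of fewer than $\eps e(\cH)$ edges, while for the legitimate fingerprint tuples $S_1\cup\dots\cup S_s\subseteq I$ and $|S_i|\le\tau|V|\le s\tau|V|$. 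To handle an almost‑independent $I$ with $e(\cH[I])\le 24\eps k!k\tau^k e(\cH)$ rather than $e(\cH[I])=0$, run the same procedure; the only new failure mode is that a peeled $u\in S$ has a link edge lying entirely in $I$, and the number of such ``defects'' is at most $e(\cH[I])$, which after the $\tau^k$‑type renormalisation is small enough to be paid for out of the fingerprint budget — this is exactly where the particular numerical threshold $24\eps k! k\tau^k e(\cH)$ comes from.

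\textbf{Main obstacle.} The genuinely hard part is the quantitative core of the second paragraph: choosing, at each peeling step and at each level $j$ of the descent, degree thresholds for which the edge count provably loses a constant fraction within a total fingerprint budget of $\tau|V|$, \emph{and} simultaneously verifying that the hypothesis on $\delta(\cH,\tau)$ is robust under both descending into links (which changes the uniformity) and the outer iteration (which passes to induced subhypergraphs). Making the two budgets — edges destroyed versus fingerprint spent — balance across all uniformities at once is what forces the precise weighted form of $\delta(\cH,\tau)$ and is by far the most delicate step; everything else (the reconstruction identity, the geometric iteration, the defect count for almost‑independent sets) is routine once this estimate is in hand.
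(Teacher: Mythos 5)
The paper does not prove this statement at all: it is quoted verbatim as \cite[Corollary~3.6]{SaxTho15} and used as a black box, with Appendix~\ref{app:containers} only deriving the reformulated Theorem~\ref{thm:containers} from it. So there is no internal proof to match your attempt against, and what you have written is an outline of Saxton and Thomason's own argument rather than a proof. As an outline it identifies the right architecture (a deterministic peeling/fingerprint step whose run is reproducible from $S$ alone, followed by a geometric iteration), but it has a genuine gap, which you yourself flag: the entire quantitative content of the theorem lives in the step you defer. The claim that, under $\delta(\cH,\tau)\le\eps/(12k!)$, one peeling pass destroys a constant fraction of the edges while adding at most $\tau|V|$ vertices to the fingerprint is exactly what the weighted sum $2^{\binom{k}{2}-1}\sum_j 2^{-\binom{j-1}{2}}\delta_j$ is designed to make true, and without carrying out the degree-threshold bookkeeping across all levels $j=2,\dots,k$ of the descent, none of the specific constants in the statement (the $\eps/(12k!)$, the $24\eps k!k\tau^k$ threshold for almost-independence, the bound $s\tau|V|$ on fingerprint sizes, or the existence of a uniform $s=s(k,\eps)$) is established.

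Two further points in the sketch are asserted rather than argued and would need real work. First, the iteration: after $s\approx\log_{1/(1-c)}(1/\eps)$ rounds the edge count has dropped by a factor of order $\eps$ while the codegrees $\Delta_j$ have only ``not increased,'' so the normalised quantities $\delta_j(\cH[A],\tau)$, which carry $e(\cH[A])$ in the denominator, can grow by a factor of order $1/\eps$; saying that ``the slack absorbs the degradation'' is not a proof, and in \cite{SaxTho15} this is precisely why the iteration is run with carefully adjusted parameters at each stage. Second, the extension from independent to almost-independent $I$: bounding the number of ``defects'' by $e(\cH[I])$ and claiming this is ``paid for out of the fingerprint budget'' again presupposes the quantitative accounting you have not done. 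If your goal is to justify this theorem within the present paper, the honest route is the one the authors take: cite it. If your goal is to reprove it, the proposal as written is a roadmap, not a proof.
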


\begin{proof}[Derivation of Theorem~\ref{thm:containers} from Theorem~\ref{thm:containers-Saxton-Thomason}]
  Let $\cH$ be a nonempty $k$-uniform hypergraph with vertex set $V$ and let $\eps>0$ and $K\in\NN$. We set $s \coloneqq s_{\ref{thm:containers-Saxton-Thomason}}(k, \eps)$ to be the constant output by Theorem~\ref{thm:containers-Saxton-Thomason} with input $k$ and $\eps$ and let
  \[
    L \coloneqq \left\lceil \frac{12k!2^{\binom{k}{2}}K}{\eps} \right\rceil,
    \qquad
    t \coloneqq 2Ls^2,
    \qquad
    \text{and}
    \qquad
    \delta \coloneqq 24 \eps k! k L^k.
  \]
  Suppose that the maximum degrees of $\cH$ satisfy the assumptions of the theorem for some $\tau\le 1/t$.  Note that, for every $j \in \{2, \dotsc, k\}$,
  \[
    \delta_j(\cH, L \tau) \leq  \frac{(L \tau)^{1-j}}{ke(\cH)} \cdot v(\cH) \Delta_j(\cH) \le \frac{KL^{1-j}}{k}
  \]
  and thus, as $L \ge 2$,
  \[
    \delta(\cH, L\tau) \le 2^{\binom{k}{2}-1} \cdot \sum_{j=2}^{k}\frac{KL^{1-j}}{k} \le \frac{2^{\binom{k}{2}}K}{L} \le \frac{\eps}{12k!}.
  \]
  Consequently, Theorem~\ref{thm:containers-Saxton-Thomason}, invoked with $\tau_{\ref{thm:containers-Saxton-Thomason}} = L\tau$, implies that there exist a function $C \colon \cP(V)^s \to \cP(V)$ that satisfies conditions~\ref{item:containers-ST-1} and~\ref{item:containers-ST-2} of that theorem. Here, we used that $\tau \le 1/t$ implies that $L\tau<1/2$. Now  define $f \colon \cP(V)^t \to \cP(V)$ by letting
  \[
    f(S_1, \dotsc, S_t) \coloneqq C\big(S_1 \cup \dotsb \cup S_{t/s}, \dotsc, S_{(s-1)t/s+1} \cup \dotsb \cup S_t\big).
  \]
If $I \subseteq V$ satisfies
  \[
    e(\cH[I]) \le \delta \tau^k e(\cH) \le 24\eps k!k (L \tau)^k e(\cH),
  \]
  then there are $T_1, \dotsc, T_s \subseteq I$, with $|T_i| \le s L\tau |V|$ for each $i$, such that $I \subseteq C(T_1, \dotsc, T_s)$.  This gives the assertion of part~\ref{item:almostindep} of the theorem, as we may partition each $T_i$ into $t/s$ sets $S_{t(i-1)/s+1}, \dotsc, S_{t(i+1)/s}$, each of size at most $\lceil s/t \cdot sL\tau |V| \rceil \le \tau |V|$. Part~\ref{item:containersparse} of the theorem follows directly from our definition of $f$ and part~\ref{item:containers-ST-2} of Theorem~\ref{thm:containers-Saxton-Thomason}. 
\end{proof}

\end{document}